\DeclareMathOperator*{\osc}{osc}
\tikzset{%
  >=latex, 
  inner sep=0pt,%
  outer sep=2pt,%
  mark coordinate/.style={inner sep=0pt,outer sep=0pt,minimum size=3pt,
    fill=black,circle}%
}
\title{Non-concavity of the Robin ground state}
\author{Ben Andrews} \address[Ben Andrews]{Mathematical Sciences
  Institute, Australian National University, ACT 2601 Australia; and
  Yau Mathematical Sciences Center, Tsinghua University, Beijing
  100084, China.}
\email{\href{mailto:Ben.Andrews@anu.edu.au}{\nolinkurl{Ben.Andrews@anu.edu.au}}}
\thanks{The research of the first author was supported by grants
  DP120102462 and FL150100126 of the Australian Research Council.
  The research of the second author was supported by grant FT1301013 of the Australian Research Council.
  }
\author{Julie Clutterbuck} 
\address[Julie Clutterbuck]{School of Mathematical Sciences, 
  Monash University,  VIC 3800  Australia}
\email{\href{mailto:Julie.Clutterbuck@monash.edu}{\nolinkurl{Julie.Clutterbuck@monash.edu}}}
\author{Daniel Hauer} \address[Daniel Hauer]{School of Mathematics and
  Statistics, The University of Sydney, NSW 2006, Australia}
\email{\href{mailto:daniel.hauer@sydney.edu.au}{\nolinkurl{daniel.hauer@sydney.edu.au}}}
\subjclass[2010]{35B65, 
35J15,  	
35J25,  	
35P15,  	
		47A75 	
				}
\keywords{Eigenfunction, eigenvalue problem, Robin boundary condition, concavity, quasiconcavity.
}
\numberwithin{equation}{section}
\newtheorem{theorem}{Theorem}[section]
\newtheorem{proposition}[theorem]{Proposition}
\newtheorem{lemma}[theorem]{Lemma}
\newtheorem{corollary}[theorem]{Corollary}
\theoremstyle{definition}
\newtheorem{definition}[theorem]{Definition}
\theoremstyle{remark}
\newtheorem{remark}[theorem]{Remark}
\newcommand\R{{\mathbb{R}}}
\renewcommand\S{{\mathbb{S}}}
\newcommand\dx{\,\mathrm{d}x }
\newcommand\dr{\mathrm{d}r }
\newcommand\tr{\mathrm{tr}\,}
\newcommand\dH{\mathrm{d}\mathcal{H}}
\newcommand\abs[1]{\lvert#1\rvert}
\newcommand\labs[1]{\left\lvert#1\right\rvert} 
\newcommand\norm[1]{\lVert#1\rVert}
\newcommand\lnorm[1]{\left\lVert#1\right\rVert}
\definecolor{darkred}{rgb}{0.7,0.1,0.1}
\definecolor{grey}{gray}{0.6}
\begin{document}


\maketitle


\begin{abstract} 
  On a convex bounded Euclidean domain, the ground state
  for the Laplacian with Neumann boundary conditions is a constant,
  while the Dirichlet ground state is log-concave.  The Robin
  eigenvalue problem can be considered as interpolating between the
  Dirichlet and Neumann cases, so it seems natural that the Robin
  ground state should have similar concavity
  properties.  In this paper we show that this is false, by analysing
  the perturbation problem from the Neumann case.  In particular we
  prove that on polyhedral convex domains, except in very special
  cases (which we completely classify) the variation of the
  ground state with respect to the Robin parameter is not a concave
  function.  We conclude from this that the Robin ground state
   is not log-concave (and indeed even has some superlevel sets which are non-convex) for small Robin parameter on
  polyhedral convex domains outside a special class, and hence also on
  arbitrary convex domains which approximate these in Hausdorff
  distance.
\end{abstract}

\spacing{1.2}

\tableofcontents

 {
   \hypersetup{hidelinks=true}
      \tableofcontents
 }

\section{Introduction and Main Results}

The Laplacian eigenvalue problem on a bounded convex domain
$\Omega\subset\R^n$ is to find a function $u$ and a constant $\lambda$
satisfying
\begin{equation}
  \label{eigenvalue}
    -\Delta u= \lambda u \text{ in $\Omega$,}
\end{equation}
subject to one of the following boundary conditions:
\begin{align}\notag
 &\text{ Dirichlet:  }& u=0 \text{ on }\partial\Omega,\\ \notag
 & \text{ Neumann: } &D_\nu u=0 \text{ on }\partial\Omega, \\ \label{eq:Robin}
& \text{ or Robin: }&D_\nu u + \alpha u=0  \text{ on }\partial\Omega.
\end{align}
Here $\nu$ is the outward pointing unit normal to $\Omega$, and
$\alpha$ is a real constant.  In this paper we are exclusively
concerned with the case $\alpha\ge0$.  For each of these problems,
there exists an non-decreasing sequence of eigenvalues
\begin{displaymath}
 0\le \lambda_0< \lambda_1 \le \dots \rightarrow \infty.
\end{displaymath}
Our main interest in this paper is in the first Robin eigenvalue
$\lambda_0^{R}(\alpha)$ for $\alpha>0$, and the corresponding
\emph{ground state} $u_{\alpha}$ which is (up to
scaling) the unique eigenfunction with definite sign (which we take to
be positive). The Robin problem~{\eqref{eigenvalue}-\eqref{eq:Robin}}
with $\alpha>0$ is often regarded as interpolating between the
Dirichlet and Neumann cases: if we consider $\alpha$ as a parameter,
the Neumann case corresponds to $\alpha=0$ and the Dirichlet case to
the limit as $\alpha\rightarrow \infty$. In particular, if we write
the eigenvalues for each boundary condition as $\lambda_j^D$,
$\lambda_j^N$, $\lambda_j^R(\alpha)$, then the $j$th Robin eigenvalue
$\lambda^{R}_{j}(\alpha)$ is monotone in $\alpha$, so in particular we
have following monotonicity property:
  \begin{displaymath}
\lambda_j^N\le \lambda_j^R{(\alpha)}\le \lambda_j^D\quad\text{ for
  all $\alpha \ge 0$.}
\end{displaymath}

We are particularly concerned with the shape of the first
eigenfunction $u_\alpha$. In the Neumann case, the first eigenfunction is constant. In the
Dirichlet case, the first eigenfunction $u_{\infty}$ is
\emph{log-concave} (that is, $\log u_{\infty}$ is concave)
\cite{MR0450480}.  Explicit eigenfunctions on rectangular domains show
that this cannot be improved to concavity of the eigenfunction itself.

In the Dirichlet case, the log-concavity of the first eigenfunction is
a key step in proving the lower bound on the gap between $\lambda_0^D$
and $\lambda_1^D$ \cite{SWYY,YuZhong,fundamental}.  Our investigation
of the concavity properties of the ground state was motivated by
possible applications to such a lower bound for the Robin case:
indeed, in those cases where the first Robin eigenfunction is
log-concave, the same proof as in the Dirichlet case applies, implying
the (non-sharp) inequality
\begin{displaymath}
\lambda_1^R(\alpha)-\lambda_0^R(\alpha)\geq
\frac{\pi^2}{D^2},
\end{displaymath}
where $D$ is the diameter of $\Omega$ and $\alpha>0$. We
describe this result in Section \ref{Section gap}.

For some domains, the Robin eigenfunction $u_{\alpha}$ can be found
explicitly and is log-concave. For example, on a ball $\Omega=B_{R}$
of radius $R>0$, 
the first eigenfunction $u_{\alpha}$ is a rotationally symmetric
function $u_{{\alpha}}(r)$ satisfying
\begin{displaymath}
u''_{\alpha}+\frac{d-1}{r}u'_{\alpha}
  +\lambda_1^R(\alpha)\,u_{\alpha}=0
\quad\text{on $[0,R)$,}\quad\text{with }\quad u'_{\alpha}(0)=0.
\end{displaymath}
\newcommand{\va}{v} 
Defining $\va = (\log u_{\alpha})'$, we have  
\begin{equation}
  \label{eq:1}
  \va' = \frac{u_\alpha''}{u_\alpha}-\left(\frac{u_\alpha'}{u_\alpha}\right)^2 
         = -\frac{d-1}{r}\va-\lambda_1^R(\alpha)-\va^2 < -\frac{d-1}{r}v\quad\text{on $[0,R)$} 
\end{equation}
and $\va(0)=0$. Thus, $\va< 0$ on $(0,R)$. Letting $w=v'$ (so that
$w<0$ for small $r$ by \eqref{eq:1}) we find that
\begin{displaymath}
  w'=-\left(\frac{d}{r}+2v\right)w-
  \frac{\lambda_1^R(\alpha)+\va^2}{r}< -\left(\frac{d}{r}+2v\right)w
  \quad\text{on $[0,R)$.} 
\end{displaymath}
It follows that $w<0$ on $[0,R)$.  The eigenvalues of the Hessian of
$\log u_\alpha$ are $(\log u_\alpha)''=w<0$ and
$\frac{(\log u_\alpha)'}{r} = \frac{v}{r}<0$, so $u_\alpha$ is
log-concave.

Another easily computed example is that of
rectangular domains given by products of intervals, where separation
of variables produces the first eigenfunction as a product of concave
trigonometric functions, which is therefore log-concave.

One might expect then that in general, the first Robin eigenfunction
$u_{\alpha}$ with $\alpha>0$ on a convex domain is log-concave, a
question raised by Smits \cite{smits1996spectral}. In this paper we
show that this is not the case: there exist convex domains, and small
values of $\alpha>0$, for which the first Robin eigenfunction
$u_{\alpha}$ fails to be log-concave and has some non-convex
superlevel sets.

Our main result is concerned with convex \emph{polyhedral} domains
$\Omega$ in $\R^{d}$, $d\ge 1$, by which we mean open bounded domains
given by the intersection of finitely many open half-spaces:
\begin{displaymath}
  \Omega = \bigcap_{i=1}^m\Big\{x\in \R^{d}\,\Big\vert\; x\cdot\nu_i< b_i\Big\},
\end{displaymath}
where $\nu_1,\dots,\nu_m$ are unit vectors and $b_1,\dots,b_m$ are
constants. The corresponding \emph{faces} $\Sigma_{i}$ of
$\Omega$ are given by
\begin{displaymath}
  \Sigma_i = \Big\{x\in\overline\Omega\,\Big\vert\; x\cdot\nu_i=b_i\Big\}
\end{displaymath}
and $\nu_{i}$ denotes the outer unit normal to $\Omega$ on the face
$\Sigma_{i}$. The \emph{tangent cone}
$\Gamma_x$ to $\Omega$ at $x\in \overline{\Omega}$ is 
\begin{displaymath}
  \Gamma_x :=  \bigcap_{i\in \mathcal{I}(x)}
  \Big\{y\in\R^{d}\,\Big\vert\; y\cdot\nu_i<0\Big\}\quad
  \text{with index set}\quad
  \mathcal{I}(x):=\Big\{i\in\{1,\dots,m\}\,\Big\vert\, x\cdot\nu_i=b_i\Big\}.
\end{displaymath}
We introduce a special subclass of polyhedral domains, with terminology borrowed from \cite{AM}:

\begin{definition}
  \label{def:exscribed}
  A convex polyhedral domain $\Omega$ in
  $\mathbb{R}^d$ is a \emph{circumsolid} if there exists a ball
  $B_R(x_0)\subset\overline\Omega$ touching every face of $\Omega$
  (that is,
  $\partial{B}_R(x_0)\cap \Sigma_i$ contains exactly one point for every $i\in\{1,\cdots,m\}$).
  Equivalently, $\Omega$ has the form
  \begin{displaymath}
    \Omega = \bigcap_{i=1}^m\Big\{x\in \R^{d}\;\Big\vert\; (x-x_0)\cdot\nu_i<R\Big\}.
  \end{displaymath}
  We say that a convex polyhedron $\Omega$ is a \emph{product of circumsolids} if there is
  a decomposition of $\R^d$ into orthogonal subspaces $E_1,\cdots,E_k$, and circumsolids
  $\Omega_i\subset E_i$ for $i=1,\cdots,k$ such that
  \begin{displaymath}
  \Omega = \Big\{x\in\R^d\;\Big\vert\; \pi_i(x)\in\Omega_i\text{\ for\ }i=1,\dots,k\Big\}
\end{displaymath}
where $\pi_i$ is the orthogonal projection from $\R^d$ onto $E_i$ for
each $i$. Here, circumsolids are trivially products of circumsolids.

We say that a point $x\in \overline{\Omega}$ has \emph{consistent
  normals} if the outward unit normals
$\{\nu_i\,\vert\, i\in{\mathcal I}(x)\}$ to the tangent cone $\Gamma_x$ are
such that there exists a solution $\gamma\in\R^d$ of the system of
equations
\begin{displaymath}
\gamma\cdot\nu_i=-1,\quad i\in{\mathcal I}(x).
\end{displaymath}
Otherwise we say that $x$ has \emph{inconsistent normals}.
Consistency of the normals at $x$ is equivalent to the statement that
the points $\{\nu_i\,\vert\ i\in{\mathcal I}(x)\}$ lie in a hyperplane
disjoint from the origin, or to the statement that the tangent cone
$\Gamma_x$ is an (unbounded) circumsolid (see Proposition
\ref{propo:irregular-bdry}).
 \end{definition}

\begin{figure}
\begin{tikzpicture}
\draw (0,0) circle (1);
\node [left] at (0,0) {$x_0$};
\node [left,red] at (1,1.2) {$\Omega$};
\node [left,blue] at (0.5,0.6) {$R$};
\draw [blue] (0,0) -- (0.71,0.71);
\draw [thick, red] (1,1.732) -- 
	(-2,0) -- (1,-1.732) -- cycle;
\draw (3,0) circle (1);
\node [left] at (3,0) {$x_0$};
\node [left,red] at (4,1.2) {$\Omega$};
\node [left,blue] at (3.5,0.6) {$R$};
\draw [blue] (3,0) -- (3.71,0.71);
\draw [thick, red, 
	declare function={
		xvalue(\a,\b) = (sin(\b)-sin(\a))/sin(\b-\a);
		yvalue(\a,\b) = -(cos(\b)-cos(\a))/sin(\b-\a);}
] (4,0.726) -- 
	({3-0.382},1.176) --
	({3-1.236},0) -- ({3-0.382},-1.176) -- (4,-0.726) --
	cycle;
\draw (6,0) circle (1);
\node [left] at (6,0) {$x_0$};
\node [left,red] at (7,1.2) {$\Omega$};
\node [left,blue] at (6.5,0.6) {$R$};
\draw [blue] (6,0) -- (6.71,0.71);
\draw [thick, red, 
	declare function={
		xvalue(\a,\b) = (sin(\b)-sin(\a))/sin(\b-\a);
		yvalue(\a,\b) = -(cos(\b)-cos(\a))/sin(\b-\a);}
] ({6-1},0.414) -- 
	({6-0.1645},1.250) --
	({6+3.732},-1) -- ({6-1},-1) --
	cycle;
\end{tikzpicture}
\caption{Planar circumsolid examples: Regular triangle, regular pentagon, skew quadrilateral}
\label{fig:planar}
\end{figure}

We mention some examples: In one dimension any interval is a
circumsolid. Planar examples include all regular polygons,
such as the triangle and pentagon in Figure \ref{fig:planar}.  However
circumsolids can be non-symmetric, such as the skew quadrilateral in
Figure \ref{fig:planar}.  Every triangle 
is a circumsolid (Figure \ref{fig:triangle}). The same is not true for quadrilaterals:
For the trapezium shown in Figure \ref{fig:trapez} only a specific
spacing between the ends (marked with a dashed line) results in
a circumsolid; a very long trapezium is not a circumsolid.

\begin{figure}[!htb]
 \minipage{7cm}
\begin{tikzpicture}
\draw (0,0) circle (1);\node [left] at (0,0) {$x_0$};
\node [left,red] at (2,0.7) {$\Omega$};
\node [left,blue] at (0.5,0.6) {$R$};
\draw [blue] (0,0) -- (0.71,0.71);
\draw [thick, red, 
	rotate=225
] (1,0.577) -- 
	(-6.078,4.664) --
	(1,-2.414) --
	cycle;
\end{tikzpicture}
\caption{Skew triangle}
\label{fig:triangle}
\endminipage\hspace{2cm}
\minipage{3.9cm}
\mbox{}\quad
\begin{tikzpicture}
  \node [left,red] at (3.3,1.4) {$\Omega$};
  \draw [thick,red] (0,0) -- (1,{sqrt(3)}) -- (3.5,{sqrt(3)}) -- (4,0) -- (0,0);
  \draw [dashed, red] ({3/2+sqrt(3)/2+1/4+1/(4*sqrt(13))},0) -- ({3/2+sqrt(3)/2-1/4+1/(4*sqrt(13))},{sqrt(3)});
  \draw (1.5,{sqrt(3)/2}) circle ({sqrt(3)/2});
\end{tikzpicture}
\caption{Trapezium}
  \label{fig:trapez}
\endminipage
\end{figure}

In higher dimensions any affine simplex is a circumsolid: For any
$d+1$ points $x_0,\dots,x_d$ in $\R^d$ which do not lie in a
$(d-1)$-dimensional subspace, the tetrahedron
$\{\sum_{i=0}^d \lambda_ix_i\,\vert\, \lambda_i\geq 0,\ \sum_i\lambda_i=1\}$
is a circumsolid (Figure \ref{fig:regtet}). 

\begin{figure}[!htb]
\minipage{6.5cm}
\hskip 1 cm\includegraphics[scale=0.7]{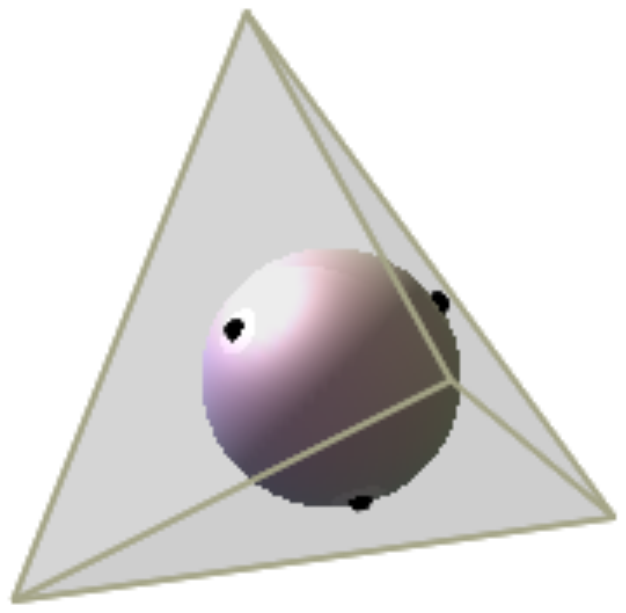}
\caption{Tetrahedron}
\label{fig:regtet}
\endminipage
\minipage{6.5cm}
\hskip 1 cm\includegraphics[scale=0.5]{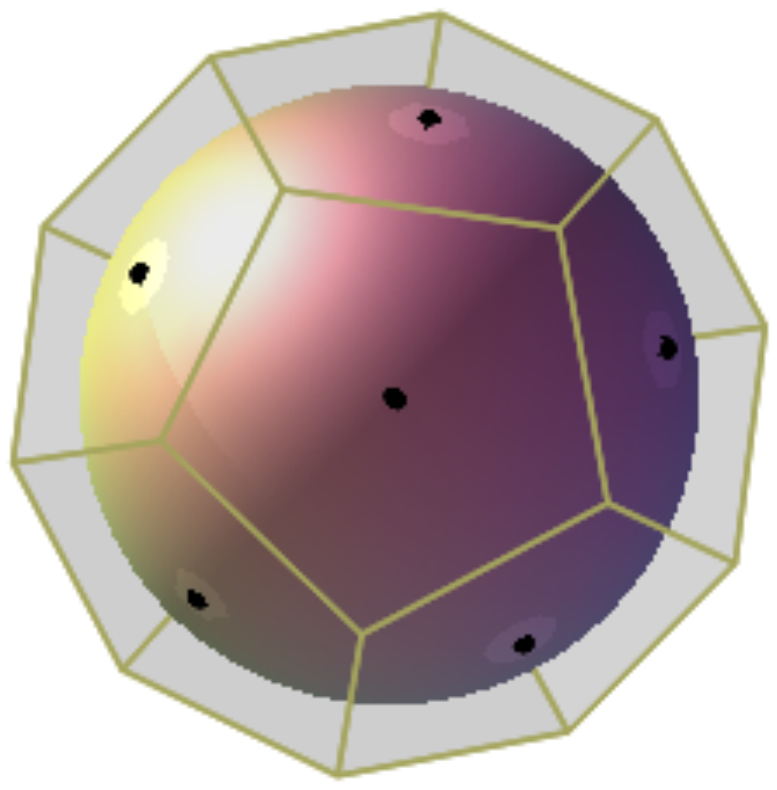}
\caption{Regular dodecahedron}
\label{fig:regdod}
\endminipage
\end{figure}

However, truncating one
of the vertices as in Figure \ref{fig:twft} does not produce a
circumsolid unless the plane of truncation is chosen to match the
inscribed sphere.  Other examples of three-dimensional circumsolids
include the platonic solids and other Archimedean solids (see for
example Figure \ref{fig:regdod}).

\begin{figure}[!htb]
\minipage{6.8cm}
\mbox{}\hspace{1cm}
\begin{tikzpicture}
  \node [left,red] at (0.7,1.4) {$\Omega$};
  \draw [thick,red] (0,2) -- (3,0) -- (4,3);
  \draw [thick,red,dashed] (0,2) -- (4,3);
  \draw [thick,red] (2.5,4) -- (3,5) -- (1.5,4.5) -- (2.5,4);
  \draw [thick,red] (2.5,4) -- (3,0);
  \draw [thick,red] (3,5) -- (4,3);
  \draw [thick,red] (1.5,4.5) -- (0,2);
\end{tikzpicture}
\caption{Tetrahedron with a flat top}
\label{fig:twft}
\endminipage
\minipage{6.4cm}
\mbox{}\quad
\includegraphics[scale=0.5]{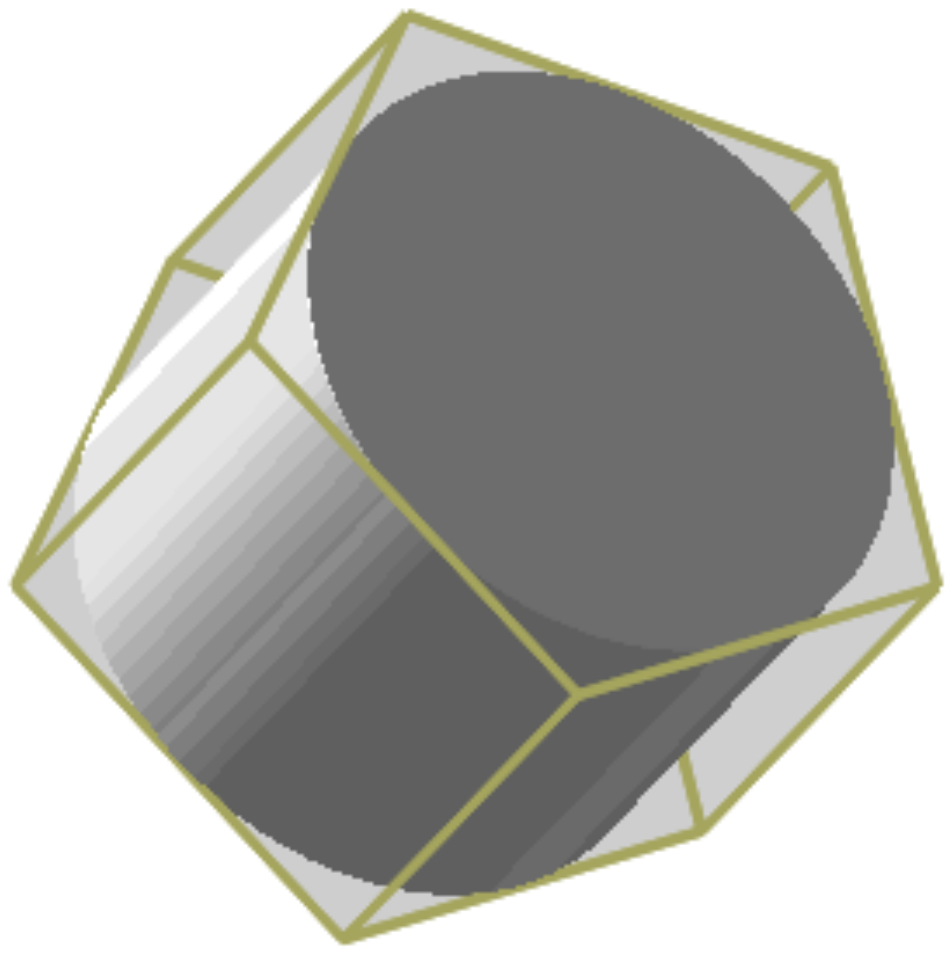}
\caption{Prism over a regular pentagon}
\label{fig:prodcirc}
\endminipage
\end{figure}

In the plane, the only domains which are nontrivial products of
circumsolids are rectangles (products of intervals in orthogonal
one-dimensional subspaces).  In three dimensions, rectangular prisms
(products of three intervals) are productes of circumsolids, as are 
prisms over planar circumsolids, such
as the example in Figure \ref{fig:prodcirc}.

\begin{figure}[!htb]
\minipage{9cm}
\mbox{}\hspace{1.7cm}
\begin{tikzpicture}
  \node [left,red] at (0.7,1.4) {$\Omega$};
   \draw [thick,red] (0,2) -- (3,0) -- (4,3);
   \draw [thick,red,dashed] (0,2) -- (4,3);
   \draw [thick,red] (2.5,4) -- (3,5); 
   \draw [thick,red] (2.5,4) -- (3,0);
   \draw [thick,red] (3,5) -- (4,3);
   \draw [thick,red] (0,2) -- (2.5,4);
   \draw [thick,red] (0,2) -- (3,5); 
   \node [below, black] at (0,1.9) {$x_{0}$};
   \draw [fill] (0,2) circle [radius=0.03];
\end{tikzpicture}
\caption{Tetrahedron with non-horizontal sliced tip}
\label{fig:twftsliced}
\endminipage
\end{figure}

We note that if $\Omega$ is a product of circumsolids then every
boundary point has consistent normals, since we can define $\gamma$ by
$\pi_i(\gamma) = -\frac{1}{R_i}\pi_i(x_0^i-x)$ for $i=1,\cdots,k$,
where $x_0^i$ and $R_i$ are the centre and radius of the circumsolid
$\Omega_i\subset E_i$ for each $i$.   In the plane, every boundary point
of a convex polygon has consistent normals. Figure~\ref{fig:twftsliced} is an
example of a convex polyhedron in $\R^{3}$ with vertex $x_{0}$ having
inconsistent normals.

The following Theorem is the main result of this paper.

\begin{theorem}
  \label{thm:main1}
  Let $\Omega$ be a convex polyhedral domain in $\R^d$, $d\ge 2$,
  which is not a product of circumsolids. Then for sufficiently small
  $\alpha>0$, the first Robin eigenfunction $u_{\alpha}$ is not
  log-concave. 
\end{theorem}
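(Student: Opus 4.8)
The plan is to study the Robin ground state $u_\alpha$ as a perturbation of the Neumann ground state (the constant function) for small $\alpha>0$, and to show that the first-order correction term already fails to be concave along the boundary whenever $\Omega$ is not a product of circumsolids. Normalize $u_\alpha$ so that, say, $\int_\Omega u_\alpha = |\Omega|$, and write $u_\alpha = 1 + \alpha\, \psi + O(\alpha^2)$ and $\lambda_0^R(\alpha) = \alpha\, \mu_1 + O(\alpha^2)$. Plugging into $-\Delta u_\alpha = \lambda_0^R(\alpha) u_\alpha$ in $\Omega$ and the Robin condition $D_\nu u_\alpha + \alpha u_\alpha = 0$ on $\partial\Omega$, and collecting the $O(\alpha)$ terms, the first-order profile $\psi$ must solve the Neumann problem
\begin{displaymath}
  -\Delta \psi = \mu_1 \quad\text{in }\Omega, \qquad D_\nu \psi = -1 \quad\text{on }\partial\Omega,
\end{displaymath}
which is solvable precisely with $\mu_1 = |\partial\Omega|/|\Omega|$ by the divergence theorem, and $\psi$ is then determined up to an additive constant (irrelevant for concavity). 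Since $\log u_\alpha = \alpha\,\psi + O(\alpha^2)$, log-concavity of $u_\alpha$ for small $\alpha$ would force $\psi$ to be concave on $\overline\Omega$; so it suffices to show $\psi$ is \emph{not} concave when $\Omega$ is not a product of circumsolids.

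The heart of the argument is a local analysis of $\psi$ near the boundary, and in particular near lower-dimensional faces and vertices, using the tangent cone $\Gamma_x$. Near a boundary point $x$ with index set $\mathcal I(x)$, the relevant model problem is the Neumann-type problem on the cone $\Gamma_x$ with $D_{\nu_i}\psi = -1$ on each bounding hyperplane $\{y\cdot\nu_i = 0\}$. If $x$ has consistent normals, there is an affine function $\gamma\cdot y$ with $D_{\nu_i}(\gamma\cdot y) = \gamma\cdot\nu_i = -1$ for all $i\in\mathcal I(x)$; subtracting it reduces the local problem to a homogeneous Neumann problem, and near such points $\psi$ behaves like an affine function plus a harmless correction, so concavity is not locally obstructed. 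The key point is the converse: at a point $x$ with \emph{inconsistent} normals — which, by the characterization in Definition~\ref{def:exscribed} and Proposition~\ref{propo:irregular-bdry}, exist precisely when $\Gamma_x$ is not a circumsolid cone — no such affine matching exists, and one shows that the solution of the cone model problem is strictly convex in some direction at $x$, i.e. has a superlevel set that bulges inward. Quantitatively I would show that if $\psi$ were concave then at every boundary point the normals would have to be consistent (by testing the concavity inequality against the inward conormal directions and using the boundary condition $D_{\nu_i}\psi=-1$), and then upgrade ``every boundary point has consistent normals'' to ``$\Omega$ is a product of circumsolids'' — this last implication is where the global geometry enters and is presumably extracted from the circumsolid theory the paper has set up.

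Once $\psi$ is shown to be non-concave, non-log-concavity of $u_\alpha$ for small $\alpha>0$ follows by a quantitative stability/continuity argument: the non-concavity of $\psi$ is witnessed by a strict inequality $\psi(\tfrac{p+q}{2}) < \tfrac12(\psi(p)+\psi(q)) - \delta$ for some $p,q\in\overline\Omega$ and $\delta>0$, and since $\frac1\alpha\log u_\alpha \to \psi$ in a strong enough topology (e.g. $C^0(\overline\Omega)$, via elliptic estimates for the perturbation expansion) the same strict inequality persists for $\log u_\alpha$ once $\alpha$ is small, contradicting log-concavity. The statement about non-convex superlevel sets comes out of the same local picture at the inconsistent-normal vertex. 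The main obstacle I anticipate is making the perturbation expansion rigorous and uniform up to the boundary on a domain with corners: one needs the right weighted or piecewise-smooth regularity for $\psi$ and for the remainder $u_\alpha - 1 - \alpha\psi$, and one must ensure the cone model problem genuinely governs the local behavior of $\psi$ (a matched-asymptotics or blow-up argument near each face and vertex). The algebraic step — that consistency of normals at every boundary point forces the product-of-circumsolids structure — is the other place where real work is hidden, though it is plausibly reduced to the equivalences already recorded after Definition~\ref{def:exscribed}.
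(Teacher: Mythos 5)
Your setup of the perturbation problem, the reduction to non-concavity of the first-order profile $\psi$ (the paper's $v$ solving \eqref{eq:3}), and the final transfer of strict non-concavity from $\psi$ to $\log u_\alpha$ for small $\alpha$ all match the paper. The gap is in the middle step, and it is fatal to the route you propose: you plan to show that concavity of $\psi$ forces every boundary point to have consistent normals, and then to ``upgrade'' consistency of normals everywhere to the product-of-circumsolids structure. That last implication is false. As the paper itself points out, \emph{every} boundary point of a convex polygon in the plane has consistent normals (at a vertex only two independent normals meet, so the system $\gamma\cdot\nu_i=-1$ is always solvable), yet most convex polygons --- e.g.\ a long trapezium --- are not circumsolids and not products of circumsolids. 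So your argument would prove nothing for such domains, which are exactly the generic case covered by Theorem~\ref{thm:main1}. Inconsistent normals is a strictly stronger hypothesis, and the paper reserves it for the superlevel-set statement in $d\ge 3$ (Theorem~\ref{thm:main1bis}); it cannot carry the weight of Theorem~\ref{thm:main1}.

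What the paper actually does in place of your step is the chain: $v$ concave (indeed merely semi-concave) $\Rightarrow$ $v\in C^{1,1}(\overline\Omega)$ $\Rightarrow$ $v\in C^2(\overline\Omega)$ $\Rightarrow$ $v$ quadratic $\Rightarrow$ $\Omega$ is a product of circumsolids. The hard content is the middle implications: the $C^{1,1}\Rightarrow C^2$ step requires expanding $v$ near each boundary point in homogeneous Neumann harmonics on the tangent cone and proving that every polyhedral cone is ``tame'' (any degree-two homogeneous Neumann harmonic with bounded second derivatives is quadratic --- itself an induction on dimension using a Lichn\'erowicz--Obata-type eigenvalue bound and an ODE analysis); the $C^2\Rightarrow$ quadratic step is a strong maximum principle for $\operatorname{tr}_E D^2v$ on cones. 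The obstruction to concavity at a consistent-normals boundary point is not the linear part of the expansion (which is harmless, as you note) but the possible presence of homogeneous harmonic terms of degree strictly between $1$ and $2$, which produce unbounded second derivatives; your proposal has no mechanism to detect or exploit these. To repair your argument you would need to replace the consistency-of-normals criterion with the full regularity-and-rigidity analysis, which is essentially the bulk of the paper.
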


To prove that the first Robin eigenfunction $u_{\alpha}$ admits
non-convex superlevel sets in dimension $d\ge3$, we make the following
stronger assumption:

\begin{theorem}
  \label{thm:main1bis}
  Let $\Omega$ be a convex polyhedral domain in $\R^d$.  If $d=2$ and
  $\Omega$ is not a product of circumsolids, then the first Robin
  eigenfunction $u_{\alpha}$ admits non-convex superlevel sets for
  sufficiently small $\alpha>0$. The same conclusion holds if $d\ge 3$
  and $\Omega$ has boundary points with inconsistent normals.
\end{theorem}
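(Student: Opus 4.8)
\medskip
\noindent\textbf{Proof plan.}
The idea is to differentiate the ground state in $\alpha$ at the Neumann endpoint $\alpha=0$ and to show that the resulting first-order profile is already not quasiconcave, with the failure localised near a distinguished boundary point. First I would set up the linearisation. Normalising $u_\alpha>0$ so that $u_\alpha\to 1$ in $L^2(\Omega)$, the simplicity and analyticity of $\lambda_0^R(\alpha)$ (its quadratic form $a_\alpha(u,v)=\int_\Omega\nabla u\cdot\nabla v+\alpha\int_{\partial\Omega}uv$ is analytic in $\alpha$ on the fixed form domain $H^1(\Omega)$) give, via analytic perturbation theory, $\lambda_0^R(\alpha)=\mu\alpha+O(\alpha^2)$ and $u_\alpha=1+\alpha\phi+O(\alpha^2)$, in $H^1(\Omega)$ and hence in $C^2_{\mathrm{loc}}(\Omega)$ and $C^2$ up to the relative interior of each face, where $\mu=|\partial\Omega|/|\Omega|$ (by the divergence theorem $\mu|\Omega|=-\int_\Omega\Delta\phi=-\int_{\partial\Omega}D_\nu\phi=|\partial\Omega|$) and $\phi$ uniquely solves
\[
-\Delta\phi=\mu \ \text{ in }\Omega,\qquad D_\nu\phi=-1 \ \text{ on }\partial\Omega,\qquad \int_\Omega\phi=0 .
\]
Then it is enough to exhibit a level $c$ and points $p,q\in\Omega$ with an interior point $m$ of $[p,q]$ --- all at a fixed positive distance from $\partial\Omega$, or in $C^2$-controlled collar neighbourhoods of points in the relative interior of a face --- satisfying $\phi(p),\phi(q)>c>\phi(m)$; for all small $\alpha>0$ the same strict inequalities pass to $(u_\alpha-1)/\alpha$, so $\{u_\alpha\ge 1+\alpha c\}$ contains $p,q$ but not $m$, hence is non-convex. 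So the task reduces to showing that $\phi$ is not quasiconcave.

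For $d=2$ I would argue as follows. For a product of circumsolids $\phi$ is an anisotropic downward paraboloid, and conversely, by the rigidity behind Theorem~\ref{thm:main1}, if $\phi$ is such a paraboloid then $\Omega$ is a product of circumsolids. Since $\Omega$ is not a product of circumsolids, it is neither a circumsolid (in particular not a triangle) nor a rectangle; as every planar convex polygon with more than three sides that is not a rectangle has an interior angle $>\pi/2$, $\Omega$ has a vertex $v$ of interior angle $\theta\in(\pi/2,\pi)$. In polar coordinates $(r,\vartheta)$ at $v$, with the two faces at $\vartheta=0,\theta$, the Neumann data near $v$ are matched by the affine function $L(x)=\phi(v)+\gamma_v\cdot(x-v)$, where $\gamma_v$ solves $\gamma_v\cdot\nu_i=-1$ on the two faces --- unique, since planar boundary points have consistent normals, and interior to $\Gamma_v$, so that $v$ is a strict local minimum of $\phi$ on $\overline\Omega$. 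The remainder expands as $\phi-L=a_0-\tfrac{\mu}{4}r^2+c_1 r^{\pi/\theta}\cos(\pi\vartheta/\theta)+O(r^{2\pi/\theta})$ near $v$. Since $1<\pi/\theta<2$, when $c_1\neq 0$ this singular mode controls the level curves $\{\phi=\phi(v)+t\}$ for small $t>0$; written as a graph over the bisector direction, such a curve differs from a straight line by a perturbation whose odd part has second derivative blowing up as $t\downarrow 0$, which forces $\{\phi\ge\phi(v)+t\}$ to be non-convex in a fixed neighbourhood of $v$ at a fixed positive distance from $\partial\Omega$. It then remains to ensure $c_1\neq 0$ at some obtuse vertex: if $c_1=0$ at all of them, I would combine the real-analyticity of $\phi+\tfrac{\mu}{4}|x|^2$ in $\Omega$ with Theorem~\ref{thm:main1} to propagate the vertex rigidity and conclude that $\phi$ is globally a downward paraboloid, i.e.\ $\Omega$ is a product of circumsolids --- a contradiction.

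For $d\ge 3$ with a boundary point $x^*$ of inconsistent normals, I would first note that $\phi$ cannot be differentiable at $x^*$: the gradient has one-sided limits $\gamma_i$ along the relative interiors of the faces $\Sigma_i\ni x^*$ with $\gamma_i\cdot\nu_i=-1$, and if these coincided their common value would solve $\{\gamma\cdot\nu_i=-1\}_{i\in\mathcal I(x^*)}$, contrary to inconsistency. Blowing up, $\phi_\lambda(y):=\lambda^{-1}(\phi(x^*+\lambda y)-\phi(x^*))$ subconverges as $\lambda\downarrow 0$ to a one-homogeneous harmonic function $h$ on the tangent cone $\Gamma_{x^*}$ with $D_\nu h=-1$ on its faces; by the characterisation in Proposition~\ref{propo:irregular-bdry} (equivalently, Theorem~\ref{thm:main1} applied to $\Gamma_{x^*}$) a concave such $h$ would make $\Gamma_{x^*}$ an unbounded circumsolid --- i.e.\ $x^*$ would have consistent normals --- so $h$ is not concave, and a one-homogeneous function on a convex cone that is not concave has a non-convex superlevel set. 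This non-convexity passes to the rescaled, hence for small $\lambda$ to the actual, superlevel sets of $\phi$ near $x^*$. In either case $\phi$ is not quasiconcave, and feeding $(p,q,m)$ back into the linearisation step completes the proof.

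The hardest step, in both cases, is the rigidity input. For $d=2$ it is ruling out that $c_1$ vanishes at every obtuse vertex (coupling analytic continuation of $\phi$ to the classification of Theorem~\ref{thm:main1}); for $d\ge 3$ it is making the blow-up at an inconsistent-normals point precise enough --- controlling the boundary regularity of $\phi$ and the mode of convergence --- to legitimately transfer non-convexity of superlevel sets to the limit $h$. The linearisation and the graph-curvature computation are, by contrast, routine.
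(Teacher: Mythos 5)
Your overall architecture is the same as the paper's: linearise at $\alpha=0$, show the perturbation profile $\phi$ (the paper's $v$) fails to be quasiconcave by expanding it near a boundary point into homogeneous Neumann harmonic modes, and transfer three witness points back to $u_\alpha$. The linearisation, the final transfer step, and the mechanism at an obtuse planar vertex (an odd singular mode of homogeneity $\pi/\theta\in(1,2)$ cannot have concave restriction to a transversal line unless it is linear, which harmonicity forbids) all match the paper, which packages the last point as Lemma~\ref{lem:section-concave}. However, the two steps you yourself flag as hardest are genuine gaps, and the mechanisms you propose for them would not close them.

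First, in the case $d=2$ with $c_1=0$ at every obtuse vertex, interior real-analyticity of $\phi+\tfrac{\mu}{4}|x|^2$ cannot ``propagate the vertex rigidity'': a harmonic function whose singular vertex coefficients vanish is not thereby forced to be quadratic, and Theorem~\ref{thm:main1} is a statement about $u_\alpha$, not a rigidity statement about $\phi$. What is actually needed is the implication ``all expansion exponents at every boundary point are $\geq 2$ (beyond the consistent linear part) $\Rightarrow$ $\phi\in C^{2}(\overline\Omega)$'', which is Theorem~\ref{prop:nobadreg}/Theorem~\ref{thm:nobadreg-bis} and rests on the tameness of polyhedral cones (Proposition~\ref{prop:tame}); only then do Theorem~\ref{lemma 1.2} and Proposition~\ref{prop:quad-domains} yield that $\Omega$ is a product of circumsolids. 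This chain is the bulk of Sections~\ref{toy problem section}--\ref{sec:domain} and cannot be replaced by analytic continuation. Second, for $d\ge 3$ the blow-up $\phi_\lambda(y)=\lambda^{-1}(\phi(x^*+\lambda y)-\phi(x^*))$ has no reason a priori to subconverge to a one-homogeneous function: the expansion of $\phi$ at $x^*$ contains modes $r^{\beta_i}\varphi_i$ with $\beta_i$ determined by the Neumann spectrum of $A=\Gamma_{x^*}\cap\S^{d-1}$, and without the Lichn\'erowicz--Obata bound $\lambda_1(A)\geq d-1$ (Theorem~\ref{thm:lichobata}) one cannot exclude exponents $\beta_i\in(0,1)$, which would make $\phi_\lambda$ diverge, nor isolate a well-defined degree-one part $\hat w$ with the remainder of strictly higher homogeneity (the paper absorbs all $\beta_i=1$ modes into $\hat w$ and uses $\beta_i>1$ for the rest). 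Even the existence of the corrector $\hat w$ with $D_\nu\hat w=-1$ (Lemma~\ref{lem:existence-of-homog-1-function}) requires this spectral input. Once $\hat w$ is in hand, your deduction that a non-linear such $\hat w$ has a non-convex superlevel set is fine, but note that you should argue via harmonicity (trace-free Hessian with null eigenvector $z$ forces a positive eigenvalue), as the paper does, rather than via the general claim that non-concave one-homogeneous functions have non-convex superlevel sets.
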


We stress that although Theorem~\ref{thm:main1} is stated for
polyhedral domains, one cannot hope to avoid
such non-concavity results by imposing more regularity on the boundary.

\begin{corollary}
  \label{cor:approx-omega}
  Let $\Omega_{0}$ be a convex polyhedral domain in $\R^d$, $d\ge 2$,
  which is not a product of circumsolids.  Then for any sufficiently
  small $\alpha>0$, for any convex domain $\Omega$ which is
  sufficiently close to $\Omega_0$ in Hausdorff distance, the first
  Robin eigenfunction $u_{\alpha}$ on $\Omega$ is not log-concave.
\end{corollary}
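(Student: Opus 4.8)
The plan is to deduce the corollary from Theorem \ref{thm:main1} by a continuity/compactness argument, using the fact that log-concavity is a closed condition under uniform convergence and that the Robin ground state depends continuously on the domain in an appropriate topology. First I would fix $\Omega_0$ as in the statement; by Theorem \ref{thm:main1}, there is an $\alpha_0>0$ such that for every $\alpha\in(0,\alpha_0)$ the ground state $u_\alpha$ on $\Omega_0$ is not log-concave. In fact I want something quantitatively stable: the failure of log-concavity should be witnessed by a fixed triple of points. So from the non-log-concavity of $u_\alpha$ on $\Omega_0$, I extract points $x,y\in\Omega_0$ and $t\in(0,1)$ with $tx+(1-t)y\in\Omega_0$ and a strict inequality
\begin{displaymath}
  \log u_\alpha\big(tx+(1-t)y\big) < t\log u_\alpha(x)+(1-t)\log u_\alpha(y) - \delta
\end{displaymath}
for some $\delta>0$ (strictness is automatic once $u_\alpha$ is not log-concave, since $\log u_\alpha$ is continuous on the open set $\Omega_0$ and a non-strict violation on a dense set would upgrade to a strict one nearby; alternatively one uses that $u_\alpha$ is real-analytic in the interior).

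Next I would set up the domain perturbation. Let $\Omega$ be a convex domain with $d_{\mathrm H}(\Omega,\Omega_0)<\epsilon$. For $\epsilon$ small, $x$, $y$, and $tx+(1-t)y$ all lie in $\Omega$ (they are interior points of $\Omega_0$, hence at positive distance from $\partial\Omega_0$, and Hausdorff-closeness keeps them inside $\Omega$). The key analytic input is the continuous dependence of the first Robin eigenpair $(\lambda_0^R(\alpha),u_\alpha)$ on the domain under Hausdorff convergence of convex sets: convex domains converging in Hausdorff distance also converge in the sense of Mosco for the associated form domains $H^1$, so the Robin eigenvalues converge and the (suitably normalised, say $L^\infty$- or $L^2$-normalised) ground states converge locally uniformly on the interior. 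One way to make this precise without developing new machinery: extend eigenfunctions by a fixed cutoff, use uniform $H^1$ and elliptic $L^\infty_{\mathrm{loc}}$ and $C^{1,\beta}_{\mathrm{loc}}$ estimates in the interior (the Robin boundary condition and convexity give uniform control independent of the domain, cf.\ the estimates already used in the body of the paper), and pass to a limit; uniqueness of the positive ground state identifies the limit as $u_\alpha$ on $\Omega_0$. Hence for $\Omega$ sufficiently close to $\Omega_0$, the corresponding ground state $u_\alpha^\Omega$ satisfies $|u_\alpha^\Omega - u_\alpha|<\eta$ near $x,y,tx+(1-t)y$ with $\eta$ as small as we wish, and staying uniformly away from $0$ there (since $u_\alpha>0$ on the interior and the convergence is uniform on the relevant compact set).

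Combining these, the strict inequality above is stable: for $\eta$ (hence $\epsilon$) small enough, $\log u_\alpha^\Omega$ violates the concavity inequality at the same triple $x,y,t$, so $u_\alpha^\Omega$ is not log-concave on $\Omega$. The only point requiring care is uniformity in $\alpha$: a priori $\alpha_0$ and $\delta$ depend on $\alpha$, and we need, for each fixed small $\alpha$, a threshold $\epsilon(\alpha)$. Since the corollary only asserts the conclusion for each sufficiently small $\alpha>0$ and each $\Omega$ sufficiently close (with closeness allowed to depend on $\alpha$), this is exactly what the argument gives; no joint uniformity is claimed or needed. The main obstacle is thus not conceptual but the careful verification of the continuous dependence of $u_\alpha$ on the domain in the interior-$C^0$ topology under Hausdorff convergence of convex domains — in particular that the uniform interior regularity estimates hold with constants independent of the approximating domain, which follows from convexity together with the fact that $\alpha$ is fixed. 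Once that is in hand, the corollary is immediate.
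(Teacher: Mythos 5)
Your proposal is correct and follows essentially the same route as the paper: the paper fixes interior witness points for the failure of log-concavity of $u_\alpha$ on $\Omega_0$ (guaranteed by Theorem~\ref{thm:main1}), invokes its Proposition~\ref{propo:domain-perturbation} — whose hypotheses (Hausdorff convergence plus convergence of volume and perimeter) are verified using the continuity of these quantities on convex bodies — to get locally uniform convergence of the ground states on compact interior sets, and concludes that the strict violation of the concavity inequality persists for nearby domains. The continuous-dependence input you flag as the main point of care is exactly the content of that proposition, already established earlier in the paper.
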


 For $\alpha<0$, the first Robin eigenvalue $\lambda_{\alpha}$ is negative, and
 the methods used to prove Theorem~\ref{thm:main1} and
 Corollary~\ref{cor:approx-omega} also lead to the following result.

\begin{theorem}
  \label{thm:main2}
  Let $\Omega$ be a convex polyhedral domain in $\R^d$, $d\ge 2$,
  which is not a product of circumsolids. Then for sufficiently small
  $\alpha<0$, the first Robin eigenfunction $u_{\alpha}$ is not
  log-convex. Moreover, for any convex domain $\hat{\Omega}$ which
  is sufficiently close to $\Omega$ in Hausdorff distance, the first
  Robin eigenfunction $\hat{u}_{\alpha}$ on $\hat{\Omega}$ is not log-convex.
\end{theorem}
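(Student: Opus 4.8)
The plan is to perturb around the Neumann endpoint $\alpha=0$ exactly as in the proof of Theorem~\ref{thm:main1}, observing that passing from $\alpha>0$ to $\alpha<0$ merely reverses the relevant inequality and so converts the non-log-concavity conclusion of Theorem~\ref{thm:main1} into a non-log-convexity conclusion. First I would normalise the ground state so that $u_0\equiv 1$, which is harmless since log-convexity is invariant under $u_\alpha\mapsto c\,u_\alpha$. The quadratic forms $Q_\alpha(u)=\int_\Omega\abs{\nabla u}^2+\alpha\int_{\partial\Omega}u^2$ form an analytic family (Kato), and for $\abs{\alpha}$ small $\lambda_0^R(\alpha)$ is a simple isolated eigenvalue -- in particular for small $\alpha<0$, where $\lambda_0^R(\alpha)<0$ but the Robin Laplacian on the bounded convex domain $\Omega$ is still bounded below, with one-dimensional sign-definite ground eigenspace. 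Hence $\alpha\mapsto u_\alpha$ is real-analytic near $0$ with values in $H^1(\Omega)$, and, by interior Schauder estimates, with values in $C^2(K)$ for every $K\subset\subset\Omega$; writing $\dot u:=\partial_\alpha u_\alpha|_{\alpha=0}$, we get $u_\alpha=1+\alpha\dot u+O(\alpha^2)$ in $C^2_{\mathrm{loc}}(\Omega)$. Differentiating the weak eigenvalue equation at $\alpha=0$ identifies $\dot u$ as the solution (unique up to an additive constant, which is fixed by the normalisation and irrelevant to log-convexity) of the Neumann problem $-\Delta\dot u=\abs{\partial\Omega}/\abs{\Omega}$ in $\Omega$, $D_\nu\dot u=-1$ on $\partial\Omega$ -- precisely the first variation analysed in the proof of Theorem~\ref{thm:main1}, which establishes $\dot u\in C^\infty(\Omega)\cap C(\overline\Omega)$ and, since $\Omega$ is not a product of circumsolids, that $\dot u$ is not concave on $\overline\Omega$. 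Because $\dot u\in C^2(\Omega)$ and $\overline\Omega$ is convex, this non-concavity forces an interior point $p_0\in\Omega$ and a unit vector $e_0$ with $2\mu:=\langle\nabla^2\dot u(p_0)\,e_0,e_0\rangle>0$.

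The non-log-convexity is then immediate from the expansion. At any $p\in\Omega$ and unit vector $e$,
\begin{displaymath}
\langle\nabla^2(\log u_\alpha)(p)\,e,e\rangle
=\frac{\langle\nabla^2 u_\alpha(p)\,e,e\rangle}{u_\alpha(p)}-\frac{\langle\nabla u_\alpha(p),e\rangle^2}{u_\alpha(p)^2}
=\alpha\,\langle\nabla^2\dot u(p)\,e,e\rangle+O(\alpha^2),
\end{displaymath}
uniformly on compact subsets of $\Omega$; taking $p=p_0$, $e=e_0$ gives $\langle\nabla^2(\log u_\alpha)(p_0)\,e_0,e_0\rangle=2\mu\alpha+O(\alpha^2)$, which is strictly negative for all $\alpha\in(-\alpha_0,0)$ provided $\alpha_0>0$ is small. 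Hence $\log u_\alpha$ has indefinite Hessian at $p_0$, so $u_\alpha$ is not log-convex. (This is the same dichotomy used for $\alpha>0$, read the other way: were $u_\alpha$ log-convex for a sequence $\alpha_j\uparrow 0$, dividing $\nabla^2(\log u_{\alpha_j})\ge0$ by $\alpha_j<0$ and passing to the limit in $C^0_{\mathrm{loc}}(\Omega)$ would give $\nabla^2\dot u\le 0$ on $\Omega$, hence $\dot u$ concave on $\overline\Omega$ by continuity -- a contradiction.)

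For the Hausdorff-stability clause I would argue exactly as in the proof of Corollary~\ref{cor:approx-omega}, now with $\alpha\in(-\alpha_0,0)$ fixed (shrinking $\alpha_0$ if needed). A uniform trace inequality valid for convex domains of bounded diameter ensures that for $\abs{\alpha}$ small the Robin Laplacian on any convex $\hat\Omega$ close to $\Omega$ is bounded below, with a simple positive ground state $\hat u_\alpha$; Hausdorff convergence of convex domains $\hat\Omega\to\Omega$ gives Mosco convergence of the Robin forms, whence $\hat u_\alpha\to u_\alpha$ in $H^1$, and, by interior Schauder estimates, in $C^2_{\mathrm{loc}}(\Omega)$. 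Since $p_0\in\Omega$ belongs to $\hat\Omega$ once $\hat\Omega$ is Hausdorff-close to $\Omega$, and $\langle\nabla^2(\log u_\alpha)(p_0)\,e_0,e_0\rangle<0$ strictly, the same strict inequality persists for $\hat u_\alpha$ at a point near $p_0$ for $\hat\Omega$ sufficiently close; thus $\hat u_\alpha$ is not log-convex.

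The only genuinely hard ingredient is the non-concavity of the first variation $\dot u$ for domains that are not products of circumsolids, and that is exactly what the proof of Theorem~\ref{thm:main1} supplies, so I may take it as given. Within the present argument the only points needing care are minor: checking that analytic perturbation theory, together with the simplicity and positivity of the ground state, survives when $\lambda_0^R(\alpha)$ turns negative for small $\alpha<0$ (it does, since the Robin Laplacian on a bounded convex domain stays bounded below with a one-dimensional sign-definite ground eigenspace for $\abs{\alpha}$ small), and the standard uniform trace inequality and Mosco continuity invoked in the stability step. Everything else is sign bookkeeping: since $\log u_\alpha\approx\alpha\dot u$ to leading order, replacing $\alpha>0$ by $\alpha<0$ turns a positive Hessian eigenvalue of $\dot u$ into a negative Hessian eigenvalue of $\log u_\alpha$, i.e.\ turns the failure of log-concavity into the failure of log-convexity.
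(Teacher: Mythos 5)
Your proposal is correct, and it rests on exactly the same hard ingredient as the paper (the non-concavity of the first variation $v=\dot u$ of the ground state, i.e.\ Corollary~\ref{cor:concavity-characterisation}); the paper itself gives no separate proof of Theorem~\ref{thm:main2}, merely asserting that the methods of Theorem~\ref{thm:main1} and Corollary~\ref{cor:approx-omega} carry over. Where you differ is in how non-concavity of $v$ is converted into non-convexity of $\log u_\alpha$. The paper's route is a secant (three-point) inequality: from $\log u_\alpha=\alpha v+o(\alpha)$ in $C^{0,\beta}(\overline\Omega)$ (Proposition~\ref{propo:properties-of-Robineigenvalues}) and $t\,v(x)+(1-t)v(y)-v(tx+(1-t)y)=\varepsilon>0$, one gets $t\log u_\alpha(x)+(1-t)\log u_\alpha(y)-\log u_\alpha(tx+(1-t)y)\le\alpha\varepsilon+3\delta\abs{\alpha}<0$ for small $\alpha<0$, which needs only the uniform convergence the paper already supplies. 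Your route instead locates an interior point $p_0$ and direction $e_0$ with $D^2v|_{p_0}(e_0,e_0)>0$ (legitimate: Corollary~\ref{cor:concavity-characterisation} shows $v$ is not even semi-concave, so its Hessian is not bounded above in $\Omega$) and shows $D^2(\log u_\alpha)|_{p_0}(e_0,e_0)=\alpha\,D^2v|_{p_0}(e_0,e_0)+O(\alpha^2)<0$. This requires upgrading the $o(\alpha)$ remainder from $C^{0,\beta}$ to $C^{2}$ on a neighbourhood of $p_0$; your appeal to interior Schauder estimates is sound, since the remainder $w_\alpha=u_\alpha-1-\alpha v$ satisfies $\Delta w_\alpha=-\lambda_\alpha u_\alpha+\alpha\mu=o(\alpha)$ in $C^{0,\beta}_{\mathrm{loc}}$, but you should say this explicitly because Proposition~\ref{propo:properties-of-Robineigenvalues} does not state $C^2$ dependence. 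The trade-off: the paper's secant argument is more elementary and uses only what is already proved; your Hessian argument localises the failure of convexity at a single interior point, which makes the Hausdorff-stability step transparent, since only interior convergence of $\hat u_\alpha$ near $p_0$ is needed. Two minor cautions: the domain-convergence result (Proposition~\ref{propo:domain-perturbation}) is stated for $\alpha>0$ and its extension to small $\alpha<0$ is asserted rather than proved — your Mosco-convergence sketch is at the same level of detail as the paper's own gloss; and your claim of real-analytic dependence on $\alpha$ is more than is available or needed ($C^1$ dependence suffices). Also note a notational clash: you reuse $\mu$ both for the constant in~\eqref{eq:3} and for the Hessian eigenvalue at $p_0$.
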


Our approach to Theorem~\ref{thm:main1} is to treat the Robin
problem~\eqref{eigenvalue}-\eqref{eq:Robin} for small positive
$\alpha$ as a perturbation from the Neumann case $\alpha=0$. To be
more precise, let $v = \tfrac{d u_\alpha}{d\alpha}\big|_{{\alpha=0}}$.
Then we show in Section~\ref{section perturbation} that the function
$v$ satisfies
  \begin{equation}
  \label{eq:3}
  \begin{cases}
   \Delta v + \mu = 0 & \quad\text{in $\Omega$,}\\
    D_{\nu} v = -1 &  \quad\text{on $\partial\Omega$,}
  \end{cases}
\end{equation}
for some constant $\mu$.  The concavity properties of $u_\alpha$ for
small $\alpha$ relate directly to the concavity properties of $v$, so
we proceed to investigate the latter, in the particular case of
polyhedral domains.  We deduce Theorem~\ref{thm:main1} from 
the statement that the solution $v$ of \eqref{eq:3} on a convex polyhedral domain
$\Omega$ is concave precisely when
$\Omega$ is a product of circumsolids.

Our argument proceeds as follows: After some
preliminary material on the perturbation problem in Section
\ref{section perturbation}, we prove in Section \ref{toy problem
  section} the remarkable result that every $C^2$ solution of
\eqref{eq:3} on a polyhedral domain is a quadratic function.  In
section \ref{sec: quadratic} we relate this to concave solutions, by showing that any
concave solution of \eqref{eq:3} is $C^2$ up to the boundary.  
This involves expanding the solution in terms of homogeneous harmonic functions 
about any boundary point, and requires in particular the interesting
observation that any degree two homogeneous harmonic function
with bounded second derivatives and with Neumann boundary condition 
on a polyhedral cone in $\R^d$ is a quadratic function.

In Section \ref{sec:domain} we prove that those polyhedral domains on
which a quadratic function solves the equation \eqref{eq:3} are products of
circumsolids. This completes the preliminaries needed to prove our main
Theorem~\ref{thm:main1} in Section~\ref{sec:proof-main-results}.  In the last section, we
discuss 
some interesting observations and open problems.

\section{Motivation:  Log-concavity and the fundamental gap}
\label{Section gap} 

In the case of Dirichlet boundary data, the log-concavity of the first
eigenfunction is a key step in proving the lower bound of the gap
between the two smallest eigenvalues \cite{fundamental}.  In the case
that the first Robin eigenfunction is log-concave, then a similar
bound holds.  {Here we note that we can include a potential, and since we impose the strong  hypothesis that the first eigenfunction is log-concave, we do not need to assume that the potential is convex.}


\begin{theorem} \label{Robin gap} 
  Let $\lambda_0$ and $\lambda_1$ be the two smallest eigenvalues for the
  eigenvalue problem 
 \begin{equation}
  \label{eigenvalue}
    -\Delta u+Vu= \lambda u \text{ in $\Omega$,}
\end{equation} 
  with Robin boundary conditions~\eqref{eq:Robin}
  on a bounded convex domain $\Omega$ with diameter $D$, {and  $V\in L^1_{loc}(\Omega)$.} If the ground state $u_0$
  associated to $\lambda_0$ is log-concave, then
  \begin{equation}
    \label{eq:1gap}
    \lambda_1-\lambda_0\ge \frac{\pi^2}{D^2}.
  \end{equation}
\end{theorem}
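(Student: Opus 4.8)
\emph{Strategy and Step 1 (reduction to a drift--diffusion problem).} The plan is to follow the proof of the fundamental gap in \cite{fundamental}, using the assumed log-concavity of $u_0$ exactly where that argument uses convexity of the potential; the potential $V$ then cancels and plays no further role. Let $u_0$ and $u_1$ be eigenfunctions for $\lambda_0$ and $\lambda_1$, normalised so that $u_0>0$ on $\overline\Omega$ (positivity up to the boundary follows from the Hopf lemma and the Robin condition $D_\nu u_0=-\alpha u_0$) and $\int_\Omega u_0u_1\dx=0$, and set $w:=u_1/u_0$. Expanding $\Delta(wu_0)$ and using $-\Delta u_j+Vu_j=\lambda_j u_j$ for $j=0,1$, the terms containing $V$ cancel and one finds
\[
\Delta w + 2\nabla\log u_0\cdot\nabla w + (\lambda_1-\lambda_0)\,w = 0 \quad\text{in }\Omega .
\]
Combining the two Robin conditions $D_\nu u_j+\alpha u_j=0$ gives the homogeneous Neumann condition $D_\nu w=0$ on $\partial\Omega$, and orthogonality becomes $\int_\Omega w\,u_0^2\dx=0$, so $w$ is non-constant. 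Writing $Lf:=\Delta f+2\nabla\log u_0\cdot\nabla f$, which is self-adjoint on $L^2(\Omega;u_0^2\dx)$ with Neumann conditions, conjugation by $u_0$ shows that $\mu:=\lambda_1-\lambda_0$ is the first non-zero eigenvalue of $-L$ and that $Lw=-\mu w$.

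\emph{Step 2 (modulus of continuity along the $L$-heat flow).} Consider $z_t=Lz$ on $\Omega\times(0,\infty)$ with $D_\nu z=0$. The key claim, proved as in \cite{fundamental}, is: if $\psi_0\colon[-D/2,D/2]\to\R$ is odd, nondecreasing and concave on $[0,D/2]$, if $z(y,0)-z(x,0)\le 2\psi_0(\tfrac12|y-x|)$ for all $x,y\in\overline\Omega$, and if $\psi(s,t)$ solves the \emph{drift-free} one-dimensional heat equation $\psi_t=\psi_{ss}$ on $[-D/2,D/2]$ with $\psi_s(\pm D/2,\cdot)=0$ and $\psi(\cdot,0)=\psi_0$, then $z(y,t)-z(x,t)\le 2\psi(\tfrac12|y-x|,t)$ for all $x,y\in\overline\Omega$, $t>0$. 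This is obtained by the two-point maximum principle applied to $Z(x,y,t):=z(y,t)-z(x,t)-2\psi(\tfrac12|y-x|,t)$: at a first interior touching point with $x\ne y$ the second-order terms collapse to $2\psi_{ss}$ (using concavity of $\psi$), while the two drift terms combine to $2\psi'\bigl(\nabla\log u_0(y)-\nabla\log u_0(x)\bigr)\cdot\tfrac{y-x}{|y-x|}$, which is $\le0$ because $\psi'\ge0$ and, by concavity of $\log u_0$, the map $s\mapsto\nabla\log u_0\bigl(x+s\tfrac{y-x}{|y-x|}\bigr)\cdot\tfrac{y-x}{|y-x|}$ is nonincreasing; hence the drift term has a favourable sign and is discarded, which is exactly why the comparison model carries no drift. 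A touching point on $\partial\Omega$ is excluded using $D_\nu z=0$ and convexity of $\Omega$ (so that $(y-x)\cdot\nu(x)\le0$ there), while a touching point on the diagonal $x=y$ is excluded provided $\psi'(0,\cdot)$ dominates $\sup_{\overline\Omega}|\nabla z(\cdot,t)|$, which we arrange by taking $\psi_0'(0)$ large and using the companion gradient estimate.

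\emph{Step 3 (conclusion).} Since $u_0>0$ on $\overline\Omega$ the equation for $w$ is uniformly elliptic, so $w$ is Lipschitz on $\overline\Omega$; as $\Omega$ has diameter $D$ and $\sin\theta\ge\tfrac{2}{\pi}\theta$ on $[0,\tfrac{\pi}{2}]$, there is $c>0$ with $w(y)-w(x)\le 2c\sin\!\bigl(\tfrac{\pi|y-x|}{2D}\bigr)$ for all $x,y\in\overline\Omega$. Apply Step 2 with $z(\cdot,0)=w$ and $\psi_0(s):=c\sin(\pi s/D)$, which is odd, nondecreasing and concave on $[0,D/2]$ and dominates the gradient of $w$ once $c$ is large. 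Then $z(x,t)=e^{-\mu t}w(x)$, and since $\sin(\pi s/D)$ is the first non-constant Neumann eigenfunction on $[-D/2,D/2]$, $\psi(s,t)=c\,e^{-\pi^2t/D^2}\sin(\pi s/D)$. The estimate of Step 2 becomes
\[
e^{-\mu t}\bigl(w(y)-w(x)\bigr)\le 2c\,e^{-\pi^2t/D^2}\sin\!\bigl(\tfrac{\pi|y-x|}{2D}\bigr)\qquad\text{for all }x,y\in\overline\Omega,\ t>0 .
\]
Fixing $x,y$ with $w(y)>w(x)$ and letting $t\to\infty$ forces $e^{(\pi^2/D^2-\mu)t}$ to stay bounded, hence $\mu\ge\pi^2/D^2$, which is \eqref{eq:1gap}.

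\emph{Main obstacle.} The delicate part is making Step 2 rigorous: the two-point maximum principle must be run on the closed set $\overline\Omega\times\overline\Omega$, which means controlling the singularity of $|y-x|$ on the diagonal (via the companion gradient estimate), the boundary behaviour (via convexity of $\Omega$ together with $D_\nu z=0$), and --- since $V$ is assumed only in $L^1_{loc}(\Omega)$ --- the limited a priori regularity of $u_0$, $u_1$ and hence $w$; the last point can be handled by first treating smooth potentials and then passing to the limit in the eigenvalue problem. One can alternatively replace Step 3 by the observation that $u_0^2\dx$ is a log-concave measure on the convex body $\Omega$ of diameter $D$ and invoke the Payne--Weinberger inequality for log-concave weights, which yields the weighted Poincaré constant $\pi^2/D^2$ for functions with vanishing $u_0^2$-mean; but the parabolic argument is the one that parallels \cite{fundamental}.
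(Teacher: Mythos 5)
Your proposal is correct and follows essentially the same route as the paper: both pass to $e^{-(\lambda_1-\lambda_0)t}u_1/u_0$ as a solution of the drift heat equation $\partial_t v=\Delta v+2D\log u_0\cdot Dv$ with Neumann conditions, use log-concavity of $u_0$ to give the drift the modulus-of-contraction property $\omega\equiv 0$, and compare against $C e^{-\pi^2 t/D^2}\sin(\pi s/D)$ to force $\lambda_1-\lambda_0\ge \pi^2/D^2$. The only difference is presentational: the paper simply cites \cite[Theorem 2.1]{fundamental} for the modulus-of-continuity estimate, whereas you sketch the underlying two-point maximum principle (your Step 2), which is the content of that cited theorem.
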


\begin{proof}
  Let $u_0$ and $u_1$ be the eigenfunctions associated to $\lambda_0$ and
  $\lambda_1$ respectively.   Since $u_0$ is positive on $\Omega$, we can set
  \begin{displaymath}
    v(x,t):= \frac{e^{-\lambda_1 t} u_1(x)}{e^{-\lambda_0 t} u_0(x)}
  \end{displaymath}
  which solves the parabolic equation
  \begin{equation} 
    \label{parabolic equation} 
    \dfrac{\partial v}{\partial t}= \Delta v + 2 D\log u_0\cdot Dv\quad
    \text{on $\Omega\times (0,+\infty)$.}
  \end{equation}
  On the lateral boundary $\partial\Omega\times (0,+\infty)$, the normal derivative
  of $v$ disappears:
  \begin{displaymath}
    D_\nu v= \frac{e^{-\lambda_1 t} }{e^{-\lambda_0 t} }  \left( \frac{D_\nu u_1}{u_0}
      - \frac{u_1 D_\nu u_0}{u_0^2}\right) = v\left(-\alpha+\alpha\right)=0.
  \end{displaymath}
  By hypothesis, $u_{0}$ is log-concave, so
 the drift term in~\eqref{parabolic equation} given by  
 $X:= 2\, D\log u_0$
  satisfies the \emph{modulus of contraction} inequality
  \begin{displaymath}
    \Big( X(y,t)-X(x,t)\Big)\cdot \frac{y-x}{\abs{y-x}}\le 
    0
  \end{displaymath}
  corresponding to the modulus of contraction $\omega\equiv 0$. 
  Therefore by \cite[Theorem 2.1]{fundamental}, 
  for some large constant $C>0$, the function
  \begin{displaymath}
    \varphi(s,t):=Ce^{- \frac{\pi^2}{D^2} t}\sin\left(\frac{\pi
        s}{D}\right)
    \quad\text{for every $s\in [0,D/2]$, $t\ge 0$,}
  \end{displaymath}
  is a \emph{modulus of continuity} for $v$, that is, 
  \begin{displaymath}
    v(y,t)-v(x,t)\le 2\, \varphi\left(\tfrac{y-x}{\abs{y-x}},t\right) \quad
    \text{for every $x$, $y\in \overline{\Omega}$, $t\ge 0$,}
  \end{displaymath}
  where $\frac{\pi^2}{D^2}$ is the second (or the difference of the second
  and first) Neumann eigenvalue on the interval. From this, we can deduce that 
  \begin{displaymath}
    e^{- (\lambda_1-\lambda_0)
      t}\osc_{\overline{\Omega}}\left(\frac{u_{1}}{u_{0}}\right)
    \le C\,e^{- \frac{\pi^2}{D^2}t}\quad\text{for all $t\ge 0$,}
  \end{displaymath}
  which can only hold if inequality~\eqref{eq:1gap} holds.  This completes the
  proof of Theorem~\ref{Robin gap}.
\end{proof}

The argument given follows the approach used in  the Dirichlet case
\cite{fundamental}.  A similar result would follow using the gradient estimate approach of
\cite{SWYY,YuZhong}.  

The resulting estimate is sharp in the case $\alpha=0$, where it is the Payne-Weinberger
inequality for the first nontrivial Neumann eigenvalue \cite{MR0117419,ZhongYang}. Otherwise, it 
is not sharp, as can be seen from the one dimensional case, where the
eigenvalues can be computed. It is appealing to conjecture that the
sharp lower bound for given $\alpha$ and $D$ should correspond to the
gap for the corresponding one-dimensional problem, which would result
in an estimate which depends on $\alpha$ and increases from
$\frac{\pi^2}{D^2}$ to $\frac{3\pi^2}{D^2}$ as $\alpha$ increases from
$0$ towards infinity. However, our main theorem
(Theorem~\ref{thm:main1}, that the ground state is in general not
log-concave) means that a sharp result must necessarily be proved by
rather different means.

%
%
%
%

\section{The Robin eigenvalue problem and perturbations}

\label{section perturbation}
We recall some properties of the first Robin eigenvalue
$\lambda_{\alpha}$ 
and the corresponding eigenfunction $u_{\alpha}$.  These results are
quite well established \cite{MR1335452}, see also \cite[Theorem
1.3.1]{KennedyPhD2010} or~\cite{MR3238844}, however we include a proof
for the convenience of the reader.

\begin{proposition}
  \label{propo:properties-of-Robineigenvalues}
  Let $\Omega$ be a connected bounded Lipschitz domain in $\R^{d}$. Then
  \begin{enumerate}
  \item For every $\alpha\in \R$, there is a first Robin eigenvalue
    $\lambda_{\alpha}\in \R$ with a positive eigenfunction
    $u_{\alpha}\in H^{1}(\Omega)$.     \label{item i}
  \item For every $\alpha\in \R$, the first Robin eigenvalue $\lambda_{\alpha}$ is
    simple.
     \label{item ii}
  \item The function $\alpha\mapsto \lambda_{\alpha}$ is   \label{item iii}
    differentiable, with derivative given by
    \begin{equation}
      \label{eq:31}
      \dot{\lambda}_{\alpha}= \frac{\int_{\partial
          \Omega}u_{\alpha}^{2}\,\dH}{\int_{\Omega}u_{\alpha}^{2}\dx}\ge 0.
    \end{equation}
  \item\label{propo:properties-of-Robineigenvalues-claim4} The
    positive Robin eigenfunction $u_{\alpha}$ (normalised to have
    $\tfrac{1}{\abs{\Omega}}\int_\Omega u_\alpha^2\ \dx=1$) is
    $C^{1}$-dependent on $\alpha$ in $H^{1}(\Omega)$ and in
    $C^{0,\beta}(\Omega)$ for some $\beta\in (0,1)$. More precisely,
    $u_{\alpha}$ is continuously dependent on $\alpha$ in
    $H^{1}(\Omega)$ and in $C^{0,\beta}(\Omega)$, and if for
    $\alpha_{0}\in \R$, $v$ is the unique solution, orthogonal to
    $u_{\alpha_{0}}$ in $L^2(\Omega)$, of
     \begin{equation}
       \label{eq:50}
        \begin{cases}
         \Delta v + \lambda_{\alpha_{0}} v=-\dot{\lambda}_{\alpha_{0}} u_{\alpha_{0}}
         & \text{in $\Omega$,}\\
         \;D_{\nu}v +\alpha_{0} v=- u_{\alpha_{0}}& \text{on $\partial\Omega$,}\\
       \end{cases}
    \end{equation}
    then $u_{\alpha}= u_{\alpha_{0}} +
    v\,(\alpha-\alpha_{0})+o(\alpha-\alpha_{0})$ for every
    $\alpha$ in a neighbourhood of $\alpha_{0}$, where 
    $o(\alpha-\alpha_{0})/(\alpha-\alpha_{0}) \to 0$ in $H^{1}(\Omega)\cap
          C^{0,\beta}(\Omega)$ as $\alpha\to \alpha_{0}$.
\end{enumerate}
\end{proposition}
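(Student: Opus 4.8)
The plan is to establish the four assertions in order, treating the differentiability statement (3) and the $C^1$-dependence (4) together via a single implicit-function-theorem argument.

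\emph{Existence (1).} I would minimize the Rayleigh quotient $u\mapsto\big(\int_\Omega\abs{\nabla u}^2\,\dx+\alpha\int_{\partial\Omega}u^2\,\dH\big)\big/\int_\Omega u^2\,\dx$ over $H^1(\Omega)\setminus\{0\}$. Since $\Omega$ is bounded and Lipschitz, the embedding $H^1(\Omega)\hookrightarrow L^2(\Omega)$ and the trace $H^1(\Omega)\to L^2(\partial\Omega)$ are both compact, and for $\alpha<0$ the elementary trace inequality $\int_{\partial\Omega}u^2\,\dH\le\varepsilon\int_\Omega\abs{\nabla u}^2\,\dx+C_\varepsilon\int_\Omega u^2\,\dx$ shows that the numerator is bounded below and coercive on $H^1(\Omega)$ modulo lower-order terms (for $\alpha\ge0$ this is immediate). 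The direct method then produces a minimizer $u_\alpha\in H^1(\Omega)$ solving the eigenvalue problem weakly; since replacing $u_\alpha$ by $\abs{u_\alpha}$ does not increase the quotient we may take $u_\alpha\ge0$, and De Giorgi--Nash--Moser regularity together with Harnack's inequality gives $u_\alpha>0$ in $\Omega$.

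\emph{Simplicity (2).} The eigenspace of $\lambda_\alpha$ is finite-dimensional, and every eigenfunction for $\lambda_\alpha$ realizes the minimum of the Rayleigh quotient; applying the previous paragraph to such a function and to its negative, it cannot change sign, hence after normalization is strictly positive in $\Omega$. Two $L^2(\Omega)$-orthogonal functions cannot both be everywhere positive, so the eigenspace is one-dimensional.

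\emph{Differentiability (3) and $C^1$-dependence (4).} Fix $\alpha_0$, put $H=H^1(\Omega)$, and apply the implicit function theorem to the smooth map $\Phi:\R\times H\times\R\to H^*\times\R$ given by
$$\Phi(\alpha,u,\lambda)=\Big(\phi\mapsto\int_\Omega\nabla u\cdot\nabla\phi\,\dx+\alpha\int_{\partial\Omega}u\phi\,\dH-\lambda\int_\Omega u\phi\,\dx,\ \tfrac{1}{\abs{\Omega}}\int_\Omega u^2\,\dx-1\Big),$$
which vanishes at $(\alpha_0,u_{\alpha_0},\lambda_{\alpha_0})$ (with $u_{\alpha_0}$ normalized). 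The partial differential $D_{(u,\lambda)}\Phi$ there sends $(w,\sigma)$ to $\big(\phi\mapsto\int\nabla w\cdot\nabla\phi+\alpha_0\int_{\partial\Omega}w\phi-\lambda_{\alpha_0}\int w\phi-\sigma\int u_{\alpha_0}\phi,\ \tfrac{2}{\abs{\Omega}}\int u_{\alpha_0}w\big)$; its first component is a self-adjoint Fredholm operator of index zero whose kernel and cokernel are both spanned by $u_{\alpha_0}$ by simplicity (part (2)), and appending the $\sigma$-direction and the normalization exactly cancels this one-dimensional defect because $\int u_{\alpha_0}^2\,\dx\ne0$. Hence $D_{(u,\lambda)}\Phi$ is an isomorphism and the implicit function theorem yields a $C^1$ (indeed real-analytic, since $\Phi$ is polynomial) curve $\alpha\mapsto(u_\alpha,\lambda_\alpha)$; continuity keeps $u_\alpha$ positive, so it is the first eigenfunction. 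Differentiating the weak identity in $\alpha$, testing it with $u_\alpha$, and subtracting the original identity tested with $\dot u_\alpha$ leaves precisely $\int_{\partial\Omega}u_\alpha^2\,\dH=\dot\lambda_\alpha\int_\Omega u_\alpha^2\,\dx$, which is \eqref{eq:31}, manifestly nonnegative. Setting $v=\dot u_{\alpha_0}$, the same differentiated identity is the weak form of \eqref{eq:50}, while differentiating the normalization gives $\int_\Omega u_{\alpha_0}v\,\dx=0$; uniqueness of the orthogonal solution is the Fredholm alternative, with the solvability condition guaranteed exactly by \eqref{eq:31}. Continuous, and then $C^1$, dependence in $C^{0,\beta}(\Omega)$ follows by applying uniform up-to-the-boundary De Giorgi--Nash--Moser estimates to $u_\alpha$ and to the difference quotients $(u_\alpha-u_{\alpha_0})/(\alpha-\alpha_0)$.

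\emph{Main obstacle.} The only genuinely delicate points are expected to be (a) the boundary regularity: obtaining $C^{0,\beta}$-bounds for the Robin problem on a merely Lipschitz domain with constants stable under variation of $\alpha$, and (b) checking that $D_{(u,\lambda)}\Phi$ is onto --- equivalently, that the cokernel of the linearized eigenvalue operator is no larger than its kernel --- which is where the simplicity and self-adjointness from parts (1)--(2) must be used carefully. The remaining computations are routine bookkeeping.
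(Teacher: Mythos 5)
Your proposal is correct in substance but follows a genuinely different route from the paper. The paper renorms $H^{1}(\Omega)$ with $[\cdot,\cdot]_{M}$, represents the boundary and volume forms by operators $T$ and $B$ via Riesz--Fr\'echet, rewrites the Robin problem as an eigenvalue problem for the compact operator $T_{\alpha}=(I+\alpha T)^{-1}B$, and then imports existence from the spectral theorem for compact operators, simplicity from a cited result, and the entire differentiability/expansion statement from Kato's perturbation theory for the (affine, hence holomorphic) family $\alpha\mapsto T_{\alpha}$; the $C^{0,\beta}$ upgrade comes from a cited regularity theorem applied to $w=u_{\hat\alpha}-u_{\alpha}-v(\hat\alpha-\alpha)$. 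You instead minimise the Rayleigh quotient directly, prove simplicity by the classical sign argument, and obtain (3)--(4) from the implicit function theorem applied to $\Phi(\alpha,u,\lambda)$, with \eqref{eq:31} and \eqref{eq:50} extracted by differentiating the weak identity; your verification that $D_{(u,\lambda)}\Phi$ is an isomorphism (kernel and cokernel of the self-adjoint index-zero linearisation both spanned by $u_{\alpha_{0}}$, cancelled by the normalisation constraint) is exactly right. What the paper's route buys is that Kato's theorem delivers the expansion and its error term wholesale, and the compact-operator framework is reused verbatim in the subsequent domain-perturbation proposition; what your route buys is self-containedness and an explicit, checkable Fredholm computation. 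The one spot where you are slightly brisk is the assertion that ``continuity keeps $u_{\alpha}$ positive, so it is the first eigenfunction'': continuity of the IFT branch in $H^{1}$ alone does not preserve pointwise positivity, so you should either invoke the continuity of $\alpha\mapsto\lambda_{\alpha}$ (it is concave, being an infimum of affine functions of $\alpha$) together with the local isolation of $\lambda_{\alpha}$ from the second eigenvalue furnished by simplicity, or use the $C^{0,\beta}$-continuity you establish at the end to keep $u_{\alpha}>0$ on compact subsets. That is a small repair, not a gap in the method; the remaining steps, including the uniform De Giorgi--Nash--Moser estimates for the difference quotients (your obstacle (a), which is precisely what the paper's cited regularity theorem supplies), are as you describe.
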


\begin{proof}
  We begin by showing that for every $\alpha\in \R$, there is a first
  Robin eigenvalue $\lambda_{\alpha}\in \R$. For every $M>0$, let
  $[\cdot,\cdot]_{M}$ be given by
  \begin{displaymath}
    [u,v]_{M}:=\int_{\Omega}Du\, Dv\dx+M \int_{\Omega}u\,v\dx
  \end{displaymath}
  for every $u$, $v\in H^{1}(\Omega)$. Then $[\cdot,\cdot]_{M}$ is an
  inner product on $H^{1}(\Omega)$, which by the theorem of the
  bounded inverse \cite[Corollary~2.7]{MR2759829} is equivalent to
  the usual inner product on $H^{1}(\Omega)$. We denote by
  $\norm{\cdot}_{M}$ the norm on $H^{1}(\Omega)$ induced by
  $[\cdot,\cdot]_{M}$. For the rest of this proof, we denote by
  $H^{1}_{M}(\Omega)$ the Hilbert space $H^{1}(\Omega)$ equipped with
  the inner product $[\cdot,\cdot]_{M}$, and set
  \begin{displaymath}
    \tau(u,v)=\int_{\partial\Omega}u\,v\,\dH\quad\text{and}\\quadquad
    b(u,v)=\int_{\Omega}u\,v\,\dH
  \end{displaymath}
  for every $u$, $v\in H^{1}_{M}(\Omega)$. The bilinear forms $\tau$
  and $b$ on $H^{1}_{M}(\Omega)$ are bounded. Hence, by the Riesz-Fr\'echet
  representation theorem \cite[Theorem~5.5]{MR2759829}, for every
  $u\in H^{1}_{M}(\Omega)$, there are unique $Tu\in H^{1}_{M}(\Omega)$ and
  $Bu\in H^{1}_{M}(\Omega)$ satisfying
  \begin{equation}
    \label{eq:33}
    [Tu,v]_{M}=\tau(u,v)\quad\text{and}\quad
    [Bu,v]_{M}=b(u,v)
  \end{equation}
  $v\in H^{1}_{M}(\Omega)$. This defines bounded linear mappings $T$
  and $B$ on $H^{1}_{M}(\Omega)$.  Since $H^1(\Omega)$ is compactly
  embedded in $L^2(\Omega)$, $B$ is also a compact linear operator on
  $H^{1}_{M}(\Omega)$. We employ the two operators $\alpha T$ and $B$
  to characterise Robin eigenfunctions. First, recall that for every
  $\alpha\in \R$, $u\in H^{1}(\Omega)\setminus\{0\}$ is a Robin
  eigenfunction to eigenvalue $\lambda_{\alpha}$ if and only if $u$
  satisfies
  \begin{displaymath}
    \int_{\Omega}Du\,Dv\dx+\alpha \int_{\partial\Omega}u\,v\,\dH 
    = \lambda_{\alpha}\,\int_{\Omega}u\,v\dx
  \end{displaymath}
  for every $v\in H^{1}(\Omega)$, or equivalently for every $M>0$,
  \begin{displaymath}
    \int_{\Omega}Du\, Dv\dx+ M \int_{\Omega} u\,v\dx 
    +\int_{\partial\Omega}u\,v\,\dH 
    = (\lambda_{\alpha}+M)\,\int_{\Omega}u\,v\dx
  \end{displaymath}
  for every $v\in H^{1}(\Omega)$. Thus, if $I$ is the identity
  operator then the above is equivalent to
  \begin{displaymath}
    (I+\alpha T)u=(\lambda_{\alpha}+M)Bu\quad\text{in $H^{1}(\Omega)$.}
  \end{displaymath}
  By the continuity of the trace operator on $W^{1,1}(\Omega)$
  (cf~\cite[Theorem~15.8]{MR2527916}) and Young's inequality, we
  find that for all $\varepsilon>0$, there is 
  $C_{\varepsilon}>0$ such that
  \begin{displaymath}
    \norm{v}_{L^{2}(\partial\Omega)}^{2}\le \varepsilon\,\norm{D
      v}_{L^{2}(\Omega)}^{2}+C_{\varepsilon}\norm{v}_{L^{2}(\Omega)}^{2}
  \end{displaymath}
  for every $v\in H^{1}(\Omega)$ and so by choosing
  $M_{\varepsilon}=C_{\varepsilon}/\varepsilon$, we obtain that
  \begin{equation}
    \label{eq:36}
    \norm{v}_{L^{2}(\partial\Omega)}\le \sqrt{\varepsilon}\,\left(\norm{D
      v}_{L^{2}(\Omega)}^{2}+\tfrac{C_{\varepsilon}}{\varepsilon}\norm{v}_{L^{2}(\Omega)}^{2}\right)^{1/2}
  =\sqrt{\varepsilon}\,\norm{v}_{M_{\varepsilon}}
  \end{equation}
  for every $v\in H^{1}(\Omega)$. Applying Cauchy-Schwarz's
  inequality and~\eqref{eq:36}, we see
  \begin{displaymath}
    \norm{Tu}_{M_{\varepsilon}}^{2}  = [Tu,Tu]_{M_{\varepsilon}}
     =\tau(u,u)
    \le
      \norm{u}_{L^{2}(\partial\Omega)}\,\norm{Tu}_{L^{2}(\partial\Omega)}
    \le \varepsilon\,\norm{u}_{M_{\varepsilon}}\,\norm{Tu}_{M_{\varepsilon}},
  \end{displaymath}
  proving that for every $\varepsilon>0$, the operator $T$ on
  $H^{1}(\Omega), [\cdot,\cdot]_{M_{\varepsilon}}$ has operator norm
  $\norm{T}_{\mathcal{L}(H^{1}(\Omega))}\le \varepsilon$. Now, for
  given $\alpha\in \R$, we fix $\varepsilon>0$ such that
  $\abs{\alpha}<\tfrac{1}{\varepsilon}$. It follows that the operator
  $\alpha T$ on $(H^{1}(\Omega), [\cdot,\cdot]_{M_{\varepsilon}})$ has
  operator norm $\norm{-\alpha
    T}_{\mathcal{L}(H^{1}(\Omega))}<1$. Hence the operator
  $I+\alpha T$ is invertible on $H^{1}(\Omega)$, and so
  $u_{\alpha}\in H^{1}(\Omega)$ is a Robin eigenfunction with
  eigenvalue $\lambda_{\alpha}$ if and only if $u_{\alpha}$ is an
  eigenfunction of the operator $T_{\alpha}:= (I+\alpha T)^{-1}B$ for
  the eigenvalue $\lambda_{\alpha}+M_{\varepsilon}$. Note, for every
  $\alpha\in \R$, $T_{\alpha}$ is compact on $H^{1}(\Omega)$ since
  $(I+\alpha T)^{-1}$ is bounded and $B$ is compact on
  $H^{1}(\Omega)$. Therefore \cite[Theorems 6.6 \& 6.8]{MR2759829}, for
  every $\alpha\in \R$, the point spectrum $\sigma_{p}(T_{\alpha})$ of
  $T_{\alpha}$ consists of a sequence
  $(\Lambda_{\alpha}^{(j)})_{j\ge 1}$ of eigenvalues
  $\Lambda_{\alpha}^{(j)}\in \R\setminus\{0\}$ of finite algebraic and
  geometric mulitiplicity. In particular, this proves the existence of
  the first Robin eigenvalue
  $\lambda_{\alpha}:=\tfrac{1}{\Lambda_{\alpha}(1)}-M_{\varepsilon}\in
  \R$ for every $\alpha\in \R$ (for $\alpha=0$, $\lambda_{\alpha}=0$
  is the first Neumann eigenvalue). The eigenspace of
  $\lambda_{\alpha}$ is one-dimensional
  (see~\cite[Theorem~1.3.1]{KennedyPhD2010}) and admits a positive
  eigenfunction $u_{\alpha}\in H^{1}(\Omega)$ satisfying the
  normalisation $\int_{\Omega}u_{\alpha}^{2} \dx=1$. Now, the
  family $(T_{\alpha})_{\alpha\in \R}$ of compact operators
  $T_{\alpha}$ satisfies the hypotheses of~\cite[Theorem~2.6 of
  Chapter 8.2]{MR1335452}. Thus, 
  statement~\eqref{propo:properties-of-Robineigenvalues-claim4} with
  respect to the topology given by $H^{1}(\Omega)$ holds. Furthermore,
  if we apply~\cite[Theorem~3.14]{MR2812574} to the function
  $w:=u_{\hat{\alpha}}-u_{\alpha}-v(\hat{\alpha}-\alpha)$, then we see
  that statement~\eqref{propo:properties-of-Robineigenvalues-claim4}
  holds with respect to the topology given by
  $C^{0,\beta}(\Omega)$ for some $\beta\in (0,1)$.
\end{proof}

Next, we state a
convergence result on Robin problems on varying domains, which
is a slight improvement of~\cite[Corollary~3.4]{MR3566212}. For this,
we recall the definition of the \emph{Hausdorff complementary topology
on open sets} (cf~\cite[Section~2]{MR3566212}). For closed subsets $F_{1}$,
$F_{2}$ in $\R^{d}$, the \emph{Hausdorff metric} $d_{\mathcal{H}}$ is defined by
\begin{displaymath}
  d_{\mathcal{H}}(F_{1},F_{2})=\max\Big\{\sup_{x\in
    F_{1}}\textrm{dist}(x,F_{2}), \sup_{x\in F_{2}}\inf_{y\in
    F_{1}}\abs{x-y} \Big\},
\end{displaymath}
where $\textrm{dist}(x,F_{i}):=\inf_{y\in F_{i}}\abs{x-y}$ with the
standard conventions $\textrm{dist}(x,\emptyset)=+\infty$ so that
$d_{\mathcal{H}}(x,F)=0$ if $F=\emptyset$ and $d_{H}(\emptyset,F):=+\infty$ if
$F\not=\emptyset$. 
Let $\Omega^{c}=\R^d\setminus\Omega$ be the complement of $\Omega$.
 Now, a sequence $(\Omega_{n})_{n\ge 1}$ of open
sets $\Omega_{n}$ in $\R^{d}$  {converges to the open set $\Omega$
in $\R^{d}$ in the Hausdorff complementary topology},  which we write as
$\Omega_{n}\to \Omega$ in $\mathcal{H}^{c}$, if for every closed ball
$B$ in $\R^{d}$, one has that $d_{H}(B\cap \Omega_{n}^{c},B\cap
\Omega^{c})\to 0$ as $n\to \infty$.

\begin{proposition}
  \label{propo:domain-perturbation}
  For $d\ge 1$, let $D\subseteq \R^{d}$ be an open and bounded set,
  and let $\Omega$ and $\Omega_{n}$ be open domains with a Lipschitz
  continuous boundary satisfying $\Omega$,
  $\Omega_{n}\subset\subset D$. Let
  \begin{displaymath}
    \Omega_{n}\to \Omega\text{ in $\mathcal{H}^{c}$,}\quad
    \abs{\Omega_{n}}\to \abs{\Omega},\quad
    \mathcal{H}^{d-1}(\partial\Omega_{n})\to \mathcal{H}^{d-1}(\partial\Omega)
  \end{displaymath}
  as $n\to+\infty$. 
  Furthermore, for $\alpha>0$, let $\lambda_{\alpha,n}$ and
  $\lambda_{\alpha}$ be the first Robin eigenvalue on $\Omega_{n}$ and
  $\Omega$, and let $u_{\alpha,n}$ and $u_{\alpha}$ be the first
  positive Robin eigenfunctions with unit $L^{2}(\Omega)$-norm.  Then
  \begin{align}
    \label{eg:conv-eigenvalues}
    &\lambda_{\alpha,n}\to \lambda_{\alpha} \quad\text{as
                                               $n\to+\infty$,}\\ 
    \label{eq:conv-eigenfunctionH1}
    &u_{\alpha,n}\,\mathds{1}_{\Omega_{n}}\to
    u_{\alpha}\,\mathds{1}_{\Omega}\quad\text{in $H^{1}(D)$ as
                                     $n\to+\infty$.}
  \end{align}
   Furthermore, there are $\gamma\in (0,1)$ and $C>0$ such that
  \begin{equation}
    \label{eq:14}
    \norm{u_{\alpha,n}}_{C^{0,\gamma}(\overline{\Omega}_{n})}\le C\quad\text{for
    all $n\ge 1$,}
  \end{equation}
  and for every non-empty set
  $B\subseteq \bigcap_{n\ge n_{0}}\overline{\Omega}_{n}$,
  $n_{0}\ge 1$, and $0\le \hat{\gamma}<\gamma$, there is a subsequence
  $(u_{\alpha,\hat{n}})_{\hat{n}\ge 1}$ of
  $(u_{\alpha,\hat{n}})_{\hat{n}\ge 1}$ such that
  \begin{equation}
    \label{eq:16}
    u_{\alpha,\hat{n}}\to u_{\alpha}\quad\text{in
      $C^{0,\hat{\gamma}}(B)$ as $\hat{n}\to+\infty$.}
  \end{equation}
\end{proposition}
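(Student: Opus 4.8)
The plan is to run the classical Rayleigh-quotient argument for continuity of eigenvalues under domain perturbation (as in \cite{MR3566212}) and to upgrade it with uniform regularity so that the eigenfunctions converge locally uniformly and strongly in $L^2(D)$. \emph{Uniform bounds and \eqref{eq:14}.} Using the variational characterisation $\lambda_{\alpha,n}=\inf\{\int_{\Omega_n}\abs{Dw}^2+\alpha\int_{\partial\Omega_n}w^2\,\mathrm{d}\mathcal{H}^{d-1}:w\in H^1(\Omega_n),\ \int_{\Omega_n}w^2=1\}$ and testing with the constant function gives $0\le\lambda_{\alpha,n}\le\alpha\,\mathcal{H}^{d-1}(\partial\Omega_n)/\abs{\Omega_n}$, which is bounded by hypothesis; with the normalisation $\norm{u_{\alpha,n}}_{L^2(\Omega_n)}=1$ this yields that $\norm{Du_{\alpha,n}}_{L^2(\Omega_n)}^2+\alpha\norm{u_{\alpha,n}}_{L^2(\partial\Omega_n)}^2=\lambda_{\alpha,n}$ is bounded, so $(u_{\alpha,n})$ is bounded in $H^1(\Omega_n)$ uniformly in $n$. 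The key point --- and the main content of the improvement over \cite{MR3566212} --- is that the Sobolev trace inequality $\norm{w}_{L^2(\partial\Omega_n)}^2\le\varepsilon\norm{Dw}_{L^2(\Omega_n)}^2+C_\varepsilon\norm{w}_{L^2(\Omega_n)}^2$ holds with $C_\varepsilon$ independent of $n$; this is where the hypotheses $\Omega_n\subset\subset D$, $\mathcal{H}^{d-1}(\partial\Omega_n)\to\mathcal{H}^{d-1}(\partial\Omega)$ and $\abs{\Omega_n}\to\abs{\Omega}$ enter, to prevent degeneration of the embedding constants. Granting this, a Moser iteration for the Robin equation (cf.\ the regularity theory used in Proposition~\ref{propo:properties-of-Robineigenvalues}, e.g.\ \cite{MR2812574}) gives a uniform bound $\norm{u_{\alpha,n}}_{L^\infty(\Omega_n)}\le C$, and then the De Giorgi--Nash estimate up to the Lipschitz boundary produces \eqref{eq:14} with $\gamma$ and $C$ independent of $n$.

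\emph{Compactness and passage to the limit in the integrals.} Each $u_{\alpha,n}$ extends from $\overline\Omega_n$ to $\overline D$ with comparable $C^{0,\gamma}$-norm, so by \eqref{eq:14} and Arzel\`a--Ascoli a subsequence converges in $C^{0,\hat\gamma}(\overline D)$, for every $\hat\gamma<\gamma$, to some $u^*\in C^{0,\gamma}(\overline D)$. From $\mathcal{H}^c$-convergence one gets $\abs{\Omega\setminus\Omega_n}\le\abs{\{x\in\Omega:\,\mathrm{dist}(x,\partial\Omega)<\varepsilon_n\}}\to0$ for suitable $\varepsilon_n\to0$ (the Lipschitz boundary has finite Minkowski content), and then $\abs{\Omega_n}\to\abs{\Omega}$ gives $\abs{\Omega_n\triangle\Omega}\to0$; hence $u_{\alpha,n}\mathds{1}_{\Omega_n}\to u^*\mathds{1}_\Omega$ in $L^2(D)$ with $\int_\Omega(u^*)^2=1$. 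Testing the weak $L^2$-limit of the gradients against $\phi\in C_c^\infty(\Omega)$ (compact subsets of $\Omega$ are eventually in $\Omega_n$) and against $\phi\in C_c^\infty(D\setminus\overline\Omega)$ (using $\abs{\Omega_n\setminus\overline\Omega}\to0$) identifies $Du_{\alpha,n}\mathds{1}_{\Omega_n}\rightharpoonup Du^*\mathds{1}_\Omega$ weakly in $L^2(D)$; in particular $u^*|_\Omega\in H^1(\Omega)$. Independently, $\abs{\Omega_n\triangle\Omega}\to0$ together with $\mathcal{H}^{d-1}(\partial\Omega_n)\to\mathcal{H}^{d-1}(\partial\Omega)$ gives, by lower semicontinuity of perimeter and convergence of total mass, that the surface measures $\mathcal{H}^{d-1}\lfloor\partial\Omega_n$ converge weakly-$*$ to $\mathcal{H}^{d-1}\lfloor\partial\Omega$; combined with the uniform convergence $u_{\alpha,n}\to u^*$ this yields $\int_{\partial\Omega_n}u_{\alpha,n}^2\,\mathrm{d}\mathcal{H}^{d-1}\to\int_{\partial\Omega}(u^*)^2\,\mathrm{d}\mathcal{H}^{d-1}$, and likewise $\int_{\partial\Omega_n}\phi^2\,\mathrm{d}\mathcal{H}^{d-1}\to\int_{\partial\Omega}\phi^2\,\mathrm{d}\mathcal{H}^{d-1}$ for any fixed $\phi\in C(\overline D)$.

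\emph{Identification of the limit and conclusions.} For the upper bound, extend $u_\alpha$ to $\tilde u\in H^1(D)\cap C(\overline D)$ (approximating in $C^0$ if needed) and use $\tilde u|_{\Omega_n}$ as a competitor; the three limits above give $\int_{\Omega_n}\abs{D\tilde u}^2\to\int_\Omega\abs{Du_\alpha}^2$, $\int_{\Omega_n}\tilde u^2\to1$ and $\int_{\partial\Omega_n}\tilde u^2\,\mathrm{d}\mathcal{H}^{d-1}\to\int_{\partial\Omega}u_\alpha^2\,\mathrm{d}\mathcal{H}^{d-1}$, whence $\limsup_n\lambda_{\alpha,n}\le\lambda_\alpha$. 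For the lower bound, weak lower semicontinuity of the Dirichlet energy and the boundary convergence above give $\lambda_\alpha\le\int_\Omega\abs{Du^*}^2+\alpha\int_{\partial\Omega}(u^*)^2\,\mathrm{d}\mathcal{H}^{d-1}\le\liminf_n\bigl(\int_{\Omega_n}\abs{Du_{\alpha,n}}^2+\alpha\int_{\partial\Omega_n}u_{\alpha,n}^2\,\mathrm{d}\mathcal{H}^{d-1}\bigr)=\liminf_n\lambda_{\alpha,n}$, using $\int_\Omega(u^*)^2=1$. Hence $\lambda_{\alpha,n}\to\lambda_\alpha$ along the subsequence and $u^*$ attains the Rayleigh infimum, so it is a nonnegative first Robin eigenfunction of unit norm on $\Omega$; by simplicity of the first Robin eigenvalue (Proposition~\ref{propo:properties-of-Robineigenvalues}) and positivity, $u^*=u_\alpha$. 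Since every subsequence has a further subsequence with this same limit, the full sequences converge, giving \eqref{eg:conv-eigenvalues} and (from the $L^2$-convergence $u_{\alpha,n}\mathds{1}_{\Omega_n}\to u_\alpha\mathds{1}_\Omega$) half of \eqref{eq:conv-eigenfunctionH1}. For the gradients, $\int_D\abs{Du_{\alpha,n}\mathds{1}_{\Omega_n}}^2=\lambda_{\alpha,n}-\alpha\int_{\partial\Omega_n}u_{\alpha,n}^2\,\mathrm{d}\mathcal{H}^{d-1}\to\lambda_\alpha-\alpha\int_{\partial\Omega}u_\alpha^2\,\mathrm{d}\mathcal{H}^{d-1}=\int_\Omega\abs{Du_\alpha}^2$, so the weak convergence of gradients upgrades to strong convergence in $L^2(D)$, completing \eqref{eq:conv-eigenfunctionH1}. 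Finally, for any nonempty $B\subseteq\bigcap_{n\ge n_0}\overline\Omega_n$ and $\hat\gamma<\gamma$, restricting \eqref{eq:14} to $B$ and interpolating $C^{0,\gamma}$ bounds into $C^{0,\hat\gamma}$ compactness (Arzel\`a--Ascoli) yields a subsequence converging in $C^{0,\hat\gamma}(B)$, necessarily to $u_\alpha$ by the uniqueness just established; this is \eqref{eq:16}.

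The main obstacle is the first step: establishing the uniform trace inequality, and hence the uniform $L^\infty$ and $C^{0,\gamma}(\overline\Omega_n)$ bounds, for the whole family $\{\Omega_n\}$. This can genuinely fail under $\mathcal{H}^c$-convergence alone, since the trace and Sobolev constants of the $\Omega_n$ may blow up; the quantitative hypotheses $\abs{\Omega_n}\to\abs{\Omega}$ and $\mathcal{H}^{d-1}(\partial\Omega_n)\to\mathcal{H}^{d-1}(\partial\Omega)$ must therefore be used in an essential, not merely cosmetic, way --- which is precisely the refinement over \cite[Corollary~3.4]{MR3566212}. Everything else (the Rayleigh-quotient semicontinuity, the weak-$*$ convergence of boundary measures, and the soft compactness arguments) is standard once those uniform bounds are in hand.
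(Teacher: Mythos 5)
Your overall architecture --- Rayleigh-quotient upper and lower bounds, compactness, identification of the limit via simplicity of $\lambda_\alpha$, the subsequence trick, and upgrading weak to strong $H^1(D)$-convergence through convergence of the energies --- is sound, but it is a reconstruction of the content of the cited literature rather than of the paper's own proof: the paper's proof is essentially a citation, quoting \cite[Corollary~3.4, Lemmas~4.2 and~4.7]{MR3566212} for \eqref{eg:conv-eigenvalues}--\eqref{eq:conv-eigenfunctionH1} and \cite[Proposition~3.6]{MR2812574} for \eqref{eq:14} and \eqref{eq:16}. The soft parts of your argument (the $L^1$-convergence of characteristic functions from $\mathcal{H}^c$-convergence plus volume convergence, the weak-$*$ convergence of $\mathcal{H}^{d-1}\lfloor\partial\Omega_n$ from that plus convergence of perimeters, the McShane-type extension of H\"older functions, and the energy-convergence upgrade from weak to strong gradient convergence) are all fine once the uniform a priori estimates are in hand.

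The genuine gap is precisely the step you flag as ``the main obstacle'', and the mechanism you propose for it does not work under the stated hypotheses. The constant $C_\varepsilon$ in the trace inequality $\norm{w}_{L^2(\partial\Omega_n)}^2\le\varepsilon\norm{Dw}_{L^2(\Omega_n)}^2+C_\varepsilon\norm{w}_{L^2(\Omega_n)}^2$ depends on the Lipschitz character of $\Omega_n$ (number of boundary charts and their Lipschitz constants), and $\mathcal{H}^c$-convergence together with convergence of volumes and perimeters gives no uniform control of that character even though each $\Omega_n$ is individually Lipschitz; the same objection defeats the boundary De Giorgi--Nash estimate you invoke for \eqref{eq:14}. (In the paper's application the domains are convex, which would give uniform cone conditions, but the proposition is stated for general Lipschitz domains.) The tool that actually makes the estimates uniform is Maz'ya's inequality $\norm{w}_{L^{d/(d-1)}(\Omega_n)}\le C_d\bigl(\norm{Dw}_{L^1(\Omega_n)}+\norm{w}_{L^1(\partial\Omega_n)}\bigr)$, whose constant is purely dimensional; running the Moser-type iteration through this inequality produces $L^\infty$ and H\"older bounds depending only on $d$, $\alpha$, $|\Omega_n|$ and $\mathcal{H}^{d-1}(\partial\Omega_n)$, which is exactly how the quantitative hypotheses enter in Daners' theory of Robin problems on rough domains and in the free-discontinuity framework of \cite{MR3566212}. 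Without substituting that tool for the standard $\varepsilon$-trace inequality, your first step --- and hence \eqref{eq:14}, the local uniform convergence, the convergence of the boundary integrals, and the identification of the limit --- remains unproven.
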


\begin{proof}
  Under the hypotheses of this Proposition,
  \cite[Corollary~3.4]{MR3566212} 
  implies~\eqref{eq:31}.
  Thus,
  \begin{equation}
    \label{eq:15lim}
    \begin{split}
      \lim_{n\to +\infty}\norm{D
        u_{\alpha,n}}_{L^{2}(\Omega_{n};\R^{d})}^{2}+\alpha
      \norm{u_{\alpha,n}}_{L^{2}(\partial\Omega_{n})}^{2}
      & =\lim_{n\to +\infty}\lambda_{\alpha,n}\\
      & = \lambda_{\alpha}\\
      & =\norm{D
        u_{\alpha}}_{L^{2}(\Omega;\R^{d})}^{2}+\alpha\norm{u_{\alpha}}_{L^{2}(\partial\Omega)}^{2}.
    \end{split}
  \end{equation}
  Since $\Omega_{n}\subseteq D$ for all $n\ge 1$, we can conclude from
  the limit~\eqref{eq:15lim} and by \cite[Lemma~4.2 and
  Lemma~4.7]{MR3566212} that limit~\eqref{eq:conv-eigenfunctionH1}
  holds strongly in $L^{2}(D)$ and weakly in $H^{1}(D)$. Moreover, by
  limit~\eqref{eq:15lim} and since
  $D u_{\alpha,n}\,\mathds{1}_{\Omega_{n}}\to D
  u_{\alpha}\,\mathds{1}_{\Omega}$ weakly in $L^{2}(D;R^{d})$ as
  $n\to+\infty$ and by~\cite[Lemma~4.7]{MR3566212}, it follows that
  limit~\eqref{eq:conv-eigenfunctionH1} holds in $H^{1}(D)$. Finally,
  bound~\eqref{eq:14} and limit~\eqref{eq:16} in
  $C^{0,\hat{\gamma}}(B)$ for every non-empty set
  $B\subseteq \bigcap_{n\ge 1}\overline{\Omega}_{n}$ and
  $0\le \hat{\gamma}<\gamma$ are consequences
  from~\cite[Proposition~3.6]{MR2812574}.
\end{proof}

%
%
%
%

\section{Regular solutions are quadratic}

\label{toy problem section}

When $\alpha=0$, the perturbation problem \eqref{eq:50} reduces to
equation \eqref{eq:3}, 
with the constant $\mu$ 
computed by integrating the first equation over $\Omega$ and
applying the boundary condition, yielding
$\mu= {\mathcal H}^{d-1}(\partial\Omega)/\mathcal{H}^d(\Omega)$.

In this and the next several sections we consider a class of problems
generalising \eqref{eq:3}, under the assumption that $\Omega$ is a
convex polyhedral domain in $\R^{d}$ for $d\ge 2$. More precisely,
this means that $\Omega$ is
the intersection of finitely many open
half-spaces:
\begin{displaymath}
\Omega=\bigcap_{i=1}^m\Big\{x\in \R^{d}\,\Big\vert\, x\cdot\nu_i< b_i\Big\},
\end{displaymath}
and we can assume without loss of generality that the description is
minimal, meaning that omitting any one of the half-spaces from the
intersection results in a strictly larger set. 
In this case $\Omega$
has $m$ faces
\begin{displaymath}
  \Sigma_i=\Big\{x\in\overline\Omega\,\Big\vert\ \nu_i\cdot x=b_i\Big\}
\end{displaymath}
for $i=1,\dots,m$, each of which is itself a convex polyhedral subset
of the affine subspace $\{x\in\R^d\,\vert\ \nu_i\cdot x=b_i\}$.  
The outer unit normal to $\Omega$ on the face $\Sigma_i$ is $\nu_i$.

For an open convex set $\Omega$ in $\R^d$, the \emph{tangent cone}
$\Gamma_x$ to $\Omega$ at a point $x\in\overline{\Omega}$ is defined
by
\begin{displaymath}
  \Gamma_x = \Big\{r(y-x)\;\Big\vert\; y\in\Omega,\ r>0\Big\} 
  = \bigcup_{r>0}r(\Omega-x).
\end{displaymath}
If $x$ is in $\Omega$, the tangent cone $\Gamma_x$ is simply
$\mathbb{R}^d$. In the case of polyhedral domains, the tangent cone
can be described as follows: For each point $x\in\overline{\Omega}$,
let
\begin{equation}
  \label{eq:9}
  \mathcal{I}(x) := \Big\{i\in\{1,\dots,m\}\;\Big\vert\; x\cdot
  \nu_i=b_i\Big\}
\end{equation}
index the faces touching $x$, then the tagent cone
  \begin{displaymath}
\Gamma_{x}=
\bigcap_{i\in{\mathcal I}(x)}\Big\{y\;\Big\vert\; y\cdot\nu_i< 0\Big\}.
\end{displaymath}
This is a cone over the subset $A_{x} = \Gamma_{x}\cap \S^{d-1}$ of
the unit sphere.  In particular, $\Gamma_x$ is 
 the intersection of finitely many half-spaces with the origin in their
common boundary. We call such a set a \emph{polyhedral cone}.
\begin{remark}
  \label{rem:1}
  A special feature of polyhedral domains is that for every
  $x\in\overline{\Omega}$ there exists $r>0$ such that
  $B_r(x)\cap\Omega = x+\left(B_r(0)\cap\Gamma_{x}\right)$, so that
  $\Omega$ is a cone near $x$.
\end{remark}

We now establish a version of the strong maximum principle on general open
cones $\Gamma$ with a Lipschitz boundary.   In this paper, our application of
Proposition~\ref{prop:cone-MP} remains on cones with a polyhedral structure.

\begin{proposition}\label{prop:cone-MP}
  Let $\Gamma$ be an open cone with Lipschitz boundary and vertex at
  the origin in $\R^d$, and $r>0$.  Let $w\in H^{1}(B_r(0)\cap\Gamma)$ be a
  weak solution of
  \begin{equation}\label{eq:weak-harmonic-Neumann}
    \begin{cases}
      \Delta w = 0&\text{on}\ B_r(0)\cap \Gamma,\\
      D_\nu w = 0&\text{on}\ B_{r}(0)\cap \partial\Gamma.
    \end{cases}
  \end{equation}
  If $w(0)=0$ and $w\leq 0$ on $B_r(0)\cap \Gamma$, then $w\equiv 0$
  on $B_r(0)\cap \Gamma$.
\end{proposition}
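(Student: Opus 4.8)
The plan is to establish the strong maximum principle by a reflection-free argument that works directly on the cone, combining a mean-value-type inequality with the Neumann condition. The key point is that a weak solution of \eqref{eq:weak-harmonic-Neumann} satisfies a sub-mean-value property for balls centred at the origin, adapted to the cone geometry. First I would observe that, by the interior elliptic regularity, $w$ is smooth and harmonic in the interior $B_r(0)\cap\Gamma$, and that the Neumann condition on the smooth part of $\partial\Gamma$ holds classically; the vertex and the edges of lower dimension form a set of zero capacity, so the weak formulation is not affected by them. Thus we may integrate by parts freely against test functions that do not vanish on $\partial\Gamma$.

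The main step is to derive a monotonicity formula for the quantity
\begin{displaymath}
  \Theta(\rho) := \frac{1}{\mathcal H^{d-1}(\partial B_\rho(0)\cap\Gamma)}\int_{\partial B_\rho(0)\cap\Gamma} w\,\dH,
  \qquad 0<\rho<r.
\end{displaymath}
Writing $w$ in polar coordinates and using that $\Delta w=0$ together with the Neumann condition $D_\nu w=0$ on $\partial\Gamma$ (so that no boundary term appears when integrating the angular Laplacian over $A_x = \Gamma\cap\S^{d-1}$), one gets that $\rho\mapsto\Theta(\rho)$ satisfies the same ODE identity as in the classical mean-value property, namely $\frac{d}{d\rho}\Theta(\rho) = \rho^{1-d}\,c_d^{-1}\int_{B_\rho(0)\cap\Gamma}\Delta w = 0$, so $\Theta$ is constant. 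Hence $\Theta(\rho)\equiv\Theta(0^+) = w(0) = 0$ for all $\rho\in(0,r)$, using continuity of $w$ at $0$ (which follows from the Hölder regularity up to the vertex, e.g. by the De Giorgi--Nash--Moser estimates for the Neumann problem on Lipschitz cones, or directly since $w\in H^1$ and is harmonic). Since $w\le 0$ on $B_r(0)\cap\Gamma$ and its spherical average over $\partial B_\rho(0)\cap\Gamma$ vanishes for every $\rho$, we conclude $w\equiv 0$ on $B_r(0)\cap\Gamma$.

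I expect the main obstacle to be making the integration-by-parts step fully rigorous near the singular set of $\partial\Gamma$: one must verify that the flux of $w$ through the cone boundary genuinely vanishes in the weak sense despite the edges and vertex, and that the boundary term $\int_{\partial B_\rho\cap\partial\Gamma}$ arising when integrating the angular part by parts is controlled (it vanishes by the Neumann condition, but one needs enough regularity of $w$ up to the relatively open smooth part of $\partial\Gamma$, supplied by boundary elliptic regularity on Lipschitz domains). An alternative, perhaps cleaner route that avoids the monotonicity computation: use the weak Harnack inequality for non-negative supersolutions of a uniformly elliptic equation on the Lipschitz domain $B_r(0)\cap\Gamma$ with Neumann data — applied to $-w\ge 0$ — which gives $\sup_{B_{r/2}(0)\cap\Gamma}(-w)\le C\,\big(-w(0)\big)=0$ after using continuity at the vertex, whence $w\equiv 0$ on $B_{r/2}(0)\cap\Gamma$, and then propagate to all of $B_r(0)\cap\Gamma$ by the interior strong maximum principle and unique continuation along chains of interior balls. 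Either way, the crux is the boundary regularity/weak Harnack input on a Lipschitz cone, which is standard but must be cited carefully.
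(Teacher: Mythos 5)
Your proposal is correct, but it takes a genuinely different and in fact more elementary route than the paper. The paper expands $w$ in Neumann eigenfunctions of the cross-section $A=\Gamma\cap\S^{d-1}$ (its Lemma~\ref{lem:conv-fi-series} and Proposition~\ref{lem:series-expansion}), deduces $f_0=0$ from $w(0)=0$, and then derives a contradiction from the fact that the leading-order angular part would be a non-positive, non-constant Neumann eigenfunction on the connected set $A$ — impossible since such eigenfunctions change sign. Your argument uses only the zeroth mode: the Neumann condition forces the average of $w$ over $\partial B_\rho(0)\cap\Gamma$ to be independent of $\rho$, hence equal to $w(0)=0$, and $w\le 0$ then gives $w\equiv 0$. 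This is all the information the sign hypothesis requires, and it avoids any appeal to the discrete spectrum of $A$; what it buys is brevity here, while the paper's expansion machinery is reused heavily later (e.g.\ in Lemma~\ref{lem:twicediff}), so nothing is wasted on their side. Two refinements to make your version airtight. First, the integration by parts you worry about near the edges and the vertex is best done entirely in the weak formulation: testing $\int_{B_r(0)\cap\Gamma}Dw\cdot D\varphi=0$ against radial functions $\varphi=\eta(|x|)$ with $\eta\in C^\infty_c(0,r)$ gives directly that $\rho^{d-1}\frac{d}{d\rho}\int_A w(\rho z)\,d\mathcal H^{d-1}(z)$ is constant in $\rho$, and the finiteness of $\int |Dw|^2$ forces that constant to vanish for $d\ge 2$; this removes any need for classical boundary regularity on the smooth part of $\partial\Gamma$. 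Second, your parenthetical ``directly since $w\in H^1$ and is harmonic'' is not by itself a proof of continuity at the vertex — an $H^1$ harmonic function need not extend continuously to a boundary point — so you do need the Neumann/Lipschitz structure, either via a De~Giorgi--Nash--Moser estimate up to the Neumann boundary or via the paper's own $C^{\gamma_r}$ convergence of the series; the same input is implicitly used by the paper to make sense of the hypothesis $w(0)=0$. Your alternative route via the boundary Harnack inequality for the non-negative Neumann-harmonic function $-w$, followed by interior unique continuation, is also sound subject to the same regularity citation.
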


By scaling it suffices to consider the case $r=1$.
We begin by setting $A=S^{d-1}\cap \Gamma$. Then the set
$B_1(0)\cap\Gamma$ can be described by the \emph{polar coordinate} map
\begin{displaymath}
  (r,z)\in(0,1)\times A\mapsto rz\in B_1(0)\cap\Gamma.
\end{displaymath}
Since the set $A$ is a Lipschitz domain in $\S^{d-1}$, there is a
complete $L^2(A)$-ortho\-normal set of eigenfunctions
$\{\varphi_i\}_{i=0}^\infty$ for the Neumann Laplacian on $A$, with
associated eigenvalues $\lambda_i$ which we arrange in non-decreasing
order with $\lambda_0=0$. Let $w\in H^{1}(B_1(0)\cap\Gamma)$. Then for
every $r\in (0,1)$, $w\in H^{1}(\ B_r(0)\cap \Gamma)$, the trace
$w(r,\cdot)$ of $w$ exists in $L^{2}(A)$. Using this, we see that $w$
can be rewritten in polar coordinates as
  \begin{equation}
    \label{eq:7series}
     w(r,z) = \sum_{i=0}^{\infty} w_i(r)\varphi_i(z)\quad
     \text{for every $(r,z)\in[0,1)\times A$,}
  \end{equation}
  where for every $r\in (0,1)$ and $i\ge 1$,
  \begin{equation}
    \label{eq:7coef}
     w_{i}(r):=(w(r,\cdot), \varphi_{i})_{L^{2}(A)}
  \end{equation}
is the $i$th Fourier coefficient of the trace of $w(r,\cdot)$
  in $L^{2}(A)$. In order to continuous the proof of
  Proposition~\ref{prop:cone-MP}, we need to establish first some more
  properties of the series decomposition~\eqref{eq:7series} of the
  weak solution $w$ of~\eqref{eq:weak-harmonic-Neumann}. This is done
  in the next two statements.

 \begin{lemma}
   \label{lem:conv-fi-series}
   Let $\Gamma$ be an open cone with Lipschitz boundary and vertex at
  the origin in $\R^d$, and let $w\in H^{1}(B_1(0)\cap\Gamma)$ be a
  weak solution of Neumann problem~\eqref{eq:weak-harmonic-Neumann}. 
    Then for all $i\ge 1$,
   \begin{equation}
     \label{eq:7coef-of-f}
     f_i:=\lim_{r\to 1}w_i(r)
   \end{equation}
   exists, and furthermore the series
   $\sum_{i=0}^{\infty} \sqrt{1+\lambda_i}f_i^2$ converges with
   \begin{equation}
     \label{eq:7-series-fi}
     \sum_{i=0}^{\infty} \sqrt{1+\lambda_i}f_i^2 \leq C\,\norm{w}_{H^1(B_1(0)\cap\Gamma)}^2.
   \end{equation}
 \end{lemma}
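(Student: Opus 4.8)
The plan is to separate variables, reducing \eqref{eq:weak-harmonic-Neumann} to a family of Euler equations for the radial Fourier coefficients $w_i$ from \eqref{eq:7coef}. Writing the volume element of $B_1(0)\cap\Gamma$ in polar coordinates as $r^{d-1}\,\dr\,\mathrm{d}z$, where $\mathrm{d}z$ is the surface measure on $A=\S^{d-1}\cap\Gamma$, Fubini's theorem shows that $w(r,\cdot)\in H^1(A)$ and $\partial_r w(r,\cdot)\in L^2(A)$ for almost every $r\in(0,1)$, that $w_i'(r)=(\partial_r w(r,\cdot),\varphi_i)_{L^2(A)}$, and — expanding the $L^2(A)$-norm of $\partial_r w(r,\cdot)$ and the $H^1(A)$-norm of $w(r,\cdot)$ in the orthonormal basis $\{\varphi_i\}$ of Neumann eigenfunctions — that
\begin{displaymath}
  \norm{w}_{H^1(B_1(0)\cap\Gamma)}^2=\sum_{i\ge0} E_i,\qquad
  E_i:=\int_0^1\!\Big(\abs{w_i'}^2+\tfrac{\lambda_i}{r^2}\abs{w_i}^2+\abs{w_i}^2\Big)r^{d-1}\,\dr .
\end{displaymath}
Thus it suffices to exhibit a constant $c=c(d)>0$ with $\sqrt{1+\lambda_i}\,f_i^2\le c^{-1}E_i$ for every $i$; in the course of doing this we will also see that the limit $f_i$ in \eqref{eq:7coef-of-f} exists.

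Next I would show that each $w_i$ solves an ordinary differential equation. Inserting the test function $\psi(r)\varphi_i(z)$ with $\psi\in C^\infty_c((0,1))$ into the weak formulation of \eqref{eq:weak-harmonic-Neumann} (note that such test functions need not vanish on $\partial\Gamma$, which is what encodes the Neumann condition), and integrating by parts on $A$ using $-\Delta_A\varphi_i=\lambda_i\varphi_i$ together with $D_{\nu}\varphi_i=0$ on $\partial A$, one obtains $\int_0^1\big(w_i'\psi'+\tfrac{\lambda_i}{r^2}w_i\psi\big)r^{d-1}\,\dr=0$ for all such $\psi$. Hence $(r^{d-1}w_i')'=\lambda_i r^{d-3}w_i$ on $(0,1)$ in the distributional sense, and by interior regularity for ordinary differential equations $w_i\in C^\infty((0,1))$ with
\begin{displaymath}
  w_i''+\tfrac{d-1}{r}\,w_i'-\tfrac{\lambda_i}{r^2}\,w_i=0\qquad\text{on }(0,1).
\end{displaymath}
The solutions of this Euler equation are spanned by $r^{\gamma_i^+}$ and $r^{\gamma_i^-}$, where $\gamma_i^\pm=\tfrac12\big(2-d\pm\sqrt{(d-2)^2+4\lambda_i}\big)$ are the roots of $\gamma(\gamma+d-2)=\lambda_i$ (the second solution being $\log r$ in the degenerate case $\lambda_i=0$, $d=2$). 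Since $d\ge2$ forces $\gamma_i^-\le 2-d\le 1-\tfrac d2$, a direct computation shows that the radial derivative of the second solution is not square-integrable against $r^{d-1}\,\dr$ near $r=0$, so it has $E_i=+\infty$. As $E_i\le\norm{w}_{H^1(B_1(0)\cap\Gamma)}^2<\infty$, its coefficient must vanish, and therefore $w_i(r)=f_i\,r^{\gamma_i}$ on $(0,1)$ with $\gamma_i:=\gamma_i^+\ge0$ (so $w_i$ is constant exactly when $\lambda_i=0$); in particular $\lim_{r\to1}w_i(r)=f_i$ exists.

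It remains to insert $w_i(r)=f_ir^{\gamma_i}$ into $E_i$. Using that $2\gamma_i+d-2=\sqrt{(d-2)^2+4\lambda_i}>0$ when $\lambda_i>0$ and the indicial relation $\gamma_i^2+(d-2)\gamma_i=\lambda_i$, which yields the identity $\gamma_i^2+\lambda_i=\gamma_i(2\gamma_i+d-2)$, a direct evaluation of the three integrals (and a separate, trivial evaluation when $\lambda_i=0$) gives
\begin{displaymath}
  E_i=f_i^2\left(\frac{\gamma_i^2+\lambda_i}{2\gamma_i+d-2}+\frac{1}{2\gamma_i+d}\right)=f_i^2\left(\gamma_i+\frac{1}{2\gamma_i+d}\right)\qquad\text{for every }i.
\end{displaymath}
Since $1+\lambda_i=1+\gamma_i(\gamma_i+d-2)$, the desired estimate reduces to the lower bound
\begin{displaymath}
  \frac{\gamma+\tfrac{1}{2\gamma+d}}{\sqrt{1+\gamma(\gamma+d-2)}}\ \ge\ c\qquad\text{for all }\gamma\ge0,
\end{displaymath}
which holds for a suitable $c=c(d)>0$ because the left-hand side is continuous and strictly positive on $[0,\infty)$, tending to $\tfrac1d$ as $\gamma\to0$ and to $1$ as $\gamma\to\infty$. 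Summing $\sqrt{1+\lambda_i}\,f_i^2\le c^{-1}E_i$ over $i$ and using $\sum_i E_i=\norm{w}_{H^1(B_1(0)\cap\Gamma)}^2$ then yields \eqref{eq:7-series-fi} with $C=c^{-1}$. The main technical obstacle is justifying the polar-coordinate separation of variables rigorously — in particular the almost-everywhere-in-$r$ validity of the $H^1(A)$ expansion of $w(r,\cdot)$ and the termwise differentiation $w_i'(r)=(\partial_r w(r,\cdot),\varphi_i)_{L^2(A)}$ — since these underpin both the Parseval identity for $\norm{w}_{H^1}$ and the reduction to the Euler equation.
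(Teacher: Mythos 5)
Your proof is correct, but it takes a genuinely different route from the paper's. Both arguments start from the same Parseval identity \eqref{eq:H1norm} for $\norm{w}_{H^1(B_1(0)\cap\Gamma)}^2$, but the paper then proceeds ``softly'': it never identifies the individual coefficients $w_i$, instead setting $g(r)=\sum_i\sqrt{1+\lambda_i}\,w_i(r)^2$, bounding $\abs{g'}$ by the integrand of \eqref{eq:H1norm}, deducing from the absolute continuity of the tail of the $H^1$-integral that $g$ (and each summand) satisfies the Cauchy criterion as $r\to1^-$ — which gives the limits \eqref{eq:7coef-of-f} — and finally controlling $g$ near $r=1$ by comparison with a mean value $g(r_\delta)\le C\norm{w}_{H^1(B_1(0)\cap\Gamma)}^2$. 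You instead test the weak formulation of \eqref{eq:weak-harmonic-Neumann} against $\psi(r)\varphi_i(z)$, obtain the Euler equation for each mode, discard the singular branch $r^{\gamma_i^-}$ (or $\log r$ in the degenerate case) because its energy near $r=0$ is infinite while $E_i\le\norm{w}_{H^1(B_1(0)\cap\Gamma)}^2<\infty$, and conclude $w_i(r)=f_i r^{\beta_i}$ exactly, after which \eqref{eq:7-series-fi} reduces to an explicit elementary inequality in $\gamma=\beta_i\ge0$; your exclusion of the singular branch and the evaluation of $E_i$ both check out, including the borderline cases $\lambda_i=0$, $d=2$. Your route is more computational but buys more: it essentially establishes the radial form of each coefficient in the expansion \eqref{eq:hhdecomp} of Proposition~\ref{lem:series-expansion} as a byproduct, whereas the paper proves that proposition separately afterwards by building $\sum_i f_i r^{\beta_i}\varphi_i$ in $H^1$ and matching traces. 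The two technical points you rightly flag — the a.e.-in-$r$ validity of the $H^1(A)$ expansion of $w(r,\cdot)$ and the identity $w_i'(r)=(\partial_r w(r,\cdot),\varphi_i)_{L^2(A)}$ — are exactly the Fubini-type facts that the paper's own derivation of \eqref{eq:H1norm} already relies on, so no new gap is introduced.
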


 \begin{proof}
   The $H^1(B_1(0)\cap\Gamma)$-norm of
   $w$ can be written as
  \begin{equation}\label{eq:H1norm}
     \begin{split}
       \norm{w}_{H^{1}(B_1(0)\cap\Gamma)}^2 &= \int_{B_1(0)\cap\Gamma}w^2+|Dw|^2\,d{\mathcal H}^d\\
       &=\sum_{i,j=0}^{\infty}\left\{\int_0^1
       \left(w_iw_j+w_i'w_j'\right)r^{d-1}\,dr 
       \int_A\varphi_i\varphi_j d{\mathcal H}^{d-1}(z)\right\}\\
       &\qquad\null+ \sum_{i,j=1}^{\infty}\left\{
         \int_0^1 w_iw_j r^{d-3} dr \int_A D\varphi_i\cdot D\varphi_j\,d{\mathcal H}^{d-1}(z)\right\}\\
       &=\sum_{i=0}^{\infty} \int_0^1
       \left(\Big(1+\frac{\lambda_i}{r^2}\Big)\, w_i^2 + (w_i')^2
         \right)r^{d-1}\,dr,
     \end{split}
 \end{equation}
 where $w_i'(r)=\frac{dw_i}{dr}(r)=\int_A\nabla w(r,z)\cdot
 z\,\varphi_{i}(z)\,d\mathcal{H}^{d-1}(z)$ 
and
   \begin{equation*}
 \int_A D\varphi_i\cdot D\varphi_j \,d{\mathcal H}^{d-1}
   =-\int_A \Delta\varphi_i\varphi_j\,d{\mathcal H}^{d-1}
   =\lambda_i\int_A\varphi_i\varphi_j\,d{\mathcal H}^{d-1}=\lambda_i\delta_{ij}.
 \end{equation*}

Let $\delta\in (0,1)$ and consider the mapping $g : [\delta,1) \to L^2(A)$ defined by
\begin{displaymath}
  g(r)=\sum_{i=0}^{\infty}\sqrt{1+\lambda_i}\,w_{i}^{2}(r)\quad\text{for every $r\in [\delta,1)$.}
\end{displaymath}
Then
\begin{displaymath}
\labs{\frac{d}{dr}g(r)} = \labs{2 \sum_{i=0}^{\infty}\sqrt{1+\lambda_i}\,w_iw_i'}
\leq \sum_{i=0}^{\infty}w_i^2 \Big(1+\frac{\lambda_i}{r^{2}}\Big) + \sum_{i=0}^{\infty}\left(w_i'\right)^2
\end{displaymath}
and so
\begin{equation}
  \label{eq:7g}
  \begin{split}
    \abs{g(r_2)-g(r_1)}
    &\le \int_{r_1}^{r_2}\labs{\frac{d}{dr}g(r)}\,\dr\\
    & \le \int_{r_1}^{r_2}\sum_{i=0}^{\infty} 
    (1+\frac{\lambda_i}{r^{2}})w_i^2 + \sum_{i=0}^{\infty}\left(w_i'\right)^2\,dr\\
    &\le
    C_{\delta}\sum_{i=0}^{\infty}\int_{r_1}^{r_2}\left((1+\frac{\lambda_i}{r^2})w_i^2
      + (w_i')^2\right)r^{d-1}\,dr.
  \end{split}
\end{equation}
for every $0<\delta<r_1<r_2<1$. By~\eqref{eq:H1norm}, the right hand
side in the last estimate of~\eqref{eq:7g} tends to zero as $r_{1}$,
$r_{2}\to 1^-$. Hence, the Cauchy criterion implies that
\begin{displaymath}
  \lim_{r\to 1^-}g(r)=
  \sum_{i=0}^{\infty}\sqrt{1+\lambda_i}\,f_{i}^{2}\quad\text{exists,}
\end{displaymath}
where $f_{i}$ is defined by~\eqref{eq:7coef-of-f}. This 
shows that the function $g$ is absolutely continuous on $[\delta,1]$
for every $\delta\in (0,1)$. By the mean value theorem for
integrals, there is an $r_{\delta}\in (\delta,1)$ satisfying
 \begin{displaymath}
    g(r_{\delta}) = \tfrac{1}{1-\delta}\int_\delta^1 g(r)dr 
    = \tfrac{1}{1-\delta}\sum_{i=0}^{\infty}\int_\delta^1 \sqrt{1+\lambda_i}w_i^2\,dr
  \le \tfrac{C_\delta}{1-\delta}\norm{w}_{H^1(B_1(0)\cap\Gamma)}^2,
 \end{displaymath} 
where we also used~\eqref{eq:7g} and \eqref{eq:H1norm}. Using this
together with~\eqref{eq:7g}, one finds
\begin{displaymath}
  g(r)= g(r)-g(r_{\delta})+ g(r_{\delta}) \le C\,\norm{w}_{H^1(B_1(0)\cap\Gamma)}^2
\end{displaymath}
for some $C>0$ independent of $r\in (\delta,1)$. Sending $r\to 1$, we
find~\eqref{eq:7-series-fi}.
\end{proof}
 
Due to Lemma~\ref{lem:conv-fi-series}, every weak
solution $w$ of~\eqref{eq:weak-harmonic-Neumann} has the following series
expansion.

\begin{proposition}
  \label{lem:series-expansion}
  Let $\Gamma$ be an open cone with Lipschitz boundary and vertex at
  the origin in $\R^d$, and 
  follow the notation of Lemma~\ref{lem:conv-fi-series}. Then every
  weak solution $w\in H^{1}(B_1(0)\cap\Gamma)$
  of~\eqref{eq:weak-harmonic-Neumann} 
 satisfies
  \begin{equation}\label{eq:hhdecomp}
  w(rz) = \sum_{i=0}^\infty f_i\, r^{\beta_i}\varphi_i(z)\quad\text{for
  every $z\in A$ and $r\in(0,1)$.}
\end{equation}
The convergence of the series holds in 
$H^1(B_1(0)\cap\Gamma)\cap
C^{\gamma_{r}}(\overline{B_r(0)\cap\Gamma})$ for every $0<r<1$, where
$\gamma_{r}\in (0,1)$, and for every integer $i\ge 0$, $\beta_i\ge 0$ solves
\begin{equation}
  \label{eq:beta-i}
  \beta_i^2 +(d-2)\beta_i-\lambda_i=0.
\end{equation}
\end{proposition}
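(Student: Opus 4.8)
The plan is to project the Neumann problem onto the eigenfunctions $\varphi_i$ of the Laplacian on $A$, reduce it to a decoupled family of Cauchy--Euler ODEs for the coefficients $w_i(r)$, solve these explicitly, use the $H^1$-bound to discard the singular branch, and finally promote the resulting formal series to convergence in the two required norms. For the first step I would insert the test function $\phi(r,z)=\psi(r)\varphi_i(z)$ with $\psi\in C_c^\infty((0,1))$ into the weak formulation $\int_{B_1(0)\cap\Gamma}Dw\cdot D\phi=0$. Writing the gradient in polar coordinates, using that $\varphi_i$ is a Neumann eigenfunction on $A$ (so that $\int_A\nabla_S w(r,\cdot)\cdot\nabla_S\varphi_i\,d\mathcal H^{d-1}=\lambda_i w_i(r)$ for a.e.\ $r$) together with the identity $w_i'(r)=\int_A\nabla w(r,z)\cdot z\,\varphi_i(z)\,d\mathcal H^{d-1}$ recorded after \eqref{eq:H1norm}, one is left with $\int_0^1\big(w_i'\psi'r^{d-1}+\lambda_i w_i\psi r^{d-3}\big)\,dr=0$ for all $\psi$, i.e.\ $w_i$ is a weak, hence (by ODE regularity) classical, solution of $w_i''+\tfrac{d-1}{r}w_i'-\tfrac{\lambda_i}{r^2}w_i=0$ on $(0,1)$.

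The indicial equation for $w_i=r^\beta$ is exactly \eqref{eq:beta-i}, with roots $\beta_i^\pm=\tfrac12\big(-(d-2)\pm\sqrt{(d-2)^2+4\lambda_i}\,\big)$; since $\lambda_i\ge0$ and $d\ge2$ we have $\beta_i:=\beta_i^+\ge0$ and $\beta_i^-\le-(d-2)/2\le0$, the roots coinciding (at $0$) only when $d=2$ and $\lambda_i=0$. Hence $w_i(r)=a_i r^{\beta_i}+b_i r^{\beta_i^-}$, or $w_i(r)=a_i+b_i\log r$ in the exceptional case. Now \eqref{eq:H1norm} gives $\int_0^1\big((1+\lambda_i/r^2)w_i^2+(w_i')^2\big)r^{d-1}\,dr<\infty$ for each $i$; a direct check of exponents near $r=0$ shows this forces $b_i=0$ (for $b_i\ne0$ the term $(w_i')^2r^{d-1}$ behaves like $r^{2\beta_i^-+d-3}$ with $2\beta_i^-+d-3\le-1$, equality only when $\lambda_i=0$, where the $\log r$ and $r^{-(d-2)}$ branches are ruled out directly). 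So $w_i(r)=a_i r^{\beta_i}$; letting $r\to1^-$ and invoking Lemma~\ref{lem:conv-fi-series} (together with $w_0\equiv f_0$) gives $a_i=f_i$, so \eqref{eq:7series} becomes the formal series \eqref{eq:hhdecomp}.

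For convergence, write $S_N(rz)=\sum_{i=0}^N f_i r^{\beta_i}\varphi_i(z)$. Applying \eqref{eq:H1norm} to $w-S_N$ (whose Fourier coefficients are $w_i$ for $i>N$ and $0$ otherwise) shows $\|w-S_N\|_{H^1(B_1(0)\cap\Gamma)}^2$ is the tail of the convergent series equal to $\|w\|_{H^1(B_1(0)\cap\Gamma)}^2$, so $S_N\to w$ in $H^1(B_1(0)\cap\Gamma)$. For the Hölder convergence I would first note that each $r^{\beta_i}\varphi_i$ is itself a weak solution of \eqref{eq:weak-harmonic-Neumann} on $B_1(0)\cap\Gamma$ — the same polar-coordinate computation, with the boundary contribution at $r=1$ absent since test functions are supported in $r<1$ and the one at $r=0$ vanishing because $\beta_i+d-2\ge0$ (with the factor $\beta_i$ killing the exceptional case) — hence so is every remainder $w-S_N$. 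Fixing $0<r<\rho<1$ and applying the De Giorgi--Nash--Moser Hölder estimate up to the Neumann boundary on the Lipschitz cone (of the type used in Section~\ref{section perturbation}), $\|w-S_N\|_{C^{\gamma_r}(\overline{B_r(0)\cap\Gamma})}\le C_r\|w-S_N\|_{L^2(B_\rho(0)\cap\Gamma)}\to0$, which gives \eqref{eq:hhdecomp} with the stated mode of convergence.

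I expect the main obstacle to be the rigorous execution of the first step: that $\psi(r)\varphi_i(z)$ is admissible, that the coefficients $w_i$ are differentiable with derivative given by the stated integral formula, and that the eigenfunction expansion can be handled term-by-term under the integrals — all delicate since $w$ is only $H^1$ and $\Gamma$ only Lipschitz, and since the trace $w(r,\cdot)$ need only lie in $H^1(A)$ for a.e.\ $r$. By comparison, the exclusion of the singular solutions and the two convergence statements are essentially bookkeeping once the orthogonality identity \eqref{eq:H1norm}, Lemma~\ref{lem:conv-fi-series}, and the boundary Hölder estimate are available.
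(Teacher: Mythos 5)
Your proposal is correct, but the central identification step is genuinely different from the paper's. The paper works ``from the boundary inwards'': it takes the limits $f_i$ of the Fourier coefficients from Lemma~\ref{lem:conv-fi-series}, builds the candidate $\tilde w=\sum_i f_i\,r^{\beta_i}\varphi_i$ by verifying directly (via the polar form of the Laplacian) that each $\psi_i=r^{\beta_i}\varphi_i$ is a Neumann harmonic function, shows the partial sums are Cauchy in $H^1$ using $\sum_i\sqrt{1+\lambda_i}\,f_i^2<\infty$, and then concludes $w=\tilde w$ because both are weak solutions with the same $L^2$-trace on $A$ --- an implicit appeal to uniqueness for the mixed boundary-value problem. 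You instead work ``mode by mode'': you project the weak formulation onto each $\varphi_i$, obtain the Cauchy--Euler equation for $w_i(r)$, and use the finiteness of the $i$-th summand in \eqref{eq:H1norm} to discard the singular branch $r^{\beta_i^-}$ (and the $\log r$ branch when $d=2$, $\lambda_i=0$), which forces $w_i=f_i r^{\beta_i}$ directly. Your route avoids the uniqueness statement entirely and makes it transparent why only the non-negative root of \eqref{eq:beta-i} appears (incidentally, your formula $\beta_i=\tfrac12\bigl(-(d-2)+\sqrt{(d-2)^2+4\lambda_i}\bigr)$ is the correct root of \eqref{eq:beta-i}; the displayed ``form'' after the proposition in the paper has a sign slip); the price is the extra care you rightly flag in justifying the term-by-term projection of the weak formulation, though the ingredients you need (the formula for $w_i'$, the weak eigenfunction identity on $A$ for a.e.\ $r$, and the Parseval identity \eqref{eq:H1norm}) are exactly those the paper already establishes. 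The two convergence statements are handled essentially identically in both arguments, your tail-of-Parseval observation for the $H^1$ convergence being if anything slightly cleaner, and your boundary De Giorgi--Nash--Moser estimate playing the role of the paper's reflection-plus-interior-regularity argument for the H\"older convergence.
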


We will often use  \eqref{eq:beta-i} in the form $\beta_i=\frac12\left( d-2 +\sqrt{(d-2)^2+4\lambda_i}\right)$.

\begin{proof}
We define 
\begin{displaymath}
  \psi_i(rz) := r^{\beta_i}\varphi_i(z)\quad\text{for every $rz\in B_1(0)\cap\Gamma$.}
\end{displaymath}
Then  $\psi_i$ is harmonic on $B_1(0)\cap\Gamma$ since
  \begin{align*}
  \Delta (r^\beta_{i}\varphi_{i}(z))
  & = r^{\beta_{i}-2}\Delta_{\S^{d-1}} \varphi_{i}
    +(d-1)\frac{\partial r^{\beta_{i}}}{\partial r}\varphi_{i} 
    + \frac{\partial^2 r^{\beta_{i}}}{\partial r^2}\varphi_{i} \\
  &= r^{\beta_{i}-2}\left(-\lambda_{i} + ({d-2}) \beta_{i}+
    \beta_{i}^2\right)\varphi_{i}\\
  &=0
\end{align*}
by~\eqref{eq:beta-i} 
and the fact that $\varphi_{i}$ satisfies
\begin{equation}
  \label{eq:35}
  \Delta^{\S^{d-1}}\varphi_i+\lambda_i\varphi_i=0\quad\text{ on $A$.}
\end{equation}
Furthermore, $\psi_i$ satisfies Neumann boundary conditions on
$B_1(0)\cap\partial\Gamma$, since $\varphi_i$ satisfies Neumann
conditions on $\partial A$. Thus, each $\psi_{i}$ is a weak
solution of~\eqref{eq:weak-harmonic-Neumann}.

Now, let $\tilde{w} : B_1(0)\cap\partial\Gamma\to \R$ be given by 
\begin{displaymath}
  \tilde{w}(r,z) : = \sum_{i=0}^{\infty}
  f_i\,\psi_i(rz)=\sum_{i=0}^{\infty}f_i\, r^{\beta_i}\,\varphi_i(z)\quad
  \text{for every $rz\in B_{1}(0)\cap \Gamma$,}
\end{displaymath}
where $f_{i}$ is given by~\eqref{eq:7coef-of-f}. Next, we show that
the infinite series of $\tilde{w}$ converges in
$H^1(B^1(0)\cap\Gamma)$. For this, let $\tilde w^N$ be the 
 partial
sum of $\tilde{w}$ given by
\begin{displaymath}
  \tilde w^N(r,z) = \sum_{i=0}^N f_i\,r^{\beta_i}\varphi_i(z) \quad
  \text{for every $rz\in B_{1}(0)\cap \Gamma$.}
\end{displaymath}
For integers $1\le M< N$, applying \eqref{eq:H1norm} to
$\tilde w^N-\tilde w^M = \sum_{i=M+1}^Nf_ir^{\beta_i}\varphi_i$, we
find
\allowdisplaybreaks
\begin{align*}
\norm{\tilde w^N-\tilde w^M}_{H^1(B_{1}(0)\cap \Gamma)}^2 
  &= \sum_{i=M+1}^N \int_0^1\left(f_i^2 r^{2\beta_i+d-1}+\beta_i^2f_i^2r^{2\beta_i+d-3}\right)\,dr\\
&=\sum_{i=M+1}^N \left(\frac{1}{2\beta_i+d}+\frac{\beta_i^2}{2\beta_i+d-2}\right)f_i^2\\
&\leq C\sum_{i=M+1}^N \left(\beta_i+1\right)f_i^2\\
&\leq C\sum_{i=M+1}^N\sqrt{1+\lambda_i}f_i^2.
\end{align*}
 Lemma~\ref{lem:conv-fi-series} implies that the infinite
series $\sum_{i=0}^{\infty}\sqrt{1+\lambda_i}f_i^2$ is convergent, and so 
there is $\tilde{w}\in H^1(B_1(0)\cap\Gamma)$ such that
$\tilde w^N$ converges to $\tilde{w}$ in
$H^1(B_1(0)\cap\Gamma)$. Since every partial sum
$\tilde{w}^{N}$ is a weak solution
of~\eqref{eq:weak-harmonic-Neumann}, the limit function $\tilde w$ is
also a weak solution of~\eqref{eq:weak-harmonic-Neumann} and has $L^{2}$-trace 
\begin{displaymath}
\sum_{i=0}^{\infty} f_i\,\varphi_i\quad\text{on
$A$.}
\end{displaymath}
Since the same is true for $w$, we have $w=\tilde w$, proving
that~\eqref{eq:hhdecomp} holds in $H^1(B_1(0)\cap\Gamma)$. To obtain
convergence of the series~\eqref{eq:7series} in
$C^{\gamma_{r}}(\overline{B_r(0)\cap\Gamma})$ for every $0<r<1$ with
some $\gamma_{r}\in (0,1)$, we employ a reflection argument in a small
neighbourhood $U$ of each boundary point of
$B_{r}(0)\cap \partial\Gamma$ as in \cite{MR2812574} and use the
interior H\"older-regularity
result~\cite[Theorem~8.24]{MR1814364}. Further, we can cover
$\overline{B_r(0)\cap\Gamma}\setminus \partial\Gamma$ by finitely many
balls and apply again the interior H\"older-regularity to
$w$. Summarising, we see that for every $0<r<1$, there is a
$\gamma_{r}\in (0,1)$ such that the series~\eqref{eq:7series}
converges in $C^\gamma_{r}(\overline{B_r(0)\cap\Gamma})$.
\end{proof}

With the above preliminary results established, we can prove
Proposition~\ref{prop:cone-MP}:

\begin{proof}[Proof of Proposition~\ref{prop:cone-MP}]
Since $w(0)=0$ and $\beta_i>0$ for $i>0$, we have
$f_0=0$. Note that $\beta_i$ is non-decreasing in $\lambda_i$ and
hence in $i$.

Now assume $w$ is not identically zero.   Let
$f_{i_0}$ be the first non-zero coefficient, so that we have
\begin{displaymath}
  w(r,z) = r^{\beta_{i_0}}\left(\sum_{{\beta_i=\beta_{i_0}}}f_i \varphi_i(z) 
    + \sum_{\beta_i>\beta_{i_0}}f_i r^{\beta_i-\beta_{i_0}}\varphi_i(z)\right).
\end{displaymath}
The bracket on the right is non-positive since $w\leq
0$, and the second term converges uniformly to zero in $z$ as
$r$ approaches zero, while the first term is constant in
$r$. Hence we have that the term
\begin{displaymath}
   h(z):=\sum_{\beta=\beta_{i_0}}f_i\varphi_i(z)\leq 0\quad\text{for
     all $z\in A$.}
\end{displaymath}
But $h$ is a non-constant Neumann eigenfunction on the connected
domain $A$, and hence changes sign. This is a contradiction, and so
$w$ must be identically zero as claimed in Proposition~\ref{prop:cone-MP}.
\end{proof}

Although we are mostly interested in the perturbation problem
\eqref{eq:3}, the results of this section and the next also apply for
a somewhat larger class: We consider (weak) solutions $v$ of the
problem  
\begin{equation}\label{eq:3-generalised}
\begin{cases}
\Delta v + \mu = 0&\quad\text{in\ }\Omega,\\
D_{\nu_i} v + \gamma_i =0&\quad\text{on\ }\Sigma_i.
\end{cases}
\end{equation}
where $\mu$ and $\gamma_1,\cdots,\gamma_m$ are constants.  We observe
(by integration of the first equation over $\Omega$ and application of
the boundary condition on each face $\Sigma_i$) that these constants
necessarily satisfy the relation
\begin{displaymath}
\sum_{i=1}^m\gamma_i{\mathcal H}^{d-1}(\Sigma_i) = \mu\mathcal H^d(\Omega).
\end{displaymath}

The main result of this section is the following:

\begin{theorem} 
  \label{lemma 1.2} 
  Let $\Omega$ be a polyhedral domain in $\R^d$ with faces
  $\Sigma_1,\dots,\Sigma_m$ and $v$ be a solution
  of~\eqref{eq:3-generalised}. If $v\in C^{2}(\overline{\Omega})$ then
  $v$ is quadratic; that is, there are constants $a_{ij}$, $b_{i}$, $c\in \R$
  such that
 \begin{displaymath}
  v(x)=\sum_{i,j=1}^{d}a_{ij}x_{i}x_{j}+\sum_{i=1}^{d}b_{i}x_{i}+c
\end{displaymath}
for every $x\in \overline{\Omega}$.
\end{theorem}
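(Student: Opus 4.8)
The plan is to exploit the fact, established in Proposition~\ref{lem:series-expansion}, that near any boundary point the solution of the Neumann problem for the Laplacian on a polyhedral cone decomposes into homogeneous harmonic pieces $f_i r^{\beta_i}\varphi_i(z)$. Since $v\in C^2(\overline\Omega)$, the goal is to subtract an explicit quadratic function producing a function that vanishes to second order at a chosen point and is harmonic with Neumann data there, then show it must be zero. First I would reduce \eqref{eq:3-generalised} to the harmonic Neumann problem: because $\mu$ and the $\gamma_i$ are constants, one can write down a particular \emph{quadratic} solution $q$ of \eqref{eq:3-generalised} locally near any vertex $x_0$ — indeed, solving $\Delta q = -\mu$ and $D_{\nu_i}q = -\gamma_i$ on the faces through $x_0$ is a linear-algebra problem in the coefficients of $q$, and the compatibility relation $\sum_i\gamma_i\mathcal H^{d-1}(\Sigma_i)=\mu\mathcal H^d(\Omega)$, together with the geometry of the cone, guarantees solvability (this is exactly where the polyhedral structure enters). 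Then $w := v - q$ is harmonic in a neighbourhood of $x_0$ in $\Omega$, satisfies $D_\nu w = 0$ on the faces through $x_0$, and lies in $C^2$ up to the boundary.

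Next I would apply the series expansion to $w$ on a small ball $B_r(x_0)\cap\Gamma_{x_0}$: $w(x_0 + \rho z) = \sum_i f_i \rho^{\beta_i}\varphi_i(z)$ with $\beta_i = \tfrac12(d-2+\sqrt{(d-2)^2+4\lambda_i})$. The crucial point is that $w\in C^2$ forces every exponent $\beta_i$ appearing with $f_i\neq 0$ to be either $0$, $1$, or $2$ (a $C^2$ function has a second-order Taylor expansion at $x_0$, so any homogeneous piece of fractional or higher-integer degree must vanish). The degree-zero piece is a constant ($\varphi_0$), degree-one pieces are linear functions (their restrictions to the sphere are first spherical harmonics satisfying the Neumann condition on $A_{x_0}$), and the degree-two pieces are homogeneous harmonic Neumann functions on the cone with bounded second derivatives — which, by the ``interesting observation'' flagged in the introduction, must themselves be quadratic. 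So $w$ agrees with a quadratic polynomial in a neighbourhood of $x_0$ inside $\Omega$; combined with $q$ quadratic, $v$ is quadratic near $x_0$.

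Finally I would upgrade ``quadratic near a boundary point'' to ``quadratic on all of $\overline\Omega$''. Since $v$ is real-analytic in the interior (being harmonic-plus-quadratic there: $\Delta v = -\mu$ constant) and agrees with a polynomial on an open subset of $\Omega$, analytic continuation / unique continuation forces $v$ to equal that polynomial throughout the connected open set $\Omega$, hence on $\overline\Omega$ by continuity.

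The main obstacle I expect is the degree-two step: ruling out the possibility that $w$ contains a genuinely non-polynomial degree-two homogeneous harmonic Neumann function on the polyhedral cone. Homogeneity of degree two does not by itself imply smoothness or polynomiality — on a cone with a non-smooth cross-section $A_{x_0}$ one could a priori have $\rho^2\varphi_i(z)$ with $\varphi_i$ a genuine Neumann eigenfunction of eigenvalue $\lambda_i = 2d$ on $A_{x_0}$ that is not the restriction of a harmonic polynomial. Here the hypothesis $v\in C^2(\overline\Omega)$ is essential: it says the second derivatives of $\rho^2\varphi_i(z)$ extend continuously up to $\partial\Gamma_{x_0}$, and one must argue (presumably via the boundary regularity of Neumann eigenfunctions on each face, reflecting across faces, and the structure of the cone) that this forces $\varphi_i$ to extend to a harmonic polynomial on all of $\mathbb R^d$ — equivalently that $w$'s Hessian is constant. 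Getting this argument to work uniformly over all polyhedral cones, including those with inconsistent normals, is the delicate part; everything else is linear algebra and standard elliptic regularity.
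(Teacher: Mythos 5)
Your argument has a fatal gap at the second step. Membership in $C^2(\overline\Omega)$ does \emph{not} force the coefficients $f_i$ with $\beta_i>2$ to vanish: a term $f_i\rho^{\beta_i}\varphi_i(z)$ with $\beta_i>2$ is $o(\rho^2)$ and is perfectly compatible with twice-differentiability at $x_0$ (and with $C^2$ up to the boundary). What $C^{1,1}$/$C^2$ rules out is only the exponents in $(0,1)\cup(1,2)$ and non-linear degree-one pieces; this is exactly the content of Lemma~\ref{lem:twicediff} in the paper, and it yields only that $w$ agrees with a quadratic polynomial \emph{to second order at the single point} $x_0$, not on a neighbourhood. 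Consequently your final step has nothing to continue analytically: you never produce an open set on which $v$ coincides with a polynomial. In effect your argument re-derives the second-order Taylor expansion of $v$ at boundary points --- essentially restating the hypothesis --- rather than proving the Liouville-type rigidity claimed by the theorem. The paper's proof is genuinely global: since $v\in C^2(\overline\Omega)$ and $\overline\Omega$ is compact, the quantity $D^2v|_x(e,e)$ attains its maximum $\Lambda$ at some $(x_0,e_1)$; one then shows that $f=\mathrm{tr}_E\,D^2(v-q)$ (with $E$ the top eigenspace of $D^2v|_{x_0}$ and $q$ the second-order Taylor polynomial at $x_0$) is harmonic, satisfies Neumann conditions on the faces through $x_0$, is $\le 0$, and vanishes at $x_0$; the strong maximum principle on the tangent cone (Proposition~\ref{prop:cone-MP}) forces $f\equiv 0$, so $D^2v(e,e)\equiv\Lambda$ on $E$, splitting off a quadratic factor and reducing the dimension, and one concludes by induction. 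No purely local expansion at a boundary point can substitute for this maximum-principle step.

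A secondary problem: your justification for the existence of the local quadratic $q$ with $\Delta q=-\mu$ and $D_{\nu_i}q=-\gamma_i$ on the faces through $x_0$ is incorrect as stated. The global compatibility relation $\sum_i\gamma_i\mathcal H^{d-1}(\Sigma_i)=\mu\mathcal H^d(\Omega)$ has no bearing on local solvability at a vertex, and such a $q$ need not exist for an arbitrary polyhedral cone (for $\gamma_i\equiv 1$ its linear part would have to satisfy $b\cdot\nu_i=-1$ for all $i\in\mathcal I(x_0)$, which is precisely the consistent-normals condition that fails for the cone in Figure~\ref{fig:twftsliced}). The correct construction, used in the paper's proof of Theorem~\ref{semiconcave implies C2}, is to take $q(x)=v(x_0)+Dv|_{x_0}(x-x_0)-\tfrac{\mu}{2d}|x-x_0|^2$, which works because the hypothesis $v\in C^2$ supplies a gradient at $x_0$ satisfying $D_{\nu_i}v|_{x_0}=-\gamma_i$. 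This part is repairable; the missing maximum-principle argument is not.
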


  Our strategy to prove Theorem~\ref{lemma 1.2} is to show that there
  exists a subspace $E$ in $\R^{d}$ on which the Hessian function
  $(x,e)\mapsto D^2v|_x(e,e)$ is constant for all unit vectors $e\in E$ 
  and $x\in\Omega$.
  It will follow from this that $v(x)$ is a multiple of the
  squared length of the $E$ component of $x$, plus another function
  depending only on the $E^\perp$ component, 
  where $E^\perp$ denotes the orthogonal complement of $E$ in $\R^{d}$.
   This reduces the
  original problem to a similar problem on the lower-dimensional space
  $E^\perp$, enabling an induction on dimension to establish the
  result.

  Accordingly, we proceed by induction: For $d=1$, a polyhedral domain
  is simply an interval, and every solution to
  \eqref{eq:3-generalised} is a quadratic function, so the statement
  of Theorem~\ref{lemma 1.2} holds in this case.  Now, assume that the statement of
  Theorem~\ref{lemma 1.2} holds for every polyhedral domain in
  $\R^j$ for $j<d$, and let $\Omega$ be a polyhedral domain in $\R^d$
  and $v\in C^2(\overline{\Omega})$ be a solution of \eqref{eq:3-generalised} on $\Omega$.
  Since $v\in C^2(\overline\Omega)$, there exists
  $(x_{0},e_{1})\in \overline{\Omega}\times \S^{d-1}$ such that
  \begin{equation}
    \label{eq:7'}
    \Lambda:=\max_{x\in \overline{\Omega}, \,e\in \S^{d-1}}
    D^2v|_x(e,e) =D^2v|_{x_0}(e_1,e_1).
  \end{equation}

%


  \begin{lemma}
    \label{lem:constNeumann}
    Suppose that $v$ is a $C^2$ function on an open subset $B$
    of $\overline{\Omega}$, where $\Omega$ is a polyhedral domain in
    $\R^d$. For $j\in \{1,\dots,m\}$, let $\nu_{j}$ be the outward
    pointing unit normal vector on face $\Sigma_{j}$ and suppose
    \begin{equation}
      \label{eq:8}
     D_{\nu_{j}}v+\gamma_{j}=0\quad\text{on $\overline{\Sigma}_{j}\cap B$.}
  \end{equation}
  Then for every tangent vector $e$ parallel to $\Sigma_j$ one has
  \begin{equation} 
        \label{DnuDe} 
        D^2v|_x(e,\nu_j)=0 \quad\text{for every
          $x\in\overline{\Sigma}_j\cap B$.} 
      \end{equation}
   In particular, $\nu_{j}$ is an eigenvector for the Hessian $D^2v|_x$ for each
  $x\in\overline{\Sigma}_{j}\cap B$.
  \end{lemma}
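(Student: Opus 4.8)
The plan is to use the fact that the boundary identity \eqref{eq:8} forces the normal derivative $D_{\nu_j}v$ to be \emph{constant} along the face $\Sigma_j$, so every derivative of $D_{\nu_j}v$ in a direction tangent to $\Sigma_j$ vanishes; since the face is flat, $\nu_j$ is a constant vector, and such a tangential derivative of $D_{\nu_j}v$ is exactly the mixed Hessian entry $D^2v|_x(e,\nu_j)$.

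First I would fix a point $x$ in the relative interior of $\Sigma_j$ with $x\in B$, and a unit vector $e$ with $e\cdot\nu_j=0$. Since $B$ is relatively open in $\overline\Omega$ and $\Sigma_j$ is contained in the affine hyperplane $\{y\cdot\nu_j=b_j\}$, the segment $t\mapsto x+te$ stays in $\overline\Sigma_j\cap B$ for $\abs{t}$ small. Because $v\in C^2$ in a neighbourhood of this segment, $t\mapsto Dv(x+te)\cdot\nu_j$ is $C^1$, and by \eqref{eq:8} it is identically $-\gamma_j$. Differentiating at $t=0$ and using that $\nu_j$ is constant gives
\begin{displaymath}
0=\frac{d}{dt}\Big|_{t=0}\big(Dv(x+te)\cdot\nu_j\big)=D^2v|_x(e,\nu_j),
\end{displaymath}
which is \eqref{DnuDe} at such $x$. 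Since $D^2v$ is continuous on $\overline\Omega$ and the relative interior of $\Sigma_j$ is dense in $\overline\Sigma_j$, \eqref{DnuDe} then holds at every $x\in\overline\Sigma_j\cap B$.

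For the final assertion, note that $D^2v|_x$ is a symmetric bilinear form on $\R^d=\nu_j^\perp\oplus\R\nu_j$; the identity just proved says that the vector $D^2v|_x(\cdot,\nu_j)$ annihilates $\nu_j^\perp$, hence is a multiple of $\nu_j$, i.e.\ $\nu_j$ is an eigenvector of the Hessian $D^2v|_x$. I do not expect a real obstacle here; the only point needing a little care is the justification that \eqref{eq:8} may be differentiated tangentially, which rests on the flatness of $\Sigma_j$ (so that $\overline\Sigma_j\cap B$ is relatively open in an affine hyperplane and $\nu_j$ is a constant field) together with the $C^2$ regularity of $v$ up to the boundary.
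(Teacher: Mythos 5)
Your proof is correct and follows essentially the same route as the paper's: differentiate the constant boundary condition \eqref{eq:8} tangentially along the flat face (where $\nu_j$ is a constant vector) to get \eqref{DnuDe}, then use the decomposition $\R^d=T\Sigma_j\oplus\R\nu_j$ to conclude that $\nu_j$ is an eigenvector of the Hessian. The extra step you include---passing from the relative interior of $\Sigma_j$ to $\overline{\Sigma}_j\cap B$ by continuity of $D^2v$---is a reasonable bit of added care that the paper leaves implicit.
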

  
  \begin{proof}
    On polyhedra, the normal vector $\nu_{j}$ is constant on face $\Sigma_{j}$. 
      Differentiating the boundary condition~\eqref{eq:8} in
      the direction of any tangent vector $e\in T\Sigma_j$
      yields~\eqref{DnuDe}. Since $\R^{d}$ can be decomposed
      as a direct sum of the tangent space 
      $T\Sigma_{j}$ 
      and
      the 
       normal vector $\nu_{j}$, 
       \eqref{DnuDe} implies
      that $\nu_{j}$ is an eigenvector for the Hessian $D^2v|_x$ for $x\in
      \overline{\Sigma}_{j}\cap B$.
  \end{proof}

  Our second lemma captures in slightly greater generality the
  dimension-reduction argument outlined above:

\begin{lemma}
  \label{lem:Hessian-SMP}
  Suppose that $v$ is a $C^2$ solution of \eqref{eq:3-generalised} on
  a convex open subset $B$ of $\overline{\Omega}$, where $\Omega$ is a
  polyhedral domain in $\R^d$.  If there exists $(x_0,e_1)$ in
  $B\times \S^{d-1}$ such that
  \begin{equation}
    \label{eq:7bis}
    D^2v|_{x_0}(e_1,e_1)
    = \Lambda:=\sup_{(x,e)\in B\times
      \S^{d-1}}D^2v|_x(e,e),
  \end{equation}
  then there exists a subspace $E$ of positive dimension in $\R^d$ such that
  \begin{displaymath}
    B\cap\Omega = \Big\{x\in B\;\Big\vert\; \pi_E(x)\in\Omega^E,\; 
    \pi_{E^\perp}(x)\in\Omega^\perp\Big\},
  \end{displaymath}
  where $E^\perp$ is the orthogonal complement of $E$, $\pi_E$ and
  $\pi_{E^\perp}$ are the orthogonal projections onto $E$ and
  $E^\perp$, and $\Omega^E=\pi_{E}(\Omega)$ and
    $\Omega^\perp=\pi_{E^{\perp}}(\Omega)$ are polyhedral
  domains in $E$ and $E^{\perp}$ respectively. Furthermore,
  \begin{equation}
    \label{eq:10}
    v(x) = \frac{\Lambda}{2}|\pi_E(x-x_0)|^2 + Dv|_{x_0}\left(\pi_E(x-x_0)\right)+ g(\pi_{E^{\perp}}(x))
  \end{equation}
  for all $x\in B$, where $g$ is a $C^2$ solution
  of an equation of the form \eqref{eq:3-generalised} on
  $\pi_{E^{\perp}}(B)\subseteq\overline{\Omega^\perp}\subseteq E^\perp$.
\end{lemma}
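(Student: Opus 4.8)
The plan is to exploit the fact that, since $\Delta v=-\mu$ is constant, $v$ is smooth (indeed real-analytic) in $\Omega$, so for every fixed pair of constant vectors $e,e'$ the function $x\mapsto D^2v|_x(e,e')$ is harmonic in $B\cap\Omega$; moreover, differentiating the identity $D^2v|_x(\nu_j,e)=0$ of Lemma~\ref{lem:constNeumann} tangentially along $\Sigma_j$ shows that $x\mapsto D^2v|_x(e,e')$ satisfies homogeneous Neumann conditions on $\Sigma_j$ whenever $e,e'$ are both parallel to $\Sigma_j$. The proof then splits into two halves: first produce a direction along which $D^2v$ is everywhere equal to $\Lambda$, then read off the splitting of $v$ and of the domain.

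\emph{Part I: a unit vector $e$ with $D^2v|_x(e,e)=\Lambda$ for all $x\in B\cap\Omega$.} The candidate is the maximizing direction $e_1$ at $x_0$; being a global maximizer it is a top eigenvector of $D^2v|_{x_0}$, and by Lemma~\ref{lem:constNeumann} each $\nu_i$, $i\in\mathcal I(x_0)$, is an eigenvector of $D^2v|_{x_0}$. If the top eigenspace $E_0$ of $D^2v|_{x_0}$ contains a vector orthogonal to all these normals, choose $e_1$ to be such a vector; then (translating $x_0$ to the origin) the harmonic function $x\mapsto\Lambda-D^2v|_x(e_1,e_1)\ge 0$ vanishes at the vertex of $\Gamma_{x_0}$, has homogeneous Neumann data on $\partial\Gamma_{x_0}$, and $\Omega$ is a cone near $x_0$ by Remark~\ref{rem:1}, so Proposition~\ref{prop:cone-MP} forces it to vanish near $x_0$; by real-analyticity and connectedness of the convex set $B\cap\Omega$ it vanishes on all of $B\cap\Omega$. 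If instead $D^2v|_{x_0}=\Lambda\,\mathrm{Id}$, then $\sum_k\bigl(\Lambda-\lambda_k(D^2v|_x)\bigr)=d\Lambda+\mu$ is a nonnegative constant vanishing at $x_0$, hence identically zero, forcing $D^2v\equiv\Lambda\,\mathrm{Id}$ — the conclusion with any $e$. The remaining configurations (where $E_0$ is spanned by face normals but $D^2v|_{x_0}\ne\Lambda\,\mathrm{Id}$) must be reduced to the first case: if $\Lambda$ is attained in the interior of $\Omega$, or at a relative-interior point of a face (where, after even reflection across that face — of class $C^2$ thanks to Lemma~\ref{lem:constNeumann} — a suitable function $x\mapsto D^2v|_x(e,e)$ attains an interior maximum), one is done directly; otherwise one reflects $v$ successively across faces at $x_0$ to reduce the number of faces through the maximum point. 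I expect this last reduction — that is, arranging that the maximizing direction is tangent to every face touching the maximum point, which is exactly what lets one invoke Proposition~\ref{prop:cone-MP} — to be the main technical obstacle.

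\emph{Part II, the subspace $E$ and the form of $v$.} Given such an $e$, set $E:=\{e'\in\R^d\;\vert\; D^2v|_x(e',e')=\Lambda|e'|^2\text{ for all }x\in B\cap\Omega\}$. Any two elements of $E$ are pointwise eigenvectors of $D^2v|_x$ for the eigenvalue $\Lambda$, so $D^2v|_x$ restricted to their span equals $\Lambda\,\mathrm{Id}$; hence $E$ is a linear subspace, $D^2v|_x\,e'=\Lambda e'$ for all $e'\in E$ and $x\in B\cap\Omega$, and $E\ne\{0\}$ by Part~I. For $e'\in E$ and $e''\in E^\perp$ one gets $D^2v|_x(e',e'')=\langle\Lambda e',e''\rangle=0$ identically, so the function $v-\tfrac{\Lambda}{2}\bigl|\pi_E(\cdot-x_0)\bigr|^2-Dv|_{x_0}\bigl(\pi_E(\cdot-x_0)\bigr)$ has vanishing gradient in every $E$-direction and therefore (convexity of $B\cap\Omega$) descends to a $C^2$ function $g$ of $\pi_{E^\perp}(x)$ on $\pi_{E^\perp}(B)$; this is precisely \eqref{eq:10}. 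Inserting \eqref{eq:10} into $\Delta v=-\mu$ shows $\Delta g$ is constant.

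\emph{Part II, the splitting of the domain.} It remains to show each face normal $\nu_i$ (of a face meeting $B$) lies in $E$ or in $E^\perp$. Suppose $q:=\pi_{E^\perp}(\nu_i)\ne 0$ and $\pi_E(\nu_i)\ne 0$: inserting \eqref{eq:10} into the boundary condition $D_{\nu_i}v+\gamma_i=0$ on $\Sigma_i$, and using that on $\Sigma_i$ the $E$-component of $x$ is an affine function of its $E^\perp$-component (because $\pi_E(\nu_i)\ne 0$), one finds that $z\mapsto D_q g(z)-\Lambda\langle z,q\rangle$ is constant on the open subset $\pi_{E^\perp}(\Sigma_i\cap B)$ of $E^\perp$; differentiating in $z$ gives $D^2g|_z\,q=\Lambda q$ there, i.e.\ $D^2v|_x(q,q)=\Lambda|q|^2$ on a nonempty open subset of $B\cap\Omega$. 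Since $x\mapsto D^2v|_x(q,q)$ is harmonic on the connected set $B\cap\Omega$, it equals $\Lambda|q|^2$ throughout, so $q\in E\cap E^\perp=\{0\}$ — a contradiction; thus $\nu_i\in E$ or $\nu_i\in E^\perp$. This exhibits $\Omega$ (and with it $B\cap\Omega$) as a product with respect to $E\oplus E^\perp$ of the polyhedral domains $\Omega^E=\pi_E(\Omega)\subseteq E$ and $\Omega^\perp=\pi_{E^\perp}(\Omega)\subseteq E^\perp$, giving the asserted description of $B\cap\Omega$; and for the faces with $\nu_i\in E^\perp$ the condition $D_{\nu_i}v+\gamma_i=0$ reads $D_{\nu_i}g+\gamma_i=0$ on the corresponding face of $\Omega^\perp$, so $g$ solves an equation of the form \eqref{eq:3-generalised} on $\pi_{E^\perp}(B)\subseteq\overline{\Omega^\perp}$, as required. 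Apart from the reduction in Part~I flagged above, everything is linear algebra plus the unique-continuation property of harmonic functions.
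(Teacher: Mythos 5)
Your Part I contains a genuine gap, and you have correctly located it yourself: the case where every $\Lambda$-eigenvector of $D^2v|_{x_0}$ has a nonzero component along some face normal $\nu_j$, $j\in\mathcal I(x_0)$, and $D^2v|_{x_0}\neq\Lambda\,\mathrm{Id}$. Your proposed fix by successive reflection does not obviously work: even reflection across one face $\Sigma_j$ destroys the polyhedral boundary-value structure on the remaining faces through $x_0$ (their reflected copies carry different normals and the reflected function satisfies no useful condition there), so you cannot iterate. The paper closes this case with a different device, which is the heart of its proof: instead of the scalar $D^2v|_x(e_1,e_1)$ for a single direction, it sets $u:=v-(\text{quadratic Taylor polynomial of }v\text{ at }x_0)$ and considers $f(x):=\mathrm{tr}_E\bigl(D^2u|_x\bigr)=\sum_{i=1}^k D^2u|_x(e_i,e_i)$, where $E$ is the \emph{entire} $\Lambda$-eigenspace of $D^2v|_{x_0}$. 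Since each $\nu_j$, $j\in\mathcal I(x_0)$, is an eigenvector of $D^2v|_{x_0}$ (Lemma~\ref{lem:constNeumann}) and eigenspaces are orthogonal, either $\nu_j\in E^\perp$ — in which case every $e_i$ is tangent to $\Sigma_j$ and the third-derivative identity gives $D_{\nu_j}f=0$ directly — or $\nu_j\in E$, in which case one uses $\Delta u=0$ to rewrite $f=-\sum_{i=k+1}^{d}D^2u(e_i,e_i)$ as minus the trace over $E^\perp\subseteq T\Sigma_j$, and the same identity again gives $D_{\nu_j}f=0$. Thus $f$ is harmonic, nonpositive, vanishes at $x_0$, and satisfies homogeneous Neumann conditions on all faces through $x_0$, so Proposition~\ref{prop:cone-MP} applies to $f$ with no case distinction on where the normals sit. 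Your second sub-case ($D^2v|_{x_0}=\Lambda\,\mathrm{Id}$) is exactly this argument with $E=\R^d$; the point is that it works for $E$ of any dimension. (One further detail: the paper also chooses $x_0$ among maximizers so that $\dim E$ is maximal, which is what lets it conclude that $E$ is the exact $\Lambda$-eigenspace at \emph{every} point of $B$ and hence that each $\nu_i$ lies in $E$ or $E^\perp$.)

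Your Part II is essentially sound and takes a mildly different route to the domain splitting than the paper: you rule out a normal $\nu_i$ with nonzero components in both $E$ and $E^\perp$ by substituting \eqref{eq:10} into the boundary condition on $\Sigma_i$ and invoking unique continuation, whereas the paper gets $\nu_i\in E\cup E^\perp$ for free from Lemma~\ref{lem:constNeumann} once $E$ is known to be the $\Lambda$-eigenspace of $D^2v|_x$ at every $x$. That alternative is fine (your parenthetical ``the $E$-component of $x$ is an affine function of its $E^\perp$-component'' is not literally needed or true for $\dim E>1$; only $\pi_E(x)\cdot\pi_E(\nu_i)$ need be affine in $\pi_{E^\perp}(x)$, which the hyperplane equation provides). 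But without the trace argument in Part I the proof is incomplete.
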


\begin{proof}
  Without loss of generality, we can assume that we have chosen
  $x_0\in B$ so that the dimension of the eigenspace of $H_v(x_0)$
  with eigenvalue $\Lambda$ is maximized. We begin by defining $u$ to
  be the part of $v$ without its \emph{quadratic approximation about
    $x_0$}:
\begin{equation}
  \label{eq:26}
  u(x):={v}(x)- v(x_0)-Dv|_{x_0}(x-x_0) - \frac12 D^2v|_{x_0}(x-x_0,x-x_0)
\end{equation}
for every $x\in B$. Then $u$ has the following properties:
\begin{align}
 u(x_{0})&=0, \quad  D u(x_{0})=0, \quad
   D^2u|_{x_0}=0;\notag \\
 \label{eq:grad-u}
   D u|_x &= Dv|_x-Dv|_{x_0}-D^2v|_{x_0}(x-x_0,.)\quad \text{
    for every $x\in B$;}\\
  \label{eq:laplace-u-equation} 
  \Delta u(x)
    &=  \Delta {v}(x) - \Delta v(x_0)  = -\mu +\mu=0\quad \text{
    for every $x\in B\cap \Omega$;}\\
  \label{number 8}  
   D_{\nu_j} u (x)&=0 \quad\text{ for all $x\in\Sigma_j\cap
    B$}\quad\text{if $j\in \mathcal{I}(x_0)$, } 
\end{align}
where the index set $\mathcal{I}(x_0)$ is given by~\eqref{eq:9}. To
see that~\eqref{number 8} holds, first note that this is trivially
satisfied if $x_0\not\in\partial\Omega$, since then $\mathcal{I}(x_0)$
is empty. If $x_0\in\partial\Omega$, then by
Lemma~\ref{lem:constNeumann}, for every $j\in \mathcal{I}(x_0)$, $v$
satisfies~\eqref{DnuDe}. If $x\in \Sigma_j\cap B$, then
both $x$ and $x_0$ lie in the same face $\Sigma_{j}$ and so
$(x-x_0)\in T\Sigma_j$. %
 By taking $e=x-x_0$ and using~\eqref{eq:grad-u} and \eqref{DnuDe}, 
 one has
\begin{displaymath}
 D_{\nu_j} u(x)= D_{\nu_j}v (x) -D_{\nu_j}v(x_0)-
 \left.D^2v\right|_{x_0}(e,\nu_j)
 = \gamma_j-\gamma_j=0 \text{ for all }
x\in\Sigma_j\cap B.
\end{displaymath}

Now, let $E$ be the eigenspace of $D^2v|_{x_0}$ corresponding to its
largest eigenvalue $\Lambda$. Then, $e_1\in E\cap \S^{d-1}$. We choose
an orthonormal basis $\{e_{1}, e_{2}, \dots, e_{k}\}$ of $E$, $1\le k\le d$, and set
\begin{equation*}
    \label{eq:25}
  f(x)=\textrm{tr}_{E}\left(D^2u|_x\right):=\sum_{i=1}^{k}\left.D^2u\right|_x(e_i,e_i)\quad
  \text{for every $x\in B$.}
\end{equation*}  
Then $f$ has the following properties:
\allowdisplaybreaks
\begin{align}
  f(x)&=\sum_{i=1}^{k}\left(D^2v|_x-D^2v|_{x_0}\right)(e_i,e_i)
    \quad\text{for all $x\in B$ by~\eqref{eq:26};} \notag \\  
  \label{eq:fx0}  
  f(x_{0}) 
    &=0
\quad\text{by the above;}\\
     \label{f harmonic} 
  \Delta f(x)&=0\quad\text{for every $x\in \Omega\cap B$
    by~\eqref{eq:laplace-u-equation};}\\
  \label{eq:sign-of-f} 
  f(x)&\le 0\quad\text{for every $x\in 
        B$;}\\  
   \label{Neumann f}   
  D_{\nu_{j}}f(x)&=0\quad\text{for every $x\in B\cap\Sigma_{j}$, if $j\in \mathcal{I}(x_0)$.}
\end{align}
To see that~\eqref{eq:sign-of-f} holds, note that by~\eqref{eq:7bis}, 
\begin{equation}  \label{eq:8prime}  
  D^2u|_x(\xi,\xi)=D^2v|_x(\xi,\xi)-D^2v|_{x_0}(\xi,\xi) = D^2v|_x(\xi,\xi)-\Lambda\leq 0
  \end{equation}
for all $\xi \in E \cap \S^{d-1}$ and $x\in B$. 

To show~\eqref{Neumann
  f}, fix $j\in \mathcal I(x_0)$. Then by Lemma~\ref{lem:constNeumann}
applied to $v$, the normal $\nu_j$ is an eigenvector of $D^2v|_x$ for
$x\in\overline{\Sigma}_j$.  On the interior of the face $\Sigma_j$,
$v\in C^3(\Sigma_j)$ (since $u$ extends by even reflection in
$\Sigma_j$ as a harmonic function) and so we can differentiate
\eqref{DnuDe} again to find
 \begin{equation}
   \label{DnuDeDe} 
   \left.D^3v\right|_x(e,e,\nu_j)
   =0\quad \text{ for every $e\in T\Sigma_j$ and
     $x\in{\Sigma_j}.$} 
\end{equation}
Since the normal
 $\nu_j$
 is an eigenvector of $D^2v|_{x_0}$, and all eigenspaces of the matrix
 $D^2v|_{x_0}$ are orthogonal, the eigenvector $\nu_j$ is either in $E$
 or belongs to the orthogonal space $E^\perp$. If
 $\nu_{j}\in E^{\perp}$, then $e_i$ is orthogonal to $\nu_j$ and so is
 in $T\Sigma_j$ for each $i\in\{1,\cdots,k\}$.  Then~\eqref{DnuDeDe}
 implies
 \begin{displaymath}
   D_{\nu_j} f (x) =D_{\nu_j}\left(\sum_{i=1}^{k} \left.D^2v\right|_x(e_i,e_i)
   \right)=0
\end{displaymath}
for every $x\in B\cap\Sigma_{j}$. On the other hand, if $\nu_j\in E$, then  
\begin{align*}
  D_{\nu_j} f(x)& = D_{\nu_j} \left(\sum_{i=1}^{k} \left.D^2u\right|_x(e_i,e_i)
                  \right)= D_{\nu_j} \left(\Delta u -
                  \sum_{i=k+1}^{d} \left.D^2u\right|_x(e_i,e_i)
                  \right)\\
                & = D_{\nu_j} \left(
                  0 - \sum_{i=k+1}^{d} \left.D^2u\right|_x(e_i,e_i)
                   \right)=0
\end{align*} 
for every $x\in B\cap\Sigma_{j}$, where
$\lbrace e_{k+1},\dots,e_d\rbrace$ is a basis for
$E^\perp\subseteq \nu_j^\perp=T\Sigma_j$, and we again use
\eqref{DnuDeDe}. 

By Remark~\ref{rem:1}, the set $\Omega\cap B\cap B_r(x_0)$ coincides
with $x_0+(\Gamma_{x_0}\cap B_r(0))$ for sufficiently small $r>0$.
Equations \eqref{eq:fx0}-\eqref{Neumann f} (and that fact that $f$ is
continuous on $B$ since $v\in C^2(B)$) allow us to apply
Proposition~\ref{prop:cone-MP} to the function $\tilde f(z)=f(x_0+rz)$ on
$B_1(0)\cap\Gamma_{x_0}$ to infer that $f$ is identically zero on a
neighbourhood of $x_0$.  We conclude that the set where $f$ vanishes
is a non-empty, open, and closed subset of $B$, hence equal to $B$.  It
follows from \eqref{eq:26} that $\tr_E D^2v\equiv k\Lambda$ on $B$.
Since $D^2v\leq \Lambda I$ on $B$, this implies that
$D^2v(e_i,e_i) = \Lambda$ on $B$ for all $i=1,\dots, k$ and so,
\begin{equation}
  \label{eq:11}
D^2v|_x(e,e) = \Lambda\quad\text{for all $x\in B$ and $e\in \S^{d-1}\cap E$.}
\end{equation}
In particular $E$ is contained in the $\Lambda$-eigenspace of $D^2v|_x$ for every $x\in B$.
Since we chose $x_0\in B$ such that $k$ is the maximal dimension of
the $\Lambda$-eigenspace of $D^2v|_x$ over all $x\in B$, we can
conclude that $E$ is the $\Lambda$-eigenspace of $D^2v|_x$ for every
$x\in B$.  It then also follows that 
\begin{equation}
  \label{eq:12}
  D^2v|_x(e,\hat e)
  = 0\quad\text{for all $x\in
    B$, $e\in E$, and $\hat e\in E^\perp$.}
\end{equation}
Now, writing $x = \pi_E(x)+\pi_{E^{\perp}}(x)$, integrating \eqref{eq:11} along directions in $E$ yields
\begin{displaymath}
v(x) =
v(\pi_E(x_{0})+\pi_{E^{\perp}}(x))+Dv(\pi_E(x_{0})+\pi_{E^{\perp}}(x))\, \pi_E(x-x_0)+\frac{\Lambda}{2}\abs{\pi_E(x-x_{0})}^2.
\end{displaymath}
By~\eqref{eq:12}, differentiating
$Dv(\pi_E(x_0)+\pi_{E^{\perp}}(x))$ in a direction tangent to
$E^\perp$ gives zero, so
$Dv(\pi_E(x_0)+\pi_{E^{\perp}}(x))$ is independent of
$\pi_{E^{\perp}}(x)$ and in particular is equal to $Dv(x_0)$.
Defining 
$g(\pi_{E^{\perp}}(x))=v(\pi_E(x_{0})+\pi_{E^{\perp}}(x))$ shows that $v$ is
of the form~\eqref{eq:10}.  

If $k=\dim(E)=d$ then $E^\perp$ is trivial and there is nothing further to prove.  Otherwise it
follows that $g$ is a $C^2$ function on $\pi_{E^{\perp}}(B)\subset\overline{\Omega^\perp}$, and we have
\begin{displaymath}
0 = \Delta v+\mu = \Delta g + k\Lambda +\mu
\end{displaymath}
and for $\nu_i\in E^\perp$ we have
\begin{displaymath}
0= D_{\nu_i}v+\gamma_i = D_{\nu_i}g+\gamma_i.
\end{displaymath}
That is, $g$ is a $C^2$ solution of an equation of the form
\eqref{eq:3-generalised} on the open subset $\pi_{E^{\perp}}(B)$ of $\overline{\Omega^\perp}\subseteq E^\perp$. 
By Lemma~\ref{lem:constNeumann},
 $\nu_j$ is an eigenvector of $H_{v}(x)$ at every point
$x\in\Sigma_j\cap B$, and hence the normals $\nu_j$ are either in $E$
or $E^\perp$.
Then we can write
\begin{align*} 
  \Omega\cap B &= \bigcap_{i=1}^{m}\Big\{ x\in B\;\Big\vert\; x\cdot\nu_i<b_i\Big\}\\
     &= \bigcap_{i:\ \nu_i\in E}\Big\{ x\in B\;\Big\vert\;
           x\cdot\nu_i<b_i\Big\} 
          \bigcap\,\bigcap_{i:\ \nu_i\in E^\perp}\Big\{ x\in B\;\Big\vert\; x\cdot\nu_i<b_i\Big\}\\
    &=  \bigcap_{i:\ \nu_i\in E}\Big\{ x\in B\;\vert\;
            \pi_E(x)\cdot\nu_i<b_i\Big\}
           \bigcap\,\bigcap_{i:\ \nu_i\in E^\perp}\Big\{ x\in B\;\Big\vert\; \pi_{E^{\perp}}(x)\cdot\nu_i<b_i\Big\}\\
   &=\Big\{ x\in B\;\Big\vert\; \pi_E(x)\in\Omega^E,\ \pi_{E^{\perp}}(x)\in\Omega^\perp\Big\},
\end{align*}
where
\begin{displaymath}
  \Omega^E = 
  \bigcap_{i:\ \nu_i\in E}\Big\{ x\in E\;\big\vert\;
   x\cdot\nu_i<b_i\Big\}\text{ and }
  \Omega^\perp = \bigcap_{i:\ \nu_i\in E^\perp}\Big\{ x\in E^\perp\Big\vert\; x\cdot\nu_i<b_i\Big\}.
\end{displaymath}
This completes the proof of Lemma \ref{lem:Hessian-SMP}.
\end{proof}

Now, we can give the proof of Theorem~\ref{lemma 1.2}:

\begin{proof}[Proof of Theorem~\ref{lemma 1.2}]
  By Lemma \ref{lem:Hessian-SMP} (applied with
  $B=\overline{\Omega}$), we have that $v$ is of the
  form~\eqref{eq:10} for some solution $g$ of~\eqref{eq:3-generalised}
  on $\Omega^\perp$.  If $k=\dim(E)=d$ then $v$ is quadratic and there
  is nothing further to prove.  Otherwise the function $g$ is a $C^2$
  solution of an equation of the form \eqref{eq:3-generalised} on
  $\Omega^\perp$ in $\R^{d-k}$.  By the inductive hypothesis, $g$ is
  a quadratic function, and therefore $v$ is also quadratic.  This
  completes the induction and the proof of Theorem~\ref{lemma 1.2}.
\end{proof}

%
%
%
%


\section{Tame domains}
\label{sec: quadratic}

Our aim over the next several sections is to prove that concave
solutions of \eqref{eq:3-generalised} are twice continuously
differentiable.  The result of the previous section then implies that
such solutions are quadratic functions.

Recall that a function $f$ is \emph{semi-concave} if
there exists $C\in\R$ such that the function $x\mapsto f(x)-C|x|^2$ is concave.

Over the course of the next three sections we will prove the following:

\begin{theorem} \label{semiconcave implies C2} Let $\Omega$ be a
  polyhedral domain in $\R^{d}$ with faces
  $\Sigma_{1}, \dots, \Sigma_{m}$, and $v$ be a weak solution of
  problem \eqref{eq:3-generalised} for some $\mu$,
  $\gamma_{1},\dots, \gamma_{m}\in \R$. If $v$ is semi-concave in
  $\Omega$, then $v\in C^{2}(\overline{\Omega})$.
\end{theorem}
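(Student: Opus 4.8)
The plan is to pass to the tangent cone at a boundary point and decompose $v$ there via the homogeneous harmonic expansion of Proposition~\ref{lem:series-expansion}, using semi-concavity to kill the low-order terms and a separate rigidity statement to handle the degree-two term. Away from $\partial\Omega$, the equation $\Delta v=-\mu$ gives $v\in C^\infty(\Omega)$ by interior elliptic regularity, so only the behaviour of $D^2v$ near the boundary is at issue. As a first step I would record that $v\in C^{1,1}(\overline\Omega)$: semi-concavity means $C\abs{x}^2-v$ is convex on $\Omega$, so $D^2(C\abs{x}^2-v)$ is a nonnegative matrix-valued measure on $\Omega$ whose trace equals $(2Cd+\mu)\,\mathcal H^d$ and is therefore absolutely continuous; a nonnegative matrix measure with absolutely continuous trace is absolutely continuous (its diagonal entries are dominated by the trace and its off-diagonal entries by the diagonal ones), so $D^2v\in L^\infty(\Omega)$, and since $\Omega$ is convex this gives $v\in C^{1,1}(\overline\Omega)$. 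It then suffices to show that $D^2v$ has a limit at each $x_0\in\partial\Omega$.

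Fix such an $x_0$. By Remark~\ref{rem:1} there is $r_0>0$ with $B_{r_0}(x_0)\cap\Omega=x_0+(B_{r_0}(0)\cap\Gamma)$ for the polyhedral tangent cone $\Gamma=\Gamma_{x_0}$; translate $x_0$ to the origin. Subtracting $-\tfrac{\mu}{2d}\abs{x}^2$ makes $v$ harmonic on $B_{r_0}(0)\cap\Gamma$, the face conditions remaining $D_{\nu_i}v=-\gamma_i$ (still constant, since $x\cdot\nu_i=0$ on the faces through the origin). To make these homogeneous I would use a blow-down: the uniformly Lipschitz rescalings $v_\lambda(x)=v(\lambda x)/\lambda$ are harmonic with $D^2v_\lambda=\lambda\,D^2v(\lambda\,\cdot)\le\lambda C I$, so any locally uniform subsequential limit $v_0$ as $\lambda\to0$ is harmonic, concave, and has the same boundary data; a concave harmonic function has $D^2v_0\le0$ and $\operatorname{tr}D^2v_0=0$, hence $D^2v_0\equiv0$ and $v_0(x)=b\cdot x$ with $b\cdot\nu_i=-\gamma_i$ for all $i\in\mathcal I(0)$. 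After subtracting $b\cdot x$ and an additive constant we may assume $v$ is harmonic on $B_{r_0}(0)\cap\Gamma$, $v(0)=0$, $v\in C^{1,1}$, and $D_{\nu_i}v=0$ on every face of $\Gamma$.

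Proposition~\ref{lem:series-expansion} now applies (after rescaling to $B_1(0)\cap\Gamma$): $v(rz)=\sum_{i\ge0}f_i\,r^{\beta_i}\varphi_i(z)$ with $\beta_i$ as in \eqref{eq:beta-i} and convergence in $H^1(B_1(0)\cap\Gamma)\cap C^{\gamma_r}(\overline{B_r(0)\cap\Gamma})$. From $v(0)=0$ we get $f_0=0$. A term $r^{\beta_i}\varphi_i$ with $0<\beta_i<1$ has gradient blowing up at the origin, and one with $1\le\beta_i<2$ has Hessian blowing up at the origin unless the term is a polynomial --- which can happen only for a genuine linear summand, and such a summand does no harm. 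Arguing as in the proof of Proposition~\ref{prop:cone-MP} (isolate the smallest exponent with non-zero coefficient and test along a ray on which the corresponding spherical function is non-zero), the local Lipschitz bound forces $f_i=0$ for $\beta_i<1$ and the $C^{1,1}$ bound forces $f_i=0$ for $\beta_i\in(1,2)$. Hence $v=P+Q+R$, where $P$ collects the (harmless) polynomial summands of degree $\le 1$, $Q=\sum_{\beta_i=2}f_i\,r^2\varphi_i$ is a finite sum, and $R=\sum_{\beta_i>2}f_i\,r^{\beta_i}\varphi_i$; since the eigenvalues are discrete, $\beta_*:=\min\{\beta_i:\beta_i>2\}>2$, and a rescaling argument based on Lemma~\ref{lem:conv-fi-series} (the rescaled tails $R(r\,\cdot)/r^2$ tend to $0$ in $H^1$, hence in stronger norms) gives $D^2R(x)\to0$ as $x\to0$. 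Moreover $Q(x)=\lim_{\lambda\to0}\bigl(v(\lambda x)-P(\lambda x)\bigr)/\lambda^2$, a locally uniform limit of functions with uniformly bounded Hessians, so $Q$ is itself a degree-two homogeneous harmonic function with bounded second derivatives and homogeneous Neumann data on the polyhedral cone $\Gamma$.

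The remaining --- and genuinely hard --- ingredient is the rigidity statement to be established over the next sections: \emph{every degree-two homogeneous harmonic function on a polyhedral cone with bounded second derivatives and homogeneous Neumann boundary conditions is a quadratic polynomial.} Granting it, $Q$ is a quadratic polynomial, so near $x_0$ the original $v$ differs from a polynomial of degree $\le2$ by the tail $R$ with $D^2R\to0$ at $x_0$; thus $D^2v$ extends continuously to $x_0$, and since $x_0\in\partial\Omega$ was arbitrary and $v\in C^\infty(\Omega)$ we conclude $v\in C^2(\overline\Omega)$ (whence, by Theorem~\ref{lemma 1.2}, $v$ is in fact quadratic). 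The main obstacle is the rigidity statement itself, together with the regularity and decay of the expansion up to the edges and lower-dimensional faces of $\Gamma$ --- where solutions of the Neumann problem are in general only $C^{1,\alpha}$; I expect both to be treated by an induction on dimension, the faces of $\Gamma$ being polyhedral cones in $\R^{d-1}$, which is presumably the role of the ``tame domains'' introduced in this section.
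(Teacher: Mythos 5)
Your overall strategy coincides with the paper's: reduce to the tangent cone, expand in homogeneous Neumann harmonics, use the $C^{1,1}$ bound to eliminate the exponents $\beta_i\in(0,1)\cup(1,2)$, and invoke the rigidity of degree-two homogeneous Neumann harmonic functions with bounded Hessian (``tameness'', Proposition~\ref{prop:tame}) to identify the degree-two part as a quadratic polynomial. The preliminary steps --- interior smoothness, the two-sided Hessian bound giving $v\in C^{1,1}(\overline\Omega)$ as in Lemma~\ref{lem:C11}, the removal of the linear part, and the identification of $Q$ as a $C^{1,1}$ degree-two homogeneous Neumann harmonic function --- are all sound, and you correctly flag the rigidity statement as the input supplied by the later sections.

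The gap is in the sentence asserting that ``the rescaled tails $R(r\,\cdot)/r^2$ tend to $0$ in $H^1$, hence in stronger norms'', from which you conclude $D^2R(x)\to0$ as $x\to0$. For harmonic functions, $H^1$ smallness upgrades to $C^2$ smallness only on compact subsets of the \emph{open} cone; it does not do so up to $\partial\Gamma$, and indeed a generic Neumann harmonic function on a polyhedral cone is only $C^{1,\alpha}$ up to the boundary --- which is precisely the phenomenon the theorem must rule out, so the upgrade cannot be assumed without circularity. Term-by-term differentiation of the series also fails near $\partial A$, both because the angular eigenfunctions $\varphi_i$ need not be $C^2$ there and because their derivative sup-norms grow with $\lambda_i$. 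Consequently your argument establishes twice-differentiability of $v$ at $x_0$ with a Taylor-type error $O(|x-x_0|^{2+\gamma})$ (this is Lemma~\ref{lem:twicediff}), but not the continuity of $D^2v$ at $x_0$, which requires controlling $D^2v|_x$ for points $x$ approaching $x_0$ along lower-dimensional strata of $\partial\Gamma$. Closing this is the hardest part of the paper's proof: the decreasing induction over the strata $\Gamma^{(m)}\supset\cdots\supset\Gamma^{(0)}$ in Propositions~\ref{lem:nobadreglem1} and~\ref{lem:nobadreglem2}, together with the cone-radius estimate of Lemma~\ref{lem:cone-rad-vs-dist}, propagates the pointwise Taylor estimate \eqref{eq:wosc1} from the vertex to all nearby boundary points, and only then does Lemma~\ref{lem:coeffbound} yield $\lVert D^2w|_x - D^2w|_0\rVert\le C|x|^\gamma$. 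You gesture at an induction on dimension at the end, but as written the key analytic step is asserted rather than proved.
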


The main difficulty in proving that $v\in C^{2}(\overline{\Omega})$ is
to understand the behaviour of $v$ at points on the boundary
$\partial\Omega$, particularly where two or more of the faces
$\Sigma_{i}$ intersect. We begin by using the series
expansion~\eqref{eq:hhdecomp} to understand the behaviour of $v$ near
a boundary point $x_0$ in terms of homogeneous Neumann harmonic
functions on the tangent cone $\Gamma_{x_0}$.  A crucial step in our
argument will be to prove the result that homogeneous degree two
Neumann harmonic functions must be quadratic if they have bounded
second derivatives.  We will accomplish this in the next section.  In
the rest of this section we will establish that this result is
sufficient to prove regularity.

\begin{definition}\label{def:tame}
  For given vectors $\nu_{1}, \dots, \nu_{m}\in \R^{d}$, a polyhedral cone
  \begin{equation}
    \label{eq:poly-cone}
    \Gamma = \bigcap_{i=1}^m\Big\{x\in \R^{d}\,\Big\vert\; x\cdot\nu_i< 0\Big\}
\end{equation}
is called \emph{tame} if every degree two homogeneous harmonic
function $v\in C^{1,1}(\overline{\Gamma})$ with homogeneous Neumann boundary
condition on $\partial\Gamma$ is quadratic. If $\Omega$ is a
polyhedral domain in $\R^d$ and $B$ is a relatively open subset of
$\overline{\Omega}$, then $B$ is called \emph{tame} if the tangent
cone $\Gamma_x$ is tame for every $x\in B$.
\end{definition}

The significance of tameness for our argument is captured by the
following preliminary theorem which is the main result of this
section.

\begin{theorem}\label{prop:nobadreg}
  Let $\Omega$ be a polyhedral domain in $\R^{d}$ and $B$ a relatively
  open tame subset of $\overline{\Omega}$. Then every weak solution $w\in
  C^{1,1}(B)\cap H^{1}(B)$ of problem
    \begin{equation}\label{eq:weak-Neumann-B}
    \begin{cases}
      \Delta w = 0&\text{on } \Omega\cap B,\\
      D_\nu w = 0&\text{on }\partial\Omega\cap B
    \end{cases}
  \end{equation}
  is in $C^2(B)$.
\end{theorem}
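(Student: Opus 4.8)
\emph{Proof proposal.} The plan is to argue locally and by induction on $d$. Since $C^2$-regularity is a local property, fix $x_0\in B$; by Remark~\ref{rem:1} we may translate $x_0$ to $0$ and assume that $\overline\Omega$ agrees with the tangent cone $\Gamma=\Gamma_{x_0}$ inside a ball $B_r(0)$, so that $w\in C^{1,1}(\overline{B_r(0)\cap\Gamma})\cap H^1(B_r(0)\cap\Gamma)$ is a weak solution of $\Delta w=0$ on $B_r(0)\cap\Gamma$ with $D_\nu w=0$ on $B_r(0)\cap\partial\Gamma$. If $\Gamma=\R^d$ this is interior elliptic regularity, and if $\Gamma$ is a half-space then even reflection across $\partial\Gamma$ reduces to that case; so one may assume $\Gamma$ is a proper polyhedral cone and that the theorem already holds in dimensions $<d$ (the case $d=1$ being trivial).

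First I would use the series expansion of Proposition~\ref{lem:series-expansion}: writing $A=\Gamma\cap\S^{d-1}$ and letting $\{\varphi_i\}$ be the Neumann eigenfunctions on $A$ with eigenvalues $\lambda_i$, one has $w(rz)=\sum_{i\ge0}f_i\,r^{\beta_i}\varphi_i(z)$ with $\beta_i\ge0$ determined by $\beta_i^2+(d-2)\beta_i=\lambda_i$, converging in $H^1\cap C^{\gamma_\rho}(\overline{B_\rho(0)\cap\Gamma})$ for $\rho<r$. Grouping the terms by homogeneity degree gives $w=\sum_\beta h_\beta$, where each $h_\beta$ is homogeneous of degree $\beta$, harmonic on $\Gamma$, and satisfies the Neumann condition. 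The hypothesis $w\in C^{1,1}$ forces all components with $\beta\in(0,1)\cup(1,2)$ to vanish and forces the degree-one component to be linear: a nonzero $h_\beta$ grows exactly like $r^\beta$ along some ray, which contradicts the at-most-linear growth of $w\in C^{0,1}$ if $\beta<1$ and the at-most-quadratic growth of $w\in C^{1,1}$ if $1<\beta<2$, while the degree-one component, being the limit of $\bigl(w(rz)-w(0)\bigr)/r$, must equal $Dw|_0$. Writing $\ell$ for the resulting affine part (the first-order Taylor polynomial of $w$ at $0$) and $h_2$ for the degree-two part, we obtain $w=\ell+h_2+R$ with $R=\sum_{\beta_i>2}f_i\,r^{\beta_i}\varphi_i$, so that $R=w-\ell-h_2$ vanishes to order strictly greater than $2$ at $0$.

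Now tameness enters. The rescalings $\lambda^{-2}(w-\ell)(\lambda\,\cdot\,)=h_2+\lambda^{-2}R(\lambda\,\cdot\,)$ are bounded in $C^{1,1}$ on compact subsets of $\overline\Gamma$, uniformly in $\lambda$, and converge uniformly to $h_2$ as $\lambda\to0$; hence $h_2\in C^{1,1}(\overline\Gamma)$, so, being a degree-two homogeneous harmonic function with the Neumann condition on the \emph{tame} cone $\Gamma$, it is a quadratic polynomial. Thus near $0$ we have $w=P+R$ with $P=\ell+h_2$ a quadratic polynomial and $R\in C^{1,1}$ vanishing to order $>2$; the same rescaling argument shows $R$ is twice differentiable at $0$ with vanishing Hessian, so $w$ is twice differentiable at $x_0$ with $D^2w(x_0)=D^2P$. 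Moreover $R$ is harmonic on the \emph{open} cone $\Gamma$ and its rescalings $R_\lambda=\lambda^{-2}R(\lambda\,\cdot\,)$ are $C^{1,1}$-bounded and tend to $0$ uniformly, so interior estimates for harmonic functions (and even reflection across the relative interiors of the faces of $\Gamma$) upgrade this to $C^2_{\mathrm{loc}}$-convergence on $\overline\Gamma$ minus its codimension-$\ge2$ faces; consequently $D^2R(z)\to0$ as $z\to0$ along directions avoiding those faces, which yields continuity of $D^2w$ at $x_0$ for approaches from $\Omega$ or from the relative interiors of faces.

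The main obstacle is the behaviour along the codimension-$\ge2$ faces of $\Gamma$, where only $C^{1,1}$-bounds are available and interior estimates do not help; this is where the induction on $d$ is needed. A codimension-$\ge2$ face containing a nonzero point has dimension $k$ with $1\le k\le d-2$, and near such a point the tangent cone has the form $\R^k\times\Gamma''$ with $\Gamma''$ a pointed polyhedral cone in $\R^{d-k}$; tameness descends to $\Gamma''$, because a degree-two homogeneous harmonic Neumann function on $\Gamma''$, extended independently of the Euclidean factor, is a degree-two homogeneous harmonic Neumann function with bounded Hessian on $\R^k\times\Gamma''$ and hence quadratic by tameness of the latter. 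The regularity of $w$ near such a face then reduces to a problem in dimension $d-k<d$ — the Euclidean directions are \emph{free} (no boundary there), and differentiating $w$ twice in those directions produces functions that are again Neumann-harmonic, hence Hölder continuous by the expansion above, so that the slices transverse to the Euclidean factor solve a $(d-k)$-dimensional problem of the type handled by the inductive hypothesis (in a form allowing a Hölder right-hand side, obtained by subtracting a Newtonian potential) — and one concludes $w\in C^2$ there. Combining this with the previous step shows that $D^2w$ extends continuously to all of $B$, completing the induction. The crux is precisely this last point: away from the codimension-$\ge2$ skeleton harmonicity and reflection do all the work, but at the corners the passage from $C^{1,1}$-bounds to a continuous Hessian must be routed through the dimensional induction, using that tameness is inherited by the cones appearing along the lower-dimensional strata.
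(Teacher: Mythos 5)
Your first half — the expansion of Proposition~\ref{lem:series-expansion}, using $C^{1,1}$ to kill the components with homogeneity in $(0,1)\cup(1,2)$ and to force the degree-one part to be linear, and then tameness to make the degree-two part quadratic — is essentially the paper's Lemma~\ref{lem:twicediff}, and it correctly yields twice differentiability at each point together with the estimate $|w(x)-w(0)-L(x)-\tfrac12\mathfrak{a}(x,x)|\le C|x|^{2+\gamma}$. The gap is in your treatment of the continuity of $D^2w$ near the codimension-$\ge 2$ strata, which is the real content of the theorem. Your slicing argument reduces to an \emph{inhomogeneous} problem: the restriction of $w$ to a slice $\{t\}\times\Gamma''$ satisfies $\Delta'' u=-\sum_i\partial_{t_i}^2w(t,\cdot)$ with Neumann conditions, whereas the inductive hypothesis covers only the homogeneous Neumann problem. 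The proposed fix — "subtracting a Newtonian potential" — requires producing a particular solution of a Neumann problem on a polyhedral cone with merely H\"older right-hand side that is $C^2$ up to the corner. That is itself a corner Schauder estimate of the same difficulty as the theorem, and it is false in general: already on a planar sector of opening $\theta_0\in(\pi/2,\pi)$ the homogeneous problem admits the solution $r^{\pi/\theta_0}\cos(\pi\theta/\theta_0)$, which is not $C^{1,1}$, so the solution operator does not gain two full derivatives at the corner and one cannot normalize the particular solution to be $C^2$ there without further argument. In addition, the H\"older continuity of $\partial_{t_i}^2w$ up to the lower strata, which you feed into this reduction, is part of what is being proved; and slice-wise $C^2$ regularity for each fixed $t$ does not give joint continuity of the full Hessian without estimates uniform in $t$.

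What the paper does instead is purely quantitative and avoids any inhomogeneous problem: it stratifies $\overline{\Gamma}$ by the number $k$ of faces meeting at a point and propagates the uniform Taylor estimate \eqref{eq:wosc1}, namely $|w(y)-w(x)-Dw|_x(y-x)-\tfrac12 D^2w|_x(y-x,y-x)|\le C|y-x|^{2+\gamma}$, from the deepest stratum $\Gamma^{(m)}$ outward by \emph{decreasing} induction on $k$ (Propositions~\ref{lem:nobadreglem1} and~\ref{lem:nobadreglem2}). The inductive step applies Lemma~\ref{lem:twicediff} in the ball of radius $\rho(x)$ about a point $x\in\Gamma^{(k-1)}$, after subtracting the second-order Taylor polynomial at the nearest point $\tilde x\in\Gamma^{(k)}$; the decay $|\tilde w(y)|\le C|y-\tilde x|^{2+\gamma}$ is converted into the desired estimate at $x$ using the crucial lower bound $\rho(x)\ge\sigma\,d(x,\Gamma^{(k)})$ on the cone radius (Lemma~\ref{lem:cone-rad-vs-dist}), together with Lemmas~\ref{lem:coeffbound} and~\ref{lem:closeball} to control $Dw|_x$ and $D^2w|_x$. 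The resulting uniform estimate immediately gives $\|D^2w|_x-D^2w|_0\|\le C|x|^{\gamma}$. If you want to salvage your outline, you would need to replace the slicing/Newtonian-potential step by some such uniform two-point Taylor estimate; as written, the passage from $C^{1,1}$ bounds to a continuous Hessian at the corners is not established.
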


\begin{proof}[Proof ofTheorem~\ref{prop:nobadreg}]
  We first establish that the harmonic function $w$ is twice
  differentiable at each point $x_0\in B$, using the
  decomposition \eqref{eq:hhdecomp}.  Since the restriction of
  $B$ to a sufficiently small ball about $x_0$ agrees with
  a translate of the tangent cone to $\Omega$ at $x_0$, it is sufficient to
  consider a Neumann harmonic function defined on a ball about
  the origin in a tame cone $\Gamma$.

\begin{lemma}\label{lem:twicediff}
  Let $\Gamma$ be a tame polyhedral cone in $\R^d$ with outer unit
  face normals $\nu_{1},\dots,\nu_{m}$, and let $B=B_{1}(0)\cap \overline{\Gamma}$,
  where $B_{1}(0)$ is the open unit ball in $\R^{d}$. Then there exist
  constants $C>0$ and $\gamma\in (0,1)$ depending only on $\Gamma$ such
  that for every weak solution $w\in C^{1,1}(B)\cap H^{1}(B)$
  of~\eqref{eq:weak-Neumann-B}, there
  exists a linear functional $L : \R^{d}\to \R$ with $\{\nu_{1},\dots,\nu_{m}\}\subseteq
  \textrm{ker}(L)$ and a symmetric bilinear form $\mathfrak{a} : \R^{d}\times \R^{d}\to \R$
  with trace $\textrm{tr}(\mathfrak{a}):=\sum_{i=1}^{d}a(e_{i},e_{i})=0$ such that the following estimate holds:
  \begin{equation}\label{eq:localest}
  \left|w(x)-w(0)-L(x)-\tfrac12 \mathfrak{a}(x,x)\right|\le
  C\,\norm{w}_{L^{\infty}(B\cap\Gamma)}\,\abs{x}^{2+\gamma}\quad
  \text{for every $x\in B_{1/2}(0)\cap\Gamma$.}
  \end{equation}
 Consequently $w$ has derivatives
  up to second order at $x=0$, with $Dw|_0=L$ and $D^2w|_{0}=\mathfrak{a}$.
\end{lemma}

\begin{proof}[Proof of Lemma \ref{lem:twicediff}]
  We only need to consider the case $d\ge 2$. By
  Proposition~\ref{lem:series-expansion}, $w$ has the series
  decomposition~\eqref{eq:7series}. Since in the
  series~\eqref{eq:7series}, $\varphi_0\equiv1$ and $\beta_0=0$, we
  have $w(0)=f_0$. Thus, writing in polar coordinates $x=rz$ for $r>0$ and
  $z\in \S^{d-1}$,
 \begin{displaymath}
   w(rz) = w(0) + \sum_{i>1}f_i\,r^{\beta_i}\,\varphi_i(z) \quad\textrm{for every $rz\in
    B\cap \Gamma$,} 
\end{displaymath}
The second derivatives $D^2\psi_i$ 
of $\psi_{i}(x):=\abs{x}^{\beta_i}\varphi_i(x/\abs{x})$ are homogeneous
of degree $(\beta_i-2)$. In particular, for every $i$ with
$\beta_i<2$, $D^2\psi_i$ is unbounded as $r=\abs{x}$
approaches zero, except in the case where $\beta_i=1$ and $\psi_{i}$
is a linear function. Since $w\in C^{1,1}(B)$, the only non-zero
$\psi_{i}$ with $0<\beta_i<2$ are those with $\beta_i=1$, and these
form a linear function $L$.  Those $\psi_{i}$ satisfy homogeneous Neumann boundary
conditions on $B\cap \partial\Gamma$, implying that $L(\nu_{i})=0$ for every
$i=1,\dots, m$. Now, defining
$v(rz):=\sum_{\beta_i=2}f_i\,r^{2}\,\varphi_i(z)$ for every $rz \in B\cap \Gamma$, one has that
\begin{equation}
  \label{eq:2}
  w(rz) = w(0)+ L(rz)+v(rz)+\sum_{
{\beta_i> 2}}f_i\,r^{\beta_i}\,\varphi_i(z)
\end{equation}
for every $rz \in B\cap \Gamma$. The function $v$ is harmonic and
homogeneous of degree $2$, satisfies $D_{\nu}v=0$ on $\partial\Gamma$
and has bounded second derivatives since they are given by limits of
second derivatives of $w\in C^{1,1}(B)$ as $r\to0^+$. Thus
$v\in C^{1,1}(\overline{\Gamma})$. Since $\Gamma$ is tame, $v$ is
quadratic and since $v$ is a homogeneous quadratic function and so,
there is a symmetric bilinear form $\mathfrak{a}$ on $\R^{d}$ such
that
\begin{displaymath}
 v(x)=\tfrac12\mathfrak{a}(x,x)\quad\text{for every $x\in \overline{\Gamma}$.}
\end{displaymath}
Since $v$ is harmonic, we have that
\begin{displaymath}
  0=\Delta v(x)=\textrm{tr}(\mathfrak{a})\quad\text{for every $x\in
    B\cap \Gamma$.}
\end{displaymath}
Furthermore, since $v$ satisfies homogeneous Neumann boundary
conditions, 
one has that
\begin{displaymath}
  0 = D_{\nu_i}v(x) = Dv|_x(
  \nu_{i})=\mathfrak{a}(x,\nu_i)\quad\text{for every $x\in
    \Sigma_{i}$ and $i=1,\dots,m$.}
\end{displaymath}
Differentiating the last equality in any direction $e\in
T_x\Sigma_{i}, = (\nu_i)^\perp$, we see that
\begin{displaymath}
  0 = \mathfrak{a}(e,\nu_i)\quad\text{for every $e\perp \nu_{i}$,}
\end{displaymath}
showing that $\nu_{i}$ is an eigenvector of $\mathfrak{a}$.

Next, defining $\bar\beta=\min\{\beta_i>2:\ f_i\neq 0\}>2$, the
remaining term on the right-hand side in~\eqref{eq:2} has the form
\begin{displaymath}
r^{\bar\beta}\sum_{\beta_i>2}f_ir^{\beta_i-\bar\beta}\varphi_i(z).
\end{displaymath}
Since $f_{i}$ is defined by~\eqref{eq:7coef}-\eqref{eq:7coef-of-f} and
since $\norm{\varphi}_{L^{2}(A)}=1$, we have that
\begin{equation}
  \label{eq:4}
  \abs{f_{i}}\le \norm{w}_{L^{\infty}(B\cap \Gamma)}\quad\text{for every
    $i\ge 1$.}
\end{equation}
Further, by \cite[Corollary~1]{MR639355} and~\eqref{eq:beta-i}, 
\begin{equation}
 \label{eq:5}
  \norm{\varphi_{i}}_{L^{\infty}(A)}\le C\,
    \lambda_i^{\frac{d-1}{4}}\le C\, 2\,\beta_i^{\frac{d-1}{2}}\quad
    \text{for every $i\ge 1$,}
\end{equation}
where $C=C(d)>0$ is a constant. Combining~\eqref{eq:4} and~\eqref{eq:5}, one sees that
\begin{equation}
\label{eq:6}
\left| \sum_{\beta_i\ge 2} f_i\,r^{\beta_i-\bar\beta}\,\varphi_i(z)\right|
\le C\, \norm{w}_{L^{\infty}(B\cap \Gamma)}\,\sum_{\beta_i\ge 2} \beta_i
  ^{\frac{d-1}{2}}  r^{\beta_i-\bar\beta}
\end{equation}
Note, for every $r\in (0,1)$, there is an $N(r)>0$ such that
$f(\beta):=\beta^{(d-1)/2}\, r^{\beta}$ is decreasing on
$[N(r),+\infty)$. Thus, for every $r\in (0,1)$, let $i_{r}\ge 1$ be the first integer satisfying
$\beta_{i_{r}}\ge N(r)+2$. Then
\begin{displaymath}
  \beta^{(d-1)/2}_{i}\, r^{\beta_{i}}\le \beta^{(d-1)/2}_{i_{r}}\,
  r^{\beta_{i_{r}}}\le
  \beta^{\frac{d-1}{2}}_{i_{r}}\,r^{\beta_{i}}\quad\text{for all
    $i\ge i_{r}$.} 
\end{displaymath}
By the eigenvalue estimates due to Cheng and
Li~\cite[Theorem~1]{MR639355} and~\eqref{eq:beta-i}, there is an
integer $i_{\ast}\ge i_{r}$ and a
constant $C=C(\abs{A},d)>0$ such that
\begin{displaymath}
  \beta_{i}\ge \tfrac{1}{\sqrt{d-1}}\sqrt{\lambda_i}\ge C\; i^{\frac{1}{d-1}}\quad\text{for all $i\ge i_{\ast}$.}
\end{displaymath}
Applying this to the last estimate, we see that
\begin{displaymath}
  \beta^{(d-1)/2}_{i}\, r^{\beta_{i}}\le
  \beta^{(d-1)/2}_{i_{r}}\,r^{C\,i^{\frac{1}{d-1}}}\quad\text{for all
    $i\ge i_{\ast}$} 
\end{displaymath}
and so by~\eqref{eq:6}, 
\begin{displaymath}
  \left| \sum_{i\ge i_{\ast}} f_i\,r^{\beta_i-\bar\beta}\,\varphi_i(z)\right|
\le C\, \norm{w}_{L^{\infty}(B\cap \Gamma)}\, \sum_{i\ge i_{\ast}} r^{C\, i^{\frac{1}{d-1}}}.
\end{displaymath}
This shows that the series $S(rz):=\sum_{\beta_i\ge 2}
f_i\,r^{\beta_i-\bar\beta}\,\varphi_i(z)$ converges pointwise on
$B\cap \Gamma$, and uniformly on $B_{1/2}(0)\cap\Gamma$. In
particular, $S$ is bounded on $B_{1/2}(0)\cap\Gamma$ by $C_{1/2}\,
\norm{w}_{L^{\infty}(B\cap \Gamma)}$ for some constant
$C_{1/2}>0$. Applying this to~\eqref{eq:2} and noting that
$\bar{\beta}>2$ yields the desired estimate~\eqref{eq:localest}. The
fact that $Dw(0)=L$ and $D^{2}w(0)=\mathfrak{a}$ follow from this estimate.

\end{proof}

%
%
%
%
%

\noindent\emph{Continuation of the Proof of Theorem~\ref{prop:nobadreg}.} The remaining
difficulty in the proof of Theorem~\ref{prop:nobadreg} is to confirm
continuity of the second derivative.  As before in
Lemma~\ref{lem:twicediff}, it suffices to consider a Neumann harmonic
function $w$ on a cone, and to establish the continuity of the second
derivative at the origin.  Accordingly, we fix a point $x_0$ in
$\partial\Omega\cap B$, and $r_0>0$ sufficiently small to ensure that
\begin{displaymath}
\Omega\cap B_{r_0}(x_0) = \Big\{x_0+x\,\Big\vert\;x\in\Gamma_{x_0},\
\abs{x}<r_0\Big\},
\end{displaymath}
where $\Gamma_{x_0}$ is the tangent cone to
$\Omega$ at $x_0$. To show that the second derivatives of $w$ are continuous at $x_0$,
it is sufficient to show that the Neumann harmonic function
\begin{displaymath}
\hat{w}(x) = \frac{u(x_0+r_0x)}{\norm{w}_{L^{\infty}(B_{r_0}(x_0)\cap\overline{\Gamma}_{x_{0}})}}\quad
  \text{for every $x\in B\cap \Gamma$}
\end{displaymath}
has continuous second derivative at the origin, where $B=B_{1}(0)$ is
the open unit ball and $\Gamma$ a polyhedral cone with vertex at the
origin. 

Now  we label parts of $\Gamma$ according to the number
of faces which intersect.  Recall    the faces of 
$\Omega$ are $\Sigma_i$ with outward unit normal
vectors $\nu_{i}$ for every for $i=1, \cdots,m$. Then
\begin{displaymath}
  \Gamma^{(k)} := 
  \bigcup_{\substack{{\mathcal S}\subset\{1,\cdots,m\}\\|{\mathcal S}|=k}}
  \left(\bigcap_{i\notin{\mathcal S}}\left\{x\,\Big\vert\ 
      x\cdot\nu_i\le 0\right\}\right)\cap\left(\bigcap_{j\in{\mathcal S}}
    \left\{x\,\Big\vert\ x\cdot\nu_j=0\right\}\right)
\end{displaymath}
denotes the set of all $x\in \Gamma$ where $k$ faces intersect. Thus
$\Gamma^{(0)}=\overline{\Gamma}$,
$\Gamma^{(1)}=\partial\Gamma$, and
$0\in \Gamma^{(m)}$.\medskip 

We now proceed by (decreasing) induction on $k$, starting with $k=m$:

\begin{proposition}\label{lem:nobadreglem1}
  Let $\Gamma$ be a tame polyhedral cone in $\R^d$ and $B=B_{1}(0)\cap
  \overline{\Gamma}$. Then there exist
  constants $C>0$ and $\gamma \in (0,1)$ depending only on $\Gamma$ such
  that for every weak solution $w\in C^{1,1}(B)\cap H^{1}(B)$
  of~\eqref{eq:weak-Neumann-B},
    \begin{equation}
      \label{eq:wosc1}
        \left\vert w(y)-w(x)-Dw|_x
          (y-x)-\tfrac{1}{2}D^2w|_x(y-x,y-x)\right\vert
       \le   C\,\abs{y-x}^{2+\gamma}
  \end{equation}
  for every $x\in B_{1/2}(0)\cap\Gamma^{(m)}$ and $y\in B\cap\overline{\Gamma}$.
\end{proposition}

For the proof of Proposition~\ref{lem:nobadreglem1} we will use the
following  auxiliary result, which will be also useful several
times later.

\begin{lemma}\label{lem:coeffbound}
  Let $\mathfrak{a}$ be a symmetric bilinear form and $L$ a linear
  functional on $\R^{d}$, and let $c\in \R$. Define
  \begin{displaymath}
    q(x)=\mathfrak{a}(x,x)+L(x)+c\quad\text{for every $x\in \R^{d}$.}
 \end{displaymath}
 If for $r>0$ and $M\ge 0$, one has that $\sup_{x\in \overline{B}_r(0)}|q(x)|\le M$, then
 $|c|\le M$, $\norm{L}\le 2M/r$, and the
 eigenvalues $\lambda_{i}$ of $\mathfrak{a}$ satisfy $\abs{\lambda_{i}}\le 2M/r^2$.
\end{lemma}

\begin{proof}
  Choosing $x=0$ gives $|c|\le M$, implying that
  $|\mathfrak{a}(x,x)+L(x)|\le 2\,M$ for all $x\in \overline{B}_r(0)$. Further, for
  $x\in \overline{B}_{r}(0)$, we have (by replacing $x$ by $-x$) that
  $|\mathfrak{a}(x,x)-L(x)|\le 2\,M$, and hence (taking sums and
  differences) $|a(x,x)|\le 2\,M$ and $|L(x)|\le 2\,M$. Thus,
  $\abs{\lambda_{i}}\le 2M/r^2$ follows by choosing $x/r$ to be a normalised
  eigenvector of $\mathfrak{a}$, and $\norm{L}\le 2M/r$ follows by choosing
  $x\in \partial B_{r}(0)$ with $L(x)=\norm{L}\abs{x}$.
\end{proof}

In order to apply the lemma above, we need a suitable ball.  This is
provided by the following:

\begin{lemma}\label{lem:closeball}
  Let $\Omega$ be a bounded open convex set in $\R^d$.  Then there
  exist $\sigma>0$ and $R>0$ such that for every
  $x\in\overline{\Omega}$ and every $r\in(0,R)$, there exists
  $\hat{x}\in \Omega$ such that the open
  ball $B_{\sigma r}(\hat{x})$ is contained in $B_r(x)\cap\Omega$.
\end{lemma}

\begin{proof} 
  Let $\rho_-$ be the inradius and $x_-$ an incentre of $\Omega$,
  and let $\rho_+$ be the circumradius of $\Omega$. Then, for
  $R=2\rho_+$ (so that $\Omega$ is included in $B_R(x)$ for any
  $x\in\overline{\Omega}$) and $\sigma = \frac{\rho_-}R$, one has that
 \begin{equation}\label{eq:insideball}
 B_{\sigma R}(x_-)=B_{\rho_-}(x_-)\subseteq \Omega=\Omega\cap B_R(x)
 \end{equation} 
 for any $x\in\overline{\Omega}$. Now, for fixed
 $x\in\overline{\Omega}$ and $r\in(0,R)$, let 
 \begin{displaymath}
   T_\lambda(y) = x+\lambda(y-x)\quad\text{for $y\in \R^{d}$ and $\lambda=\tfrac{r}{R}\in (0,1)$.}
 \end{displaymath}
 Since $T_{\lambda}(y)= (1-\lambda)x+\lambda y$, convexity of
 $\Omega$ implies that $T_\lambda(\Omega)\subseteq \Omega$. Thus,
 by~\eqref{eq:insideball} and since $T_\lambda(B_R(x))=B_r(x)$, one has that
  \begin{displaymath}
 B_{\sigma r}(x_{-})=T_\lambda(B_{\sigma R}(x_-))\subseteq T_\lambda\left(\Omega\cap B_R(x)\right)
 =T_\lambda(\Omega)\cap T_\lambda(B_R(x))\subseteq \Omega\cap B_r(x),
\end{displaymath}
as claimed.
\end{proof}

With these preliminaries, we can prove the base case
of our (decreasing) induction.

\begin{proof}[Proof of Proposition~\ref{lem:nobadreglem1}]
  For $x\in B_{1/2}(0)\cap\Gamma^{(m)}$, the tangent cone $\Gamma_{x}$ to $\Omega$ at
  $x$ agrees with $\Gamma$ at the origin. 
  Thus, we can apply Lemma~\ref{lem:twicediff} to the function
  \begin{displaymath}
    w^{x}(v) = w\left(x+\frac{v}{2}\right)\quad\quad\text{for every
      $v\in B_{1}\cap \Gamma$}
  \end{displaymath}
 and obtain that
 \begin{displaymath}
\left|w^{x}(v)-w^{x}(0)-Dw^{x}|_0(v)-\tfrac12D^2w^{x}|_0(v,v)\right|
\le C\,|v|^{2+\gamma}
\end{displaymath}
for all $v\in B_{1/2}(0)\cap\Gamma_x$. Now, setting $v=2(y-x)$ for
$y\in  B_{1/4}(x)\cap\Gamma$ and using the definition of $w^{x}$ we
obtain that estimate~\eqref{eq:wosc1} holds
for all $y\in B_{1/4}(x)\cap\Gamma$. To derive the same inequality for
$y\in B_1(0)\setminus B_{1/4}(x)$, we first derive bounds on the size
of $Dw|_x$ and $D^2w|_x$, using Lemma~\ref{lem:coeffbound}: by Lemma
\ref{lem:closeball} applied to $\Omega=B\cap \Gamma$ and $r=1/4$,
there are $\sigma>0$ and $x_{\ast}\in B\cap \Gamma$ such that the open
ball $B_{\sigma/4}(x_{\ast})$ is contained in
$B_{1/4}(x)\cap\Gamma$. Due to estimate~\eqref{eq:wosc1} and since $w$
is bounded on $\overline{B}_{\sigma/4}(x_{\ast})$, there is a $C>0$
such that
\begin{displaymath}
  \sup_{y\in \overline{B}_{\sigma/4}(x_{\ast})}\left|Dw|_x(y-x)+
  \tfrac12D^2w|_x(y-x,y-x)\right|\le C.
\end{displaymath}
For $y\in \overline{B}_{\sigma/4}(x_{\ast})$, setting $v=y-x_{\ast}$,
this shows that the quadratic function
\begin{displaymath}
  q(v):=Dw|_x\,(v+x_{\ast}-x)+\tfrac12D^2w|_x(v+x-x_{\ast},v+x-x_{\ast})
\end{displaymath}
is bounded on $\overline{B}_{\sigma/4}(0)$ and hence by
Lemma~\ref{lem:coeffbound}, the coefficients 
of $q$ are boun\-ded. Moreover, the quadratic part of $q$ gives that
the eigenvalues $\lambda_{i}(x)$ of $D^2w|_x$ satisfy
$\abs{\lambda_{i}(x)}\le 32 C/\sigma^{2}$. 
Since $D^{2}w^{x}|_0=\tfrac{1}{4}D^{2}w|_x$ and $D^{2}w^x|_0$ is
symmetric by Lemma~\ref{lem:twicediff}, 
the Hessian $D^2w|_x$ is symmetric and so, the bound on
$\lambda_{i}(x)$ implies that $\norm{D^2w|_x}\le 32 C/\sigma^{2}$. 
Further, the linear part of $q$ gives that
\begin{displaymath}
\norm{Dw|_x+ D^2w|_x(x_{\ast}-x)}\le 8 C/\sigma
\end{displaymath}
and since $\norm{D^2w|_x}\le 32 C/\sigma^{2}$ and
$\abs{x-x_{\ast}}<1/4$, 
this yields that $\norm{Dw|_x} \le 16 C/\sigma$. 
Now, if $y\in B_1(0)\setminus B_{1/4}(x)$, then we have
$\frac14\leq |y-x|\leq \frac32$, and so, the bounds on $w(y)$, $w(x)$, $Dw|_x$,
$D^2w|_x$, $|y-x|$ and $|y-x|^{-1}$ show that
\begin{align*}
&\left|w(y)-w(x)-Dw|_x(y-x)-\tfrac12D^2w|_x(y-x,y-x)\right|
 \le C\le C\,|y-x|^{2+\gamma},
\end{align*}
as required.  
\end{proof}

Next, we establish the inductive step:

\begin{proposition}\label{lem:nobadreglem2}
  Let $\Gamma$ be a tame polyhedral cone in $\R^d$ and $B=B_{1}(0)\cap
  \overline{\Gamma}$. Suppose that there
  exists a $\gamma\in(0,1)$ such that if a weak solution
  $w\in C^{1,1}(B)\cap H^{1}(B)$ of~\eqref{eq:weak-Neumann-B} satisfies
\begin{equation}\label{eq:nearbyclose}
    \left|w(y)-w(x)-Dw|_x(y-x)-\tfrac12D^2w|_x(y-x,y-x)\right|\\
   \lesssim |y-x|^{2+\gamma}
\end{equation}
for every $x\in B_{1/2}(0)\cap\Gamma^{(k)}$ and
$y\in B_{1}(0)\cap\overline{\Gamma}$, then $w$ also satisfies~\eqref{eq:nearbyclose} for all
$x\in B_{1/2}(0)\cap\Gamma^{(k-1)}$ and
$y\in B_{1}(0)\cap\overline{\Gamma}$.
\end{proposition}

To prove this proposition, we intend to apply Lemma
\ref{lem:twicediff} about
$x\in (B_{1/2}(0)\cap\Gamma^{(k-1)})\setminus\Gamma^{(k)}$. In
order to do this we need to estimate the \emph{cone radius}
\begin{equation}
  \label{eq:coneradius}
  \rho(x):=\sup\Big\{r>0\,\Big\vert\, B_r(x)\cap 
  \overline{\Gamma}=x+(B_r(0)\cap\overline{\Gamma}_x)\,\Big\},
\end{equation}
where $\Gamma$ is a polyhedral cone in $\R^{d}$ with vertex at the
origin and $\Gamma_{x}$ the tangent cone to $\Gamma$ at
$x\in \partial\Gamma\setminus\{0\}$. This is supplied
by the following result.

\begin{lemma}\label{lem:cone-rad-vs-dist}
  There exists $\sigma>0$ such that
  \begin{equation}
    \label{eq:18}
    \rho(x)\geq \sigma d(x,\Gamma^{(k)})\quad\text{for all
    $x\in \Gamma^{(k-1)}\setminus\Gamma^{(k)}$.} 
  \end{equation}
\end{lemma}


We say that a convex cone $\Gamma$ in $\R^d$ \emph{admits a linear
  factor $E$} if there exist a linear subspace $E$ of $\R^{d}$ of positive dimension
with orthogonal complement $E^\perp$ in $\R^d$ and a convex cone
$\tilde\Gamma$ in $E^\perp$ such that
\begin{displaymath}
  \Gamma = \Big\{x\in\R^d\,\Big\vert\; \pi_{E^{\perp}}(x)\in\tilde\Gamma\Big\},
\end{displaymath}
where $\pi_{E^{\perp}}$ is the orthogonal projection onto $E^\perp$.  In this
situation, we write $\Gamma = \tilde\Gamma\oplus E$.\medskip  

The following  observation is used in the inductive step of our argument, and
will also be used later in the paper.

\begin{lemma}
  \label{lem:TCnotvertex}
  Let $\Gamma$ be a polyhedral cone in $\R^d$ with vertex at the
  origin and outer unit face normals $\nu_1,\cdots,\nu_m$. Let
  $x_0\in\partial\Gamma\setminus\{0\}$.  Then the tangent cone
  $\Gamma_{x_0}$ to $\Gamma$ at $x_0$ has a linear factor $\R x_{0}$,
  and so had the form
  $\Gamma_{x_0} = \tilde\Gamma\oplus \R x_0$, where $\tilde\Gamma$ is
  the polyhedral cone in the $(d-1)$-dimensional subspace
  $(\R x_0)^\perp$ of $\R^d$ defined by
  \begin{equation}
    \label{eq:15}
    \tilde\Gamma = \bigcap_{i\in\mathcal I(x_0)}\Big\{x\in (\R x_{0})^{\perp}\,\Big\vert\,
    x\cdot\nu_i < 0\Big\}\quad\text{with}\quad\mathcal{I}(x_0):=\Big\{i\in \{1,\dots,m\}\,\Big\vert\,
  x_0\cdot\nu_i=0\Big\}.
  \end{equation}
\end{lemma}

\begin{proof}
  Since
  \begin{displaymath}
    \Gamma_{x_0}=\bigcap_{i\in{\mathcal I}(x_0)}\Big\{x\in\R^{d}\,\Big\vert\, x\cdot\nu_i<0\Big\},
  \end{displaymath}
  and $i\in \mathcal{I}(x_0)$ implies $\nu_{i}\cdot x_{0}=0$, we have that $\nu_{i}\in
  (\R x_{0})^{\perp}$ for all $i\in \mathcal{I}(x_0)$. Therefore 
  $\Gamma_{x_0}=\tilde\Gamma\oplus\R x_0$, where
  $\tilde\Gamma$ is given by~\eqref{eq:15}.
\end{proof}

\begin{proof}[Proof of Lemma~\ref{lem:cone-rad-vs-dist}]
  If there is no such $\sigma>0$ such that~\eqref{eq:18} holds, then
  there exists a sequence $(x_{n})_{n\ge 1}$ of points
  $x_n\in \Gamma^{(k-1)}\setminus\Gamma^{(k)}$ such that
  \begin{equation}
    \label{eq:20}
    \frac{\rho(x_n)}{d(x_n,\Gamma^{(k)})}\to 0.
  \end{equation}
  Since both $\rho(\cdot)$ and $d(\cdot,\Gamma^{(k)})$ are homogeneous of degree one,
  we can scale $x_n$ so that $x_n\in \S^{d-1}\cap\overline{\Gamma}$.

  We first exclude the possibility that there are $\alpha>0$ and a subsequence
  $(x_{n'}) _{n'\ge 1}$ of $(x_{n}) _{n\ge 1}$ such that $d(x_{n'},\Gamma^{(k)})\ge \alpha$
  for all $n'\ge 1$. Otherwise, for such a subsequence
  $(x_{n'}) _{n'\ge 1}$ of $(x_{n}) _{n\ge 1}$, one has that $\rho(x_{n'})\to 0$. Since $x_{n'}\in \S^{d-1}\cap
  (\Gamma^{(k-1)}\setminus\Gamma^{(k)})$, we can extract another
  subsequence of $(x_{n'}) _{n'\ge 1}$ which we denote, for simplicity, again by
  $(x_{n'}) _{n'\ge 1}$ such that $x_{n'}$ converges to a point
  $\bar x\in \S^{d-1}\cap\Gamma^{(k-1)}\setminus\Gamma^{(k)}$.  Label
  the faces so that $\bar x\cdot\nu_i$ is in non-increasing order.
  Then, since $\bar x\in\Gamma^{(k-1)}\setminus\Gamma^{(k)}$, we have
  $\bar x\cdot\nu_i=0$ for $i=1,\dots,k-1$ and $\bar x\cdot\nu_k<0$.  Since
  the function $x\mapsto x\cdot\nu_i$ is continuous, any point $x$ in
  $\Gamma^{(k-1)}\setminus\Gamma^{(k)}$ sufficiently close to $\bar x$
  also satisfies $x\cdot\nu_i=0$ for $i=1,\dots,k-1$ and
  $x\cdot\nu_i<\frac12\bar x\cdot\nu_k<0$ for $i\geq k$.  It follows
  that
  \begin{displaymath}
    \Gamma_x=\Gamma_{\bar x} = \bigcap_{i=1}^{k-1}\Big\{z\,\Big\vert\;
    z\cdot\nu_i<0\Big\},
  \end{displaymath}
  so the tangent cone is constant and hence the cone radius $\rho$ is
  continuous on $\Gamma^{(k-1)}$ near $\bar x$.  In particular, 
  we have that $\rho(x_{n'})$ is bounded below, contradicting the fact
  that $\rho(x_{n'})\to 0$.

  The remaining possibility is that $d(x_n,\Gamma^{(k)})$ converges to
  zero.  Passing to a subsequence, we have convergence to a point
  $\bar x\in \S^{d-1}\cap\Gamma^{(k)}$.  In particular for $n$
  sufficiently large $x_n\in B_{\rho(\bar x)}(\bar x)\cap \bar \Gamma$.

  In Lemma~\ref{lem:TCnotvertex}, we have observed that since
  $\bar x\neq 0$, the tangent cone $\Gamma_{\bar x}$ is the
  product $\Gamma_{\bar x}=\tilde{\Gamma}\oplus \R\bar{x}$, where
  $\tilde{\Gamma}$ is a polyhedral cone in the $(d-1)$-dimensional
  subspace $(\R\bar x)^{\perp}$. Thus, it follows that both $\rho(x_n)$ and
  $d(x_n,\Gamma^{(k)})$ are invariant under translation in the
  $\bar x$-direction and homogeneous of degree one under rescaling
  about $\bar x$. Therefore, we can replace $x_n$ by
  \begin{displaymath}
    \tilde x_n=\frac{\left(x_n-\frac{x_n\cdot \bar x}{|\bar x|^2}\bar
      x\right)}{\left|\left(x_n-\frac{x_n\cdot \bar x}{|\bar x|^2}\bar
        x\right)\right|}\in \left(\tilde\Gamma\times\{0\}\right)\cap \S^{d-1}
 \end{displaymath}
 and still have a sequence $(\tilde{x}_{n}) _{n\ge 1}$ satisfying
 $\tilde{x}_{n}\in \tilde\Gamma\cap
 (\Gamma^{(k-1)}\setminus\Gamma^{(k)})$ and~\eqref{eq:20} where
 $x_{n}$ is replaced by $\tilde x_{n}$.

Now, we repeat the above argument inductively, with $\Gamma$ replaced
by $\tilde\Gamma$.  At each application, the dimension of the cone
reduces by one, which is impossible since $\Gamma$ is
finite-dimensional. This contradicts our assumption that there is no
positive $\sigma$ satisfying the statement of Lemma
\ref{lem:cone-rad-vs-dist}, so the proof of the Lemma is complete.
\end{proof}

Now, we can complete the proof of the inductive step.

\begin{proof}[Proof of Proposition~\ref{lem:nobadreglem2}]
  Fix $x\in (B_{1/2}(0)\cap\Gamma^{(k-1)})\setminus\Gamma^{(k)}$. Let
  $\tilde x\in \Gamma^{(k)}$ be the closest point to $x$ in
  $\Gamma^{(k)}$ satisfying $\abs{x-\tilde{x}}<1/2$.  We claim
  that $\tilde x\in B_{1/2}(0)$.  As $\lambda\tilde x$ is in
  $\Gamma^{(k)}$ for $\lambda>0$, $g(\lambda):=|x-\lambda \tilde x|^2$
  is minimised at $\lambda=1$, and so
  $0=g'(1)=-2(x-\tilde{x})\cdot\tilde{x}$. Since $x-\tilde{x}$ and $\tilde{x}$ are orthogonal,
  \begin{displaymath}
    |x|^2=|x-\tilde{x}+\tilde{x}|^2=|x-\tilde{x}|^2+|\tilde{x}|^2\ge |\tilde{x}|^2
  \end{displaymath}
  and since $|x|<1/2$, it follows that $|\tilde{x}|<1/2$ as claimed.
  Hence, by hypothesis, $w$ satisfies~\eqref{eq:nearbyclose}
  at $\tilde{x}$. More precisely, 
\begin{equation}
  \label{eq:19}
   \left|w(y)-w(\tilde x)-Dw|_{\tilde x}(y-\tilde x)-  
   \tfrac12D^2w|_{\tilde x}(y-\tilde x,y-\tilde x)\right|\le  C|y-\tilde x|^{2+\gamma}
\end{equation}
for all $y\in B_1(0)\cap\overline{\Gamma}$ for some constant $C>0$ and
$\gamma\in (0,1)$. To make use of this, we define
\begin{displaymath}
\tilde w(y) := w(y)-w(\tilde x)-Dw|_{\tilde x}(y-\tilde x)-\tfrac12D^2w|_{\tilde
  x}(y-\tilde x,y-\tilde x)
\end{displaymath}
for every $y\in B_{1}(0)\cap \overline{\Gamma}$. Then $\tilde{w}$ is
a weak solution of~\eqref{eq:weak-Neumann-B} on $ B_{1}(0)\cap \Gamma$ and by~\eqref{eq:19},
\begin{equation}
  \label{eq:17}
  |\tilde w(y)|\le C|y-\tilde x|^{2+\gamma}\quad\text{for $y\in B_1(0)\cap\overline{\Gamma}$.}
\end{equation}
To proceed, we will apply Lemma \ref{lem:twicediff} about $x$. But
first note that by $\tilde{x} \in\Gamma^{(k)}$, after a possible
re-ordering, we may assume without loss of generality that
$\tilde{x}\cdot\nu_{i}=0$ for all $i=1,\dots, k$ and since
$x\in \Gamma^{(k-1)}\setminus\Gamma^{(k)}$, there must be an
$1\le i_{0}\le k$ such that $x\cdot \nu_{i_{0}}<0$. Now, let $\rho(x)$
be the cone radius around $x$ given by~\eqref{eq:coneradius} and we claim
that 
\begin{equation}
  \label{eq:21}
\rho(x)\le |x-\tilde{x}|.
\end{equation}
If $\rho(x)>\abs{x-\tilde{x}}$, then there is an $\varepsilon>0$ such that
\begin{displaymath}
x+(B_{(1+\varepsilon)\abs{x-\tilde{x}}}(0)\cap
\overline{\Gamma}_{x})=B_{(1+\varepsilon)\abs{x-\tilde{x}}}(x)\cap\overline{\Gamma}
\end{displaymath}
and since
$\tilde{x}\in B_{(1+\varepsilon)\abs{x-\tilde{x}}}(x)\cap\overline{\Gamma}$,
there is a $v\in
B_{\abs{x-\tilde{x}}}(0)\cap\overline{\Gamma}_{x}$ such
that $v=\tilde{x}-x$. Then
$x+(1+\varepsilon) v\in x+(B_{(1+\varepsilon)\abs{x-\tilde{x}}}(0)\cap
\overline{\Gamma}_{x})$ and hence, $x+(1+\varepsilon) v\in\overline{\Gamma}$. However,
\begin{displaymath}
  (x+(1+\varepsilon)v)\cdot\nu_{i_{0}} = x\cdot\nu_{i_{0}} + 
(1+\varepsilon)(\tilde{x}-x)\cdot\nu_{i_{0}} = -\varepsilon x\cdot\nu_{i_{0}}>0,
\end{displaymath}
which contradicts the definition of  $\Gamma$, proving our
claim~\eqref{eq:21}. 
Since $\abs{x-\tilde{x}}<1/2$, 
\begin{displaymath}
  \hat w(y):= \tilde w(x+y\rho(x))\quad\text{for $y\in B_1(0)\cap \Gamma_x$}
\end{displaymath}
is a well-defined function. Moreover, $\hat{w}$ is a weak solution
of~\eqref{eq:weak-Neumann-B} on $B_1(0)\cap \Gamma_x$. Hence, by Lemma
\ref{lem:twicediff}, there is a $\gamma\in (0,1)$ and a $C>0$ such
that
\begin{equation}
  \left|\hat{w}(y)-\hat{w}(0)-D\hat{w}|_0\, y-\tfrac12D^{2}\hat{w}|_0(y,y)\right|\le
  C\, \norm{\hat{w}}_{L^{\infty}(B_{1}(0)\cap\Gamma_{x})}\,\abs{y}^{2+\gamma}
\end{equation}
for $y\in B_{1/2}(0)\cap\Gamma_{x}$. Note, by~\eqref{eq:17} and using~\eqref{eq:21},
\begin{equation}
  \label{eq:22}
\sup_{B_1(0)\cap\overline{\Gamma_x}}\hat w =
\sup_{B_{\rho(x)}(x)\cap\overline{\Gamma}}\tilde w\le 
\sup_{B_{2|x-\tilde x|}(\tilde x)\cap\overline{\Gamma}}\tilde
w\leq C|x-\tilde x|^{2+\gamma}.
\end{equation}
Combining the last two estimates then gives
\begin{displaymath}
\left|\hat w(y)-\hat w(0)-D\hat w|_0(y)-\tfrac12 D^2\hat
  w|_0(y,y)\right| \le C|y|^{2+\gamma}|x-\tilde x|^{2+\gamma}
\end{displaymath}
for $y\in B_{1/2}(0)\cap\Gamma_x$. By the definition of $\hat w$, this
gives
\begin{align*}
&\left|\tilde w(y)-\tilde w(x)-D\tilde w|_x(y-x)-\tfrac12D^2\tilde
  w|_x(y-x,y-x)\right|
  \le C\,\left(\frac{|y-x|}{\rho(x)}\right)^{2+\gamma}|x-\tilde
  x|^{2+\gamma}
\end{align*}
for every $|y-x|<\tfrac12\rho(x)$. Since by Lemma \ref{lem:cone-rad-vs-dist}, there
is a $\sigma>0$ such that
\begin{equation}
  \label{eq:23}
  \rho(x)\ge \sigma |x-\tilde x|,
\end{equation}
we can conclude from the last estimate that
\begin{equation}
\label{eq:wosc2}
\left|\tilde w(y)-\tilde w(x)-D\tilde w|_x(y-x)-\tfrac12D^2\tilde
  w|_x(y-x,y-x)\right|
  \le C\,|y-x|^{2+\gamma}
\end{equation}
for every $|y-x|<\tfrac12\rho(x)$. From this, we deduce bounds on
$D\tilde w|_x$ and $D^2\tilde w|_x$: By Lemma \ref{lem:closeball}
applied to $\Omega=B_{1/2}(x)\cap \overline{\Gamma}$, there are
$x_{\ast}\in B_{1/2}(x)\cap \overline{\Gamma}$ and $\sigma_{\ast}>0$
such that the open ball $B_{\sigma_{\ast}\rho(x)}(x_{\ast})$ is
contained in $B_{\rho(x)}(x)\cap\overline{\Gamma}$.  By~\eqref{eq:22},
we have
\begin{displaymath}
|\tilde w(x)|+|\tilde w(y)|\leq C|x-\tilde
x|^{2+\gamma}\quad\text{for every $y\in B_{\sigma_{\ast}\rho(x)}(x_{\ast})$}
\end{displaymath}
and so, by~\eqref{eq:wosc2},
\begin{displaymath}
  \left|D\tilde w|_x(y-x)+\tfrac12 D^2\tilde w|_x(y-x,y-x)\right|
  \le C|x-\tilde x|^{2+\gamma}
\end{displaymath}
for every $y\in B_{\sigma_{\ast}\rho(x)}(x_{\ast})$. 
Moreover, from the previous application of Lemma~\ref{lem:twicediff} to $\hat{w}$,
we know that the Hessian $D^{2}\hat{w}|_0=\rho^{-2}(x)D^{2}\tilde
w|_x$ is symmetric. Thus Lemma~\ref{lem:coeffbound} yields that
\begin{align}
  \notag
  &\norm{D\tilde w|_x(x_{\ast}-x)+\tfrac12 D^2\tilde
    w|_x(x_{\ast}-x)} \le C\,|x-\tilde x|^{2+\gamma}\\ \label{est:Dtilde w}
  & 
      \lnorm{D\tilde w|_x+ D^2\tilde
        w|_x(x_{\ast}-x)} \le
      \frac{2C |x-\tilde x|^{2+\gamma}}{\sigma_{\ast}\rho(x)}\le C
      |x-\tilde x|^{1+\gamma},\\ \label{est:D2tilde w}
  & \lnorm{D^{2}\tilde w|_x}\le 2
      \frac{4C |x-\tilde
    x|^{2+\gamma}}{\sigma_{\ast}^{2}\rho^{2}(x)}\le C |x-\tilde x|^{\gamma},
\end{align}
where we used the estimate~\eqref{eq:23} in the second inequalities of
both~\eqref{est:Dtilde w}
and~\eqref{est:D2tilde w}. Since 
$|x-x_*|\leq C|x-\tilde x|$, inequality~\eqref{est:Dtilde w} implies that
\begin{equation}
  \label{eq:24}
  \norm{D\tilde w|_x}\le C|x-\tilde x|^{1+\gamma}.
\end{equation}

Next, we establish estimate~\eqref{eq:wosc2} for
$y\in (B_1(0)\setminus B_{\rho(x)/2}(x))\cap\overline{\Gamma}$: On
this set, we have $|x-\tilde x|+|y-\tilde x|\leq C|y-x|$ due
to~\eqref{eq:23} and since $\rho(x)/2\le \abs{y-x}$. Thus,
by~\eqref{eq:17},~\eqref{eq:24}, and~\eqref{est:D2tilde w},
\begin{align*}
&\left|\tilde w(y)-\tilde w(x)-D\tilde w|_x(y-x)-\tfrac12 D^2\tilde w|_x(y-x,y-x)\right|\\
&\quad\le |\tilde w(y)|+|\tilde w(x)|+\norm{D\tilde w|_x}\,|y-x|+\tfrac12\norm{D^2\tilde w|_x}\,|y-x|^2\\
&\quad\le C|y-\tilde x|^{2+\gamma}+ C|x-\tilde x|^{2+\gamma}
  +C|x-\tilde x|^{1+\gamma}|y-x|+C|x-\tilde x|^\gamma|y-x|^2\\
&\quad\le C|y-x|^{2+\gamma},
\end{align*}
as required.  This shows that estimate \eqref{eq:wosc2} holds for
all $y\in B_1(0)\cap\overline{\Gamma}$.  Finally, we note that
$\tilde w$ and $w$ differ by a quadratic function, so
\begin{equation}
  \label{eq:27}
  \begin{split}
    &\tilde w(y)-\tilde w(x)-D\tilde w|_x(y-x)-\tfrac12
    D^2\tilde
    w|_x(y-x,y-x)\\
    &\qquad\qquad\qquad=w(y)-w(x)-Dw|_x(y-x)-\tfrac12D^2w|_x(y-x).
  \end{split}
\end{equation}
Therefore inequality \eqref{eq:nearbyclose} holds for all
$y\in B_1(0)\cap\overline{\Gamma}$ and
$x\in B_{1/2}(0)\cap\Gamma^{(k-1)}$, and the proof of
Proposition~\ref{lem:nobadreglem2} is complete.
\end{proof}

%
%
%
%
%

\noindent\emph{Completion of the Proof of
  Theorem~\ref{prop:nobadreg}.}
Now, Proposition~\ref{lem:nobadreglem1} and
Proposition~\ref{lem:nobadreglem2} allow us to establish estimate
\eqref{eq:wosc1} for all points $x\in B_{1/2}(0)\cap\overline{\Gamma}$
and all points $y\in B_1(0)\cap\overline{\Gamma}$, by
\emph{(decreasing) induction on $k$}: Due to
Proposition~\ref{lem:nobadreglem1}, estimate \eqref{eq:wosc1} holds
for $x\in\Gamma^{(m)}$, and by Proposition~\ref{lem:nobadreglem2} if
estimate~\eqref{eq:wosc1} holds for $x\in\Gamma^{(k)}$ then it also holds for
$x\in\Gamma^{(k-1)}$. Therefore, by induction,
estimate~\eqref{eq:wosc1} holds for all
$x\in B_{1/2}(0)\cap\Gamma^{(0)}=B_{1/2}(0)\cap\overline{\Gamma}$. This
allows us to complete the proof of Theorem~\ref{prop:nobadreg} by
proving that $D^2w$ is continuous at the origin. 
So we must prove that $D^2w|_x$ approaches $D^{2}w|_0$ as
$x\in B_{1/2}(0)\cap\overline{\Gamma}$ approaches zero. To do this, we
apply estimate~\eqref{eq:wosc1} about
$x\in B_{1/2}(0)\cap\overline{\Gamma}$:  Let
\begin{displaymath}
  \tilde{w}(y)=w(y)-w(0)-Dw|_0(y)-\tfrac{1}{2}D^2w|_0(y,y)
\end{displaymath}
for every $y\in B\cap \overline{\Gamma}$. By estimate~\eqref{eq:wosc1},
\begin{displaymath}
    \abs{\tilde w(y)}\le   C\,\abs{y}^{2+\gamma}\quad\text{for every
      $y\in B_{1}(0)\cap \overline{\Gamma}$.}
\end{displaymath}
By~\eqref{eq:27}, estimate~\eqref{eq:wosc1} yields
\begin{displaymath}
  \sup_{y\in \overline{B}_{|x|}(x)}\left|D\tilde{w}|_x(y-x)+\tfrac12
    D^2\tilde{w}|_x(y-x,y-x)\right|
  \le C|x|^{2+\gamma}
\end{displaymath}
for every $x\in B_{1/2}(0)$.  
By Lemma
\ref{lem:closeball} there is a ball of radius comparable to $|x|$ in
$B_{|x|}(x)\cap\overline{\Gamma}$, and applying Lemma
\ref{lem:coeffbound} on this ball gives that
\begin{displaymath}
  \abs{D\tilde{w}|_x(x)+\tfrac{1}{2}D^2\tilde{w}|_x(x,x)}\le C
  |x|^{2+\gamma},\quad
  \norm{D\tilde{w}|_x+D^2\tilde{w}|_x(x,.)}\le C
  |x|^{1+\gamma}, 
\end{displaymath}
and
\begin{equation}
\label{eq:second-derivative-wtilde}
    \norm{D^2\tilde{w}|_x}\le C |x|^{\gamma}
\end{equation}
for every $x\in B_{1/2}(0)\cap \overline{\Gamma}$. Since 
 $D^{2}\tilde{w}|_x=D^2 w|_x-D^{2}w|_0$,
 inequality~\eqref{eq:second-derivative-wtilde} can be rewritten as 
 \begin{displaymath}
   \norm{D^2 w|_x-D^{2}w|_0}\le C |x|^{\gamma}\quad\text{for every $x\in B_{1/2}(0)\cap \overline{\Gamma}$.}
 \end{displaymath}
 proving that harmonic functions on a tame cone $B_{1}\cap\Gamma$
 satisfying homogeneous Neumann boundary condition on
 $B_{1}\cap \partial\Gamma$ are $C^{2,\gamma}$. 
This completes the proof of Theorem~\ref{prop:nobadreg}. 
\end{proof}

%
%
%
%

\section{Polyhedral cones are tame}



\label{sec: homogeneous}

Next, we prove the following, making the tameness hypothesis in
Theorem~\ref{prop:nobadreg} redundant.

\begin{proposition}\label{prop:tame}
  Every polyhedral cone $\Gamma$ in $\R^d$ is tame.
\end{proposition}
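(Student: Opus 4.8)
The plan is to prove Proposition~\ref{prop:tame} by induction on the dimension $d$, feeding the results of the previous sections back into the argument in dimension $d$ together with the tameness of all polyhedral cones of dimension $<d$. Throughout, $v$ denotes a degree two homogeneous harmonic function on $\Gamma$ with homogeneous Neumann data, so that $v(rz)=r^{2}h(z)$ for a Neumann eigenfunction $h$ on $A=\S^{d-1}\cap\Gamma$; the hypothesis $v\in C^{1,1}(\overline\Gamma)$ says precisely that the degree-zero-homogeneous field $D^{2}v$ is bounded. The base case $d=1$ is trivial, since a one dimensional cone is a half line or a line, on which every degree two homogeneous function is quadratic. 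So I would assume $d\ge 2$ and that every polyhedral cone of dimension $<d$ is tame.

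First I would isolate a lemma: \emph{if $\Gamma_{1}$ is a tame polyhedral cone and $\Gamma_{1}\oplus\R e$ denotes its product with an orthogonal line, then $\Gamma_{1}\oplus\R e$ is tame.} For $d=2$ this cone is a half-plane or $\R^{2}$, which is tame by even reflection, so I would take $d\ge 3$. Given $w$ as above on $\Gamma_{1}\oplus\R e$, the direction $e$ is tangent to every face, so $\partial_{e}^{2}w$ is a bounded, degree-zero-homogeneous, Neumann harmonic function; hence (for $d\ge 3$) it lies in $H^{1}(B_{1}(0)\cap(\Gamma_{1}\oplus\R e))$ and, by the series expansion of Proposition~\ref{lem:series-expansion}, which retains only the $\beta_{0}=0$ mode, equals a constant $c$. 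Writing $w=\tfrac c2 (x\cdot e)^{2}+b(x')(x\cdot e)+\tilde c(x')$ with $x'\in e^{\perp}$, for any unit $u\in e^{\perp}$ the quantity $\partial_{u}^{2}w=(x\cdot e)\,\partial_{u}^{2}b(x')+\partial_{u}^{2}\tilde c(x')$ is affine in $x\cdot e$ and bounded for all admissible $x\cdot e\in\R$, which forces $\partial_{u}^{2}b\equiv0$; thus $b$ is affine and, being homogeneous of degree one, linear. Then $\tilde c$ is a degree two homogeneous $C^{1,1}$ solution of $\Delta\tilde c=-c$ with homogeneous Neumann data on $\Gamma_{1}$; subtracting a suitable multiple of $|x'|^{2}$ turns $\tilde c$ into a harmonic function of the same type on $\Gamma_{1}$, which is quadratic by tameness of $\Gamma_{1}$, so $w$ is quadratic.

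Now to the inductive step. For $x_{0}\in\partial\Gamma\setminus\{0\}$, Lemma~\ref{lem:TCnotvertex} gives $\Gamma_{x_{0}}=\tilde\Gamma\oplus\R x_{0}$ with $\tilde\Gamma$ a polyhedral cone in the $(d-1)$-dimensional space $(\R x_{0})^{\perp}$, tame by the inductive hypothesis, hence $\Gamma_{x_{0}}$ is tame by the lemma above; at interior points the tangent cone $\R^{d}$ is trivially tame. Thus $\overline\Gamma\setminus\{0\}$ is a relatively open tame subset of $\overline\Gamma$, and Theorem~\ref{prop:nobadreg} (whose proof is local) yields $v\in C^{2}(\overline\Gamma\setminus\{0\})$; being homogeneous of degree zero, $D^{2}v$ descends to a continuous bounded field on the compact cross-section $\overline A$. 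Let $\Lambda:=\max_{\overline A}\lambda_{\max}(D^{2}v)$. If $\Lambda\le0$ then $D^{2}v\le0$, and since $\operatorname{tr}D^{2}v=\Delta v=0$ this forces $D^{2}v\equiv0$, so $v\equiv0$ is quadratic. If $\Lambda>0$, choose $z_{0}\in\overline A$ attaining $\Lambda$ with the dimension of the $\Lambda$-eigenspace of $D^{2}v|_{z_{0}}$ maximal, and a unit $\Lambda$-eigenvector $e_{1}$; on the convex, relatively open set $B:=B_{r}(z_{0})\cap\overline\Gamma$ with $r\in(0,1)$ one has $0\notin\overline B$, $v\in C^{2}(B)$, and $\sup_{B\times\S^{d-1}}D^{2}v|_{x}(e,e)=\Lambda=D^{2}v|_{z_{0}}(e_{1},e_{1})$, so Lemma~\ref{lem:Hessian-SMP} (with $\mu=\gamma_{i}=0$) produces a positive-dimensional subspace $E\ni e_{1}$ which is the $\Lambda$-eigenspace of $D^{2}v|_{x}$ for every $x\in B$. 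For each $e\in E$, $\partial_{e}^{2}v$ is harmonic on the connected set $\Gamma$, is $\le\Lambda$ everywhere and equals $\Lambda$ on the nonempty open set $B\cap\Gamma$, hence $\equiv\Lambda$ on $\overline\Gamma$ by the strong maximum principle; since then $\lambda_{\max}(D^{2}v|_{z})\equiv\Lambda$ and $z_{0}$ maximizes the $\Lambda$-eigenspace dimension over all of $\overline A$, we conclude that $E$ is the $\Lambda$-eigenspace of $D^{2}v|_{z}$ at every $z$, so $D^{2}v|_{z}(e,\hat e)\equiv0$ for $e\in E$, $\hat e\in E^{\perp}$. Integrating these identities and using homogeneity of degree two to remove linear terms gives $v(x)=\tfrac\Lambda2|\pi_{E}(x)|^{2}+g(\pi_{E^{\perp}}(x))$ with $g$ homogeneous of degree two; by Lemma~\ref{lem:constNeumann} every face normal $\nu_{j}$ is an eigenvector of $D^{2}v$ on $\Sigma_{j}$ and hence lies in $E$ or in $E^{\perp}$, whence $\Gamma=\Gamma^{E}\times\Gamma^{\perp}$ is a product of polyhedral cones, $g=v|_{\overline{\Gamma^{\perp}}}\in C^{1,1}(\overline{\Gamma^{\perp}})$ solves $\Delta g+\Lambda\dim E=0$ with Neumann data on $\Gamma^{\perp}\subset E^{\perp}$, and necessarily $\dim E^{\perp}<d$ (else $v=\tfrac\Lambda2|x|^{2}$ is not harmonic). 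Adding $\tfrac{\Lambda\dim E}{2\dim E^{\perp}}|\cdot|^{2}$ to $g$ produces a degree two homogeneous $C^{1,1}$ harmonic function on $\Gamma^{\perp}$ with homogeneous Neumann data, which is quadratic since $\Gamma^{\perp}$ is tame by the inductive hypothesis; hence $g$, and therefore $v$, is quadratic, completing the induction.

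The delicate point is the passage through the vertex: one controls $v$ only away from $0$, so Lemma~\ref{lem:Hessian-SMP} must be applied on balls avoiding the origin and the extremal value $\Lambda$ then propagated inward by the maximum principle, which relies on the continuity of $D^{2}v$ on the compact $\overline A$ (so the extremal direction is attained) and on the coherence of $E$ across $\overline\Gamma$. The second nontrivial ingredient is the lemma that adjoining a linear factor preserves tameness, needed to get the inductive step started because the tangent cones at non-vertex points are not of strictly smaller dimension but only split off a line; here the technical subtlety is justifying that $\partial_{e}^{2}w$ lies in $H^{1}$ near the vertex (equivalently, controlling the decay of $\partial_{e}^{2}w$ near the lower-dimensional strata of the cone), after which its proof parallels the dimension-reduction of Lemma~\ref{lem:Hessian-SMP}. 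Everything else is bookkeeping with homogeneity and the earlier regularity and rigidity results.
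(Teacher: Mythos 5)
Your overall architecture coincides with the paper's: induction on dimension, a lemma that adjoining a linear factor preserves tameness, tameness of $\overline\Gamma\setminus\{0\}$ via Lemma~\ref{lem:TCnotvertex} and Theorem~\ref{prop:nobadreg}, then the Hessian maximum principle (Lemma~\ref{lem:Hessian-SMP}) and dimension reduction. Your inductive step is essentially the paper's argument; applying Lemma~\ref{lem:Hessian-SMP} on a small convex ball away from the vertex and then propagating $\partial_e^2v\equiv\Lambda$ by the strong maximum principle is fine, and arguably more careful than the paper's direct application with $B=\overline\Gamma\setminus\{0\}$.

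The gap is in your product lemma, and it sits exactly where the paper invests its heaviest machinery. You assert that $\partial_e^2w$, being bounded, degree-zero homogeneous and ``Neumann harmonic'', lies in $H^1(B_1(0)\cap(\Gamma_1\oplus\R e))$ and is therefore the constant mode of the expansion in Proposition~\ref{lem:series-expansion}. Neither half of this is justified. The hypothesis $w\in C^{1,1}$ bounds $D^2w$ but not $D^3w$; the interior/reflection gradient estimate for the bounded harmonic function $\partial_e^2w$ gives only $|D\partial_e^2w(x)|\lesssim d(x,S)^{-1}$, where $S$ is the codimension-two edge stratum of the cone, and $\int d(\cdot,S)^{-2}$ diverges logarithmically, so square-integrability of the gradient does not follow. (Your parenthetical ``for $d\ge 3$'' only disposes of the radial integral at the vertex, not of the angular singularity at the edges of the cross-section $A$.) Likewise, the Neumann condition for $\partial_e^2w$ is known only pointwise on the relatively open faces; extending it weakly across the edges is a boundary-removability statement that itself requires proof. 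Without $H^1$ membership and the weak boundary condition, Proposition~\ref{lem:series-expansion} cannot be invoked, and the conclusion genuinely needs the boundary condition: a bounded degree-zero homogeneous harmonic function on a convex cone need not be constant (e.g.\ $a+b\theta$ on a planar sector). This is precisely the content that the paper's Lemma~\ref{lem:prodcone} and Lemma~\ref{ode lemma} supply by a different route: separating variables on the spherical cross-section and classifying, via the Riccati/winding analysis, the eigenvalues $\mu\in\{0,\,d-2,\,2(d-1)\}$ on $\tilde A$ that can contribute to an eigenvalue-$2d$ eigenfunction on $A$, which yields the decomposition $u=\tilde u+s\tilde v+C\bigl(s^2-\tfrac{1}{d-1}|x|^2\bigr)$ without ever taking a third derivative of $w$. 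You flag this point as ``the technical subtlety'' but offer no argument for it; as written, the product lemma, and hence the induction, is not established.
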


\begin{proof}
  The proof uses an induction on the dimension $d\ge 1$, and uses the
  regularity results for tame domains established in the previous
  section.  Our argument here is similar to that used in the
  proof of Proposition~\ref{lemma 1.2}, in that we apply a strong
  maximum principle to the Hessian of the function.  The homogeneity
  of the function allows us to consider points $x_0\in \partial\Gamma$,
  which are not near the vertex of the cone, and this is the basis of
  the induction on dimension: We observe that by Lemma
  \ref{lem:TCnotvertex}, the tangent cone is a direct
  product of a lower-dimensional cone with a line:
  $\Gamma_{x_0} = \tilde\Gamma\oplus\R x_0$, where $\tilde\Gamma$ is a
  polyhedral cone in the subspace $(\R x_0)^\perp$.  To proceed, we need
  to understand the relationship between homogeneous harmonic
  functions on $\Gamma_{x_0}$ and those on $\tilde\Gamma$:

\begin{lemma}\label{lem:prodcone}
  Any homogeneous degree $2$ Neumann harmonic function $u$ on
  $\tilde\Gamma\oplus\R x_0$ has the form
\begin{equation}  \label{eqn of 6.2}
u(x+s x_0) = \tilde u(x) + s\tilde v(x) + C\left(s^2|x_0|^2-\frac{1}{d-1}|x|^2\right)
\end{equation}
for $x\in\tilde\Gamma$, $s\in \R$, where $\tilde u$ is a homogeneous
degree $2$ Neumann harmonic function on $\tilde\Gamma$, $\tilde v$ is
a homogeneous degree $1$ Neumann harmonic function on $\tilde\Gamma$,
and $C$ is constant.
\end{lemma}

\begin{proof}
  Without loss of generality, we may assume that $|x_0|=1$.  We choose
  an orthonormal basis for $\R^d$ so that $x_0=e_{d}$.  Denote
  $A=(\tilde\Gamma\oplus\R e_d)\cap \S^{d-1}$, and
  $\tilde A = \tilde\Gamma\cap \S^{d-2}$.  Then homogeneous degree $2$
  harmonic Neumann functions on $\tilde\Gamma\oplus\R e_d$ are
  determined by their restriction to $A$ which is a Neumann
  eigenfunction.  The corresponding eigenvalue is determined by the
  relation \eqref{eq:beta-i} which produces $\lambda_i=2d$ when
  $\beta_i=2$ (cf~\cite[Chapter~2.4]{MR768584}).

  In the case $d=2$, the cone $\tilde{\Gamma}$ cannot be $\R e_1$,
  since then $\Gamma$ would be $\R^2$, contradicting
  $x_0\in \partial\Gamma$.  Therefore $\tilde{\Gamma}$ is a ray in the
  direction of $\pm e_1$, and the cone $\tilde\Gamma\oplus\R x_0$ is
  congruent to the half-space $H=\{x>0\}$ in $\R^2$.

  Any Neumann harmonic function $u$ on $H$
  extends by even reflection to an entire harmonic function on $\R^2$,
  which is therefore $C^\infty$.  In particular a homogeneous degree $2$
  Neumann harmonic function on $H$ is $C^2$ at the origin and
  therefore agrees with the degree $2$ Taylor polynomial, since the
  second derivatives are homogeneous of degree zero, which must equal
  $C(x^2-y^2)$.  In this case, \eqref{eqn of 6.2} is satisfied with $\tilde{v}\equiv \tilde{u}\equiv 0 $.

  Now, consider the case $d\geq 3$.  We will construct eigenfunctions
  on $A$ from eigenfunctions on $\tilde A$ using separation of
  variables: We parametrise points of $A$ by the map
  \begin{displaymath}
    \Phi : \tilde A\times \left[-\frac{\pi}{2},\frac{\pi}{2}\right]\to
    \S^{d-1}\text{ given by }
    \Phi(z,\theta)=(\cos\theta)\,z+(\sin\theta)\, e_d,\quad
    z\in \tilde A,\; \theta\in \left[-\frac{\pi}{2},\frac{\pi}{2}\right].
  \end{displaymath}
  The construction which follows is quite general
  (producing a basis of eigenfunctions on warped product spaces in
  terms of eigenfunctions on the warping factors), but we describe it
  here only in our specific situation.

\begin{figure}[!htb]
 \minipage{7cm}
\begin{tikzpicture} 

\def\R{2.5} 
\def\angEl{35} 
\filldraw[ball color=white] (0,0) circle (\R);

  \draw (-1.7,2.1) node[above] {$\S^{d-1}$};

  \fill[fill=black] (0,0) circle (1pt);

  \draw[thin, red, outer color=red, inner color=white] (0,-2.5) arc (-90:90:2 and 2.5) arc (90:-90:1 and 2.5);
  \draw[dashed] (2.5,0) arc (0:180:2.5 and 0.6);
  
  \filldraw[dashed] (0,0) --  (2.07,-0.38);
  \filldraw[dashed] (0,0) --  (1.07,-0.58);
   \filldraw[dashed] (0,-2.5) -- (0,2.5); 
   \draw [->, red] (2,2) -- (1.2,1.2); 
   \draw (2,2) node[above, red] {$A$};
   \draw [->, dashed] (0,0) -- (1.4,0.4); 
   \fill[fill=black] (1.4,0.4) circle (1pt);
   \draw (1.5,0.95) node[below] {$(z,\theta)$};
   \draw[->, dashed] (1.46,-0.5) arc (-40:55:0.2 and 0.6); 
   \filldraw[dashed] (0,0) -- (1.46,-0.5);	
   \draw (1.35,-0.2) node[above] {$\theta$};
   
   \draw[thick] (-2.5,0) arc (180:360:2.5 and 0.6);
   \draw (-1,-0.5) node[above] {$\S^{d-2}$};
   \draw[thick, green] (1,-0.55) arc (-70:-40:2.3 and 0.6);
   \fill[fill=black] (1.45,-0.5) circle (1pt);
  \draw (1.6,-0.5) node[below] {$z$};
  \draw (1.2,-0.6) node[below, green] {$\tilde{A}$};		
\end{tikzpicture}
\endminipage
\end{figure}

The metric induced on $\tilde A\times[-\pi/2,\pi/2]$ by the map $\Phi$
is
\begin{displaymath}
  g = \cos^2\theta \bar g + d\theta^2,
\end{displaymath}
where $\bar g$ is the metric on $\S^{d-2}$.  The Laplacian in these coordinates is
\begin{displaymath}
  \Delta^{\S^{d-1}}=\frac{1}{\cos^2\theta}\Delta^{\S^{d-2}}-(d-2)\tan\theta\,\partial_\theta + \partial^2_\theta.
\end{displaymath}
If $\varphi$ is an eigenfunction on $\tilde A$ satisfying
$\Delta^{\S^{d-2}}\varphi+\mu\varphi=0$, then the function
$f(\theta)\varphi(z)$ satisfies the eigenfunction equation on $A$ with
eigenvalue $\lambda$ provided
\begin{equation}\label{eq:Legendre}
{\mathcal L}_\mu f:= f''-(d-2)\tan\theta\,f'-\frac{\mu}{\cos^2\theta}f=-\lambda f.
\end{equation}
Then $f\varphi$ is a Neumann eigenfunction on $A$ provided $\varphi$ satisfies Neumann conditions on $\tilde A$ and $f\varphi$ extends continuously to the poles $\theta=\pm\frac\pi2$ of $A$.  If $\varphi$ is constant on $\tilde A$ (corresponding to $\mu=0$) then this amounts simply to the requirement that $f$ extends continuously to $[-\pi/2,\pi/2]$, but if $\varphi$ is non-constant (corresponding to $\mu>0$) then continuity of $f\varphi$ at the poles amounts to the requirement that $f$ has limit zero at $\pm \frac{\pi}{2}$.  We note that the endpoints $\pm\pi/2$ are regular singular points of the ODE \eqref{eq:Legendre}, and so solutions are asymptotic to $C_1(\theta+\pi/2)^{-\frac{d-3}{2}-\sqrt{\left(\frac{d-3}{2}\right)^2+\mu}}+C_2(\theta+\pi/2)^{-\frac{d-3}{2}+\sqrt{\left(\frac{d-3}{2}\right)^2+\mu}}$ as $\theta\to{-}\pi/2$, and to
$C_3(\pi/2-\theta)^{-\frac{d-3}{2}-\sqrt{\left(\frac{d-3}{2}\right)^2+\mu}}+C_4(\pi/2-\theta)^{-\frac{d-3}{2}+\sqrt{\left(\frac{d-3}{2}\right)^2+\mu}}$ as $\theta\to\pi/2$.  The continuity requirements are therefore that $C_1=0$ and $C_3=0$.

The operator ${\mathcal L}_\mu$ is essentially self-adjoint on $L^2((\cos\theta)^{d-2}d\theta)$.  Accordingly, for any $\mu$ there is an increasing sequence of values $\lambda_{\mu,j}$ approaching infinity such that there is a solution $f_{\mu,j}$ of equation \eqref{eq:Legendre} satisfying the required endpoint conditions.   These form a complete orthonormal basis for $L^2((\cos\theta)^{d-2}d\theta)$.  We claim that if $\{\varphi_i\}_{i=0}^\infty$ is a complete orthonormal basis of Neumann eigenfunctions on $\tilde A$ with eigenvalues $\mu_i$, then the resulting collection of eigenfunctions $\{f_{\mu_i,j}(\theta)\varphi_i(z)\}$ forms a complete orthonormal basis of Neumann eigenfunctions on $A$.  To see this, suppose that $g$ is a function in $L^2(d\omega_{\bar{g}}(\cos\theta)^{d-2}d\theta)$ which is orthogonal to $f_{\mu_i,j}(\theta)\varphi_i(z)$ for all $i$ and $j$.  That is, we have
$$
\int_{-\pi/2}^{\pi/2} \int_{\tilde A} g(z,\theta)\varphi_i(z)\,d\omega_{\bar g}(z)\,f_{\mu_i,j}(\theta) (\cos\theta)^{d-2}\,d\theta = 0
$$
for all $i$ and $j$.  Fix $i$, and let $g_i(\theta) = \int_{\tilde A} g(z,\theta)\varphi_i(z)d\omega_{\bar g}(z)$.  Then $g_i$ is orthogonal to $f_{\mu_i,j}$ for every $j$ in $L^2((\cos\theta)^{d-2}d\theta)$, and so vanishes almost everywhere.  It follows that for almost all $\theta$, $g_i(\theta)=0$ for every $i$.  That is, $g(\theta,z)$ is orthogonal to $\varphi_i(z)$ for every $i$, and hence $g(\theta,z)=0$ for almost all $z$.  This proves that $g=0$ almost everywhere, proving completeness.

It follows that an eigenfunction on $A$ with eigenvalue $\lambda=2d$ is a finite linear combination of 
 terms of the form $f_{\mu_i,j}(\theta)\varphi_i(z)$ for which $\lambda_{\mu_i,j}=2d$.

\begin{lemma}
\label{ode lemma}
For $\lambda=2d$, solutions $f_\mu$ of \eqref{eq:Legendre} with the required boundary conditions  
\begin{displaymath}
  f_\mu \rightarrow C^\pm( \pi/2-|\theta|)^{-\frac{d-3}{2}+\sqrt{\left(\frac{d-3}{2}\right)^2+\mu}} \text{ as }\theta\to{\pm}\pi/2
\end{displaymath}
exist only for $\mu=0$, $\mu=d-2$ and $\mu=2(d-1)$, and these are given by 
$f_0(\theta) = \sin^2\theta-\frac{1}{d-1}\cos^2\theta$, 
$f_{d-2}(\theta) = \sin\theta\cos\theta$, and
$f_{2(d-1)}(\theta) = \cos^2\theta$.
\end{lemma}

\begin{proof}
  The particular solutions given are constructed from homogeneous
  degree two spherical harmonics (harmonic polynomials on $\R^d$):
  These arise from the above construction in the case
  $\tilde A=\S^{d-2}$, and so give rise to solutions of
  \eqref{eq:Legendre}. On $\S^{d-1}$, we have $x_d = \sin\theta$ and
  $|x|=\cos\theta$, where $x=(x_1,\cdots,x_{d-1})$.

  The harmonic function $x_d^2-\frac{1}{d-1}|x|^2$ therefore restricts
  to $f_0(\theta)=\sin^2\theta-\frac{1}{d-1}\cos^2\theta$.  The
  restriction of this to $\S^{d-2}$ is constant, hence an eigenfunction
  with eigenvalue $\mu=0$ on $\S^{d-2}$.  It follows that
  $\mathcal{L}_0f_0+2df_0=0$.

  The harmonic function $x_dx_1$ restricts on $\S^{d-1}$ to the
  function $\sin\theta\cos\theta \frac{x_1}{|x|} =
  f_{d-2}(\theta)\varphi(x/|x|)$ on $\S^{d-1}$, where $\varphi(x)=x_1$
  is a homogeneous degree one harmonic function on $\R^{d-1}$, hence
  an eigenfunction of the Laplacian on $\S^{d-2}$ with eigenvalue
  $\mu=d-2$.  It follows that $\mathcal{L}_{d-2}f_{d-2}+2df_{d-2}=0$.

  Finally, the harmonic function $x_2^2-x_1^2$ on $\R^d$ restricts to
  $ f_{2(d-1)}(\theta)\varphi(x/|x|)$, where
  \begin{displaymath}
    f_{2(d-1)}(\theta) = \cos^2\theta\quad\text{ and }\quad 
    \varphi(x) = x_2^2-x_1^2,
  \end{displaymath}
  which is the restriction to $\S^{d-2}$ of a degree 2 homogeneous
  harmonic function on $\R^{d-1}$, hence an eigenfunction of the
  Laplacian on $\S^{d-2}$ with eigenvalue $\mu=2(d-1)$.  It follows
  that $\mathcal{L}_{2(d-1)}f_{2(d-1)}+2df_{2(d-1)}=0$, as required.

These formulae can 
be checked by explicit computation.

The harder part of the proof is to show that these are the only solutions of \eqref{eq:Legendre} with the required boundary conditions.  It is convenient to perform a transformation of equation \eqref{eq:Legendre} to de-singularise the endpoints at $\pm\pi/2$.  To do this we introduce the new variable $s$ by $\tanh(s/2) = \tan(\theta/2)$, so that $s$ increases over the entire real line as $\theta$ increases from $-\pi/2$ to $\pi/2$. 
 This choice implies that $\frac{d\theta}{ds} = {\cos\theta}$, and we have the identities $\cos\theta=\frac{1}{\cosh s}$, $\sin\theta=\tanh(s)$ and $\tan\theta=\sinh s$.  The equation \eqref{eq:Legendre} transforms to 
$$
0 = f_{ss}-(d-3)\tanh s f_s + \left(\frac{2d}{\cosh^2s}-\mu\right)f.
$$
Defining $f = (\cosh s)^{\frac{d-3}{2}}g$ then produces the equation
\begin{equation}\label{eq:g-potential}
0 = g_{ss}+\left(\frac{(d+1)(d+3)}{4\cosh^2s}-\left(\frac{d-3}{2}\right)^2-\mu\right)g.
\end{equation}
The behaviour at $\theta=\pm\pi/2$ translates to the condition that $g$ is asymptotic to
 $C_2{\rm{e}}^{s\sqrt{\left(\frac{d-3}{2}\right)^2+\mu}}$ as $s\to-\infty$ and to $C_4{\rm{e}}^{-s\sqrt{\left(\frac{d-3}{2}\right)^2+\mu}}$ as $s\to\infty$.

Next, we consider the Riccati equation associated to the ODE \eqref{eq:Legendre}, which is the first order ODE satisfied by the function $q=\frac{g_s}{g}$:
\begin{align*}
\partial_sq &= \frac{g_{ss}}{g}-\left(\frac{g_s}{g}\right)^2\\
&=\mu + \left(\frac{d-3}{2}\right)^2 - \frac{(d+1)(d+3)}{4\cosh^2s}-q^2.
\end{align*}
The boundary conditions then become the requirement that $q\to \sqrt{\left(\frac{d-3}{2}\right)^2+\mu}$ as $s\to-\infty$ and $q\to -\sqrt{\left(\frac{d-3}{2}\right)^2+\mu}$ as $s\to\infty$.  

The function $q$ approaches infinity whenever the value of $g$ crosses zero. 
 We remove these singularities by defining a new variable $\sigma$ which gives (twice) the angle from the positive $x$ axis of the point $(g(s),g_s(s))$, so that $\tan{(\sigma/2)}=g_s(s)/g(s)=q$.  This is defined only modulo $2\pi$, but a continuous choice of $\sigma$ exists and is uniquely defined up to an integer multiple of $2\pi$.    
  It follows from the definition that $\tan(\sigma/2)=q$, and we deduce that
\begin{equation}\label{eq:dsigma}
\sigma_s = (1+\cos\sigma)\left(\mu+1+\left(\frac{d-3}{2}\right)^2-\frac{(d+1)(d+3)}{4\cosh^2s}\right)-2.
\end{equation}
From the asymptotic conditions on $q$, our construction requires a solution $\sigma$ such that
$$
\sigma(s)\to\sigma_-(\mu):=2\arctan\left(\sqrt{\left(\frac{d-3}{2}\right)^2+\mu}\right)
$$
as $s\to-\infty$, and $\sigma(s)\to \sigma_+(\mu)$ modulo $2\pi{\mathbb Z}$ as $s\to\infty$, where
$$
\sigma_+(\mu):=-2\arctan\left(\sqrt{\left(\frac{d-3}{2}\right)^2+\mu}\right).
$$
 For each $\mu$ there is a unique solution  $\sigma_\mu(s)$ of \eqref{eq:dsigma} with $\sigma_\mu(s)\to\sigma_-(\mu)$ as $s\to-\infty$ (arising from the solutions of \eqref{eq:Legendre} with the required asymptotics near $\theta=-\pi/2$ provided by the theory of regular singular points).   It remains to find those values of $\mu$ for which $\sigma_\mu$ has the required behaviour as $s\to\infty$.

 The crucial property we require is monotonicity of $\sigma_\mu(s)$ with respect to $\mu$ for each $s$:  

Suppose $\mu_2>\mu_1\geq 0$.  Then we observe that $\sigma_{\mu_1}(x)$ satisfies
\begin{align*}
\partial_s\sigma_{\mu_1} &=(1+\cos\sigma)\left(\mu_1+1+\left(\frac{d-3}{2}\right)^2-\frac{(d+1)(d+3)}{4\cosh^2s}\right)-2\\
&\leq (1+\cos\sigma)\left(\mu_2+1+\left(\frac{d-3}{2}\right)^2-\frac{(d+1)(d+3)}{4\cosh^2s}\right)-2,
\end{align*}
so that solutions of \eqref{eq:dsigma} for $\mu=\mu_2$ cannot cross $\sigma_{\mu_1}$ from above.  But now for $s$ sufficiently negative we have $\sigma_{\mu_1}(s)$ as close as desired to $\sigma_-(\mu_1)$, while $\sigma_{\mu_2}(s)$ is as close as desired to $\sigma_-(\mu_2)$, and we have $\sigma_-(\mu_1)<\sigma_-(\mu_2)$.  That is, we have $\sigma_{\mu_1}(s)<\sigma_{\mu_2}(s)$ for $s$ sufficiently negative, and the comparison principle implies that this remains true for all $s\in\R$.  This proves that $\sigma_{\mu}(s)$ is strictly increasing in $\mu\geq 0$ for any fixed $s$.  The limit $\overline\sigma_\mu:=\lim_{s\to\infty}\sigma(\mu,s)$ therefore also exists and is (weakly) increasing in $\mu$, although it can (and will) be discontinuous.

Our construction produces a solution $f_\mu$ with the required boundary behaviour precisely when $\overline\sigma_\mu-\sigma_+(\mu)= 2\pi k$ for some $k\in{\mathbb Z}$.   Since $\overline{\sigma}_\mu$ is increasing in $\mu$ and $\sigma_+(\mu)$ is strictly decreasing in $\mu$, we have that $\overline\sigma_\mu-\sigma_+(\mu)$ is strictly increasing in $\mu$, and hence each integer $k$ can arise for at most one value of $\mu$.   We note from \eqref{eq:dsigma} that $\sigma_\mu(s)$ is strictly decreasing at any point where it takes values which are an odd multiple of $\pi$ (corresponding to points where $g(s)=0$), and hence the value of $k$ can be computed as the number of points where the corresponding solution $g$ of \eqref{eq:g-potential} equals zero.

The three solutions constructed above allow us to compute
$\overline\sigma_\mu-\sigma_+(\mu)$ for these three specific values of
$\mu$: For $\mu=0$, the solution
$f_{0}=\sin^2\theta-\frac{1}{d-1}\cos^2\theta$ gives rise to
\begin{displaymath}
  g=(\cosh s)^{-\frac{d-3}{2}}\left(1-\frac{d}{d-1}\frac{1}{\cosh^2s}\right),
\end{displaymath}
which has two crossings of zero, so that we have
$\overline\sigma_0 -\sigma_+(0)=-4\pi$.  For $\mu=d-2$, the solution
$f_{d-2}=\sin\theta\cos\theta$ gives $g=(\cosh s)^{-\frac{d+1}{2}}\sinh s$
which has a single crossing of zero and so, we have
$\overline\sigma_{d-2}-\sigma_+(d-2)=-2\pi$. Finally, for
$\mu=2(d-1)$, the solution $f_{2(d-1)}=\cos^2\theta$ produces
$g=(\cosh(s))^{-\frac{d+1}{2}}$, which has no zero crossings, and hence
$\overline\sigma_{2(d-1)}-\sigma_+(2(d-1)) = 0$.  Since the
$\overline\sigma_\mu-\sigma_+(\mu)$ is strictly increasing, there can
be no other values of $\mu$ between $0$ and $2(d-1)$ for which
$\overline{\sigma}-\sigma_+\in2\pi\mathbb{Z}$.  For $\mu>2(d-1)$ we
have $\overline{\sigma}_\mu-\sigma_+(\mu)>0$, and we observe that the
line $\sigma=\pi$ cannot be crossed by solutions of \eqref{eq:dsigma}
from below, so that we can never have
$\overline{\sigma}_\mu-\sigma_+(\mu)=2\pi k$ for $k$ a positive
integer.  This completes the proof that only the values
$\mu=0,d-2,2(d-1)$ are possible.
\end{proof}

Finally, we complete the proof of Lemma \ref{lem:prodcone}: The
argument above shows that a Neumann eigenfunction on $A$ with
eigenvalue $2d$ has the form
\begin{displaymath}
  f_0(\theta)\varphi_0(z)+f_{d-2}(\theta)\varphi_{d-2}(z)+f_{2(d-1)}\varphi_{2(d-1)}(\theta)v_{2(d-1)}(z)
\end{displaymath}
where $f_0$, $f_{d-2}$ and $f_{2(d-1)}$ are given in Lemma \ref{ode
  lemma}, and $\varphi_0$, $\varphi_{d-2}$ and $\varphi_{2(d-1)}$ are
Neumann eigenfunctions with the corresponding eigenvalues on
$\tilde A\subset S^{d-2}$.  In particular, $\varphi_0$ is a constant,
$\varphi_{d-2}$ is the restriction to $\tilde A$ of a Neumann
homogeneous degree 1 harmonic function $\tilde v$ on
$\tilde\Gamma\subset\R^{d-1}$, and $\varphi_{2(d-1)}$ is the
restriction to $\tilde A$ of a Neumann homogeneous degree 2 harmonic
function $\tilde u$ on $\tilde\Gamma$.

The homogeneous degree 2 Neumann harmonic function $u$ is then
given by extending this eigenfunction on $A$ using the homogeneity:
\allowdisplaybreaks
\begin{align*}
u(x+sx_0) &= |x+sx_0|^2\left(\cos^2\theta \tilde u\left(\frac{x}{|x|}\right)+ \sin\theta\cos\theta\tilde v\left(\frac{x}{|x|}\right)\right.\\
&\quad\null \phantom{|x+sx_0|^2}\left.\phantom{\left(\frac{x}{|x|}\right)}
+ \varphi_0\left(\sin^2\theta-\frac{1}{d-1}\cos^2\theta\right)\right)\\
&=|x+sx_0|^2\left(\frac{|x|^2}{|x+sx_0|^2}\,\frac{1}{|x|^2}\tilde u(x) + \frac{s|x|}{|x+sx_0|^2}\,\frac{1}{|x|}\tilde v(x)\right.\\
&\quad\null \phantom{|x+sx_0|^2}\left.\phantom{\left(\frac{x}{|x|}\right)}
+ \varphi_0\left(\frac{s^2}{|x+sx_0|^2}-\frac{1}{d-1}\frac{|x|^2}{|x+sx_0|^2}\right)\right)\\
&=\tilde u(x) + s\tilde v(x) + \varphi_0\left(s^2-\frac{1}{d-1}|x|^2\right)
\end{align*}
where we used $\sin^2\theta = \frac{s^2}{s^2+|x|^2}$ and $\cos^2\theta = \frac{|x|^2}{s^2+|x|^2}$, the expressions for $f_0$, $f_{d-2}$ and $f_{2(d-1)}$ from Lemma \ref{ode lemma}, and the homogeneity of $\tilde v$ and $\tilde u$.
\end{proof}

\begin{remark}
  The proof above applies with minor modifications to prove that for
  any positive integer $k$, the values of $\mu$ which can give rise to
  an eigenfunction on $A$ with eigenvalue $\lambda = k^2+(d-2)k$
  (corresponding to the restriction of a harmonic function on
  $\tilde\Gamma\times\R$ which is homogeneous of degree $k$) are
  precisely $\mu = j^2+(d-3)j$ for $j=0,\dots,k$ (corresponding to
  eigenfunctions on $\tilde A$ given by the restriction of a harmonic
  function on $\tilde\Gamma$ which is homogeneous of an integer degree
  no greater than $k$).
\end{remark}

\begin{lemma}\label{lem:dimredtame}
  If $\tilde\Gamma$ is a tame cone in a $(d-1)$-dimensional subspace
  $E=(x_0)^\perp$ of $\R^d$, then $\tilde\Gamma\oplus\R x_0$ is a tame
  cone in $\R^d$.
\end{lemma}

\begin{proof}
  Suppose $u$ is a homogeneous degree two Neumann harmonic function on
  $\tilde\Gamma\oplus\R x_0$, with bounded second derivatives.  By
  Lemma \ref{lem:prodcone} we can write
  \begin{displaymath}
    u(x+sx_0) = \tilde u(x) + s\tilde v(x) +
    C\left(s^2|x_0|^2-\frac{1}{d-1}|x|^2\right)\quad\text{for every $x\in\tilde\Gamma$,}
  \end{displaymath}
  where $\tilde u$ is a homogeneous degree 2
  Neumann harmonic function on $\tilde\Gamma$, $\tilde v$ is a
  homogeneous degree 1 Neumann harmonic function on $\tilde\Gamma$,
  and $C$ is constant.  The last term has bounded second derivatives,
  so the sum of the other two terms must also.  Fixing $s=0$ we
  conclude that $\tilde u$ has bounded second derivatives, and hence
  is quadratic function since $\tilde\Gamma$ is tame.  Fixing $s=1$ we
  conclude that $\tilde v$ also has bounded second derivatives.  But
  the second derivatives of a homogeneous degree one function are
  homogeneous of degree $-1$, and hence are unbounded unless they are
  zero.  Therefore $\tilde v$ is a linear function, and we conclude
  that $u$ is a quadratic function.
\end{proof}

Now, we complete the proof of Proposition~\ref{prop:tame} by induction
on dimension.  Suppose that $u$ is a homogeneous degree 2 Neumann
harmonic function on $\Gamma$ with bounded second derivatives.  We
must show that $u$ is a quadratic function.

First, for $d=1$ then every Neumann harmonic function is constant, so
every homogeneous degree 2 Neumann harmonic function vanishes and
hence is a quadratic function.

Now suppose that every polyhedral cone in $\R^{p}$ is tame for $1\leq p<d$,
and let $\Gamma$ be a polyhedral cone in $\R^d$.  We observe that by Lemma \ref{lem:TCnotvertex}, for every $x_0\in \partial\Gamma\setminus\{0\}$ the tangent cone $\Gamma_{x_0}$ is a product of a cone $\tilde\Gamma$ in $(x_0)^\perp$ with $\R x_0$.  By the induction hypothesis, $\tilde\Gamma$ is tame, and hence by Lemma \ref{lem:dimredtame} we conclude that $\Gamma_{x_0}$ is tame.  That is,
$\overline{\Gamma}\setminus\{0\}$ is a tame domain.  It follows from Proposition \ref{prop:nobadreg} that $u$ is $C^2$ on $\overline{\Gamma}\setminus\{0\}$.

Since the second derivatives of $u$ are bounded, there exists a
sequence $(x_{k})_{k\ge 1}$ of points $x_k$ in $\Gamma$ and a sequence
$(e_{k})_{k\ge 1}$ of $e_k\in \S^{d-1}$ such that
\begin{displaymath}
e^{T}_{k}D^2u(x_k)e_k\to
C_2:=\sup_{(x,e)\in\Gamma\times \S^{d-1}}e^{T}D^2u(x)e\quad\text{as $k\to+\infty$}.
\end{displaymath}
The second derivatives of a homogeneous degree 2 function are homogeneous of
degree zero, so we can replace $(x_{k})_{k\ge 1}$ by $(\tilde{x}_{k})_{k\ge
  1}$ given by
$\tilde x_k = \frac{x_k}{|x_k|}\in \S^{d-1}\cap\Gamma$, and conclude
that $e^{T}_{k}D^2u(\tilde x_k)e_k\to C_2$ as $k\to+\infty$.  By compactness,
$(\tilde x_k,e_k)$ converges for a subsequence of $k$ to
$(\bar x,e)\in (\S^{d-1}\cap\overline{\Gamma})\times \S^{d-1}$.  Since
$u$ is $C^2$ at $\bar x$, we have that
$D^2u|_{\bar x}(\bar e,\bar e) = C_2$.

Now we apply Lemma \ref{lem:Hessian-SMP} with $B=\overline{\Gamma}\setminus\{0\}$, and deduce that $\Gamma = \Gamma^E\times\Gamma^\perp$, where $\Gamma^E$ is a polyhedral cone in a subspace $E$ of $\R^d$ of positive dimension $K$, and $\Gamma^\perp$ is a polyhedral cone in $E^\perp$, and we have
$$
u(x) = \Lambda|\pi_E(x)|^2 + g(\pi_{E^{\perp}}(x)).
$$
If $K=\dim E=d$ then since $u$ is harmonic we have $\Lambda=0$ and $u$ vanishes.  Otherwise we write
$$
u(x) = K\Lambda\left(\frac1K|\pi_E(x)|^2 - \frac{1}{d-K}|\pi_{E^{\perp}}(x)|^2\right) + \tilde g(\pi_{E^{\perp}}(x)).
$$
The first term is harmonic, and $u$ is harmonic, so the last term $\tilde g$ is also harmonic.  Furthermore, since $u$ is homogeneous of degree 2, so is $\tilde g$, and $\tilde g$ also satisfies zero Neumann boundary conditions on $\Gamma^\perp$ since $u$ and the first term do.  Finally, $\tilde g$ has bounded second derivatives since $u$ does.  Therefore by the induction hypothesis, $\tilde g$ is a quadratic function, and so $u$ is quadratic and $\Gamma$ is tame.  This completes the induction and the proof of Proposition \ref{prop:tame}.
\end{proof}

\section{Concave implies regular}
\label{sec: concave-implies-regular}

The results of the previous two sections allow us to complete the
proof of the main regularity result, Theorem \ref{semiconcave implies
  C2}. We begin with the following observation.


\begin{lemma}\label{lem:C11}
  Let $\Omega$ be a bounded domain in $\R^{d}$ with a continuous
  boundary $\partial\Omega$. For $\mu\in
  \R$, let $v\in H^{1}_{loc}(\Omega)$ be weak solution of
    $\Delta v+\mu=0$ on $\Omega$. 
  If $v$ is semi-concave on $\Omega$, then $v$ belongs to
  $C^{1,1}(\overline{\Omega})$.
\end{lemma}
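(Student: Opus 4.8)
The plan is to first upgrade the one-sided Hessian bound coming from semi-concavity to a genuine two-sided bound, using the equation $\Delta v+\mu=0$, and then to propagate the resulting interior $C^{1,1}$ estimate up to the boundary.

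\textbf{Step 1: from semi-concavity and the equation to a bounded Hessian.} Let $C\in\R$ be a semi-concavity constant for $v$, chosen (without loss of generality, by enlarging $C$) so large that $\tilde\mu:=\mu+2dC\ge 0$, and put $w:=v-C|x|^2$. Then $w$ is locally concave, equivalently $D^2w\le 0$ in $\mathcal D'(\Omega)$; in particular $w$ is locally Lipschitz and its distributional Hessian $D^2w$ is a (nonpositive) symmetric matrix-valued Radon measure on $\Omega$, while the equation gives $\Delta w=\Delta v-2dC=-\tilde\mu$ in $\mathcal D'(\Omega)$. I would then show $D^2w$ has no singular part: writing $D^2w=A\,\mathcal L^d+(D^2w)^{s}$ (with $\mathcal L^d$ Lebesgue measure, and both summands $\le 0$), taking traces gives $\operatorname{tr}\big((D^2w)^{s}\big)=\Delta w-\operatorname{tr}(A)\,\mathcal L^d=0$; since $(D^2w)^{s}\le 0$ its diagonal entries are nonpositive measures summing to $0$ and hence each vanish, and testing against $e_i\pm e_j$ shows the off-diagonal singular parts vanish as well, so $(D^2w)^{s}=0$. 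Thus $D^2w=A\,\mathcal L^d$ with $A\le 0$ and $\operatorname{tr}A=-\tilde\mu$ a.e., so the eigenvalues of $A$ lie in $[-\tilde\mu,0]$ and $\|A\|_{L^\infty(\Omega)}\le\tilde\mu$. Consequently $D^2v=A+2CI\in L^\infty(\Omega)$ with a bound depending only on $d,\mu,C$; equivalently $v$ is both semi-concave and semi-convex, hence $v\in C^{1,1}_{\mathrm{loc}}(\Omega)$ with a \emph{location-independent} bound $M:=\|D^2v\|_{L^\infty(\Omega)}$.

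\textbf{Step 2: extension to $\overline\Omega$.} Since $\|D^2v\|_{L^\infty(\Omega)}\le M$, one has $|Dv(p)-Dv(q)|\le M\,\ell$ whenever $p,q\in\Omega$ can be joined by a path in $\Omega$ of length $\ell$ (and $Dv$ is bounded on $\Omega$ for the same reason, so also $|v(p)-v(q)|\le (\sup_\Omega|Dv|)\,\ell$). Fix $x_0\in\partial\Omega$ and, using continuity of $\partial\Omega$, write $\Omega$ near $x_0$ as a subgraph $\{x_d<f(x')\}$ after a rotation, with $f$ continuous and with modulus of continuity $\omega$. For $p,q\in\Omega\cap B_\delta(x_0)$ I would join them by dropping each of them vertically to a common level just below $f(x_0')-\omega(\delta)$, moving horizontally at that level, then coming back up; this path stays in $\Omega$ and has length $\le c\,(\delta+\omega(\delta))\to0$ as $\delta\to0$. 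Hence $\osc_{\Omega\cap B_\delta(x_0)}Dv\to0$ and $\osc_{\Omega\cap B_\delta(x_0)}v\to0$, so $v$ and $Dv$ have limits at every boundary point; covering the compact set $\partial\Omega$ by finitely many such neighbourhoods and using interior regularity elsewhere gives $v\in C^1(\overline\Omega)$, which combined with $D^2v\in L^\infty(\Omega)$ yields $v\in C^{1,1}(\overline\Omega)$. (When $\Omega$ is convex — the case actually needed in the sequel — Step 2 is immediate, since $Dv$ is then globally $M$-Lipschitz on $\Omega$ and so extends to $\overline\Omega$.)

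I expect Step 1, and specifically the vanishing of the singular part of $D^2w$, to be the real content: it is where the equation and the one-sided bound combine to turn ``$v$ is semi-concave'' into ``$v$ has a bounded Hessian''. Step 2 is routine once one observes that continuity of the boundary forces the intrinsic diameter of $\Omega\cap B_\delta(x_0)$ to tend to $0$ as $\delta\to0$.
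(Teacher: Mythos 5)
Your proof is correct, and its central mechanism is the same as the paper's: the trace identity $\Delta v=-\mu$ converts the one-sided bound $D^2v\le CI$ from semi-concavity into a lower bound $D^2v(e,e)\ge -\mu-(d-1)C$, hence a two-sided Hessian bound. The differences are in the surrounding machinery. In Step 1 the paper simply invokes interior elliptic regularity ($v\in H^1_{\mathrm{loc}}$ with $\Delta v=-\mu$ gives $v\in C^\infty(\Omega)$), after which the pointwise Hessian exists classically and the trace argument is a one-liner; your route through the Lebesgue decomposition of the distributional Hessian of the concave function $w=v-C|x|^2$ is valid but does more work than is needed, since the vanishing of the singular part is automatic once interior smoothness is known. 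In Step 2 the situation is reversed: the paper asserts in one sentence that a bounded Hessian makes $Dv$ Lipschitz and hence continuously extendable to $\overline\Omega$, whereas for a domain that is merely continuous (not convex or Lipschitz) one really does need your observation that the intrinsic diameter of $\Omega\cap B_\delta(x_0)$ tends to zero, so your path construction fills in a step the paper leaves implicit. Both versions suffice for the application in the paper, where $\Omega$ is convex and the extension is immediate.
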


\begin{proof}
  Note, that due to classical regularity theory of second order
  elliptic equations (cf~\cite[Corollary~8.11]{MR1814364}),
  $v\in C^{\infty}(\Omega)$. By assumption, there is constant $C\in \R$
  such that $D^2v|_x\le CI$ for every $x\in \Omega$. Given any
  $x\in\Omega$ and any unit vector $e$, choose an orthonormal basis
  $\{e_1,\cdots,e_d\}$ with $e=e_d$.  Then
  \begin{displaymath}
    D^2v|_x(e,e) = \Delta v(x) - \sum_{i=1}^{d-1}D^2v|_x(e_i,e_i) \ge \mu - C(d-1)
  \end{displaymath}
  for every $x\in \Omega$. Thus $D^2v$ is also bounded from below.
  It follows that $Dv$ is Lipschitz with bounded Lipschitz constant,
  and so extends continuously to 
  $\overline{\Omega}$
  as a Lipschitz
  function.
\end{proof}

We are now ready to prove Theorem \ref{semiconcave implies C2}.

\begin{proof}[Proof of Theorem~\ref{semiconcave implies C2}]
  We prove that $v$ is $C^2$ on a neighbourhood of any point
  $x_0\in\partial\Omega$. Choose $r>0$ sufficiently small such that
  \begin{equation}
    \label{eq:28}
    B_r(x_0)\cap\Omega = x_0+r(B_{1}(0)\cap \overline{\Gamma}_{x_0})
  \end{equation}
  and set
  \begin{displaymath}
    w(x) =
    v(x_0+rx)-Dv|_{x_0}(rx)+\frac{\mu}{2d}r^2\abs{x}^2\quad\text{for
      every $x\in B_{1}\cap\overline{\Gamma}_{x_0}$.}
  \end{displaymath}
  Then $w$ is well-defined on $B_1\cap\overline{\Gamma}_{x_0}$, with $\Delta w=0$ on $B_{1}\cap \Gamma_{x_0}$, and
  \begin{displaymath}
    D_{\nu}w|_x = rD_{\nu}v|_{x_0+rx}-rD_{\nu}v|_{x_0}+\frac{\mu}dr^2\, x\cdot\nu=0\quad\text{for $x\in
    B_{1}\cap \partial \Gamma_{x_{0}}$,}
  \end{displaymath}
  since both $x_0$ and $x_0+rx$ are in $\Sigma_i$, so  $D_{\nu_i}v|_{x_0+rx}=D_{\nu_i}v|_{x_0}=-\gamma_i$.   We also use that $x$ is normal to $\nu_i$.   
  This shows that $w$ is a weak solution
  of~\eqref{eq:weak-Neumann-B}. By hypothesis, there is a
  constant $C\in \R$ such that $D^{2}v\le C$ on $\Omega$, and so
  \begin{displaymath}
    D^{2}w|_x(e,e)=r^2D^{2}v|_{x_{0}+rx}(e,e)+ \frac{\mu}{d}r^2
    \abs{e}^{2}\le \left(r^{2}C+ \frac{\mu}{d}r^2\right)\,\abs{e}^{2}
  \end{displaymath}
  for every $e\in \R^{d}$ and $x\in B_{1}\cap \Gamma_{x_0}$, showing
  that $w$ is semi-concave on $B_{1}\cap \Gamma_{x_0}$. Thus, by
  Lemma~\ref{lem:C11}, $w$ is in $C^{1,1}(\overline{B_{1}\cap
  \Gamma_{x_0}})$. By Proposition~\ref{prop:tame}, $B=B_{1}(0)\cap
\overline{\Gamma}_{x_0}$ is tame and hence by
Theorem~\ref{prop:nobadreg}, $w\in C^{2}(B)$. Since $x_0$ is
  arbitrary, $w\in C^{2}(\overline{\Omega})$.
\end{proof}

The results of Theorem \ref{semiconcave implies C2} and
Theorem~\ref{lemma 1.2} imply the following:

\begin{corollary}\label{cor:concave-implies-quadratic}
  Let $\Omega$ be a convex polyhedral domain in $\R^{d}$ with faces
  $\Sigma_{1}, \dots, \Sigma_{m}$, and for given $\mu$,
  $\gamma_{1},\dots, \gamma_{m}\in \R$, let $v$ be a weak solution of
  problem \eqref{eq:3-generalised}. If $v$ is
  semi-concave, then $v$ is a quadratic function.
\end{corollary}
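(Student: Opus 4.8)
The plan is to obtain the corollary by simply chaining together the two main results established in the preceding sections. First I would invoke Theorem~\ref{semiconcave implies C2}: since $\Omega$ is a (convex) polyhedral domain with faces $\Sigma_1,\dots,\Sigma_m$, and $v$ is a weak solution of \eqref{eq:3-generalised} which is semi-concave in $\Omega$, that theorem yields $v\in C^2(\overline{\Omega})$. Then, with $v$ now known to be a $C^2(\overline{\Omega})$ solution of \eqref{eq:3-generalised} on the polyhedral domain $\Omega$, I would apply Theorem~\ref{lemma 1.2}, which asserts precisely that every such solution is quadratic, i.e. of the form $v(x)=\sum_{i,j}a_{ij}x_ix_j+\sum_i b_ix_i+c$. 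Composing the two implications gives the conclusion.

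There is essentially no obstacle here beyond verifying that the hypotheses line up, which they do verbatim: the convexity of $\Omega$ assumed in the corollary is (strictly) stronger than the polyhedrality required by Theorems~\ref{semiconcave implies C2} and~\ref{lemma 1.2}, since a polyhedral domain in the sense used in this paper is automatically convex; and ``$v$ is semi-concave'' is exactly the hypothesis of Theorem~\ref{semiconcave implies C2}, while ``$v\in C^2(\overline{\Omega})$'' is exactly the hypothesis of Theorem~\ref{lemma 1.2}. All of the substantive work — the polar-coordinate series expansion \eqref{eq:hhdecomp} in homogeneous Neumann-harmonic functions, the tameness of arbitrary polyhedral cones (Proposition~\ref{prop:tame}), the upgrade from $C^{1,1}$ to $C^2$ up to the boundary (Theorem~\ref{prop:nobadreg}), and the Hessian strong-maximum-principle and dimension-reduction argument behind Theorem~\ref{lemma 1.2} — has already been carried out, so the proof of the corollary is a one-line deduction from these two theorems.
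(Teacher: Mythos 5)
Your proposal is correct and is exactly the paper's argument: the corollary is stated there as an immediate consequence of Theorem~\ref{semiconcave implies C2} (semi-concave weak solutions are $C^2(\overline{\Omega})$) followed by Theorem~\ref{lemma 1.2} ($C^2$ solutions are quadratic). The hypotheses do line up verbatim, so nothing further is needed.
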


\section{Quadratic solutions and circumsolids}
\label{sec:domain}

In this section we determine precisely which are the domains on which
the solution of \eqref{eq:3} (or, more generally,
\eqref{eq:3-generalised}) is a quadratic function:

\begin{proposition}\label{prop:quad-domains}
  Let $v$ be a quadratic function on $\R^d$, and let $E_1,\cdots,E_k$
  be the eigenspaces of the Hessian of $v$ with eigenvalues
  $\lambda_1,\cdots,\lambda_k$.  Then $v$ satisfies an equation of the
  form \eqref{eq:3-generalised} on a convex polyhedral domain $\Omega$
  if and only if $\Omega = \{x\in\R^d\,\vert\ \pi_{E_i}(x)\in\Omega_i\}$,
  where $\Omega_i$ is a polyhedral domain in $E_i$ for each $i$.
  Furthermore, $v$ satisfies equation \eqref{eq:3} if and only if
  $\lambda_i<0$ and $\Omega_i$ is a circumsolid in $E_i$ with center
  at the maximum of $v|_{E_i}$ and radius equal to $-1/\lambda_i$ for
  each $i$ (see Definition \ref{def:exscribed}).
\end{proposition}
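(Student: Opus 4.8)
The plan is to reduce the whole proposition to one elementary observation about quadratic functions together with a short convexity argument. Write $A=D^2v$ for the (constant) Hessian of $v$, and let $b$ be the constant vector with $Dv(x)=Ax+b$; then $\Delta v=\tr A$ is constant, so the interior equation in \eqref{eq:3-generalised} holds automatically with $\mu=-\tr A$, and the entire content of that problem lies in the boundary conditions. The key point: for a face $\Sigma_i$ of $\Omega$, contained in the hyperplane $\{x\cdot\nu_i=b_i\}$, the requirement that $D_{\nu_i}v$ be \emph{constant} on $\Sigma_i$ (which is exactly what allows a choice of the constant $\gamma_i$) says that the affine function $x\mapsto Dv(x)\cdot\nu_i$, namely $x\mapsto x\cdot(A\nu_i)+b\cdot\nu_i$ (using that $A$ is symmetric), is constant on that hyperplane; an affine function is constant on a hyperplane precisely when its linear part is normal to it, so this holds iff $A\nu_i$ is parallel to $\nu_i$, i.e.\ iff $\nu_i$ is an eigenvector of $A$. (We use, as the paper does, that the description of $\Omega$ is irredundant, so each $\Sigma_i$ spans its hyperplane.) Hence \emph{$v$ satisfies an equation of the form \eqref{eq:3-generalised} on $\Omega$ if and only if every face normal of $\Omega$ is an eigenvector of $D^2v$.}

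For the first equivalence I would sort the face normals $\nu_i$ according to which eigenspace $E_j$ they lie in (distinct eigenspaces of a symmetric matrix are mutually orthogonal and span $\R^d$, so a nonzero eigenvector lies in exactly one). Since $\nu_i\cdot x=\nu_i\cdot\pi_{E_j}(x)$ whenever $\nu_i\in E_j$, this yields
\[
\Omega=\bigcap_{j}\,\bigcap_{i:\,\nu_i\in E_j}\{x:\pi_{E_j}(x)\cdot\nu_i<b_i\}=\{x:\pi_{E_j}(x)\in\Omega_j\text{ for all }j\},
\]
where $\Omega_j\subseteq E_j$ is cut out by the half-spaces whose normal lies in $E_j$. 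Since $\Omega$ is nonempty, open and bounded, a short check gives $\Omega_j=\pi_{E_j}(\Omega)$, which is then a nonempty bounded open convex polyhedron, i.e.\ a polyhedral domain in $E_j$ (in particular each $E_j$ must contain at least one face normal). Conversely, if $\Omega$ has this product form, its faces are the products of a face of one $\Omega_j$ with the closures of the other factors, so its face normals are the face normals of the $\Omega_j$ and hence eigenvectors of $A$, and \eqref{eq:3-generalised} is solvable.

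For the second equivalence I would impose $\gamma_i=1$, i.e.\ $Dv\cdot\nu=-1$ on every face. Using the product structure and that $A$ acts on $E_j$ as multiplication by $\lambda_j$, on a face of $\Omega_j$ with unit normal $\nu$ and offset $b_\nu$ one computes $Dv(x)\cdot\nu=\lambda_j b_\nu+\pi_{E_j}(b)\cdot\nu$ for every $x$ on that face; when $\lambda_j\neq0$ this equals $\lambda_j(b_\nu-p_j\cdot\nu)$, where $p_j:=-\pi_{E_j}(b)/\lambda_j\in E_j$ is the unique critical point of $v|_{E_j}$. So \eqref{eq:3} forces $b_\nu-p_j\cdot\nu=-1/\lambda_j$ for \emph{every} face normal $\nu$ of $\Omega_j$: the point $p_j$ has one and the same signed distance $-1/\lambda_j$ to all face hyperplanes of $\Omega_j$, with the convention that positive signed distance means $p_j$ lies inside $\Omega_j$. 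Now comes the one genuinely geometric step. Because $\Omega_j$ is bounded, its face normals positively span $E_j$, so there are $t_i\geq0$ with $\sum_i t_i=1$ and $\sum_i t_i\nu_i=0$; substituting into the common-distance identity gives the common value as $\sum_i t_i b_i$, whereas for any $x_0\in\Omega_j$ one has $0=x_0\cdot\sum_i t_i\nu_i<\sum_i t_i b_i$, so the common signed distance is \emph{strictly positive}. (With $\lambda_j=0$ instead, the boundary condition would force $\pi_{E_j}(b)\cdot\nu_i=-1$ for every face normal $\nu_i$, whence $-1=\pi_{E_j}(b)\cdot\sum_i t_i\nu_i=0$, which is absurd; so $\lambda_j\neq0$.) Therefore $-1/\lambda_j>0$, i.e.\ $\lambda_j<0$, the ball of radius $R_j:=-1/\lambda_j$ about $p_j$ touches every face of $\Omega_j$, so $\Omega_j$ is a circumsolid with centre $p_j$ and radius $R_j$ in the sense of Definition \ref{def:exscribed}, and $p_j$ is the maximum of the now strictly concave function $v|_{E_j}$. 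The converse is this computation read backwards, with $\mu=-\tr A=-\sum_j\lambda_j\dim E_j>0$.

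I expect the main obstacle to be exactly this last sign discussion: ruling out the \emph{exscribed} configuration — a bounded polytope all of whose face hyperplanes are tangent to a common sphere centred outside it — which the positive-spanning argument forbids. The remaining care is pure bookkeeping in the first equivalence (that orthogonal projection of a bounded convex polyhedron onto an eigenspace is again a polyhedral domain, and that the face structure of a product is the union of the factors' face structures); everything else is linear algebra once the eigenvector observation is in hand.
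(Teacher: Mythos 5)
Your proposal is correct and follows essentially the same route as the paper: both proofs rest on the observation that the boundary conditions in \eqref{eq:3-generalised} force every face normal to be an eigenvector of $D^2v$ (the paper gets this from differentiating the boundary condition along the face, i.e.\ \eqref{DnuDe}; you get it by noting an affine function is constant on a hyperplane iff its linear part is normal to it — the same fact), and both then sort the normals into the eigenspaces to obtain the product decomposition. The one place you genuinely diverge is in proving $\lambda_j<0$ for the second equivalence: the paper integrates $D_{\nu}v$ over $\partial\Omega_j$ and applies the divergence theorem, getting $-\mathcal H^{d-1}(\partial\Omega_j)=\dim(E_j)\,\lambda_j\,|\Omega_j|$ in one line, whereas you use boundedness of $\Omega_j$ to produce a convex combination $\sum_i t_i\nu_i=0$ of the face normals and deduce that the common signed distance $\sum_i t_i b_i$ is strictly positive. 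Both arguments are valid; the paper's is shorter, while yours has the small advantage of being purely finite-dimensional convex geometry (no integration) and of handling the degenerate case $\lambda_j=0$ by the same mechanism. No gaps.
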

 
 \begin{proof}
   For a quadratic function, the Hessian 
   $ D^2v|_x$ is
   constant.  Accordingly we denote the Hessian by $A$ and let
   $E_1,\dots,E_k$ be the eigenspaces of $A$, so that we have
   $v(x) = \frac12\sum_{i=1}^k\lambda_i|\pi_i(x)|^2 + b\cdot x + c$,
   where $\pi_i$ is the orthogonal projection onto $E_i$, where
   $\lambda_1,\cdots,\lambda_k$ are the eigenvalues of $A$, and
   $b\in\R^d$ and $c\in\R$ are constants.
 
   First we show that $v$ satisfies \eqref{eq:3-generalised} on a
   polyhedral domain $\Omega$ if and only if $\Omega$ is a product of
   polyhedral domains $\Omega_i\subset E_i$: If $\Omega$ has this form
   then
   \begin{displaymath}
     \Omega = \bigcap_{i=1}^k\big\{x\,\vert\ \pi_i(x)\in\Omega_i\big\}
     = \bigcap_{i=1}^k\bigcap_{j=1}^{m_i}\Big\{x\,\Big\vert\
     \pi_i(x)\cdot\nu_{j}^i\leq b_j^i\Big\}
     =\bigcap_{i,j}
    \Big\{x\,\Big\vert\ x\cdot\nu_{j}^i\leq b_j^i\Big\},
\end{displaymath}
where $\Omega_i = \bigcap_{j=1}^{m_i}\{x\in E_i\,\vert\ x\cdot\nu_j^i\leq
b_j^i\}$ for each $i$.  Thus the normals to the faces of
$\Omega$ are $\nu_i^j$ for $1\leq i\leq k$ and $1\leq j\leq
m_i$, corresponding to the face $\Sigma_i^j=\overline{\Omega}\cap\{x\,\vert\
x\cdot\nu_i^j=b_i^j\}$.  The derivative of $v$ is given by
\begin{displaymath}
 Dv|_x(e) = \sum_{p=1}^k\lambda_p\pi_p(x)\cdot e+b\cdot e,
\end{displaymath}
 so on the face $\Sigma_i^j$ we have
 \begin{displaymath}
 D_{\nu_i^j}v|_x = \sum_{p=1}^k\lambda_p\pi_p(x)\cdot\nu_i^j + b\cdot
 e = \lambda_i x\cdot\nu_i^j+b\cdot e = \lambda_ib_i^j+b\cdot e,
\end{displaymath}
which is constant on the face.  Also we have $\Delta v = \sum_{i=1}^k
\dim(E_i)\lambda_i$ which is constant, and so
$v$ is a solution of an equation of the form \eqref{eq:3-generalised}
on $\Omega$.
 
 The converse statement follows from the argument of Lemma
 \ref{lem:Hessian-SMP}:  Equation \eqref{DnuDe} shows that each normal
 vector $\nu_i$ to a face of $\Omega$ is an eigenvector of $A$, and so
 lies in $E_j$ for some $j$.  This allows us to write
\allowdisplaybreaks 
 \begin{align*}
 \Omega &= \bigcap_i\Big\{x\in\R^d\,\Big\vert\ x\cdot\nu_i<b_i\Big\}\\
 &=\bigcap_{j=1}^k\bigcap_{\nu_i\in E_j}\Big\{x\in\R^d\Big\vert\ x\cdot\nu_i<b_i\Big\}\\
 &=\bigcap_{j=1}^k\bigcap_{\nu_i\in E_j}\Big\{x\in\R^d\Big\vert\ \pi_j(x)\cdot\nu_i<b_i\Big\}\\
 &=\bigcap_{j=1}^k\Big\{x\in\R^d\Big\vert\ \pi_j(x)\in\Omega_j\Big\}
 \end{align*}
 where $\Omega_j=\bigcap_{i:\ \nu_i\in E_j}\{x\in E_j\,\vert\ x\cdot\nu_i<b_i\}$.
 
 Now we specialise to the case of equation \eqref{eq:3}: First suppose
 $v$ is strictly concave, so that $\lambda_i<0$ for $i=1,\cdots,k$.
 Then we have
\begin{equation}\label{eq:concavecase}
  v = \frac12\sum_{i=1}^k\lambda_i|\pi_i(x)|^2+b\cdot x+c 
  = \frac12\sum_{i=1}^k\lambda_i\left|\pi_i(x)-\frac{1}{\lambda_i}\pi_i(b)\right|^2+\tilde c
\end{equation}
for some constant $\tilde c$.  Hence $\frac{1}{\lambda_i}\pi_i(b)$ is
the maximum point of $v$ restricted to $E_i$.  The condition that
$\Omega_i$ is a circumsolid in $E_i$ with centre at the maximum of
$v|_{E_i}$ and radius $-1/(2\lambda_i)$ is that
\begin{displaymath}
\Omega_i = \bigcap_{j=1}^{m_i}\left\{x\in E_i\,\Big\vert\
  \left(x-\frac{\pi_i(b)}{\lambda_i}\right)\cdot
  \nu_j^i<-\frac{1}{\lambda_i}\right\}.
\end{displaymath}
In this case, we have for $x$ 
in the face
$\Sigma_j^i = \{x\,\vert\ (x-\frac{\pi_i(b)}{\lambda_i})\cdot
\nu_j^i=-\frac{1}{\lambda_{i}}\}$ that
\begin{displaymath}
D_{\nu_j^i}v(x) = \sum_{p}\lambda_p\left(\pi_p(x)-\frac{\pi_p(b)}{\lambda_p}\right)\cdot \nu_j^i
 = \lambda_i\left(\pi_i(x)-\frac{\pi_i(b)}{\lambda_i}\right)\cdot \nu_j^i = \lambda_i\,\frac{-1}{\lambda_i}=-1
\end{displaymath}
as required.  Conversely, if we suppose that the boundary condition in
\eqref{eq:3} holds, then we can show that $\lambda_i<0$ for every $i$
as follows. We have
 \begin{displaymath}
   Dv|_x(e) = \sum_{p=1}^k\lambda_p\pi_p(x)\cdot e+b\cdot e.
\end{displaymath}
Integrating over $\Omega_i$ and using the divergence theorem gives
 \begin{displaymath}
 -|\partial\Omega_i| = \int_{\partial\Omega_i}\nu_j^i\cdot Dv 
 = \int_{\Omega_i}\Delta^{E_i}v = \dim(E_i)\lambda_i|\Omega_i|,
\end{displaymath}
so that $\lambda_i<0$ and $v$ is strictly concave.  Therefore $v$ has
the form \eqref{eq:concavecase}, and the boundary condition gives
 \begin{displaymath}
   -1=Dv|_x\cdot\nu_j^i = \lambda_i\left(\pi_i(x)-\frac{\pi_i(b)}{\lambda_i}\right)\cdot \nu_j^i
\end{displaymath}
so that $\Omega_i$ is a circumsolid in $E_i$ with radius
$\frac{1}{\lambda_i}$ and centre at $\frac{\pi_i(b)}{\lambda_i}$.
 \end{proof}
 
 \begin{corollary}\label{cor:quadcircum}
   For a convex polyhedral domain $\Omega$, there is a quadratic
   function $v$ solving the elliptic boundary-value
   problem~\eqref{eq:3} if and only if $\Omega$ is a product of
   circumsolids.
 \end{corollary}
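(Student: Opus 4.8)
The plan is to deduce the corollary directly from Proposition~\ref{prop:quad-domains}, which already classifies the convex polyhedral domains carrying a quadratic solution of \eqref{eq:3-generalised} and, among those, the ones carrying a quadratic solution of \eqref{eq:3}. So the corollary should follow by simply reading off the two implications and matching them against Definition~\ref{def:exscribed}.

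For the ``if'' direction, I would start from a product of circumsolids: an orthogonal decomposition $\R^d = E_1\oplus\cdots\oplus E_k$ together with circumsolids $\Omega_i\subset E_i$ with centres $x_0^i$ and radii $R_i$ such that $\Omega = \{x\,\vert\,\pi_i(x)\in\Omega_i\ \text{for all}\ i\}$. Take $v(x) = -\sum_{i=1}^k\frac{1}{2R_i}\abs{\pi_i(x)-x_0^i}^2$, a quadratic function whose Hessian has the $E_i$ as eigenspaces with eigenvalues $\lambda_i = -1/R_i<0$, and whose restriction to each $E_i$ attains its maximum at $x_0^i$. Since each $\Omega_i$ is then a circumsolid in $E_i$ with centre at the maximum of $v|_{E_i}$ and radius $R_i = -1/\lambda_i$, Proposition~\ref{prop:quad-domains} yields that $v$ solves \eqref{eq:3} on $\Omega$.

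For the ``only if'' direction, suppose $v$ is quadratic and solves \eqref{eq:3} on $\Omega$, and let $E_1,\dots,E_k$ be the eigenspaces of its (constant) Hessian with eigenvalues $\lambda_1,\dots,\lambda_k$. Being a symmetric matrix, the Hessian has mutually orthogonal eigenspaces spanning $\R^d$. Proposition~\ref{prop:quad-domains} gives that $\lambda_i<0$ for every $i$ and that $\Omega = \{x\,\vert\,\pi_{E_i}(x)\in\Omega_i\}$ where each $\Omega_i$ is a circumsolid in $E_i$; by Definition~\ref{def:exscribed} this exhibits $\Omega$ as a product of circumsolids.

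The only point requiring care — and it is not really an obstacle — is to verify that ``product of circumsolids'' as in Definition~\ref{def:exscribed} is verbatim the structure produced by Proposition~\ref{prop:quad-domains} (an orthogonal splitting of $\R^d$ into subspaces carrying circumsolid factors, with $\Omega$ the common preimage of those factors under the coordinate projections), and to track the sign and normalisation conventions relating the eigenvalue $\lambda_i$ to the radius of the corresponding circumsolid so that they agree with the statement of Proposition~\ref{prop:quad-domains}.
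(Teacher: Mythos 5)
Your proposal is correct and follows essentially the same route as the paper: the "only if" direction is a direct application of Proposition~\ref{prop:quad-domains}, and the "if" direction exhibits the same explicit quadratic $v(x)=-\sum_i\frac{1}{2R_i}\abs{\pi_i(x)-x_0^i}^2$ that the paper uses, with the correct normalisation $\lambda_i=-1/R_i$ making the boundary condition $D_{\nu}v=-1$ hold.
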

 
 \begin{proof}
   Proposition \ref{prop:quad-domains} shows that if $\Omega$ has a
   quadratic solution of \eqref{eq:3} then $\Omega$ is a product of
   circumsolids.  Conversely, suppose $\Omega$ is a product of
   circumsolids. Then there is a decomposition
   $\R^d=E_1\oplus\cdots\oplus E_k$ of $\R^d$ into orthogonal
   subspaces $E_{1},\dots, E_{k}$ and
   \begin{displaymath}
     \Omega = \bigcap_{i=1}^k\Big\{x\in\R^d\,\Big\vert\;
     \pi_i(x)\in\Omega_i\Big\},\quad
     \text{where}\quad\Omega_i :=\bigcap_{j=1}^{m_i} \Big\{x\in
     E_i\;\Big\vert\;(x-p_i)\cdot\nu_j^i<R_i\Big\}
   \end{displaymath}
   for some $p_i\in E_i$ and $R_i>0$. The above calculations show that
   \begin{displaymath}
     v(x) =
     -\frac12\sum_{i=1}^k\frac{|\pi_i(x)-p_i|^2}{R_i}\quad\text{for
       every $x\in \Omega$,}
   \end{displaymath}
   is a solution of \eqref{eq:3} on $\Omega$.
 \end{proof}
 
 Summarising, let $v$ be a weak solution of~\eqref{eq:3} on a convex
 polyhedral domain $\Omega$ in $\R^{d}$. Then by Lemma~\ref{lem:C11},
 if $v$ is semi-concave then $v\in C^{1,1}(\overline{\Omega})$. Using
 that for every boundary point $x_{0}\in \partial\Omega$ and $r>0$
 small enough, $v$ can be written as $v(x_{0}+r\cdot)=w+q$ on
 $B_{1}(0)\cap\overline{\Gamma}_{x_0}$ for a quadratic function $q$
 and a weak solution $w$ of~\eqref{eq:weak-Neumann-B},
 Theorem~\ref{prop:nobadreg} and Proposition~\ref{prop:tame} states
 that $v\in C^{1,1}(\overline{\Omega})$ implies $v$ is in $C^{2}(\overline{\Omega})$
 and according to Theorem~\ref{lemma 1.2}, the latter yields that $v$ is
 quadratic. By Proposition~\ref{prop:quad-domains}, $v$ then needs to be
 concave. Combining this together with Corollary \ref{cor:quadcircum},
 we can state the following characterisation.

 \begin{corollary}
   \label{cor:concavity-characterisation}
   Let $v$ be a weak solution of~\eqref{eq:3}  on a convex polyhedral
   domain $\Omega$ in $\R^{d}$. Then the following statements are equivalent.
   \begin{enumerate}
      \item $v$ is semi-concave;  
      \item $v$ is in $C^{1,1}(\overline{\Omega})$;  
      \item $v$ is in $C^{2}(\overline{\Omega})$;  
      \item $v$ is quadratic;
      \item $v$ is concave;  
      \item $\Omega$ is a product of circumsolids.
   \end{enumerate}
 \end{corollary}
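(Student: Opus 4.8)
The plan is to verify the cycle $(1)\Rightarrow(2)\Rightarrow(3)\Rightarrow(4)\Rightarrow(5)\Rightarrow(1)$ together with the side-equivalence $(4)\Leftrightarrow(6)$, reading off each arrow from the results assembled in the preceding sections; the proof is essentially a formalisation of the summary paragraph that precedes the statement.

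For $(1)\Rightarrow(2)$ I would simply invoke Lemma~\ref{lem:C11}: a semi-concave weak solution of $\Delta v+\mu=0$ on a bounded domain with continuous boundary lies in $C^{1,1}(\overline{\Omega})$. The heart of the argument is $(2)\Rightarrow(3)$. Fix $x_0\in\partial\Omega$ and choose $r>0$ small enough that $B_r(x_0)\cap\Omega=x_0+r(B_1(0)\cap\overline{\Gamma}_{x_0})$; following the proof of Theorem~\ref{semiconcave implies C2}, write $v(x_0+rx)=w(x)+q(x)$ where $q$ is the quadratic polynomial $Dv|_{x_0}(rx)-\tfrac{\mu}{2d}r^2|x|^2$ and $w$ is harmonic on $B_1(0)\cap\Gamma_{x_0}$ with homogeneous Neumann data on $B_1(0)\cap\partial\Gamma_{x_0}$ (the boundary condition holds because on a polyhedral domain $D_{\nu_i}v=-\gamma_i$ is constant along each face, so the linear correction cancels the boundary term exactly). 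The hypothesis $v\in C^{1,1}$ transfers to $w\in C^{1,1}$, and since the tangent cone $\Gamma_{x_0}$ is tame by Proposition~\ref{prop:tame}, Theorem~\ref{prop:nobadreg} gives $w\in C^2$ near the origin, hence $v\in C^2$ near $x_0$; together with interior elliptic regularity this yields $v\in C^2(\overline{\Omega})$.

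The remaining arrows are short. $(3)\Rightarrow(4)$ is Theorem~\ref{lemma 1.2}, that a $C^2$ solution of \eqref{eq:3-generalised} is quadratic. $(4)\Rightarrow(5)$ follows from Proposition~\ref{prop:quad-domains}: decomposing a quadratic solution of \eqref{eq:3} as $\tfrac12\sum_i\lambda_i|\pi_{E_i}(x)|^2+b\cdot x+c$ over the eigenspaces of its Hessian and integrating $D_\nu v=-1$ over each factor domain $\Omega_i$ with the divergence theorem forces $\dim(E_i)\lambda_i|\Omega_i|=-|\partial\Omega_i|<0$, so every $\lambda_i<0$ and $v$ is concave. $(5)\Rightarrow(1)$ is immediate, taking $C=0$ in the definition of semi-concavity. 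Finally $(4)\Leftrightarrow(6)$ is exactly Corollary~\ref{cor:quadcircum}.

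The only step with genuine content is $(2)\Rightarrow(3)$, and even there the difficulty has already been isolated: it rests on the boundary-regularity theorem (Theorem~\ref{prop:nobadreg}), which in turn depends on the tameness of all polyhedral cones (Proposition~\ref{prop:tame}) and on the homogeneous expansion of Neumann-harmonic functions (Proposition~\ref{lem:series-expansion}). What remains to check in the write-up is just the bookkeeping of the local splitting and the verification that $w$ has the claimed Neumann data; everything else is immediate, so the proof should run to only a few lines.
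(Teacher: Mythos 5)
Your proposal is correct and follows the same route as the paper: the paper's own justification is precisely the chain semi-concave $\Rightarrow C^{1,1}$ (Lemma~\ref{lem:C11}) $\Rightarrow C^{2}$ (via the local splitting $v(x_0+r\cdot)=w+q$, Proposition~\ref{prop:tame} and Theorem~\ref{prop:nobadreg}) $\Rightarrow$ quadratic (Theorem~\ref{lemma 1.2}) $\Rightarrow$ concave (Proposition~\ref{prop:quad-domains}), closed trivially, with $(4)\Leftrightarrow(6)$ from Corollary~\ref{cor:quadcircum}. The only detail worth adding in the write-up is that $(6)\Rightarrow(4)$ uses uniqueness of solutions of \eqref{eq:3} up to an additive constant, so that the given $v$ coincides with the quadratic solution supplied by Corollary~\ref{cor:quadcircum}.
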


\section{Proof of the main results}
\label{sec:proof-main-results}

In this section, we complete the proofs of our main results:
Theorem~\ref{thm:main1}, Theorem~\ref{thm:main1bis}, 
and Corollary~\ref{cor:approx-omega}. 

\begin{proof}[Proof of Theorem~\ref{thm:main1}]
  Suppose $\Omega$ is polyhedral domain in $\R^{d}$ that is not a
  product of circumsolids.  We first show that for all $\alpha>0$
  small enough, the first Robin eigenfunction $u_{\alpha}$ is not
  log-concave. Set $v_{\alpha}=\log u_{\alpha}$. Then $v_{0}\equiv 0$
  and so, by Proposition~\ref{propo:properties-of-Robineigenvalues},
  $v_{\alpha}$ can be expanded as
  \begin{equation}
    \label{eq:13}
    v_{\alpha}= \alpha v+ f^\alpha,
  \end{equation}
  where $f^\alpha$ belongs to $o(\alpha)$ in
  $C^{0,\beta}(\overline{\Omega})$ for all $\alpha>0$ small enough,
  $\beta\in (0,1)$, and $v$ is a solution of the Neumann
  problem~\eqref{eq:3} for
  $\mu=\frac{d \lambda_{\alpha}}{d\alpha}_{\vert \alpha=0}$. Now, by
  Corollary~\ref{cor:concavity-characterisation}, $v$ is not concave
  on $\overline{\Omega}$.  Thus, there exist $x$,
  $y\in \overline{\Omega}$ and $t\in(0,1)$ such that
   \begin{equation}
    \varepsilon:=t\,v(x)+(1-t)\,
    v(y)-v(tx+(1-t)y)>0.
  \end{equation}
  On the other hand, for every $\delta>0$, there is an $\alpha_{0}>0$
  such that $\norm{f^\alpha}_{\infty}\le \delta\alpha$ for all
  $0<\alpha\le \alpha_{0}$. Set $\delta=\varepsilon/4$, and let
  $\alpha$ be less than the corresponding $\alpha_0$.  Then
  \allowdisplaybreaks
\begin{align*}
tv_\alpha&(x)+(1-t)v_\alpha(y)-v_\alpha(tx+(1-t)y) \\
&=\alpha \left[ t\,v(x)+(1-t)
    v(y)-v(tx+(1-t)y) \right]  \\
    & \hspace{3cm}
     + tf^\alpha(x)+(1-t)f^\alpha(y)-f^\alpha\left(tx+(1-t)y\right) \\
&\ge \alpha\varepsilon -3\delta \alpha>0,
\end{align*}
so $v_\alpha$ is not concave for any $\alpha<\alpha_0$, proving
Theorem~\ref{thm:main1}.
\end{proof}

Next we consider the convexity of superlevel sets
$\{x\big|\ u_\alpha(x)> c\}$. We first establish two preliminary
results. The first is a Lichn\'erowicz-Obata type result for the first
non-trivial Neumann eigenvalue on a convex subset of the sphere
$\S^{d-1}$, which extends partially the result of~\cite{MR1072395} by
allowing non-smooth boundary, resulting in a larger class of quality
cases.


\begin{theorem}\label{thm:lichobata}
  For $d\ge 3$, let $A$ be a  
  convex open subset of the sphere $\S^{d-1}$. Then the first nontrivial
  eigenvalue
  \begin{displaymath}
    \lambda_{1}(A)=\inf_{\varphi\in C^{\infty}(\overline{A}) :
      \int_{A}\varphi dV_{g}=0}\frac{\int_{A}\abs{D\varphi}^{2}\,
      \textrm{d}V_{g}}{\int_{A}\abs{\varphi}^{2} dV_{g}}
  \end{displaymath}
  of the Neumann Laplacian on $A$ satisfies $\lambda_{1}\ge
  d-1$. Moreover, $\lambda_{1}(A)= d-1$ if and only if 
  the cone $\Gamma=\{x=rz\in \R^{d}\,\vert\,z\in A\}$ in $\R^{d}$ has
  a linear factor, so that (after an orthogonal transformation)
  $\Gamma=\tilde{\Gamma}\times \R$ for some convex cone
  $\tilde{\Gamma}$ in $\R^{d-1}$. In this case, the corresponding
  eigenfunction is the restriction to $\S^{d-1}$ of the linear function
  $L(x,y)=y$ for $(x,y)\in\R^{d-1}\times\R$.
 \end{theorem}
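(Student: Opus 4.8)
The plan is to prove this as a version of the Reilly--Escobar--Xia lower bound for the first nontrivial Neumann eigenvalue of a domain with convex boundary inside a manifold with Ricci curvature bounded below --- here the ambient manifold is $\S^{d-1}$, on which $\mathrm{Ric}=(d-2)\,g$, which is where $d\ge3$ enters so that the curvature term is nontrivial --- together with the Reilly rigidity statement, carried out carefully enough to allow the merely Lipschitz boundary of a general convex set $A$. It is worth keeping in mind the cone picture that runs through the rest of the paper: by the correspondence $\varphi(z)\mapsto\Psi(rz)=r^{\beta}\varphi(z)$ with $\beta$ solving $\beta^2+(d-2)\beta=\lambda$ (cf.~\eqref{eq:beta-i}), a Neumann eigenfunction on $A$ of eigenvalue $\lambda$ is the same thing as a homogeneous harmonic Neumann function of degree $\beta$ on the cone $\Gamma$ over $A$; the bound $\lambda_1(A)\ge d-1$ says a convex cone carries no such function of degree $\beta<1$, while $\lambda_1(A)=d-1$ (i.e.\ $\beta=1$) means it carries a degree-one one, that is, a nonzero linear function vanishing on every boundary normal of $\Gamma$.

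For the lower bound I would first reduce to the case that $A$ has smooth, strictly convex boundary, by exhausting $A$ from inside by such domains $A_j\uparrow A$ and using only the elementary estimate $\limsup_j\lambda_1(A_j)\le\lambda_1(A)$ (obtained by restricting a first eigenfunction of $A$ to $A_j$ and recentring, along the lines of \cite{MR3566212}). On a smooth $A_j$, with $\varphi$ a first eigenfunction normalised by $\int\varphi^2=1$, so that $\Delta\varphi=-\lambda\varphi$, $\partial_\nu\varphi=0$ on $\partial A_j$, $\int\varphi=0$ and $\int|\nabla\varphi|^2=\lambda$, I integrate the Bochner formula
\[
\tfrac12\Delta|\nabla\varphi|^2=|D^2\varphi|^2+\langle\nabla\varphi,\nabla\Delta\varphi\rangle+\mathrm{Ric}(\nabla\varphi,\nabla\varphi)
\]
over $A_j$. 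Since on $\partial A_j$ the vector $\nabla\varphi$ is tangential, the divergence-theorem boundary term $\int_{\partial A_j}D^2\varphi(\nabla\varphi,\nu)$ equals minus the second fundamental form of $\partial A_j$ applied to $\nabla\varphi$, which by convexity has the sign favourable to the estimate. Inserting $\langle\nabla\varphi,\nabla\Delta\varphi\rangle=-\lambda|\nabla\varphi|^2$, $\mathrm{Ric}=(d-2)g$, the pointwise Cauchy--Schwarz bound $|D^2\varphi|^2\ge\frac{1}{d-1}(\Delta\varphi)^2$ and $\int(\Delta\varphi)^2=\lambda^2$, the integrated identity collapses to $(\lambda-(d-2))\lambda\ge\frac{\lambda^2}{d-1}$, i.e.\ $\lambda\ge d-1$; letting $j\to\infty$ gives $\lambda_1(A)\ge d-1$.

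For the equality case, assume $\lambda_1(A)=d-1$. Then $\lambda_j:=\lambda_1(A_j)\to d-1$, and the integrated identity on $A_j$, after discarding the nonnegative boundary term, yields
\[
0\le\int_{A_j}\Bigl|D^2\varphi_j-\tfrac{\Delta\varphi_j}{d-1}g\Bigr|^2=\int_{A_j}|D^2\varphi_j|^2-\tfrac{\lambda_j^2}{d-1}\le(\lambda_j-(d-2))\lambda_j-\tfrac{\lambda_j^2}{d-1}\longrightarrow0 .
\]
Using a uniform extension operator for the convex exhaustion and interior elliptic ($H^2_{\mathrm{loc}}$) estimates, a subsequence of the normalised $\varphi_j$ converges weakly in $H^1(A)$ (strongly in $L^2(A)$) to a nonzero Neumann eigenfunction $\varphi$ of $A$ with eigenvalue $d-1$, and the displayed estimate passes to the limit on every $U\Subset A$, giving Obata's equation $D^2\varphi+\varphi\,g=0$ in the interior of $A$. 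Since a solution of Obata's equation on a connected open subset of the round sphere is the restriction of a linear function on $\R^d$, we get $\varphi(z)=\langle a,z\rangle$ for some $a\ne0$. As $\varphi$ is a weak solution of the Neumann problem on $A$ and $-\Delta\varphi=(d-1)\varphi$ holds, integration by parts forces $\partial_\nu\varphi=0$ a.e.\ on $\partial A$; but $\partial_\nu\varphi=\langle a-\langle a,z\rangle z,\nu\rangle=\langle a,\nu\rangle$ since $\nu\perp z$, so $\langle a,\nu\rangle=0$ for a.e.\ boundary point of $A$, hence (by density of smooth points and convexity) $\langle a,N\rangle=0$ for every $N$ in the normal cone of $\Gamma$, so $\Gamma+\R a=\Gamma$. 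After rotating $a/|a|$ to $e_d$ this reads $\Gamma=\tilde\Gamma\times\R$ with $\tilde\Gamma\subset\R^{d-1}$ a convex cone, and $\varphi$ is the restriction of $L(x,y)=y$. Conversely, if $\Gamma=\tilde\Gamma\times\R$ then $L(x,y)=y$ is linear, harmonic, homogeneous of degree one, with $\partial_\nu L=0$ on $\partial\Gamma$ (all face normals lie in $\R^{d-1}\times\{0\}$), so its restriction to $\S^{d-1}$ is a zero-mean (automatically, since the eigenvalue is positive) Neumann eigenfunction on $A$ of eigenvalue $d-1$, which together with the lower bound gives $\lambda_1(A)=d-1$.

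The smooth-boundary Reilly computation is routine. The real work --- and the content of the claimed extension of \cite{MR1072395} --- is the non-smooth boundary: for the lower bound only the easy half of eigenvalue continuity under convex exhaustion is needed, but in the rigidity case one must promote the approximate identity $\int|D^2\varphi_j-\frac{\Delta\varphi_j}{d-1}g|^2\to0$ to an exact Obata equation for a genuine eigenfunction on $A$, which is where uniform extension and compactness for the exhaustion $A_j\uparrow A$ are required; once Obata's equation holds in the interior, reading off the linear factor of the cone from the Neumann condition is elementary.
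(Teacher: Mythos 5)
Your proof is correct and follows essentially the same route as the paper: an integrated Bochner/Reilly identity on smooth convex approximating domains, with the convexity of the boundary controlling the sign of the boundary term, then passage to the limit for general convex $A$, and in the equality case the approximate identity forcing Obata's equation $\nabla^2\varphi+\varphi g=0$ in the interior, whence $\varphi$ is the restriction of a linear function whose direction the Neumann condition places in the lineality space of the cone. The only cosmetic difference is that you apply Bochner directly to the eigenfunction, whereas the paper states a Reilly-type formula for an auxiliary $f$ solving $\Delta f=(d-1)u$ (which it then specialises to $f=-\tfrac{d-1}{\lambda_1}u$, i.e.\ exactly your computation).
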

 
 \begin{proof}
 First suppose that $A$ has smooth boundary.  Then for any $u\in H^1(A)$ and $f\in H^3(A)$ with $D_\nu f=0$ on $\partial A$, the following Reilly-type formula holds:
 \begin{equation}\label{eq:Reilly}
 \begin{split}
 \int_A\left(\Delta f-(d-1)u\right)^2 &- \int_A\|\nabla^2f -ug\|^2 - (d-2)\int_A\|\nabla f+\nabla u\|^2 - \int_{\partial A} h(\bar\nabla f,\bar\nabla f)\\
 &=(d-2)\left[(d-1)\int_A u^2 - \int_A\|\nabla u\|^2\right] \end{split}
 \end{equation}
 where $\nabla$ is the covariant derivative on $S^{d-1}$, $h$ is the second fundamental form of $\partial A$, and $\bar\nabla f$ is the gradient vector of the restriction of $f$ to $\partial A$.  This is proved by integration by parts and application of the curvature identity (the proof due to the first author for the situation without boundary is described in \cite[Theorem B.18]{CNL}).
 
 In particular, given $u\in H^1(A)$ with $\int_Au=0$, let $f$ be a solution of the problem
 \begin{equation}\label{eq:NP}
\begin{cases} \Delta f = (d-1)u&\text{on\ }A;\\
 D_\nu f=0&\text{on\ }\partial A.\end{cases}
 \end{equation}
 With this choice the first term on the left vanishes, and the remaining terms are non-positive, so the right-hand side is non-positive, proving the Poincar\'e inequality
 $$
 \int_A \|\nabla u\|^2 \geq (d-1)\int_A u^2
 $$
 for all $u\in H^1(A)$ with $\int_Au=0$, implying that $\lambda_1(A)\geq d-1$.
 
 Now consider the general case, where the boundary of $A$ may not be smooth.  Suppose that $\{A_n\}$ is a sequence of convex domains in $S^n$ with smooth boundary, which converge in Hausdorff distance to $A$ (these can be constructed by smoothing level sets of the distance to $\partial A$, for example).  Let $\{u_n\}$ be the corresponding sequence of first eigenfunctions, normalised to $\int_{A_n}u_n^2=1$.  The solution of \eqref{eq:NP} is then given by 
 $f_n = -\frac{d-1}{\lambda_1(A_n)}u_n$.  As $n\to\infty$ we have $\lambda_1(A_n)\to \lambda_1(A)$, so $\lambda_1(A)\geq d-1$.  
 
 Suppose that equality holds.  Then we can find a subsequence along which $u_n$ converges weakly in $H^1$ to the first eigenfunction $u$ on $A$, and the interior regularity estimates imply that $u_n$ converges to $u$ in $C^\infty(B)$ for any compact subset $B$ of $A$.  The right-hand side of \eqref{eq:Reilly} is equal to $(d-1)-\lambda_1(A_n)$, which converges to zero.  The first term on the left is equal to zero for every $n$, and the last term on the left is non-positive by the convexity of $A_n$, so on any compact subset $B$ we have
\begin{align*}
\int_{B}\|\nabla^2f_n-(d-1)u_n\|^2 &+ (d-2)\int_{B}\|\nabla f_n+\nabla u_n\|^2\\
&\leq \int_{A_n}\|\nabla^2f_n-(d-1)u_n\|^2 + (d-2)\int_{A_n}\|\nabla f_n+\nabla u_n\|^2\\
&\leq (d-2)(\lambda_1(A_n)-(d-1)).
\end{align*}
 Since $f_n$ converges to $-u$ on $B$, the left-hand side converges to $\int_{B}\|\nabla^2u+ug\|^2$, while the right-hand side converges to zero.  Therefore we have $\nabla^2u+ug=0$ at every point of $B$, and hence at every point of $A$ since $B$ is an arbitrary compact subset of $A$.
 
 It follows that $u$ is the restriction of a linear function on $\R^d$ to $\S^{d-1}$:  Define $e(z):= u(z)z+\nabla_i u(z)g^{ij}\partial_jz\in\R^d$.  Then we have
 $$
 \partial_k e = \partial_ku z + u\partial_kz + \nabla_k\nabla_iu g^{ij}\partial_jz +\nabla_iu g^{ij}(-g_{ki}z) = (\nabla^2u+ug)_{ki}g^{ij}\partial_jz = 0,
 $$
 so that $e$ is constant on $A$.  Finally, we have $u(z) = e(z)\cdot z$, which is a linear function.   The claimed structure of $A$ now follows from the Neumann condition $D_\nu u=0$.
 \end{proof}

This result has an immediate consequence, which is important in our
proof of Theorem~\ref{thm:main1bis}.

\begin{lemma}
  \label{lem:existence-of-homog-1-function}
  Let $\Gamma$ be a polyhedral convex cone in $\R^{d}$ with vertex at the origin.
 Then there is a harmonic function $\hat{w}$ on
  $\Gamma$ which is homogenous of degree one and satisfies
  $D_{\nu}\hat{w}=-1$ on $\partial\Gamma$.
\end{lemma}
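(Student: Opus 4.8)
The plan is to seek $\hat w$ in the homogeneous form $\hat w(rz)=r\,\varphi(z)$ for $z\in A:=\Gamma\cap\S^{d-1}$, and to reduce the statement to a linear boundary value problem for $\varphi$ on the spherical domain $A$. Since $\Delta(r\varphi)=r^{-1}\bigl(\Delta_{\S^{d-1}}\varphi+(d-1)\varphi\bigr)$, harmonicity of $\hat w$ is equivalent to $\Delta_{\S^{d-1}}\varphi+(d-1)\varphi=0$ on $A$. Moreover $\nabla\hat w=\varphi(z)\,z+\nabla_{\S^{d-1}}\varphi(z)$, and at a point $z\in\partial A$ lying on the face with outer normal $\nu_i$ one has $z\cdot\nu_i=0$, so $\nu_i\in T_z\S^{d-1}$ is exactly the outward unit conormal $\eta$ of $\partial A$ in $A$ at $z$; hence $D_{\nu_i}\hat w=\partial_\eta\varphi$ there, and the condition $D_\nu\hat w=-1$ on $\partial\Gamma$ becomes $\partial_\eta\varphi=-1$ on $\partial A$. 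Thus it suffices to find $\varphi\in H^1(A)$ solving $\Delta_{\S^{d-1}}\varphi+(d-1)\varphi=0$ in $A$ with $\partial_\eta\varphi=-1$ on $\partial A$, in the weak sense that $\int_A\nabla\varphi\cdot\nabla\psi-(d-1)\int_A\varphi\,\psi=-\int_{\partial A}\psi$ for all $\psi\in H^1(A)$, the boundary integral being well defined by the trace theorem on the Lipschitz domain $A$.

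We first reduce to the case of a pointed cone. If the normals $\nu_1,\dots,\nu_m$ fail to span $\R^d$ (in particular if $m=0$, where one may simply take $\hat w(x)=x_1$), put $V=\operatorname{span}\{\nu_i\}$ and $W=V^\perp\neq\{0\}$; then $\Gamma=\{x:\pi_V(x)\in\tilde\Gamma\}$ with $\tilde\Gamma=\bigcap_i\{y\in V:y\cdot\nu_i<0\}$ a nonempty polyhedral convex cone in $V$ whose normals span $V$. Granting the result for $\tilde\Gamma$ in $V\cong\R^{\dim V}$, the pull-back $\hat w:=\tilde w\circ\pi_V$ of a solution $\tilde w$ is harmonic and homogeneous of degree one on $\Gamma$, with $D_{\nu_i}\hat w=\nu_i\cdot\nabla_V\tilde w(\pi_V x)=D_{\nu_i}\tilde w=-1$ on the face with normal $\nu_i$, since $\pi_V(x)\in\partial\tilde\Gamma$ there. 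So we may assume the $\nu_i$ span $\R^d$; then $\Gamma$ has no linear factor (a linear factor $E$ would be orthogonal to every $\nu_i$), so $\Gamma$ is pointed, $A$ lies in an open hemisphere, and $A$ is a geodesically convex open proper subset of $\S^{d-1}$. The crucial point is that $d-1$ is not a Neumann eigenvalue of $A$: for $d\ge 3$ this follows from Theorem~\ref{thm:lichobata}, which gives $\lambda_1(A)>d-1$ (strict, as $\Gamma$ has no linear factor), while $\lambda_0(A)=0<d-1$; for $d=2$, $A$ is a circular arc of length $\theta\in(0,\pi)$ with Neumann eigenvalues $(n\pi/\theta)^2$, $n\ge 0$, none equal to $1$; and for $d=1$, $\Gamma$ is a half-line and $\hat w(x)=\pm x$ works directly. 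Consequently the bilinear form $B(\varphi,\psi)=\int_A\nabla\varphi\cdot\nabla\psi-(d-1)\int_A\varphi\,\psi$ — which is the $H^1(A)$ inner product minus a compact ($L^2$) perturbation, hence Fredholm of index zero — has trivial kernel, so it is an isomorphism of $H^1(A)$ onto its dual and the weak problem above has a unique solution $\varphi$. Setting $\hat w(rz)=r\varphi(z)$ gives the required function.

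Finally one records regularity. Interior elliptic regularity makes $\hat w$ smooth and classically harmonic on $\Gamma$. Near a point in the relative interior of a face $\Sigma_i$ the function $\hat w+x\cdot\nu_i$ has vanishing Neumann data on the flat piece $\Sigma_i$, so it extends by even reflection across the hyperplane $\{x\cdot\nu_i=0\}$ to a harmonic function; hence $\hat w$ is smooth up to the relative interior of every face and satisfies $D_{\nu_i}\hat w=-1$ there classically, while along the lower-dimensional strata of $\partial\Gamma$ it remains a weak solution. The main obstacle is precisely the spectral input that $d-1$ avoids the Neumann spectrum of $A$: this is where the Lichnérowicz–Obata type Theorem~\ref{thm:lichobata} is used, and the case $d=2$, not covered by that theorem, must be handled by the elementary eigenvalue computation above. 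The remaining steps — the reduction to the sphere and the Fredholm solvability with the inhomogeneity supported on $\partial A$ — are routine.
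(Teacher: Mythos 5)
Your proof is correct and follows essentially the same route as the paper's: reduce to the spherical problem $\Delta_{\S^{d-1}}\varphi+(d-1)\varphi=0$, $\partial_\eta\varphi=-1$ on $A=\Gamma\cap\S^{d-1}$, invoke Theorem~\ref{thm:lichobata} to place $d-1$ in the resolvent set of the Neumann Laplacian when $\Gamma$ has no linear factor, and split off linear factors otherwise. You are in fact slightly more careful than the paper in spelling out the Fredholm solvability and in treating $d=2$ separately (since Theorem~\ref{thm:lichobata} is only stated for $d\ge 3$), but the substance is identical.
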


\begin{proof}
  Set $A:=\Gamma\cap \S^{d-1}$. First consider the case when $\Gamma$
  does not admit a linear factor. Then by Theorem~\ref{thm:lichobata},
  $d-1$ is in the resolvent set $\rho(-\Delta^{\S^{d-1}}_{\vert A})$
  of the operator $-\Delta^{\S^{d-1}}_{\vert A}$ equipped with
  homogeneous Neumann boundary conditions and realised in $L^{2}(A)$.
  Therefore, there exists a unique weak solution $\tilde{\varphi}$ of
\begin{equation}
  \label{eq:eigenvalue-on-Sd-1}
  \begin{cases}
    \Delta^{\S^{d-1}}\tilde{\varphi}+(d-1)\tilde{\varphi}&=\phantom{-}0 \text{ on $A$,}\\
   \hfill D_{\nu}\tilde{\varphi}&=-1  \text{ on $\partial A$.}
  \end{cases}
\end{equation}

It follows that the function
  \begin{equation}\label{eq:hatw}
    \hat{w}(rz):=r\,\tilde{\varphi}(z)\quad\text{for $r\in [0,1]$, $z\in A$}
  \end{equation}
  is harmonic on $\Gamma$, homogeneous of degree one, and satisfies
  $D_{\nu}\hat{w}=-1$ on $\partial\Gamma$.
  
  Now suppose $\Gamma$ has a linear factor, so that
  there is a $k\in \{1,\dots,d-1\}$ such that
  $A=(\R^{k}\oplus \tilde{\Gamma})\cap \S^{d-1}$ for a polyhedral cone
  $\tilde{\Gamma}$ in $\R^{d-k}$ with no linear factors. In particular, if $k=d-1$ then
  $\tilde{\Gamma}=(0,+\infty)$, and then $\tilde{\varphi}(z)=z$ is a
  solution of~\eqref{eq:eigenvalue-on-Sd-1} on 
  $\tilde{\Gamma}$ and so,
  \begin{displaymath}
    \hat{w}(x_{1},\dots,x_{d-1},z):=z\quad\text{for
      every $(x_{1},\dots,x_{d-1},z)\in \Gamma$}
  \end{displaymath}
  is harmonic on $\Gamma$, homogeneous of degree one, and satisfies $D_{\nu}\hat{w}=-1$ on
  $\partial\Gamma$.  Otherwise we have $1\leq k\leq d-2$ and
  $\tilde{\Gamma}$ is a convex polyhedral cone with has no linear factor. Then by the first case, there is a
  harmonic function $\tilde{w}$ on $\tilde{\Gamma}$ which is homogeneous of degree one and satisfies
  $D_{\nu}\tilde{w}=-1$ on $\partial\tilde{\Gamma}$.  Then the function
  \begin{displaymath}
    \hat{w}(x_{1},\dots,x_{k},z):=\tilde{w}(z)\quad\text{for
      every $(x_{1},\dots,x_{k},z)\in \R^{d-k}\times\tilde\Gamma=\Gamma$}
  \end{displaymath}
  is a harmonic on $\Gamma$, homogeneous of degree one, and satisfies
  $D_{\nu}\hat{w}=-1$ on $\partial\Gamma$.  We note that the solution
  space is in general of dimension $k$, since we can add an arbitrary
  linear function on the linear factors.
\end{proof}

Further, in dimension $d\ge 3$, we will use the following 
characterisation of polyhedra with boundary points with inconsistent normals. 
We
omit the proof of this result.

\begin{proposition}
  \label{propo:irregular-bdry}
  Let $\Omega$ be a convex polyhedral domain in $\R^{d}$, $d\ge 2$,
  with outer unit face normals $\nu_{1},\dots,\nu_{m}$. For each point
  $x\in \overline{\Omega}$, let $\mathcal{I}(x)$ be the index
  set~\eqref{eq:9} of faces touching $x$. Then the
  following statements are equivalent.
  \begin{enumerate}
     \item $x$ has inconsistent normals:  That is, there is no $\gamma\in \R^{d}$ satisfying
       \begin{displaymath}
         \nu_{i}\cdot \gamma =-1\quad\text{for every $i\in \mathcal{I}(x)$.}
       \end{displaymath}  
     \item If $\hat w$ is a function on $\Gamma_{x}$ which is harmonic and homogeneous of degree one and satisfies
       \begin{displaymath}
         D_{\nu_{i}}\hat{w}=-1\quad\text{ on $\Sigma_{i}$ for all
           $i\in \mathcal{I}(x)$.}
     \end{displaymath}
     then $\hat w$ is not a linear function.
     \item The
       tangent cone $\Gamma_{x}$ to $\Omega$ is not a circumsolid.
  \end{enumerate}
\end{proposition}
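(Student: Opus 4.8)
The plan is to read off the three implications directly from the definitions, after reindexing so that $\mathcal I(x)=\{1,\dots,\ell\}$ and $\Gamma_x=\bigcap_{i=1}^{\ell}\{y\mid y\cdot\nu_i<0\}$. As a preliminary step I would record that this description of $\Gamma_x$ is irredundant: since each $\Sigma_i$ with $i\in\mathcal I(x)$ is a facet of $\Omega$ (hence $(d-1)$-dimensional) and $\Omega$ agrees with $x+\Gamma_x$ near $x$, the hyperplane $\{y\mid y\cdot\nu_i=0\}$ meets $\overline{\Gamma_x}$ in a $(d-1)$-dimensional set, so it defines a face of $\Gamma_x$. Consequently the faces of $\Gamma_x$ are exactly the sets $\overline{\Gamma_x}\cap\{y\mid y\cdot\nu_i=0\}$, $i\in\mathcal I(x)$, which is what makes the three statements speak about the same collection of hyperplanes.

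I would then dispatch the equivalence of (1) and (2) by inspection. A linear function $\hat w(y)=\gamma\cdot y+c$ is automatically harmonic, is homogeneous of degree one exactly when $c=0$, and then has $D_{\nu_i}\hat w\equiv\gamma\cdot\nu_i$ on $\Sigma_i$. Hence a harmonic, degree-one-homogeneous function with $D_{\nu_i}\hat w=-1$ on every $\Sigma_i$, $i\in\mathcal I(x)$, that is moreover linear exists precisely when there is $\gamma\in\R^d$ with $\gamma\cdot\nu_i=-1$ for all $i\in\mathcal I(x)$, i.e.\ precisely when $x$ has consistent normals. Since (2) asserts that no such $\hat w$ is linear, (2) holds if and only if $x$ has inconsistent normals, which is (1). (By Lemma~\ref{lem:existence-of-homog-1-function} some such $\hat w$ always exists, so (2) is not vacuous, though this is not needed here.)

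For the equivalence of (1) and (3) I would argue on the negations. If $x$ has consistent normals, pick $\gamma$ with $\gamma\cdot\nu_i=-1$ for all $i\in\mathcal I(x)$ and consider the ball $B_1(\gamma)$: for $|v|\le1$ one has $(\gamma+v)\cdot\nu_i=-1+v\cdot\nu_i\le0$, so $\overline{B_1(\gamma)}\subseteq\overline{\Gamma_x}$; and $\gamma+\nu_i\in\partial B_1(\gamma)$ satisfies $(\gamma+\nu_i)\cdot\nu_i=0$ while $(\gamma+\nu_i)\cdot\nu_j=-1+\nu_i\cdot\nu_j<0$ for $j\ne i$ (strict, since distinct unit normals have $\nu_i\cdot\nu_j<1$), so $\gamma+\nu_i$ lies on the face of $\Gamma_x$ with normal $\nu_i$, where $\partial B_1(\gamma)$ is tangent to $\{y\mid y\cdot\nu_i=0\}$ and hence meets that face in exactly one point. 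Thus $\Gamma_x$ is a circumsolid. Conversely, if $\Gamma_x$ is a circumsolid, let $B_R(x_0)\subseteq\overline{\Gamma_x}$ be a ball touching every face; from $x_0+R\nu_i\in\overline{B_R(x_0)}\subseteq\overline{\Gamma_x}\subseteq\{y\mid y\cdot\nu_i\le0\}$ we get $x_0\cdot\nu_i\le-R$, while the point of $\partial B_R(x_0)$ lying on $\{y\mid y\cdot\nu_i=0\}$ forces $x_0\cdot\nu_i\ge-R$; hence $x_0\cdot\nu_i=-R$ for all $i\in\mathcal I(x)$, so $\gamma:=x_0/R$ witnesses that $x$ has consistent normals. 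This completes the chain of equivalences.

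I do not expect a genuine obstacle here: the whole proposition is a dictionary between an affine condition on $\{\nu_i\}_{i\in\mathcal I(x)}$, the existence of an inscribed ball touching every face of the tangent cone, and the presence of a linear representative among the degree-one Neumann-harmonic functions on $\Gamma_x$. The one point needing a little care is the irredundancy remark above; alternatively it can be avoided by invoking the coordinate form of Definition~\ref{def:exscribed} and noting that, for a cone with vertex at the origin, every bounding hyperplane of a representation $\Gamma_x=\bigcap\{(y-x_0)\cdot\nu_i<R\}$ must contain the origin, which again forces $x_0\cdot\nu_i=-R$ and hence $\gamma=x_0/R$.
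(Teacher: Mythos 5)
Your argument is correct and complete. The paper itself omits the proof of Proposition~\ref{propo:irregular-bdry}, so there is nothing to compare against; your proof supplies exactly the routine dictionary the authors evidently had in mind. The two points that genuinely need care — that the half-space description of $\Gamma_x$ by $\{\nu_i\}_{i\in\mathcal I(x)}$ is irredundant (so that the faces of the tangent cone are precisely the sets $\overline{\Gamma_x}\cap\{y\cdot\nu_i=0\}$, making the circumsolid condition in (3) refer to the same normals as (1) and (2)), and that homogeneity of degree one kills the constant term of a putative linear $\hat w$ in (2) — are both handled explicitly, and the tangency computation for $(1)\Leftrightarrow(3)$ ($\overline{B_1(\gamma)}\subseteq\overline{\Gamma_x}$ with touching points $\gamma+\nu_i$, and conversely $x_0\cdot\nu_i=-R$ so $\gamma=x_0/R$) is sound.
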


We now proceed to the proof of Theorem \ref{thm:main1bis}:

\begin{proof}[Proof of Theorem~\ref{thm:main1bis}]  
  Under the assumptions of Theorem \ref{thm:main1bis}, we first prove
  that the function $v$ satisfying the Neumann problem ~\eqref{eq:3}
  has some non-convex superlevel sets.
    
  Let $x_{0}\in \partial\Omega$ and $\Gamma=\Gamma_{x_0}$, and choose
  $r>0$ small enough so that~\eqref{eq:28} holds. We define
  \begin{displaymath}
        \tilde{v}(x) := v(x_0+x)+\frac{\mu}{2d}\abs{x}^2\quad\text{for
     $x\in B_{r}(0)\cap \Gamma$.}
  \end{displaymath}
  Then $\tilde{v}$ is harmonic on $B_{r}\cap \Gamma$ and
  satisfies $D_{\nu}\tilde{v}=-1$ on $B_{r}\cap\partial
  \Gamma_{x_0}$. By Lemma~\ref{lem:existence-of-homog-1-function},
  there is a harmonic function $\hat{w}$ on
  $\Gamma$ which is homogenous of degree one 
  and satisfies
  $D_{\nu}\hat{w}=-1$ on $\partial\Gamma$. Then the
  function
  \begin{displaymath}
    w(x):=\tilde{v}(x)-\hat{w}(x)\quad\text{for
      every $x\in B_{r}(0)\cap \Gamma$}
  \end{displaymath}
  is a weak solution of the Neumann
  problem~\eqref{eq:weak-harmonic-Neumann} on
  $B_r(0)\cap\Gamma$. 
  Proposition~\ref{lem:series-expansion} applied to a suitable dilation of $w$ gives the series
  expansion~\eqref{eq:hhdecomp}. Therefore, $v$ can be written as
  \begin{equation}
    \label{eq:29}
    v(x_{0}+x)=-\frac{\mu}{2d}\abs{x}^2+ \hat{w}(x) +
 \sum_{i=0}^\infty f_i\, \psi_{i}(x)\quad\text{for
      every $x\in B_{r}\cap\Gamma$,}
  \end{equation} 
  where $\psi_{i}$ is the harmonic function on $\Gamma$ given by
  \begin{equation} \label{eqn-for-psi}
    \psi_{i}(x):=s^{\beta_i}\varphi_i(z) \quad\text{for
      every $x=sz$ with $s>0$ and $z\in A$.}
  \end{equation}
  Here $A=\Gamma\cap \S^{d-1}$, $\{\varphi_{i}\}_{i=0}^{\infty}$ is an
  orthonormal basis of $L^{2}(A)$ consisting of eigenfunctions $\varphi_{i}$ of
  the  Neumann-Laplacian $\Delta^{\S^{d-1}}$ on $A$, and
  $\beta_{i}$ are given by~\eqref{eq:beta-i}.   Further, $\beta_0=0$ (corresponding to $\lambda_0=0$), and the remaining
    $\beta_i$ are estimated by Theorem~\ref{thm:lichobata}
    and~\eqref{eq:beta-i}, so that $\beta_i\ge 1$ for $i\ge1$.
  Moreover, there is no loss of generality in assuming that each
  $\beta_{i}>1$, since
  $\tilde{w}(x):=\sum_{i : \beta_{i}=1}f_{i}\psi_{i}(x)$ is harmonic
  on $\Gamma$, of homogeneous degree one, and satisfies
  $D_{\nu}\tilde{w}=0$ on $\partial\Gamma$, and so $\tilde{w}$ can be
  included in $\hat{w}$.   Summarising, we can write
  \begin{equation}
    \label{eq:29bis}
    v(x_{0}+x)=v(x_0)-\frac{\mu}{2d}\abs{x}^2 +\hat w(x)+
 \sum_{i=1}^\infty f_i\, \psi_{i}(x)\quad\text{for
      every $x\in B_{r}\cap\Gamma$,}
  \end{equation}
  where the non-vanishing terms in the sum all have exponent $\beta_i>1$.
\bigskip 

  Before continuing the proof of Theorem~\ref{thm:main1bis}, we observe that the proof of
  Theorem \ref{prop:nobadreg} applies almost without change to prove the following 
  generalisation:

  \begin{theorem}\label{thm:nobadreg-bis}
    Let $\Omega$ be a polyhedral domain in $\R^{d}$, and $B$ a relatively open subset of $\Omega$.   Let $w\in H^1(B)$ be a weak solution of problem \eqref{eq:weak-Neumann-B}.  For each
    $x_{0}\in B\cap\partial\Omega$, choose
    $r(x_0)>0$ small enough so that~\eqref{eq:28} holds, so that  by Proposition \ref{lem:series-expansion} $w$ is given by the expansion
    \begin{displaymath}
      w(x_0+x)=
      \sum_{i=0}^\infty f_i(x_0)\, \psi_{i}^{x_0}(x) \quad\text{for
      every $x\in B_{r(x_0)}(0)\cap\Gamma_{x_0}$,}
    \end{displaymath}
    where $\psi^{x_0}_{i}$ is the harmonic function on $\Gamma_{x_0}$ given
    by $\psi^{x_0}(x) = |x|^{\beta_i(x_0)}\varphi_i^{x_0}\left(\frac{x}{|x|}\right)$, $A_{x_0}=\Gamma_{x_0}\cap \S^{d-1}$, $\{\varphi_{i}^{x_0}\}_{i=0}^{\infty}$ is an
    orthonormal basis of $L^{2}(A_{x_0})$ consisting of eigenfunctions $\varphi_{i}^{x_0}$ of
    the  Neumann-Laplacian $\Delta^{\S^{d-1}}$ on $A_{x_0}$, and
    $\beta_{i}(x_0)$ are given by~\eqref{eq:beta-i}. If $f_i(x_0)\neq 0$ only for those $i$ with $\beta_{i}(x_0)\geq 2$ for every $x_0\in B\cap\partial\Omega$,
    then $w\in C^{2}(B)$.
  \end{theorem}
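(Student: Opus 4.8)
The plan is to repeat the proof of Theorem~\ref{prop:nobadreg} essentially verbatim, with the hypothesis on the Fourier coefficients $f_i(x_0)$ taking over the roles played there by the assumption $w\in C^{1,1}(B)$. Recall that in that proof — concretely, in Lemma~\ref{lem:twicediff} — at a boundary point $x_0$ with tangent cone $\Gamma=\Gamma_{x_0}$ and series expansion $w(x_0+x)=\sum_i f_i\psi_i(x)$ given by Proposition~\ref{lem:series-expansion}, the $C^{1,1}$ hypothesis is used for exactly two things: to discard the modes $\psi_i$ with $0<\beta_i<2$ that are not linear (their second derivatives are homogeneous of negative degree, hence blow up at the vertex), and to know that the degree-two mode $v:=\sum_{\beta_i=2}f_i\psi_i$ — a finite sum, since $\lambda=2d$ has finite multiplicity — has bounded second derivatives, so that $v\in C^{1,1}(\overline\Gamma)$ and the tameness of $\Gamma$ can be invoked. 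The structural hypothesis $f_i(x_0)\neq 0\Rightarrow\beta_i(x_0)\geq 2$ removes the first difficulty outright: no modes with $0<\beta_i<2$ occur, and $w(x_0+x)=f_0+v(x)+\sum_{\beta_i>2}f_i\psi_i(x)$ with $f_0=w(x_0)$.

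First I would record the analogue of Lemma~\ref{lem:twicediff}. The tail $\sum_{\beta_i>2}f_i\psi_i$ is estimated exactly as in the original: although we now assume only $w\in H^1(B)$, the expansion of Proposition~\ref{lem:series-expansion} converges in $C^{\gamma_r}(\overline{B_r(0)\cap\Gamma})$, so $w$ is locally bounded and $|f_i|\le\|w\|_{L^\infty(B\cap\Gamma)}$; combining this with the Cheng--Li eigenvalue bounds \cite{MR639355} and \eqref{eq:beta-i} shows the tail converges uniformly on $B_{1/2}(0)$ and is $O(|x|^{2+\gamma})$ for a suitable $\gamma>0$. For the degree-two mode $v$: granting $v\in C^{1,1}(\overline\Gamma)$, Proposition~\ref{prop:tame} (every polyhedral cone is tame) forces $v=\tfrac12\mathfrak a(x,x)$ for a symmetric bilinear form $\mathfrak a$ with $\operatorname{tr}\mathfrak a=\Delta v=0$ and with every face normal $\nu_i$ an eigenvector of $\mathfrak a$ (differentiate the Neumann identity $\mathfrak a(x,\nu_i)=0$ along $\Sigma_i$), exactly as in Lemma~\ref{lem:twicediff}. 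Together these yield a local second-order estimate of the form \eqref{eq:localest}, so $w$ is twice differentiable at $x_0$ with $D^2w|_{x_0}=\mathfrak a$.

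With this in hand, the remainder of the proof of Theorem~\ref{prop:nobadreg} transfers without modification. The decreasing induction on the number of intersecting faces — Propositions~\ref{lem:nobadreglem1} and~\ref{lem:nobadreglem2}, together with the geometric Lemmas~\ref{lem:coeffbound}, \ref{lem:closeball}, \ref{lem:cone-rad-vs-dist} — invokes only the conclusion of Lemma~\ref{lem:twicediff}, the equation~\eqref{eq:weak-Neumann-B}, and the elementary geometry of polyhedral cones; moreover the auxiliary functions $\tilde w$ appearing there differ from $w$ by a \emph{harmonic} quadratic (its Hessian is $D^2w$ at a point of the next stratum, of trace $\Delta w=0$), hence contribute only modes with $\beta_i\in\{0,1,2\}$, so $\tilde w$ still has no non-linear modes with $\beta_i<2$ — which is all that the argument requires. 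Propagating the estimate~\eqref{eq:wosc1} from $\Gamma^{(m)}$ outward to all of $B_{1/2}(0)\cap\overline\Gamma$ and then deducing continuity of $D^2w$ at the vertex exactly as in the final paragraph of the proof of Theorem~\ref{prop:nobadreg} gives $w\in C^{2}(B)$ (in fact $C^{2,\gamma}_{\mathrm{loc}}$). Applying this at every $x_0\in B\cap\partial\Omega$ completes the proof.

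The hard part is the single step that cannot simply be quoted from the $C^{1,1}$ setting: verifying that the degree-two mode $v$ indeed lies in $C^{1,1}(\overline\Gamma)$ — including across the lower-dimensional strata $\Gamma^{(k)}$, $k\geq 2$, where several faces meet — so that Proposition~\ref{prop:tame} applies to it. Away from the vertex this is handled by the reduction already used inside the proof of Proposition~\ref{prop:tame}: by Lemma~\ref{lem:TCnotvertex} every tangent cone of $\Gamma$ splits off a line; by Lemma~\ref{lem:prodcone} and Lemma~\ref{ode lemma} a degree-two Neumann harmonic function on such a product cone is assembled from a smooth term, a degree-two Neumann harmonic function on a cone of one dimension lower, and a degree-one Neumann harmonic function which — by Theorem~\ref{thm:lichobata} — must be linear; and an induction on dimension then yields the needed regularity. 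One must interleave this dimensional induction with the decreasing induction on strata so that no circular dependence arises, but since each reduction strictly lowers either the ambient dimension or the stratum depth, the combined scheme terminates, and this completes the proof.
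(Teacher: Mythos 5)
Your overall plan is the right one, and in fact the paper offers no proof of Theorem~\ref{thm:nobadreg-bis} beyond the assertion that the proof of Theorem~\ref{prop:nobadreg} applies ``almost without change''; you have correctly located the one place where the change is not cosmetic, namely Lemma~\ref{lem:twicediff}, where the $C^{1,1}$ hypothesis is used both to kill the non-linear modes with $\beta_i\in(0,2)$ (now excluded by hypothesis) and to place the degree-two mode in the class where tameness applies. Your bookkeeping for the tail, for the subtracted harmonic quadratics $\tilde w$ in the strata induction, and for the final continuity argument is all correct.

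The gap is in your resolution of the hard step. You assert that the degree-two mode $v=\sum_{\beta_i=2}f_i\psi_i$ of $w$ at $x_0$ lies in $C^{1,1}(\overline{\Gamma})$ because ``away from the vertex this is handled by the reduction already used inside the proof of Proposition~\ref{prop:tame}.'' That reduction is not available here: in Proposition~\ref{prop:tame} the function is \emph{assumed} to have bounded second derivatives (that is the hypothesis of tameness, and it is what licenses the appeal to Theorem~\ref{prop:nobadreg} on $\overline{\Gamma}\setminus\{0\}$), whereas boundedness of $D^2v$ is exactly what you must prove. Since $D^2v$ is homogeneous of degree zero, the issue is its behaviour near the boundary rays of $\Gamma$, and there you must control the expansion of $v$ --- not of $w$ --- at a point $y\in\partial\Gamma\setminus\Gamma^{(m)}$. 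The theorem's hypothesis constrains only the expansion of $w$ at $x_0+y$: a priori a mode of $v$ at $y$ with $\beta\in(0,2)$ could be cancelled by an equal and opposite mode of the tail $T=\sum_{\beta_i>2}f_i\psi_i$, so the hypothesis does not transfer to $v$ for free. It does transfer, but via an argument you do not supply: writing $c_j(u,y)$ for the $j$-th coefficient of the expansion of $u$ at $y$, homogeneity gives that the $j$-th coefficient of $w$ at $\lambda y$ equals $\lambda^{2-\beta_j}c_j(v,y)+\sum_i f_i\lambda^{\beta_i-\beta_j}c_j(\psi_i,y)$; since this vanishes for every $\lambda\in(0,1]$ (the hypothesis holds along the whole ray) and $2-\beta_j$ is the strictly smallest exponent, one gets $c_j(v,y)=0$ for every $j$ with $\beta_j\in(0,2)$. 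Only with this separation lemma --- applied recursively through Lemma~\ref{lem:prodcone}, one essential dimension at a time --- does your interleaved induction close; as written, ``the combined scheme terminates'' is asserted rather than proved.
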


  \noindent\emph{Continuation of the Proof of
    Theorem~\ref{thm:main1bis}.}  
   
  \textbf{The case of inconsistent normals:} In the case where
  $\Omega$ has a boundary point $x_0$ where the normal vectors are
  inconsistent, we have by Proposition \ref{propo:irregular-bdry} that
  $\hat w$ is not a linear function.  It follows that $\hat w$ does
  not have convex superlevel sets: Choosing any point $z\in A$ where
  $\hat w(z)\neq 0$ and $D^2\hat w|_z\neq 0$, we have that $z$ is a
  null eigenvector of $D^2\hat w$ (since $\hat w$ is homogeneous of
  degree one), and that the trace of $D^2\hat w|_z$ on the orthogonal
  subspace $(\R z)^{\perp}$ is zero (since $\hat w$ is harmonic).  It follows that
  $D^2\hat w|_z$ has an eigenvector $\xi\in (\R z)^{\perp}$ with
  positive eigenvalue, so that $D^2\hat w|_z(\xi,\xi)>0$.
  
  Now let $\eta = \xi - \frac{D\hat w|_z(\xi)}{\hat w(z)}z$.  Then we have
  \begin{displaymath}
    D\hat w|_z(\eta) = D\hat w|_z(\xi) - \frac{D\hat w|_z(\xi)}{\hat w(\xi)}D\hat w|_z(z) =0,
  \end{displaymath}
  since $D\hat w|_z(z)=\hat w(z)$ by the homogeneity of $\hat w$.  Also, we have
  \begin{displaymath}
    D^2\hat w|_z(\eta,\eta) = D^2\hat w|_z(\xi,\xi)>0,
  \end{displaymath}
  since $z$ is a null eigenvector of $D^2\hat w|_z$.  It follows that
  the superlevel set $S=\{x\,\vert\ \hat w(x)>\hat w(z)\}$ is not convex
  near $z$, since for small $s\neq 0$ we have
  $\hat w(z\pm s\eta)>\hat w(z)$ and hence $z\pm s\eta\in S$, but
  $z\notin S$.  Since $\hat w$ is homogeneous, the superlevel sets
  $S_\lambda = \{x\,\vert\ \hat w(x)>\lambda\hat w(z)\}$ are also non-convex
  near $\lambda z$, for any $\lambda>0$.
  
  Now we conclude that $v$ also has some non-convex superlevel sets:
  By the non-convexity and openness of $S$, there exist points $x_1$
  and $x_2$ in $\Gamma$ such that $x_1,x_2\in S$ but
  $\frac{x_1+x_2}{2}\notin \overline{S}$.  It follows that there
  exists $\varepsilon>0$ such that $\hat w(x_i)>\hat w(z)+\varepsilon$
  for $i=1,2$, but
  $\hat w\left(\frac{x_1+x_2}{2}\right)<\hat w(z)-\varepsilon$.  Now
  we use the expression \eqref{eq:29} to write
 \begin{align*}
 v(x_0+\lambda x_j) &= v(x_0)+\lambda \hat w(x_j)-\frac{\mu}{2d}
        \lambda^2|x_j|^2+\sum_{i>1}f_i\lambda^{\beta^i}\psi_i(x_j)\\
 &=v(x_0)+\lambda\left(\hat w(x_j) - \frac{\mu\lambda}{2d}|x_j|^2
   +\sum_{i>1}\lambda^{\beta_i-1}\psi_i(x_j)\right)\\
 &>v(x_0)+\lambda\left(\hat w(x_j)-\varepsilon\right)\\
 &>v(x_0)+\lambda\hat w(z)
 \end{align*}
 for $j=1,2$, for $\lambda>0$ sufficiently small.  Here we used the
 fact that the sum $\sum_{i>1}\lambda^{\beta_i-1}\psi_i(x_j)$
 converges to zero as $\lambda$ approaches zero, which follows as in
 the proof of Lemma \ref{lem:twicediff}.  Similarly, we have
 \begin{displaymath}
   v\left(x_0+\lambda\frac{x_1+x_2}{2}\right)<v(x_0)+\lambda \hat w(z)
\end{displaymath}
for $\lambda>0$ sufficiently small. This proves that the superlevel set
$\{x\,\vert\, v(x)>v(x_0)+\lambda\hat w(z)\}$ is not convex.

\textbf{The case of consistent normals:} Now we consider the case
where the normals are consistent at every point.  By Proposition
\ref{propo:irregular-bdry} this implies that for every $x_0$, the
function $\hat w$ on $\Gamma_{x_0}$ provided by Lemma
\ref{lem:existence-of-homog-1-function} is linear.  If for every $x_0$
the non-zero terms in the expansion of the Neumann harmonic function
$w(x) =\sum_{i=1}^\infty f_i\psi_i(x)$ had exponent $\beta_i\geq 2$
for every $x_0$, then by Proposition \ref{thm:nobadreg-bis} $w$ is
$C^2$ near $x_0$ and hence \eqref{eq:29bis} implies that $v$ is also
$C^2$ near $x_0$.

However, if we assume that $\Omega$ is not a product of circumsolids,
then by Corollary \ref{cor:concavity-characterisation}, we have that
$v$ is not $C^{2}$ and so there must be some $x_0\in\partial\Omega$
such that the first nontrivial term in the sum in \eqref{eq:29bis} has
exponent $\beta_i$ between $1$ and $2$: Precisely, we can assume (by
choosing a new basis for the corresponding eigenspace if necessary)
that
  \begin{equation}
    \label{eq:29modified}
    v(x_{0}+x)=v(x_0)-\frac{\mu}{2d}\abs{x}^2 + f_1\psi_1(x)+
    \sum_{i>1}^\infty f_i\, \psi_{i}(x)+ \hat{w}(x)\quad\text{for
      every $x\in B_{r}\cap\Gamma$,}
  \end{equation}
  where $f_1>0$, $1<\beta_1<2$, and $\beta_i>\beta_1$ for $i>1$.
  Since $\psi_{i}$ is homogeneous of order $\beta_{i}$ and
  $\hat{w}(x)=x\cdot\gamma$, we have
  \begin{align}
    \label{eq:32bis}
     Dv|_{x_{0}+\lambda x}(\xi) &=\gamma\cdot\xi-\frac{\mu\lambda}{d}\, x\cdot \xi + 
      \sum_{i\geq 1}^\infty f_i\,  \lambda^{\beta_i-1}
                        D\psi_{i}\big|_{x}(\xi)\\
    \label{eq:32}
    D^{2}v|_{x_{0}+\lambda x}(\xi,\eta) &= -\frac{\mu}{d}\xi\cdot\eta 
                                          + f_1\lambda^{\beta_1-2}D^2\psi_1|_x(\xi,\eta)+
    \sum_{i>1}^\infty f_i\,
    \lambda^{\beta_i-2}D^{2}\psi_i\big|_{x}(\xi,\eta)
      \end{align}
  for every $x\in B_{r}(0)\cap\Gamma$, $\lambda\in(0,1)$, and
  $\xi,\eta\in\R^d$. 

  To show that $v$ has a non-convex superlevel set, it suffices to
  show that there exists $x$ with $x_0+x\in\Omega$, and $\xi\in \R^d$,
  such that
  \begin{equation}
    \label{eq:30}
    Dv|_{x_0+x}(\xi)=0\quad\text{and}\quad D^{2}v|_{x_0+x}(\xi,\xi)>0.  
  \end{equation}
  We note that as $\lambda$ approaches zero, the right-hand side of
  \eqref{eq:32bis} is dominated by the first term since the remaining
  terms are homogeneous of positive degree in $\lambda$, while the
  right-hand side of \eqref{eq:32} is dominated by the first
  non-trivial term in the sum since this is homogeneous of degree
  $\beta_1-2<0$ in $\lambda$.\bigskip

This motivates the following lemma:

\begin{lemma}\label{lem:section-concave}
  Suppose that the restriction of $\psi_1$ to the hyperplanar section
  $L:= \{x\in\Gamma\,\vert\ \gamma\cdot x=|\gamma|\}$ of
  $\Gamma=\Gamma_{x_0}$ is not concave.  Then $v$ has a non-convex
  superlevel set.
\end{lemma}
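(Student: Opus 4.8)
The plan is to produce, for a suitable small $\lambda>0$, a point $p_\lambda\in\Omega$ and a direction $\xi_\lambda\in\R^d$ satisfying the non-convexity criterion~\eqref{eq:30}; by the reduction established just before the lemma this is enough. The guiding picture is that near $x_0$ the differential $Dv$ is, to leading order, the constant linear functional $x\mapsto\gamma\cdot x$ given by $\hat w$, while $D^2v$ is dominated by $f_1\lambda^{\beta_1-2}D^2\psi_1$, which blows up as $\lambda\to0$; so one should look along a direction that is \emph{almost perpendicular to $\gamma$} (to annihilate the leading part of $Dv$) and along which $\psi_1$ fails to be concave — that is, a direction tangent to the section $L$. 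Since $\psi_1$ is harmonic, hence real-analytic, on the open cone $\Gamma$, the set $L$ is a relatively open convex subset of the affine hyperplane $\{\gamma\cdot x=|\gamma|\}$ on which $\psi_1$ is smooth, so the failure of $\psi_1|_L$ to be concave yields a point $\bar x\in L$ and a vector $w$ with $w\cdot\gamma=0$ and $D^2\psi_1|_{\bar x}(w,w)>0$. Replacing $\bar x$ by a small positive multiple of itself (using the homogeneity of $\psi_1$, which preserves the sign of $D^2\psi_1|_{\bar x}(w,w)$ and keeps $\bar x$ in the cone), we may assume $\bar x\in B_r(0)\cap\Gamma$. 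As $\bar x$ then lies in the interior of $\Gamma$, the point $p_\lambda:=x_0+\lambda\bar x$ belongs to $\Omega$ for all small $\lambda>0$ by~\eqref{eq:28}, $v$ is real-analytic near $p_\lambda$, and the expansions~\eqref{eq:32bis}--\eqref{eq:32} apply with $x=\bar x$.

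I would first read off the gradient. From~\eqref{eq:32bis}, using $w\cdot\gamma=0$, $0<\beta_1-1<1$, and the convergence of the differentiated series — exactly as in the proof of Lemma~\ref{lem:twicediff}, where the terms beyond the $\gamma$-term are controlled by $O(\lambda^{\beta_1-1})$ once the appropriate power of $\lambda$ is extracted — one obtains $Dv|_{p_\lambda}(\gamma)=|\gamma|^2+o(1)$ and $Dv|_{p_\lambda}(w)=O(\lambda^{\beta_1-1})$ as $\lambda\to0$. In particular $Dv|_{p_\lambda}(\gamma)\neq0$ for $\lambda$ small, so I set
\[
  \xi_\lambda:=w-\frac{Dv|_{p_\lambda}(w)}{Dv|_{p_\lambda}(\gamma)}\,\gamma ,
\]
which makes $Dv|_{p_\lambda}(\xi_\lambda)=0$ hold exactly, while $|\xi_\lambda-w|=O(\lambda^{\beta_1-1})$.

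I would then verify positivity of the Hessian quadratic form at $p_\lambda$ in the direction $\xi_\lambda$. From~\eqref{eq:32} and the homogeneity of the $\psi_i$, we have $D^2v|_{p_\lambda}(w,w)=f_1\lambda^{\beta_1-2}D^2\psi_1|_{\bar x}(w,w)+o(\lambda^{\beta_1-2})$, the error absorbing the bounded term $-\tfrac{\mu}{d}|w|^2$ and the tail $\sum_{i>1}$ (whose terms carry powers $\lambda^{\beta_i-2}$ with $\beta_i>\beta_1$; the absolute convergence that lets the common factor $\lambda^{\beta_\ast-2}$ with $\beta_\ast:=\min\{\beta_i:i>1,\ f_i\neq0\}>\beta_1$ be pulled out is the one already established in Lemma~\ref{lem:twicediff}). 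Likewise $\norm{D^2v|_{p_\lambda}}=O(\lambda^{\beta_1-2})$, so expanding $D^2v|_{p_\lambda}(\xi_\lambda,\xi_\lambda)$ about $w$ produces cross terms of size $O(\lambda^{\beta_1-2}\,|\xi_\lambda-w|)=O(\lambda^{2\beta_1-3})$ and $O(\lambda^{\beta_1-2}\,|\xi_\lambda-w|^2)=O(\lambda^{3\beta_1-4})$. Since $\beta_1>1$, both exponents strictly exceed $\beta_1-2$, so these terms are $o(\lambda^{\beta_1-2})$, and hence $D^2v|_{p_\lambda}(\xi_\lambda,\xi_\lambda)=\lambda^{\beta_1-2}\big(f_1D^2\psi_1|_{\bar x}(w,w)+o(1)\big)>0$ for $\lambda$ small enough. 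Fixing such a $\lambda$, the pair $(x,\xi)=(\lambda\bar x,\xi_\lambda)$ satisfies~\eqref{eq:30}, which proves the lemma.

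The main obstacle is this two-scale bookkeeping: one must annihilate $Dv$ \emph{exactly} while preserving positivity of the Hessian quadratic form, and this succeeds only because the correction from $w$ to $\xi_\lambda$ has size $O(\lambda^{\beta_1-1})$ whereas $D^2v$ has size $O(\lambda^{\beta_1-2})$, so the cross term $O(\lambda^{2\beta_1-3})$ is genuinely subdominant precisely when $\beta_1>1$ — which is exactly the property of $x_0$ secured in~\eqref{eq:29modified}. The only other ingredient, uniform control of the tail series and its first two derivatives as $\lambda\to0$, is routine and essentially a repetition of the estimate already carried out in the proof of Lemma~\ref{lem:twicediff}.
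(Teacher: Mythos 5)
Your proof is correct and follows essentially the same route as the paper's: choose a point of $L$ and a direction tangent to $L$ where $D^2\psi_1>0$, perturb that direction by an $O(\lambda^{\beta_1-1})$ multiple of a transversal vector to annihilate $Dv$ exactly, and observe that the resulting corrections to the Hessian term are of order $\lambda^{2\beta_1-3}$ and higher, hence subdominant to the $\lambda^{\beta_1-2}$ leading term precisely because $\beta_1>1$. The only (immaterial) difference is that you correct along $\gamma$ whereas the paper corrects along the point $x\in L$ itself.
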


\begin{proof}
  Since the restriction of $\psi_1$ to $L$ is not concave, there
  exists $x\in L$ and $\xi_0\perp\gamma$ such that
  $D^2\psi_1|_x(\xi_0,\xi_0)>0$.  The expression \eqref{eq:32bis} then
implies
  \begin{displaymath}
    Dv|_{x_0+\lambda x}(\xi_{0}) =
    O(\lambda^{\beta_1-1}),\quad\text{and}\quad 
    Dv|_{x_0+\lambda x}(x) = |\gamma|+O(\lambda^{\beta_1-1}),
\end{displaymath}
for $\lambda\to 0+$, from which it follows that
$Dv|_{x_0+\lambda x}(\xi_{0}+c(\lambda)x)=0$ for some
$c(\lambda)=O(\lambda^{\beta_1-1})$ as $\lambda\to0+$. Then we have by
\eqref{eq:32} that
\begin{displaymath}
  D^2v|_{x_0+\lambda x}(\xi_{0}+c(\lambda)x,\xi_{0}+c(\lambda)x) 
  = \lambda^{\beta_1-2}\left(f_1D^2\psi_1|_x(\xi_{0},\xi_{0})+O(\lambda^\sigma)\right),
\end{displaymath}
where $\sigma = \min\{\beta_1-1,2-\beta_2,\beta_2-\beta_1\}$.  In
particular, since $D^2\psi_1|_x(\xi_{0},\xi_{0})>0$, we have that
$D^2v|_{x_0+\lambda x}(\xi_{0}+c(\lambda)x,\xi_{0}+c(\lambda)x)>0$ for
$\lambda>0$ sufficiently small, proving that $v$ has a non-convex
superlevel set.
\end{proof}

\begin{remark}\label{rem:not-able-to-proof}
  We are unable to establish the hypothesis of Lemma
  \ref{lem:section-concave} for dimensions $d\ge 3$, but note here that this
  would be sufficient to prove that $v$ has a non-convex superlevel
  set whenever $\Omega$ is not a product of circumsolids,
  substantially strengthening the result of Theorem~\ref{thm:main1bis}.
\end{remark}

\textbf{The case $d=2$:} We can establish the hypothesis of Lemma
\ref{lem:section-concave} in the case $d=2$, as follows: In this case
the tangent cone $\Gamma_{x_0}$ at any boundary point
$x_0\in\partial\Omega$ is a sector with opening angle
$\theta_0\le\pi$.  The case $\theta_0=\pi$ cannot arise, since in that
case the homogeneous Neumann harmonic functions on the half-plane
$\Gamma_{x_0}$ are spherical harmonics with integer degree of
homogeneity, so one cannot have $\beta_1\in(1,2)$. Therefore
$\theta_0\in(0,\pi)$. 

Let $\gamma$ be the inward-pointing bisector of this sector of length
$1/\sin\left(\theta_0/2\right)$.  Then we have
$\nu_i\cdot\gamma=-1$ for $i=1,2$, where $\nu_1$ and $\nu_2$ are the
outer unit normal vectors to the two faces of $\Omega$ which meet at
$x_0$.  The homogenous degree one harmonic function of
Lemma~\ref{lem:existence-of-homog-1-function} is then given by
$\hat{w}(x)=\gamma\cdot x$.  In particular $\hat w$ is linear, so we
are in the situation where all boundary points have consistent
normals.
  
The corresponding eigenfunctions are given by 
 
\begin{displaymath}
 \psi_i(r\,(\cos\theta)\, e_1+r\,(\sin\theta)\, e_2) =  
    \begin{cases}
      r^{\frac{i\pi}{\theta_0}}\cos\left(\frac{i\pi}{\theta_0}\theta\right),&i\ \textrm{even};\\
      r^{\frac{i\pi}{\theta_0}}\sin\left(\frac{i\pi}{\theta_0}\theta\right),&i\ \textrm{odd},
    \end{cases}
    \quad\text{for $\theta\in \left(-\tfrac{\theta_{0}}{2}, \tfrac{\theta_{0}}{2}\right)$,}
\end{displaymath}
with degree of homogeneity $\beta_i = \frac{i\pi}{\theta_0}$, for
non-negative integer $i$. Here, $e_{1}=\frac{\gamma}{|\gamma|}$, and
$e_2$ is a unit vector orthogonal to $\gamma$.
 \begin{figure}[!htb]\label{fig:d=2}
     \minipage{9cm}
     \mbox{}\hspace{1cm}
     \begin{tikzpicture}
        \fill [grey!20!white] (0,0) -- (6.055,-3.5) arc (-30:30:7cm) -- (0,0);
       \node [left,black] at (5,1.4) {$\Gamma$};
       \node [left,black] at (0,0.3) {$(0,0)$};
       \draw [black,fill] (0,0) circle (0.06cm);
       \draw [thick,black] (0,0) -- (6.055,-3.5);
       \draw [thick,red,->] (0,0) -- (6,0);
       \draw [red,fill] (6,0) circle (0.06cm); 
       \node [left,red,below] at (6,-0.2) {$\gamma$};
       \draw [thick,red,dashed] (-1,0) -- (7,0);
       \node [left,red] at (7.7,0) {$\R\gamma$};
       \draw [thick,black] (0,0) -- (6.055,3.5);
       \draw [thick,red] (2.6,-1.5) arc (-30:30:3cm);
       \draw[thick, red, dashed] (2.12,-2.12) arc (-45:45:3);
       \node [right,red] at (2.3,2.2) {$S^1$};
        \node [left,red] at (2.7,-0.8) {$A$};
        \draw [thick, blue] (3,-1.732) -- (3,1.732); 
        \node [right,blue] at (3.2,-1.2) {$L$};
     \end{tikzpicture}
     \caption{The case $d=2$.}
     \label{fig:d2}
     \endminipage
   \end{figure}  

The only possibilities which can give rise to $1<\beta_{i}<2$ are where
$\theta_0\in(\pi/2,\pi)$ and $i=1$.  In this case $\psi_1$ is odd in
$\theta$, and hence is an odd function when restricted to the line $L$
(see Figure \ref{fig:d=2}).  Since an odd concave function is
necessarily a multiple of the identity function, the only possibility
in which $\psi_1$ has a concave restriction to $L$ is when
\begin{displaymath}
  \psi_1(e_1+ye_2) = cy,
\end{displaymath}
which implies by homogeneity that
\begin{displaymath}
\psi_1(xe_1+ye_2) = cx^{\beta_1-1}y.
\end{displaymath}
However a direct computation shows that this is harmonic only in the
cases $\beta_1=1$ or $\beta_1=2$, which are impossible. This proves
Lemma \ref{lem:section-concave} for the case $d=2$, so we have
established that $v$ has a non-convex superlevel set whenever
$\Omega$ is not a product of circumsolids.  We note that for $d=2$
this applies except when $\Omega$ is either a circumsolid or a
rectangle.

Now we complete the proof of Theorem \ref{thm:main1bis}, by proving
that the Robin eigenfunction $u_\alpha$ also has some non-convex
superlevel sets for sufficiently small $\alpha>0$:

By Proposition~\ref{propo:properties-of-Robineigenvalues}, for all
sufficiently small $\alpha>0$, the first Robin eigenfunction
$u_{\alpha}$ is given by
  \begin{equation}
    \label{eq:37}
    u_{\alpha}=\mathds{1}+\alpha v+f^{\alpha}
  \end{equation}
  where $f^{\alpha}$ is  $o(\alpha)$ in
  $C^{0,\beta}(\overline{\Omega})$ for some $\beta\in (0,1)$.

  We have proved that $v$ has some non-convex superlevel sets, which
  means that there exist points $x_1$ and $x_2$ in $\Omega$, a number
  $c\in\R$, and $\varepsilon>0$ such that $v(x_i)>c+\varepsilon$ for
  $i=1,2$, but $v\left(\frac{x_1+x_2}{2}\right)<c-\varepsilon$.  But
  then we have by \eqref{eq:37} for $\alpha$ sufficiently small that
 \begin{displaymath}
   u_\alpha(x_i) = 1+\alpha v(x_i)+o(\alpha)>1+\alpha c+\alpha \varepsilon+o(\alpha)>1+\alpha c
\end{displaymath}
 for $i=1,2$, while
 \begin{displaymath}
   u_\alpha\left(\frac{x_1+x_2}{2}\right) = 1+\alpha
   v\left(\frac{x_1+x_2}{2}\right)
   +o(\alpha)<1+\alpha c-\alpha \varepsilon+o(\alpha)<1+\alpha c.
\end{displaymath}
It follows that the superlevel set $\{x\,\vert\, u_\alpha(x)>1+\alpha c\}$
is not convex for sufficiently small $\alpha>0$.
\end{proof}

It remains to give the proof of Corollary~\ref{cor:approx-omega}.

\begin{proof}[Proof of Corollary~\ref{cor:approx-omega}]
  It suffices to show the following: If $\Omega$ is a convex domain
  for which the Robin ground state $u_\alpha(\Omega)$ is
  not log-convex (or has a non-convex superlevel set) for some
  $\alpha$, and $\{\Omega_n\}$ is a sequence of convex domains which
  approach $\Omega$ in Hausdorff distance, then the Robin
  eigenfunction $u_{\alpha,n}$ of $\Omega_n$ is not log-concave
  (respectively, has a non-convex superlevel set) for sufficiently
  large $n$.

  We apply Proposition \ref{propo:domain-perturbation}, which applies
  since the volume and perimeter of convex sets are continuous with
  respect to Hausdorff distance.  In particular, by \eqref{eq:16} the
  eigenfunctions $u_{n,\alpha}$ converge uniformly to $u_\alpha$ on
  any subset which is contained in $\overline{\Omega}_n$ for all large
  $n$.

  Under the assumption that $u_\alpha$ is not log-concave on $\Omega$,
  there exist points $x_1$ and $x_2$ in $\Omega$ such that
  $\frac12(\log u_\alpha(x_1)+\log u_\alpha(x_2))>\log
  u_\alpha\left(\frac{x_1+x_2}{2}\right)$, or equivalently
  $u_\alpha(x_1)u_\alpha(x_2)>u_\alpha\left(\frac{x_1+x_2}{2}\right)^2$.
  For sufficiently large $n$ the points $x_1$, $x_2$ and
  $\frac{x_1+x_2}{2}$ are all contained in $\Omega_n$, and hence we
  have
\begin{displaymath}
  u_{\alpha,n}(x_1)u_{\alpha,n}(x_2)-u_{\alpha,n}\left(\frac{x_1+x_2}{2}\right)^2
  \to 
  u_\alpha(x_1)u_\alpha(x_2)-u_\alpha\left(\frac{x_1+x_2}{2}\right)^2>0
\end{displaymath}
as $n\to\infty$, and hence the left-hand side is positive for
sufficiently large $n$, proving that $u_{\alpha,n}$ is not log-concave
for $n$ large.

Similarly, under the assumption that $u_\alpha$ has a non-convex
superlevel set, there exist points $x_1,x_2$ in $\Omega$ and $c\in\R$
such that $u_\alpha(x_i)>c$ for $i=1,2$, while
$u_\alpha\left(\frac{x_1+x_2}{2}\right)<c$.  As before the convergence
of $u_{\alpha,n}$ to $u_\alpha$ at the points $x_1$, $x_1$ and
$\frac{x_1+x_2}{2}$ guarantees that $u_{\alpha,n}(x_i)>c$ for $i=1,2$
and $u_{\alpha,n}\left(\frac{x_1+x_2}{2}\right)<c$ for $n$
sufficiently large, proving that $u_{\alpha,n}$ has a non-convex
superlevel set.
\end{proof}

\section{Final Discussions and conjectures}

We conclude this paper by formulating some interesting
observations and conjectures.\medskip 

We recall that the Dirichlet eigenvalue problem corresponds to the limiting
case $\alpha\to+\infty$ in which it is well-known (cf~\cite{MR0450480})
that the first eigenfunction is log-concave. Thus, our first
conjecture is naturally:
    
\begin{center}
  \begin{minipage}[c]{0.95\linewidth}
    {\bfseries $\mbox{}\quad$ 1. Conjecture. }\emph{For a given bounded convex domain
      $\Omega$, there is an $\alpha_{0}>0$ such that for all $\alpha\ge
      \alpha_{0}$, the first Robin
      eigenfunction $u_{\alpha}$ is log-concave.}\\[-7pt]
  \end{minipage}
\end{center}

Furthermore, it would be interesting to know whether the threshold
$\alpha_{0}$ depends on the dimension $d\ge 2$ and whether it can be
independent of the domain $\Omega$.\medskip

Let $\Omega$ be a convex polyhdral domain that is
not the product of circumsolids. In order to prove in dimensions
$d\ge 3$ that the first Robin eigenfunction $u_{\alpha}$ has
non-convex superlevel sets without imposing the stronger hypothesis
\emph{$\Omega$ has inconsistent normals at some boundary point}, our proof of
Theorem~\ref{thm:main1bis} shows that one needs to study the second
case when the harmonic function $\hat{w}$ given by
Lemma~\ref{lem:existence-of-homog-1-function} is
\emph{linear}. The linear case in dimension $d=2$ is much simpler to
treat than the $(d-1)$-dimensional hyperplane
   \begin{math}
     \mathcal{H}:=\{x\in \R^{d}\,\vert\, x\cdot\gamma=|\gamma|\}
   \end{math} 
reduces to a line segment $L$. Nevertheless, we are convinced that the
following conjecture holds. \\[-7pt]

\begin{center}
  \begin{minipage}[c]{0.95\linewidth}
    {\bfseries $\mbox{}\quad$ 2. Conjecture. }\emph{If $\Omega$ is a convex
      polyhedron in $\R^{d}$ for $d\ge 3$ which is not a product of
      circumsolids, then for sufficiently small $\alpha>0$, the first
      Robin eigenfunction $u_{\alpha}$ has non-convex superlevel
      sets.}\\[-7pt]
  \end{minipage}
\end{center}

Our argument shows that it would be sufficient to establish Lemma
\ref{lem:section-concave} whenever $\Gamma$ is a polyhedral convex
cone which is a circumsolid about the point $\gamma$, and $\psi_1$ is
a homogeneous harmonic function with Neumann boundary conditions on
$\Gamma$ with degree of homogeneity between $1$ and $2$ (see Remark~\ref{rem:not-able-to-proof}).

Our initial motivation for the work undertaken in this paper was to establish
the {\bfseries fundamental gap conjecture for
Robin eigenvalues}: \\[-7pt]

\begin{center}
  \begin{minipage}[c]{0.95\linewidth}
    \emph{Let $\Omega$ be a bounded convex domain in $\R^{d}$ of
      diameter $D$, $V$ be a weakly convex potential, and for
      $\alpha>0$, let $\lambda_{i}(\alpha)$ be the Robin eigenvalues
      on the interval $(-\tfrac{D}{2},\tfrac{D}{2})$. Then for
      $\alpha>0$, the Robin eigenvalues $\lambda_{i}^{\! V}(\alpha)$
      of the Schr\"odinger operator $-\Delta+V$ satisfy
      \begin{displaymath}
        \lambda_{1}^{\! V}(\alpha)-\lambda_{0}^{\! V}(\alpha)
        \ge \lambda_{1}(\alpha)-\lambda_{0}(\alpha).
      \end{displaymath}}\\[-7pt]
  \end{minipage}
\end{center}

In the Dirichlet case 
this conjecture was first
observed by van den Berg~\cite{MR711491} and then later independently
suggested by Ashbaugh and Benguria~\cite{MR942630}, and
Yau~\cite{MR865650}. The complete proof of the fundamental gap
conjecture in this case was given
in~\cite{fundamental}. Theorem~\ref{Robin gap} is a first attempt to
prove the fundamental gap conjecture for Robin eigenvalues, but
provides non-optimal lower bounds. But due to our main
Theorem~\ref{thm:main1}, it is clear that this conjecture can only be
proved by methods avoiding the log-concavity of the first Robin
eigenfunction. To the best of our knowledge, only Lavine's work~\cite{MR1185270}
provides a proof of the fundamental gap conjecture which
does not use the log-concavity of the first eigenfunction. That paper
concerns the Dirichlet and Neumann case on a bounded
interval. With this in mind, we conclude
with the following question: \\[-7pt]

\begin{center}
  \begin{minipage}[c]{0.95\linewidth}
    {\bfseries Open problem.\; } \emph{How can one prove the fundamental
      gap conjecture for Robin eigenvalues without using the
      log-concavity of the first eigenfunction?}
  \end{minipage}
\end{center}

%
%

\bibliographystyle{plain}
\bibliography{citations}

\end{document}